\theoremstyle{plain}
\newtheorem*{MainThm}{Main Theorem}
\newtheorem{Thm}{Theorem}[section]
\newtheorem{Lemma}[Thm]{\bf Lemma}
\newtheorem{Corollary}[Thm]{\bf Corollary}
\newtheorem{Theorem}[Thm]{\bf Theorem}
\newtheorem{Proposition}[Thm]{\bf Proposition}
\newtheorem{Notation}[Thm]{\bf Notation}
\theoremstyle{definition}
\newtheorem{Definition}[Thm]{\bf Definition}
\theoremstyle{remark}
\newtheorem{Remark}[Thm]{\bf Remark}
\newtheorem{Example}[Thm]{\bf Example}
\newtheoremstyle{Cl}{5pt}{3pt}{\sl}{}{\it}{:}{.5em}{}
\theoremstyle{Cl}
\DeclareMathOperator{\spt}{supp \,}
\DeclareMathOperator{\supp}{supp \,}
\def\al{\alpha}
\def\be{\beta}
\def\de{\delta}
\def\eps{\varepsilon}
\def\e{\varepsilon}
\def\ga{\gamma}
\def\G{\Gamma}
\def\wha{\widehat}
\def\la{\lambda}
\def\si{\sigma}
\def\om{\omega}
\def\EE{{\mathbf E}}
\def\VV{{\mathbf V}}
\def\TT{{\mathbf T}}
\def\EN{{\mathcal E}}
\def\VN{{\mathcal V}}
\def\G{\Gamma}
\def\bM{\mathbb M}
\def\bP{\mathbb P}
\def\R{{\mathbb R}}
\def\Z{{\mathbb Z}}
\def\N{{\mathbb N}}
\def\A{{\mathcal A}}
\def\FN{{\mathcal F}}
\def\LL{{\mathcal L}}
\def\HH{{\mathcal H}}
\def\NN{{\mathcal N}}
\def\M{{\mathbb M}}
\def\ov{\overline}
\def\F{\mathcal F}
\def\Cf{\mathfrak C}
\def\oo{\mathrm o}
\def\tt{{\mathrm t}}
\def\F{{\mathcal F}}
\def\wtd{\widetilde}
\def \txt{\qquad\text}
\def\begincproof{

    \begin{proof}
    }
    \def\endcproof{
    \end{proof}

}
\DeclareRobustCommand{\SkipTocEntry}[5]{}
\newcommand{\add}[1]{\textcolor{blue}{#1}}
\title[Homogenization of HJ equations on networks]{Homogenization of Hamilton--Jacobi equations on networks}
\author{Marco Pozza}
\address{Link Campus University, Italy.}
\email{m.pozza@unilink.it}
\author{Antonio Siconolfi}
\address{Dipartimento di Matematica, Sapienza Universit\`a di Roma, Italy.}
\email{siconolfi@mat.uniroma1.it}
\author{Alfonso Sorrentino}
\address{Dipartimento di Matematica, Universit\`a degli Studi di Roma ``Tor Vergata'', Rome, Italy.}
\email{sorrentino@mat.uniroma2.it}
\subjclass[2020]{35B27, 35R02, 35F21, 37J51, 49L25}
\keywords{homogenization, Hamilton--Jacobi equations, periodic networks, Aubry--Mather theory, weak KAM theory, topological crystals}
\begin{document}

    \begin{abstract}
        We prove a homogenization result for a family of time-dependent Hamilton--Jacobi equations, rescaled by a parameter $\eps$ tending to zero, posed on a periodic network, with a suitable notion of periodicity that will be defined. As $\eps$ becomes infinitesimal, we derive a limiting Hamilton--Jacobi equation in a Euclidean space, whose dimension is determined by the topological complexity of the network and is independent of the ambient space in which the network is embedded. Among the key contributions of our analysis, we extend to the setting of networks and graphs Mather's result on the asymptotic behavior of the average minimal action functional, as time tends to infinity. Additionally, we establish the well-posedness of the approximating problems, representing a nontrivial generalization of existing results for finite networks to a non-compact setting.
    \end{abstract}

    \maketitle

    \bigskip

    \section{Introduction}

    Starting from the pioneering contributions in \cite{SchCam,BreH}, the literature on Hamilton--Jacobi equations and control models on graphs and networks, as well as on stratified domains, now constitutes a significant body of results that continues to fuel a rather popular line of research.

    By {\it network} we simply mean a subset $\NN \subset \R^N$ of the form
    \begin{equation*}
        \NN := \bigcup_{\gamma \in \EN} \, \gamma([0,1]) \subset \R^N,
    \end{equation*}
    where $\EN$ is a collection of simple $C^1$ regular ({\it i.e.}, with non-vanishing derivative) oriented curves $\gamma: [0,1] \to \R^N$, called {\it arcs} of the network. The initial and final point of any arc $\ga$ have a special status and are called {\it vertices} of the network: if we look at a network as a piecewise regular manifold, vertices are the points where regularity fails. \\
    In the following we assume that the networks are connected. We refer to Subsection \ref{sectionnetwork} for a more comprehensive presentation of network and their properties.

    The analysis of differential problems on networks involves a number of subtle theoretical issues and has a significant impact on applications in various fields, such as traffic models, data transmission, and robotics; in these contexts, the discontinuities in the differential problems are related to some specific structures in the environment. Valuable contributions can be found, for instance, in \cite{AchdouCamilliCutriTchou12,AchdouOudetTchou16,AchdouTchou15, BarlesBrianiChasseigne14, BarlesBrianiChasseigne13,CamilliMarchi13,ImbMon, ImbMonZid, LionsSouganidis17, LionsSouganidis16,Morfe20,PozzaLTB, PozzaSiconolfi21, RaoSiconolfiZidani14}, and, last but not least, in the recent comprehensive monograph \cite{BarlesChasseigne}.

    \medskip

    The main aim of this paper is to prove a homogenization result for a family of oscillating time-dependent Hamilton–Jacobi equations posed on \textit{periodic} networks. The notion of periodicity for networks, a cornerstone of our discussion, will be described more precisely in Subsection \ref{secunderlyinggraph}; in particular our definition implies that the local network is \emph{locally finite}, \textit{i.e.}, there are finitely many arcs departing or arriving at each given vertex.

    However, to provide a preliminary rough idea for the sake of this introduction, it can be expressed as the invariance of the network under the action of a free Abelian group of symmetries ({\it i.e.}, isomorphic to $\Z^b$, for some $b \in \N$), such that the {\it fundamental cell} of this action is a finite graph ({\it i.e.}, with finitely many vertices and arcs). In other words, the network can be viewed as a {\it repetition} of copies of a finite network {\it patched} together through the action of the group of symmetries. We show in Appendix \ref{appexnet} (see also Section \ref{construens}) how to construct examples of periodic networks starting from any choice of finite graph as fundamental cell.

    As usual in an homogenization result, we prove that the approximating equations on the periodic network converge, as the rescaling parameter goes to 0, to a limit Hamilton--Jacobi posed on a Euclidean space, whose dimension is related to the complexity of the periodic network, where an \textit{effective} Hamiltonian $\bar H$, independent of the state variable, appears.

    Some remarks are now in order to corroborate our exposition.

    \medskip

    \begin{itemize}
        \item{\it Hamiltonians}: A {Hamiltonian} on the network is a collection of functions of local Hamiltonians $H:=\{H_\gamma\}_{\gamma\in\EN}$, $H_\gamma:[0,1]\times\R\to\R$, defined on each arc. Since our approach is variational and grounded in the use of the Lagrangian action, we assume each $\{H_\gamma\}_{\gamma\in\EN}$ to satisfy some conditions related to strict convexity and superlinearity on the fibers (see (H$_\ga$1)--(H$_\ga$4) in Subsection \ref{secHam}). We emphasize that, apart from some obvious compatibility conditions related to changing the orientation of an arc (see \eqref{ovgamma}), the Hamiltonians $H_\ga$ are completely unrelated to one another.

        \smallskip

        \item{\it Notion of solution}: The fact that a Hamiltonian on a network is in general discontinuous at the vertices requires an adapted notion of solution. In this paper, we adopt the one introduced in \cite{Sic}, coupled with the choice of the so-called {\it maximal flux limiter} (see Remark \ref{remarkintroflux} and Subsection \ref{secsolutionHJ}).

        \smallskip

        \item{\it Limit space:} An interesting aspect of our result is that the homogenization process leads to a limiting Hamilton--Jacobi equation posed in a Euclidean space of dimension $b$, which is determined by the topological complexity of the network, particularly the rank of its maximal group of symmetries. This result is noteworthy because it demonstrates that the dimension of the ambient space in which the network is embedded does not affect the homogenization outcome. We also note that $b$ can be characterized in terms of the number of edges and vertices of what we call the {\it base graph}, namely a finite abstract graph underlying the structure of the periodic network playing the role of the fundamental cell of the periodicity. It can be shown (see Remark \ref{Euler}) that
        \begin{equation*}
            b = \frac12 \text{(numer or edges)} - \text{(number of vertices) + 1}.
        \end{equation*}
        Intuitively, the topology becomes more complicated (and hence $b$ increases) if the number of vertices decreases and the number of connections between them increases, whereas it becomes simpler if the converse happens.

        \smallskip

        \item{\it Convergence:} The fact that time-oscillating problems and the limit one are posed on different spaces requires to introduce a suitable the notion of convergence. In this regard, we found beneficial the concept of {\it asymptotic cone} in the Gromov--Hausdorff sense (see Section \ref{conogelato}), which allows us not only to relate the rescaled networks (as metric spaces) to the limit space, but also to deal with the convergence of functions defined on these different spaces (see Definitions \ref{semi} and \ref{semibis}). The main idea is as follows: when we rescale the metric of the periodic network and let the scale parameter tend to $0$, we obtain the so-called asymptotic cone of the network, which is the metric space that best approximates, in the limit, the network in the Gromov–Hausdorff sense. Visually, instead of reducing the size of the network, we simply `observe' it from increasing and increasing distance.
    \end{itemize}

    \medskip

    \subsection{Main novelties and strategy}

    There are two possible approaches to the homogenization problem. In \cite{LPV}, the homogenization problem is addressed using purely PDE techniques. In this approach, the Hamiltonian is assumed to be merely continuous in both arguments and coercive in the variable $p$, without any convexity assumptions on the momentum variable. However, under the assumption that the Hamiltonian is convex and superlinear in the second argument, a variational approach is also possible. This approach relies on the fact that, under these conditions, a corresponding Lagrangian can be defined. The variational approach is based on the Lax–Oleinik formula, which expresses the solution of a time-dependent Hamilton–Jacobi equation with an initial condition in terms of the initial datum and the minimal action functional.

    \medskip

    Our method is rooted in this latter variational approach, drawing inspiration from \cite{ConItSic} and using Lax–Oleinik representations, but adapted to the specific framework of networks. In particular, two keys ideas in our strategy consists in:

    \begin{itemize}
        \item{\it Discretizing} the problem, as already successfully implemented in \cite{PozSic, Sic, SicSor1, SicSor}. We associate to the network underlying graphs, encoding all of the information on the complexity of the network and its periodicity, and we transfer the Hamiltonians and the Lagrangians to discrete objects on the graphs, extrapolating all the information that are relevant to our asymptotic analysis.

        \item Using {\it homological techniques}, similar to those used in \cite{ConItSic}, where the homogenization result is developed on arbitrary compact manifolds by lifting the Hamiltonian to a suitable covering manifold that introduces generalized periodicity. We adapt these ideas to the framework of networks, which presents non-trivial and novel challenges.
    \end{itemize}

    Among the most significant results that are auxiliary to the proof of the main homogenization theorem, we highlight:

    \smallskip

    \begin{itemize}
        \item \textit{Mather's asymptotic result for networks and graphs}: We leverage results from Aubry--Mather and weak KAM theory on graphs \cite{SicSor1, SicSor}, to extend Mather asymptotic result \cite{Mather91}, with suitable adaptations, to the framework of networks and graphs, see Theorems \ref{new} and \ref{melania}. More specifically, we prove the uniform convergence, as time goes to infinity, of the average minimal action on the network/graph to the value function of a relaxed variational problem defined on a suitable space of measures. As a matter of fact, the asymptotic, as $\eps$ goes to $0$, of the solutions to the approximated problems corresponds, through Lax--Oleinik formula, to the behaviour of the average action functional for large times. In this way, we also obtain a characterization of the effective Hamiltonian appearing in the limit equation in terms of stationary cell problems on the base graph, which generalizes what happens in the case of the torus or arbitrary compact manifolds.

        \smallskip

        \item \textit{Lax--Oleinik representation formula}: We prove the validity of this formula in the context of periodic networks, generalizing previous results for junctions \cite{ImbMonZid} and finite networks \cite{PozSic}. This is part of an extensive analysis of time-dependent Hamilton--Jacobi equation on periodic networks, carried out in Section \ref{equation}, which also includes a novel comparison theorem (Theorem \ref{compa}), requiring nontrivial modifications compared to the finite network case treated in \cite{Sic}.
    \end{itemize}

    \medskip

    \begin{Remark}\label{remarkintroflux}
        Finally, we note that the solutions of time-dependent Hamilton--Jacobi equations on networks are influenced not only by the initial data but also by the choice of a {\it flux limiter}, following the theory initiated in \cite{ImbMon}. Throughout this paper, we fix the flux limiter to be the maximal one. However, we acknowledge that if the flux limiter is varied, the homogenization result still holds, but with distinct features, both in terms of the limit space and the limit equation (see also Remark \ref{fluxlimiter}). Given the length of this paper, we have chosen not to address this aspect here, leaving it for future investigation.
    \end{Remark}

    \medskip

    \subsection{Comparison with previous literature}

    Since the seminal work by Lions, Papanicolaou, and Varadhan \cite{LPV}, an extensive body of literature on the homogenization of Hamilton–Jacobi equations has emerged, with significant applications to models characterized by the coexistence of phenomena at different scales and with varying levels of complexity.

    In our framework, certain recent papers on the so-called {\it specified homogenization} in the context of junctions can be considered related, see \cite{ForcadelSalazar20, ForcadelSalazarZaydan18, GaliseImbertMonneau15}. These works deal with graphs that possess a single vertex from which branches of infinite length emanate. The primary aim of these studies is to transition from microscopic to macroscopic scales in traffic analysis when bifurcations or local perturbations occur. Despite this broad similarity, the mathematical models in these works differ substantially from ours.

    Regarding homogenization results in non-standard settings, it is also worth mentioning \cite{AchdouLeBris23}, which investigates the problem for a periodic Hamiltonian perturbed near the origin, and \cite{BarlesBrianiChasseigneTchou15}, which addresses homogenization in a control model with distinct dynamics and costs in complementary domains of Euclidean space.

    As for the full homogenization, so to say, of Hamilton--Jacobi equations on networks, to the best of our knowledge, previous contributions can only be found in the comprehensive work \cite{ImbMon} and in \cite{CamMar}, where in particular the rate of convergence of the approximating equations is studied in the presence of a second-order viscosity term. However, the results in these papers are limited to networks with base graphs of {\em bouquet type}, {\it i.e.}, with a single vertex and several edges that start and end at this vertex (see for example figure \ref{subfig:2bouquet}); in this case $b$ corresponds to the number of self-loops. In contrast, our analysis allows for arbitrary geometries of the periodic network.

    \medskip

    \subsection{Organization of the article}

    To help the reader navigate this lengthy text, we would like to provide a map of its organization. It is essentially structured into four main segments, alongside the introduction and the appendix.

    The first one corresponds to Section \ref{periodic}, where we specify the setting and state the Homogenization result (Main Theorem).

    The second part coincides with Section \ref{equation} and contains a PDE analysis of time-dependent Hamilton--Jacobi equations posed on the periodic networks. The novelty of this part, compared to the existing literature, lies in the fact that the problem is posed on a periodic network with infinitely many vertices and arcs. We remark that our approach is generally applicable to any infinite network, not necessarily periodic, satisfying the conditions listed in \textbf{(P1)}--\textbf{(P7)} (see Subsection \ref{subsecexistenceuni}).

    The third part is the most conspicuous one and consists of Sections \ref{construens} to \ref{secrelaxed}. Inspired by \cite{PozSic, Sic, SicSor1, SicSor}, one of the main ideas behind our approach consists in discretizing the problem. Namely, we associate to the network an underlying abstract graph, encoding all of the information on the complexity of the network and its periodicity, and we transfer the Hamiltonians and the Lagrangians to discrete objects on the graphs.

    In the fourth and final part, Section \ref{epi}, the output of the previous sections are exploited to carry out the asymptotic analysis and provide a proof of the Main Theorem.

    To avoid weighing down the presentation, we have decided to postpone some of the more technical and lengthy proofs to Section \ref{proofs}.

    Finally, to corroborate the generality of our setting, in Appendix \ref{appexnet} we will discuss how to explicitly construct a periodic network starting from any given finite graph as base graph. We believe this construction could be useful for computational or numerical purposes, see \cite{CCPS}.

    \bigskip

    \subsection*{Acknowledgements}

    Antonio Siconolfi and Alfonso Sorrentino would like to thank the Mathematical Sciences Research Institute (MSRI) in Berkeley for its hospitality during the Fall 2018 semester, as part of the program: {\it Hamiltonian systems, from topology to applications through analysis}. The stimulating environment and support provided by MSRI greatly contributed to the development of this work.\\
    Alfonso Sorrentino and Marco Pozza acknowledge the support of the Italian Ministry of University and Research’s PRIN 2022 grant ``{\it Stability in Hamiltonian dynamics and beyond}''. Antonio Siconolfi acknowledges the support of KAUST through the subaward agreement ORA-2021-CRG10-4674.6. Alfonso Sorrentino also acknowledges the support of the Department of Excellence grant MatMod@TOV (2023-27) awarded to the Department of Mathematics of University of Rome Tor Vergata. \\
    The authors are members of the INdAM research group GNAMPA.

    \newpage

    \addtocontents{toc}{\SkipTocEntry}

    \tableofcontents

    \newpage

    \section*{Some notations and terminology}

    In this section, we gather some of the notations, terminology and symbols that will be used throughout the paper to assist the reader in navigating the article.
    \begin{itemize}
        \item By \emph{curve} we mean an \emph{absolutely continuous} curve with support contained in a Euclidean space.

        \item Given an open set $\mathcal O$ and a continuous function $u:\overline{\mathcal O}\to\R$, we call \emph{supertangents} (resp.\ \emph{subtangents}) to $u$ at $x\in\mathcal O$ the viscosity test functions from above (resp.\ below). If needed, we take, without explicitly mentioning, $u$ and test function coinciding at $x$ and test function strictly greater (resp.\ less) than $u$ in a punctured neighbourhood of $x$. \\
        We say that a subtangent $\varphi$ to $u$ at $x\in\partial\mathcal O$ is \emph{constrained to $\overline{\mathcal O}$} if $x$ is a minimizer of $u-\varphi$ in a neighbourhood of $x$ intersected with $\overline{\mathcal O}$.

        \item List of symbols:
    \end{itemize}

    \begin{longtable}{rl}
        $|\xi|$ & Euclidean length of a curve $\xi$\\
        $\G$ & a maximal topological crystal\\
        $\EE$ & set of the edges of $\G$\\
        $\VV$ & set of the vertices of $\G$\\
        $\oo(e)$ & initial vertex of the edge $e$\\
        $\tt(e)$ & terminal vertex of the edge $e$\\
        $\oo(\xi)$ & initial vertex of the path $\xi$\\
        $\tt(\xi)$ & terminal vertex of the path $\xi$\\
        $\EE_x$ & set of the $e\in\EE$ with $\oo(e)=x$\\
        $d_\VV$ & metric on $\VV$ associating to any pair of vertices the minimal length\\
        & of a path linking them\\
        $\G_0$ & quotient graph of $\G$\\
        $\EE_0$ & set of the edges of $\G_0$\\
        $\VV_0$ & set of the vertices of $\G_0$\\
        $\EE_0^+$ & an orientation of $\G_0$\\
        $\Cf_0(\G_0,\R)$ & the vector space over $\R$ spanned by $\VV_0$\\
        $\Cf_1(\G_0,\R)$ & the vector space over $\R$ spanned by $\EE_0$\\
        $H_1(\G_0,\R)$ & the first homological group of $\G_0$ with real coefficients\\
        $H^1(\G_0,\R)$ & the first cohomological group of $\G_0$ with real coefficients\\
        $T^+\G_0$ & tangent cone of $\G_0$\\
        $\TT$ & a spanning tree in $\G_0$\\
        $\EE_\TT$ & set of the edges of $\TT$\\
        $\VV_\TT$ & set of the vertices of $\TT$\\
        $\NN$ & periodic network over the base graph $\G_0$\\
        $\EN$ & set of the arcs of $\NN$\\
        $\VN$ & set of the vertices of $\NN$\\
        $\EN^z$ & set of the $\ga\in\EN$ with $\ga(1)=z$\\
        $d_\NN$ & distance on $\NN$ induced by the Euclidean one in $\R^N$\\
        $d_\VN$ & lift of $d_\VV$ from $\VV$ to $\VN$\\
        $\pi_1(z)$ & projection of $z\in\VN$ into $\VV_0$\\
        $\pi_2(z)$ & projection of $z\in\VN$ into $\Z^{b(\G_0)}$\\
        $T\NN$ & tangent bundle of $\NN$\\
        $-e$ & reversed edge of the edge $e$\\
        $\wtd\ga$ & reversed arc of the arc $\gamma$\\
        $b(\G_0)$ & first Betti number of $\G_0$\\
        $\#_\xi(e)$ & multiplicity of an edge $e$ in a path $\xi$\\
        $[\xi]$ & incidence vector of a path $\xi$\\
        $\theta([\xi])$ & rotation vector of a path $\xi$\\
        $\spt\xi$ & set of the edges $e$ in a path $\xi$\\
        $H$ & collection of Hamiltonians $H_\gamma$ indexed by the $\gamma\in\EN$\\
        $L_\ga$ & the Lagrangian obtained from $H_\ga$ via Fenchel--Legendre duality\\
        $\wha c_z$ & maximal flux limiter\\
        $L$ & Lagrangian defined on $T\NN$ suitably gluing together the $L_\ga$'s\\
        $A_L(\xi)$ & action of the curve $\xi$ on $\NN$\\
        $\Phi_L(z_1,z_2,T)$ & minimal action functional between $z_1,z_2\in\NN$ in a given time $T$\\
        $\si^+_a(e,s)$ & $\max\{\rho \mid H_e(s,\rho)=a\}$\\
        $\si(e,a)$ & $\int_0^1\si^+_{a}(e,s) \, ds$\\
        $\HH(e,\cdot)$ & inverse with respect to the composition of $a \mapsto \si(e,a)$\\
        $\LL$ & Fenchel transform of $\HH$\\
        $A_\LL(\xi)$ & action of the path $\xi$ on $\G_0$\\
        $\Phi_\LL(x,y,T)$ & minimal action functional between $zx,y\in\G_0$ in a given time $T$\\
        $\mathbb P$ & family of Borel probability measures on $T^+ \G_0$\\
        $\supp_{\EE_0}\mu$ & set of the $e\in\EE_0$ such that $\mu_e(\R^+)>0$\\
        $\delta(e,q)$ & Dirac delta concentrated on the point $(e,q)\in T^+\G_0$\\
        $\bM$ & set of the closed measures in $\mathbb P$\\
        $\rho(\mu)$ & rotation vector of $\mu\in\bM$\\
        $A(\mu)$ & action of $\mu\in\bM$\\
        $\al$ & Mather's $\al$--function\\
        $\be$ & Mather's $\be$--function\\
        $\bM_p$ & set of the minimizer of $\al(p)$\\
        $\bM^h$ & set of the minimizer of $\be(h)$\\
        $\bar H$ & effective Hamiltonian
    \end{longtable}

    \newpage

    \section{Periodic networks and main theorem}\label{periodic}

    Before stating our main result, let us describe the setting, namely the concepts of periodic network, underlying abstract graph (which is auxiliary to definition of periodic networks and the discretization of the problem), and Hamiltonian on the network.

    \medskip

    \subsection{Networks}\label{sectionnetwork}

    We recall that by {\it network} we mean a subset $\wha\NN \subset \R^N$, of the form
    \begin{equation*}
        \wha\NN := \bigcup_{\gamma \in \wha\EN} \, \gamma([0,1]) \subset \R^N,
    \end{equation*}
    where $\wha\EN$ is a collection of simple $C^1$ regular ({\it i.e.}, with non-vanishing derivative) oriented curves $\gamma: [0,1] \to \R^N$, called {\it arcs} of the network.

    For each arc $\gamma\in \wha\EN$, we denote by $\wtd{\gamma}$ the same arc with opposite orientation ({\it reversed arc}) ({\it i.e.}, $\wtd{\gamma}(s)=\gamma(1-s)$ for every $s\in [0,1]$), which we assume to also be in $\wha\EN$. Observe that the map
    \begin{eqnarray*}
        \wtd{\phantom{o}}: \wha\EN &\longrightarrow& \wha\EN \\
        \gamma &\longmapsto& \wtd \gamma,
    \end{eqnarray*}
    is a fixed-point-free involution, namely
    \begin{equation*}
        \wtd \gamma \ne \gamma \qquad\text{and} \qquad \wtd{\wtd \gamma}= \gamma \qquad\text{for any $\gamma \in \EN$.}
    \end{equation*}

    The initial and final point of any arc $\ga$, namely $\ga(0)$ and $\ga(1)$, have a special status: they are called {\it vertices of the network}. The set of vertices is denoted by $\wha\VN$. If we look at a network as a piecewise regular manifold, vertices are the points where regularity fails.

    We assume the following key disjointness condition: arcs with different support can intersect only at the vertices, namely
    \begin{equation*}
        \ga((0,1)) \cap \ga'((0,1)) = \emptyset \txt{provided that $\ga' \ne \ga, \, \wtd \ga$.}
    \end{equation*}
    Given a vertex $z$ in $\wha\NN$, we set
    \begin{equation*}
        \wha\EN^z:= \{ \ga \in \wha\EN \mid \ga(1)=z\}
    \end{equation*}
    We say that a network $\wha\NN$ is {\it finite/infinite} if the set of edges is finite/infinite, we say that it is {\it locally finite} if $\wha\EN^z$ is finite for any $z \in \wha\VN$.

    We say that a network is {\em connected} if any pair of points can be linked by a curve contained in $\wha\NN$, namely an absolutely continuous curve of $\R^N$ with support contained in $\wha\NN$. {\it All the networks we consider throughout the paper are understood to be connected}.

    We consider on the network the distance $d_{\wha\NN}$ induced by the Euclidean one in $\R^N$, namely we take as distance of two points in $\wha\NN$ the minimal Euclidean length of curves contained in $\wha\NN$ linking them.

    \medskip

    \begin{Remark}
        Our results can be extended to the case in which $\wha\NN$ is embedded in a Riemannian manifold, for example by means of Nash embedding theorem \cite{Nash}. Moreover, the results are independent of the chosen parameterizations of the arcs. In this regard, one could also choose a more intrinsic approach and consider arcs as $1$-dimensional submanifolds, and the whole network as a stratified space. Hereafter, we preferred not to adopt this point of view to simplify the presentation.
    \end{Remark}

    \medskip

    \subsection{Abstract graphs and topological crystals}

    A {\it graph} $\wha\G=(\wha\VV ,\wha\EE) $ is an ordered pair of disjoint sets $\wha\VV$ and $\wha\EE$, with the latter possibly empty, which are called, respectively, {\it vertices} and {\it edges}, plus two functions:
    \begin{equation*}
        \oo: \wha\EE \longrightarrow \wha \VV
    \end{equation*}
    and
    \begin{eqnarray*}
        -: \wha\EE &\longrightarrow&\wha \EE \\
        e &\longmapsto& - e,
    \end{eqnarray*}
    with the latter assumed to be a fixed-point-free involution, namely satisfying
    \begin{equation*}
        -e \ne e \qquad\text{and} \qquad -(- e)= e \qquad\text{for any $e \in \wha\EE$.}
    \end{equation*}
    We further set
    \begin{equation*}
        \tt(e)= \oo(-e) \qquad\text{for any $e \in \wha\EE$.}
    \end{equation*}
    We give the following geometric picture of the setting: $\oo(e)$, $\tt(e)$ are the {\it initial} and {\it terminal} vertex of $e$, respectively, and $-e$ the {\it reversed} edge, namely the same edge with the opposite orientation, whose initial vertex is clearly the terminal vertices of $e$.

    \smallskip

    We call {\it path} any finite sequence $e_1, \cdots, e_m$ of {\it concatenated} edges, namely satisfying
    \begin{equation*}
        \oo(e_i)= \tt(e_{i-1}) \txt{for $i= 2, \cdots, m$.}
    \end{equation*}
    Given a path $\xi:=(e_1, \cdots, e_m)$, we set $\oo (\xi):= \oo (e_1)$, $\tt (\xi):= \tt (e_m)$. Given two paths $\xi$, $\eta$ with $\tt(\xi)=\oo(\eta)$ we denote by $\xi \cup \eta$ the path obtained by concatenating them.

    \begin{Definition}\label{loop}
        An edge $e$ (resp.\ an arc $\ga$) in a graph (resp.\ network) is called {\it self-loop} if
        \begin{equation*}
            \oo(e)=\tt(e) \txt{(resp $\ga(0)=\ga(1)$).}
        \end{equation*}
    \end{Definition}

    \smallskip

    We call {\it length} of a path the number of edges in it. We say that a graph is {\it connected} if any pair of vertices is linked by some path. {\it All the graphs we consider throughout the paper are understood to be connected}. We can therefore define a metric $d_{ \wha \VV}$ on $\wha\VV$ associating to any pair of vertices the minimal length of a path linking them.

    For any vertex $x\in\wha\VV$, we define
    \begin{equation*}
        \wha \EE_x :=\{ e\in \wha\EE|\; \oo(e)=x\}
    \end{equation*}
    and call it the {\it star centered at $x$}.

    We say that a graph is {\it finite/infinite} if it possesses finitely/infinitely many edges. If it is finite we denote by $|\wha\EE|$, $|\wha\VV|$ the number of its edges and vertices respectively. We say that a graph is {\it locally finite} if every star has a finite number of elements.

    \smallskip

    \smallskip

    Let us now introduce some further notions that are relevant to our scope of defining the notion of periodicity.

    \begin{Definition}\label{morph}
        A {\it morphism} between two graphs $\wha\G=(\wha\VV,\wha\EE)$, $\G'=(\VV',\EE')$ is a pair of maps $(\FN_{\VV},\FN_{\EE})$ where
        \begin{eqnarray*}
            \FN_{\VV}: \wha\VV \to\VV' \qquad{\rm and} \qquad \FN_\EE: \wha\EE \to \EE'
        \end{eqnarray*}
        satisfying the compatibility/adjacency relations:
        \begin{eqnarray*}
            \oo(\FN_\EE(e)) = \FN_\VV (\oo(e)) \quad{\rm and} \quad \FN_\EE(-e ) = -\FN_\EE(e) \txt{for any $e \in \wha \EE$}.
        \end{eqnarray*}
        If both $\F_\EE$, $\F_\VV$ are bijections then the morphism is called {\it isomorphism} and if, in addition, $\wha\G=\G'$ {\it automorphism} or {\it symmetry}. We denote by ${\mathrm{Aut}}(\wha\G)$ the set of automorphisms of $\wha\Gamma$, which is clearly a group under the composition.
    \end{Definition}

    \medskip

    \begin{Definition}
        Given a subgroup ${G} \le{\rm Aut}(\wha\G)$, we say that $G$ acts {\it freely} on $\wha\G$ if for any $(\FN_\VV,\FN_\EE) \in G$:
        \begin{itemize}
            \item[{\bf (i)}] $\FN_\EE (e) \ne -{e}$ for every $e \in \wha\EE$ ({\it i.e.}, it acts without inversion),
            \item[{\bf (ii)}] if $\FN_\VV(x)=x$ for some $x\in \wha\VV$, then $(\FN_\VV,\FN_\EE)$ is the identity (both on vertices and edges). Clearly, this also implies that if $(\FN_\VV,\FN_\EE) \in G$ is not the identity, then $\FN_\EE(e)\ne e $ for every $e\in \wha\EE$.
        \end{itemize}
    \end{Definition}

    If $G \le{\rm Aut}(\G)$ acts {\it freely} on $\G$, then the quotient of $\G$ with respect to the action of $G$ has a natural structure of graph, that we call {\it quotient graph} and denote by $ \G_0 = ( \VV_0, \EE_0)$, see \cite[Theorem 3.1]{Sunada}.

    \begin{Remark}
        To retrieve analogies with the Euclidean setting, we recall that the $\Z^N$--translation make up a group of automorphisms of $\R^N$, and the action of $(\Z^N,+)$ on $\R^N$ is free according to the above definition, up to some trivial adaptations.
    \end{Remark}

    In the next definition we prescribe further conditions on the group $G$ freely acting on $\G$.

    \medskip

    \begin{Definition}\label{deftopcrystruct}
        A graph $\G= (\VV, \EE)$ is called a {\em (maximal) topological crystal over $\Gamma_0$} if it admits a group $G \le{\mathrm{Aut}}(\G)$ such that:
        \begin{itemize}
            \item[{\bf (i)}] $G$ acts freely on $\G$,
            \item[{\bf (ii)}] the quotient graph $ \G_0 = ( \VV_0, \ \EE_0)$ is a finite graph, and $G \simeq{(\Z^{b(\G_0)},+)}$, where
            \begin{equation*}
                b(\G_0)= \frac{1}{2}| \EE_0| - | \VV_0| + 1.
            \end{equation*}
        \end{itemize}
    \end{Definition}

    \begin{Remark}
        As observed by crystallographers, the associated with a crystals is not just an infinite graph realized in space, but a graph with an acting group of symmetries, which becomes a finite graph when factored out. In crystallography the group of symmetries leaving the crystal invariant (the group $G$ in Definition \ref{deftopcrystruct}) is called {\it Bravais lattice}.
    \end{Remark}

    \smallskip

    \begin{Remark}
        From a topological point of views, $\G$ is an Abelian covering graph of $\G_0$ (see \cite[Theorem 5.2]{Sunada}). In particular condition {\bf(ii)} ensures that $\G$ is the {\it maximal} Abelian covering of $ \G_0$ (roughly speaking, one cannot obtain $\G_0$ as quotient of a graph with a topological crystal structure given by a group of rank larger than $b(\G_0)$). In fact, $b(\G_0)$ is the {\it first Betti number} of $\G_0$, namely it coincides with the rank of its first homology group, (see {Corollary \ref{incide}}), and this guarantees the maximality (see \cite[Theorem 6.1]{Sunada}). See also Remark \ref{remaftermainthm} {\bf (iii)}.\\
        In particular, it follows that the maximal topological crystal over a given finite graph is unique, up to isomorphism.
    \end{Remark}

    \medskip

    \subsection{Underlying abstract graph of a network and periodic networks}\label{secunderlyinggraph}

    Given an infinite network $\wha\NN=(\wha\VN, \wha\EN)$ with a set of edges $\EN$ and a set of vertices $\VN$, we can associate with it an abstract graph $\wha\G= (\wha\VV, \wha\EE)$ in the following way. In other words, we view each edge as a single element of an abstract set rather than as an arc in $\R^N$.

    \begin{Definition}\label{sottostante}
        Given any network $\wha\NN \subset \R^N$, an {\it underlying abstract graph} $\wha\Gamma= (\wha\VV, \wha\EE)$ is defined by considering two bijections
        \begin{equation*}
            \Psi_{\wha\VN}: \wha\VN \to \wha\VV \qquad \Psi_{\wha\EN}: \wha\EN \to \wha\EE,
        \end{equation*}
        where $\wha\VV$ and $ \wha\EE$ are abstract sets playing the role of vertices and edges, respectively, of $\wha\G$. Maps $\oo: \wha\EE \to \wha\VV $ and $- : \wha\EE \to \wha\EE$ are defined via
        \begin{equation*}
            \oo(\Psi_{ \wha\EN}(\ga))= \Psi_{\wha\VN}(\ga(0)) \txt{and} \qquad -\Psi_{\wha\EN}(\ga) = \Psi_{\wha\EN}(\wtd \ga) \txt{for any $\ga \in \wha\EN$.}
        \end{equation*}
        The above rules actually give to $\wha\G=(\wha\VV,\wha\EE)$ the structure of graph.
    \end{Definition}

    \medskip

    \begin{Remark}\label{remarkunicityuptoiso}
        \hfill

        \noindent{\bf (i)} An underlying graph inherits from the associated network the properties of being connected, finite, infinite or locally finite. Conversely, properties of the underlying graph translates into analogue properties of the above network. For example, self-loops of the underlying graph (see Definition \ref{loop}) are in correspondence with arcs $\gamma$ in $\wha\NN$ such that $\gamma(0)=\gamma(1)$.\\
        {\bf (ii)} We denote by $d_{\VN}$ the lift of $d_{\wha\VV}$ from $\wha\VV$ to $\wha\VN$. \\
        {\bf (iii)} It is easy to prove that the underlying graph associated to a given network is unique, up to isomorphisms.
    \end{Remark}

    \medskip

    We can now introduce the notion of {\it periodic network}.

    \begin{Definition}\label{vetro}
        Given a finite graph $\G_0 =(\VV_0, \EE_0)$, we call {\em periodic network} over the {\em base graph} $\G_0$, a network $ \NN= ( \EN, \VN)$, embedded in some Euclidean space, such that

        \begin{itemize}
            \item[{\bf (i)}] The underlying graph $\G$ is a maximal topological crystal with quotient graph $\G_0$;
            \item[{\bf (ii)}] The Euclidean lengths of the arcs of $\NN$ are bounded from above and possess a positive infimum.
        \end{itemize}
    \end{Definition}

    \smallskip

    We remark that one could give a more general notion of periodicity without requiring the underlying graph to be a maximal topological crystal.

    \medskip

    One directly derive from the above condition {\bf (ii)} the following

    \begin{Corollary}
        The two metrics $d_\VN$ and $d_\NN$, restricted to $\VN$, are equivalent.
    \end{Corollary}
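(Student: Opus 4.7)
The plan is to derive the equivalence directly from condition (ii) of Definition \ref{vetro}, which provides constants $0<\ell_{\min}\le\ell_{\max}<\infty$ bounding every arc's Euclidean length from below and above. The goal is the two-sided estimate
\[
\ell_{\min}\,d_\VN(z_1,z_2) \le d_\NN(z_1,z_2) \le \ell_{\max}\,d_\VN(z_1,z_2) \qquad \text{for every } z_1,z_2\in\VN.
\]
The upper bound is straightforward: a path in the underlying graph $\G$ realizing $m=d_\VN(z_1,z_2)$ corresponds to a concatenation of $m$ arcs in $\NN$ joining $z_1$ to $z_2$, producing a curve of total Euclidean length at most $m\,\ell_{\max}$.

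For the lower bound I would start from an arbitrary absolutely continuous curve $\xi:[0,T]\to\NN$ with $\xi(0)=z_1$, $\xi(T)=z_2$ and finite length. Because $\G$ is a maximal topological crystal over the finite graph $\G_0$, it is locally finite, hence $\VN$ is a discrete subset of $\NN$; consequently $V:=\xi^{-1}(\VN)$ is a closed subset of $[0,T]$ on which $\xi$ is locally constant, and its complement is a countable disjoint union of open intervals $(a_i,b_i)\subset(0,T)$. On each $(a_i,b_i)$, continuity and the disjointness of arc interiors force $\xi$ to lie in the interior of a single arc $\gamma_i$, and $\xi(a_i)$, $\xi(b_i)$ are both endpoint vertices of $\gamma_i$. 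I would call $(a_i,b_i)$ \emph{productive} when $\xi(a_i)\ne\xi(b_i)$; in that case $\xi$ must traverse the entire arc $\gamma_i$ from one end to the other, so its length over $(a_i,b_i)$ is at least $\ell_{\min}$. Ordering the productive intervals as $(a_1',b_1'),\dots,(a_N',b_N')$, between any two consecutive ones (and before the first, after the last) $\xi$ must remain at a single vertex, since any non-productive interval encountered there returns to the same vertex and $V$ itself contributes only constant behaviour. Consequently $z_1=\xi(a_1'),\,\xi(b_1')=\xi(a_2'),\,\dots,\,\xi(b_N')=z_2$ is a path of length $N$ in $\G$, giving $N\ge d_\VN(z_1,z_2)$. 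Summing the contributions from productive intervals yields $|\xi|\ge N\,\ell_{\min}\ge \ell_{\min}\,d_\VN(z_1,z_2)$, and the bound for $d_\NN$ follows by taking the infimum over $\xi$.

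The delicate step is controlling the potentially wild behaviour of $\xi$ outside the productive intervals: infinitely many small dips into arcs, dense revisits to a single vertex, or a Cantor-like structure for $V$. The key observation that dissolves this difficulty is that none of these phenomena produce genuine progress in $\G$, so they can only inflate $|\xi|$ while leaving the combinatorial distance between $z_1$ and $z_2$ already fully accounted for by the productive arc-traversals; this reduces the lower bound to the simple counting argument above.
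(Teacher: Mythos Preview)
Your argument is correct and is exactly the natural elaboration of what the paper leaves implicit: the authors state only that the corollary is derived directly from condition~(ii) of Definition~\ref{vetro} and give no further proof, and your two-sided estimate via $\ell_{\min},\ell_{\max}$ is precisely how one unpacks that remark. One small correction: the discreteness of $\VN$ in $(\NN,d_\NN)$ does not follow from local finiteness alone (a locally finite chain of arcs with lengths $1,\tfrac12,\tfrac14,\dots$ would have accumulating vertices); you need the positive lower bound $\ell_{\min}$ on arc lengths as well, which you are already invoking, so this is only a misattribution rather than a gap.
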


    \smallskip

    \begin{Notation}
        From now on, we indicate by $\NN =(\VN,\EN)$ an arbitrary periodic network and by $\G=(\VV,\EE)$, $G$, $\G_0=(\VV_0,\EE_0)$ its underlying graph, Bravais lattice and base graph, respectively.
    \end{Notation}

    \smallskip

    \begin{Remark}
        According to \cite[Theorem 5.2]{Sunada}, the canonical projection from $ \G$ to $\G_0$ is a graph morphism, which we denote by $(\FN^0_\VV,\FN^0_\EE)$, satisfying
        \begin{itemize}
            \item[--] $\FN^0_\VV: \VV \to \VV_0$ is surjective;
            \item[--] $\FN^0_\EE$ is a bijection from $\EE_z$ to ${(\EE_0)}_{\FN^0_\VV(z)}$ for any vertex $z$ in $\VV$.
        \end{itemize}
        Any maximal topological crystal is consequently {\it locally finite}, and the same property consequently holds for periodic networks, see Definition \ref{sottostante}.
    \end{Remark}

    \medskip

    \medskip

    \begin{Example}\label{bouquetexe}
        Homogenization in the previous literature, \cite{ImbMon,CamMar}, is limited to periodic networks generated by $\Z^N$ with straight arcs. In our setting the base graph of such periodic networks is the so called {\it $N$--bouquet}, namely a graph made up of a single vertex and $N$ self-loops (see Definition \ref{loop}). In figure \ref{fig:bouquetcov} we see a periodic network generated by $\Z^2$, with straight arcs, and its base graph ({\it i.e.}, a $2$-bouquet).
    \end{Example}

    \begin{figure}[ht]
        \begin{subcaptiongroup}
            \phantomcaption\label{subfig:2bouquetcry}
            \raisebox{-114.5946pt}{\includegraphics{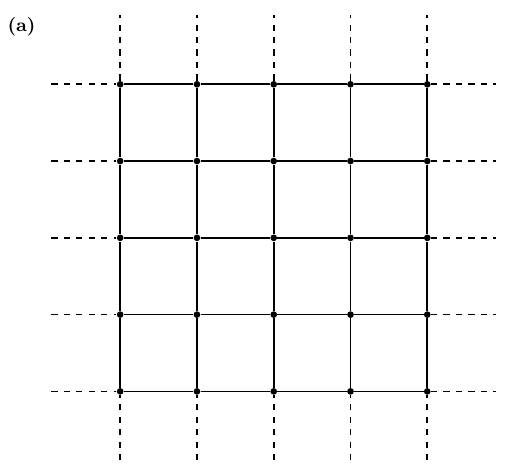}}
            \hspace{7mm}
            \phantomcaption\label{subfig:2bouquet}
            \raisebox{-44.01863pt}{\includegraphics{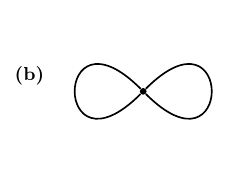}}
        \end{subcaptiongroup}
        \caption{A periodic network \subref{subfig:2bouquetcry} over the 2--bouquet \subref{subfig:2bouquet}.}\label{fig:bouquetcov}
    \end{figure}

    \begin{Example}\label{graphenecryexe}
        We present in figure \ref{fig:graphenecry} another example of periodic network. This {\it honeycomb} network appears as the crystal structure of the graphene and is well known in crystallography.
    \end{Example}

    \begin{figure}
        \begin{subcaptiongroup}
            \phantomcaption\label{subfig:graphenecry}
            \raisebox{-83.2962pt}{\includegraphics{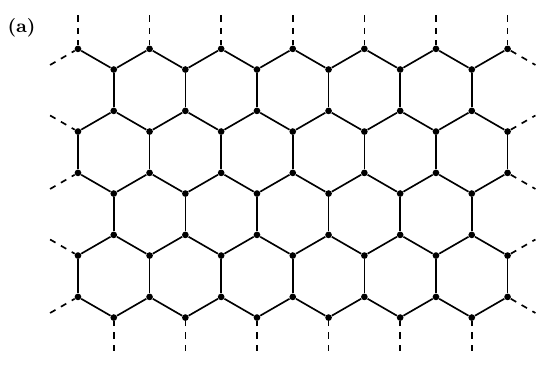}}
            \hspace{7mm}
            \phantomcaption\label{subfig:graphenebasecry}
            \raisebox{-38.00491pt}{\includegraphics{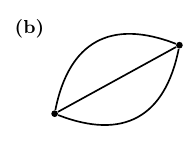}}
        \end{subcaptiongroup}
        \caption{The honeycomb network in \subref{subfig:graphenecry} is a periodic graph over the base graph \subref{subfig:graphenebasecry}.}\label{fig:graphenecry}
    \end{figure}

    \medskip

    For more examples of periodic networks, see Appendix \ref{appexnet}, where we describe how to construct periodic networks starting from any finite graph as base graph.

    \medskip

    \subsection{Hamiltonians on networks and statement of the Main Theorem}\label{secHam}

    \begin{Definition}
        A {\it Hamiltonian} on $\NN$ is a collection of functions $H:=\{H_\gamma\}_{\gamma\in\EN}$, where

        \begin{eqnarray*}
            H_\ga: [0,1] \times \R &\longrightarrow& \R\\
            (s,\rho) &\longmapsto& H_\ga(s,\rho)
        \end{eqnarray*}
        and such that the following compatibility condition is satisfied:
        \begin{equation}\label{ovgamma}
            H_{\wtd\ga}(s,\rho) = H_{\ga}(1-s,-\rho) \qquad\text{for any $\ga \in \EN$}.
        \end{equation}
    \end{Definition}

    \begin{Remark}
        We stress that, apart from condition \eqref{ovgamma}, the Hamiltonians $H_\ga$ on each arc are totally unrelated.
    \end{Remark}

    In the following, we further require our Hamiltonian $H=\{H_\gamma\}_{\gamma\in\EN}$ to satisfy some extra conditions. Namely, we require any $H_\ga$ to be:
    \begin{itemize}
        \item[{\bf (H$_\ga$1)}] continuous in $s$ and continuously differentiable in $\rho$;
        \item[{\bf (H$_\ga$2)}] {strictly} convex in $\rho$;
        \item[{\bf (H$_\ga$3)}] superlinear in $\rho$, {\it i.e.}, $\lim_{\rho\rightarrow +\infty} \frac{H_\gamma(s,\rho)}{\rho}=+\infty$, uniformly in $s\in [0,1]$.
    \end{itemize}

    We further require $H$ to be {\it $G$--periodic}, namely

    \begin{itemize}
        \item[{\bf (H$_\ga$4)}] for every $(\FN_\VV, \FN_\EE) \in G$ and every $\gamma\in \EN$
        \begin{equation*}
            H_{(\Psi^{-1}_\EN \circ \FN_\EE \circ \Psi_\EN)(\gamma)}=H_\gamma,
        \end{equation*}
        where $\Psi_\EN: \EN \to \EE$ is one of the bijection relating $\NN$ to its underlying graph $\G$, see Definition \ref{sottostante}.
    \end{itemize}

    \bigskip

    We consider the family of equations
    \begin{equation*}
        \tag{HJ$_\ga$} u_t + H_\ga(s,u')=0 \qquad\text{on $ (0,1) \times (0,+\infty)$,}
    \end{equation*}
    for $\ga$ varying in $\EN$, with a uniformly continuous initial datum $g: \NN \to \R$ at $t=0$

    We synthetically denote the problem we are interested on by
    \begin{equation*}\tag{HJ}
        \left\{
        \begin{aligned}
            &\partial_tu(z,t)+H(x,\partial_zu(z,t))=0&&\text{on }\NN\times(0,\infty),\\
            &u(z,0)=g(z)&&\text{on }\NN.
        \end{aligned}
        \right.
    \end{equation*}
    {See Section \ref{equation} for more details and for the precise definition of solution on $\NN$, which, in particular, involves the introduction of the so-called {\it maximal flux limiter} $\wha c_z$, see Definition \ref{defsol}}.

    \bigskip

    We can now state our Main Theorem.

    \begin{MainThm}
        {Let $\NN =(\VN,\EN)$ a periodic network and let $\G=(\VV,\EE)$ denote its underlying abstract graph, with base graph $\Gamma_0=(\VV_0,\EE_0)$ and Bravais lattice $G\simeq \Z^{b(\G_0)}$}. Let ${H=}\{H_\gamma\}_{\gamma\in\EN}$ be a Hamiltonian on $\NN$, satisfying conditions {\bf(H$_\ga$1)--(H$_\ga$4)}. Let $u_\eps$ be the solutions to the rescaled time-dependent Hamilton--Jacobi equations:
        \begin{equation}\tag{HJ$^\eps$}\label{HJeps}
            \left\{
            \begin{array}{lll}
                \partial_t u_\eps(x,t) + H\left(x, \frac{1}{\eps} \partial_x u_\eps(x,t)\right) = 0 && x\in \NN,\; t>0\\
                u_\eps(x,0) = g_\e (x) && x\in \NN
            \end{array}
            \right.
        \end{equation}
        coupled with {maximal flux limiters} (see Definition \ref{defsol}) and initial data $g_\eps: \NN \longrightarrow \R$ at $t=0$ equi-uniformly continuous with respect to the metrics $\eps d_\NN$ (in the sense of Definition \ref{semiter}) and locally uniformly convergent (in the sense of Definition \ref{semi}), as $\eps$ goes to $0^+$, to a function $g: \R^{b(\G_0)} \longrightarrow \R$.\\
        \noindent Then, there exists a convex and superlinear effective Hamiltonian $\bar{H}: \R^{b(\G_0)} \rightarrow \R$, such that the family $ u_\eps$ locally uniformly converge (in the sense of Definition \ref{semibis}) to the unique viscosity solution $u: \R^{b(\G_0)} \times (0,+\infty) \longrightarrow \R$ to
        \begin{equation}\tag{HJ$^{\rm lim}$}\label{HJlim}
            \left\{
            \begin{array}{lll}
                \partial_t u ( h,t) + \bar{H}(\partial _{h} u(h,t)) = 0 && (h,t) \in \R^{b(\G_0)} \times (0,\infty)\\
                u(h,0) = g(h) && h\in \R^{b(\G_0)}.
            \end{array}
            \right.
        \end{equation}
    \end{MainThm}

    \bigskip

    \begin{Remark}
        Since the networks we consider are embedded in Euclidean spaces, the rescaling from $(x,p)$ to $(x/\eps,p)$ still makes sense and if $\NN$ is a network, the same holds true for $\eps \, \NN$. This approach to the homogenization problem entails that the approximated problems are defined on networks varying with $\eps$.

        Instead, we prefer to use the rescaling from $(x,p)$ to $(x,p/\eps)$ because it is more intrinsic and simpler to handle. It corresponds to rescaling by a factor $\eps$ the metric keeping the network fixed, which only affects the momenta.

        The two rescaling are actually equivalent in our case as well as in the Euclidean one: if $u_\eps(x,t)$, $v_\eps(x,t)$ are the solutions of the $\eps$--problems corresponding to the two above rescaling, respectively, then it is easy to check that the relation $v_\eps(x,t) = u_\eps(\eps x,t)$ holds true.
    \end{Remark}

    \bigskip

    \begin{Remark}\label{remaftermainthm}
        \hfill

        \noindent{\bf (i)} The effective Hamiltonian $\bar{H}: \R^{b(\G_0)}\longrightarrow \R$ coincides with the so-called Mather's $\alpha$-function associated to the Hamiltonian on the base graph $\G_0$, which was constructed in \cite{SicSor1} (see Definitions \ref{defHbar} \& \ref{defalpha}, and Proposition \ref{effetto}). We mention that this function is defined on the first cohomology group of $\G_0$, which under our assumptions (see \textbf{(ii)} in Definition \ref{deftopcrystruct}) is isomorphic to $\R^{b(\G_0)}$.\\
        {\bf (ii)} This result could be actually generalized to the case in which the topological structure on $\NN$ is {\it not maximal}, namely condition \textbf{(ii)} in Definition \ref{deftopcrystruct} is not imposed. In this case, the effective Hamiltonian will coincide with the restriction of the Mather's $\alpha$-function to some subspace of the first cohomology group. In order to avoid extra technicalities to our presentation, we prefer to restrict ourselves to the maximal case, although we think that our techniques extend to this other setting.\\
        {\bf (iii)} A more subtle issue is how to extend this result to the case in which the acting group $G \le{\rm Aut}(\NN)$ is not necessarily Abelian. We remark that in this case, the existence of an asymptotic cone of $\NN$ depends on the algebraic nature $G$, in particular on its rate of growth, which must be at most polynomial. We will not discuss this setting here. For Hamilton--Jacobi equations on manifolds and acting groups $G$ that are (virtually) nilpotent, this problem has been investigated in \cite{SorrentinoHomogenization}.
    \end{Remark}

    \bigskip

    \section{Time-dependent Hamilton--Jacobi equations on periodic networks}\label{equation}

    In this section we would like to discuss the existence and the properties of the solutions to the {\it time-dependent Hamilton--Jacobi equation} on the network $\NN=(\VN, \EN)$, which are the main objects in the homogenization procedure:
    \begin{equation}\tag{HJ}\label{eq:HJ}
        \left\{
        \begin{array}{l}
            \partial_t u (x,t) + H\left(x, \partial_x u(x,t)\right) = 0 \qquad x\in \NN,\; t>0\\
            u(x,0)= g( x),
        \end{array}
        \right.
    \end{equation}
    under suitable assumptions on $H=\{H_\gamma\}_{\gamma\in \EN}$ and $g: \NN \rightarrow \R$.

    This problem, in the case of finite networks $\NN$, has been thoroughly investigated in \cite{PozSic}. Although those results cannot be directly applied to our setting, being $\NN$ not finite, yet the periodicity assumptions on $\NN$ and $\{H_\gamma\}_{\gamma\in \EN}$ will allow us to extend them to our case.

    Let us first describe the meaning of \eqref{eq:HJ} and what we mean by being a solution to this problem.

    \subsection{Notions of solution, subsolution and supersolution}\label{secsolutionHJ}

    For any vertex $z$ in $ \NN$, we define:
    \begin{equation}\label{maxxi}
        \wha c_z := \min_{\ga \in \EN^z} \left [ -\max_{s \in [0,1]} \min_\rho H_\ga(s,\rho) \right ].
    \end{equation}

    See Remark \ref{fluxlimiter} for more details on the meaning of these quantities.

    We can now introduce the concept of being a solution (resp.\ sub/supersolution) to \eqref{eq:HJ}.

    \begin{Definition}\label{defsol}
        By {\it solution} to \eqref{eq:HJ} we mean a uniformly continuous function $ u: \NN \times [0,+ \infty) \to \R$ such that:
        \begin{itemize}
            \item[{\bf (i)}] For every $\gamma\in \EN$, the restriction $u_\gamma(s,t):=u(\ga(s),t)$ is a viscosity solution to
            \begin{equation}\label{HJg}
                \tag{HJ$_\ga$} \left\{
                \begin{array}{l}
                    \partial_t u_\gamma (s,t) + H_\gamma\left(s, \partial_s u_\gamma(s,t)\right) = 0 \qquad (s,t)\in (0,1) \times (0,+ \infty)\\
                    u_\gamma(s,0)= g(\gamma(s)) \hspace{4 cm} s\in [0,1].
                \end{array}
                \right.
            \end{equation}
            \item[{\bf (ii)}] For every $z \in \VN$ and $t_0 \in (0,+\infty)$:
            \begin{itemize}
                \item[{\bf (a)}] If $\psi$ is a $C^1$ subtangent to $u(z, \cdot)$ at $t_0>0$, such that
                \begin{equation*}
                    \frac d{dt} \psi (t_0) < \wha c_z
                \end{equation*}
                then there is an arc $\ga \in \EN^z$ such that all the $C^1$ subtangents $\varphi$, constrained to $[0,1] \times [0,+ \infty)$, to $v \circ \ga$ at $(1,t_0)$ satisfy
                \begin{equation*}
                    \varphi_t(1,t_0) + H_\ga(1, \varphi'(1,t_0)) \ge 0.
                \end{equation*}
                (Note that the arcs $\ga$ where this condition holds true, change in function of $t_0$).\\
                \item[{\bf (b)}] All $C^1$ supertangents $\psi$ to $u(z,\cdot)$ at $t_0>0$ satisfy
                \begin{equation*}
                    \frac d{dt} \psi(t_0) \le \wha c_z
                \end{equation*}
            \end{itemize}
        \end{itemize}

        \medskip

        We further say that a continuous function $ u$ is a {\it subsolution} (resp. {\it supersolution}) if for every $\gamma\in \EN$:
        \begin{equation*}
            \left\{
            \begin{array}{l}
                \partial_t u_\gamma (s,t) + H_\gamma\left(s, \partial_s u_\gamma(s,t)\right) \le 0 \qquad \text{(resp.\ $\ge 0$)} \qquad \qquad (s,t)\in (0,1) \times (0,+ \infty)\\
                u_\gamma(s,0)\le g(\gamma(s)) \hspace{3 cm} \text{(resp.\ $\ge g(\gamma(s))$} \hspace{2cm} s\in [0,1]
            \end{array}
            \right.
        \end{equation*}
        and condition \textbf{(ii-b)} (resp. \textbf{(ii-a)}) holds true at any vertex $z \in \VN$ and $t_0\in (0,+\infty)$.
    \end{Definition}

    \smallskip

    \begin{Remark}\label{fluxlimiter}
        It has been established in \cite{ImbMon} (in the case of junctions) and in \cite{Sic} (for general networks) that to get existence and uniqueness of solutions to the evolutive Hamilton-Jacobi equation, then it must be coupled not only with a continuous initial datum at time $0$, but also with the so-called {\it flux limiter}, that is a choice of appropriate constants
        \begin{equation*}
            c_z \le \wha c_z \qquad \forall\;z\in \VN;
        \end{equation*}
        for this reason, $ \{\wha c_z\}_{z\in \VN}$ is called {\it maximal flux limiter}.\\
        Flux limiters crucially appear in the conditions that a solution must satisfy on the interfaces and, among other things, bond from above the time derivatives of any subsolution on it. This is the meaning of conditions \textbf{(ii-a)} and \textbf{(ii-b)}. We remark that even if an initial datum is fixed, solutions can change accordingly to the choice of flux limiter and the homogenization procedure might have different features, in particular the limit Hamiltonian might not the same (although they are somehow related). In this article, we decided to focus on the case of maximal flux limiter at every vertex.
    \end{Remark}

    \subsection{Existence and uniqueness of solutions to the evolutive Hamilton-Jacobi equation}\label{subsecexistenceuni}

    We will prove in this section that the problem \eqref{eq:HJ} is well posed and the unique solution is given by an adapted Lax--Oleinik formula.

    In order to do this, let us introduce the {\it Lagrangian} associated to the Hamiltonian $H=\{H_\gamma\}_{\gamma\in \EN}$, namely a collection of functions $L:=\{L_\gamma\}_{\gamma \in\EN}$, defined by the Fenchel transform:
    \begin{equation*}
        L_\gamma(s,\lambda) := \sup_{\rho \in \R} \left( \rho \, \lambda - H_\gamma(s,\rho) \right).
    \end{equation*}
    It follows from the above properties of $H_\gamma$ that $L_\gamma$ satisfies the following conditions:
    \begin{itemize}
        \item[{\bf (L$_\gamma$1)}] $L_\gamma$ is continuous in $(s,\lambda)$ and continuously differentiable in $\lambda$ for any fixed $s$;
        \item[{\bf (L$_\gamma$2)}] $L_\gamma$ is strictly convex in $\lambda$;
        \item[{\bf (L$_\gamma$3)}] $L_\gamma$ is superlinear in $\lambda$ {uniformly in $s\in [0,1]$};
        \item[{\bf (L$_\ga$4)}] $G$--periodic.
    \end{itemize}

    \begin{Remark}
        It follows easily from \eqref{ovgamma} and the Fenchel-Legendre transform that
        \begin{equation*}
            L_{\wtd \gamma}(s,q) = L_\gamma(1-s, -q) \qquad \forall\; s\in [0,1],\; q\in \R.
        \end{equation*}
    \end{Remark}

    \begin{Remark}
        The maximal flux limiter introduced in \eqref{maxxi} can be expressed in terms of the Lagrangians as:
        \begin{equation*}
            \wha c_z = \min_{\ga \in \EN^z} \left [ -\max_{s \in [0,1]} \min_\rho H_\ga(s,\rho) \right ] = \min_{s, \, \ga \in \EN^z} L_\ga(s,0).
        \end{equation*}
    \end{Remark}

    \medskip

    Since our ambient space is a locally finite network, we need to require a number of adjustments to generalize the results holding in the case of finite networks for time-dependent Hamilton--Jacobi equations, see \cite{PozSic,Sic}.

    We emphasize that the results of this section hold for equations posed on any networks, not necessarily periodic, as long as the following conditions are fulfilled (which is the case for periodic networks):
    \begin{itemize}
        \item[{\bf (P1)}] The Euclidean length of arcs in $ \NN$ is bounded and admits a positive infimum.
        \item[{\bf (P2)}] The flux limiters $\wha c_z$ are bounded from below.
        \item[{\bf (P3)}] The $ L_\ga$'s are uniformly superlinear.
        \item[{\bf (P4)}] There exists a constant estimating from below the $L_\ga$'s in $[0,1] \times \R$ for any $\ga$.
        \item[\textbf{(P5)}] $\NN$ is locally finite.
        \item[\textbf{(P6)}] For any $r\ge0$ there is a constant $M_r$ such that
        \begin{equation*}
            \max_{\ga\in\EN,s\in[0,1],|q|\le r}L_\ga(s,q)\le M_r.
        \end{equation*}
        \item[{\bf (P7)}] The network does not have self-loops. See the forthcoming Proposition \ref{luppolo}
    \end{itemize}

    \medskip

    \smallskip

    \subsubsection{Overall Lagrangian on \texorpdfstring{$\NN$}{N}}

    The first step is to define an {\it overall Lagrangian} $L$ on $ \NN$ suitably gluing together the $L_\ga$'s. For this purpose, we introduce the {\em tangent bundle} of $ \NN$, $T \NN$ in symbols, made up by elements $(z, q) \in \NN \times \R^N$ with $q$ of the form
    \begin{equation*}
        q = \la \, \dot\ga(s) \txt{if $z = \ga(s)$, $s \in [0, 1]$, $\la \in \R$.}
    \end{equation*}
    Note that this rule univocally determines a one-dimensional linear subspace of $\R^N$ in correspondence of any $z \in \NN \setminus \VN$, and the union of finitely many one dimensional subspaces if instead $z$ is a vertex.

    We set
    \begin{equation*}
        \EN(z,q) = \{\ga \in \ \EN^z \mid \; \text{$q$ parallel to $\dot\ga(1)$}\} \txt{for $(z,q) \in T \NN$}.
    \end{equation*}

    In the definition of $L$ we distinguish:
    \begin{itemize}
        \item[$\bullet$] If $z \in \NN \setminus \VN$, $z \in \ga((0,1))$ then
        \begin{equation}\label{link1}
            L(z,q) := L_\ga \left ( \ga^{-1}(z), \frac{q \cdot \dot\ga(\ga^{-1}(z))}{|\dot\ga(\ga^{-1}(z))|^2} \right )
        \end{equation}
        \item[$\bullet$] If $z \in \VN$, $q \ne 0$ then
        \begin{equation*}
            L(z,q) := \min_{\ga \in \EN(z,q)} L_\ga \left ( 1, \frac{q \cdot \dot\ga(1)}{|\dot\ga(1)|^2} \right )
        \end{equation*}
        \item[$\bullet$] If $z \in \VN$, $q = 0$ then
        \begin{equation}\label{link3}
            L(z,0) := \wha c_z.
        \end{equation}
    \end{itemize}
    In the above formulae $q \cdot q'$ stands for the scalar product in $\R^N$. Note that the formula \eqref{link3} provides the connection between the Lagrangian $L$ that will appear in the Lax--Oleinik formula and the flux limiter specified in the definition of solution to \eqref{eq:HJ}. See \cite{PozSic} for more comments on the above formulae.

    We can prove, see \cite[Proposition 4.1]{PozSic}:

    \begin{Proposition}
        The Lagrangian $L: T \NN \to \R$ is lower semicontinuous.
    \end{Proposition}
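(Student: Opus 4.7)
The plan is to verify lower semicontinuity pointwise: fix $(z,q) \in T\NN$ and any sequence $(z_n, q_n) \to (z, q)$ in $T\NN$, and show $\liminf_n L(z_n, q_n) \ge L(z, q)$. I would split the analysis according to the nature of the limit point $z$, using the local finiteness of $\NN$ (property \textbf{(P5)}) to reduce limiting behaviour at a vertex to finitely many single-arc sub-cases.

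If $z \in \NN \setminus \VN$, then $z$ lies in the relative interior of a unique arc $\ga$, and the disjointness condition on arcs forces $z_n \in \ga((0,1))$ for all $n$ large. Formula \eqref{link1} then expresses $L(z_n, q_n) = L_\ga(\ga^{-1}(z_n), \la_n)$ with $\la_n := (q_n \cdot \dot\ga(\ga^{-1}(z_n)))/|\dot\ga(\ga^{-1}(z_n))|^2 \to \la$, and the continuity hypothesis \textbf{(L$_\ga$1)} yields continuity (hence lower semicontinuity) of $L$ at $(z,q)$.

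If instead $z \in \VN$, then by \textbf{(P5)} the set $\EN^z \cup \{\wtd\ga : \ga \in \EN^z\}$ is finite, and every $z_n$ sufficiently close to $z$ either equals $z$ or lies on one of these adjacent arcs. Extracting a subsequence I may assume all $z_n$ lie on a single arc $\ga_0$ with $\ga_0(1) = z$ (replacing $\ga_0$ by its reverse if needed, which does not affect the value of $L$ thanks to the reversal identity $L_{\wtd\ga}(s, q) = L_\ga(1-s, -q)$). Write $z_n = \ga_0(s_n)$ with $s_n \to 1$ and $q_n = \la_n \dot\ga_0(s_n)$ with $\la_n \to \la$, where $q = \la \dot\ga_0(1)$. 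If $q \ne 0$, then $\ga_0 \in \EN(z, q)$ by construction, so continuity of $L_{\ga_0}$ gives
\begin{equation*}
    L(z_n, q_n) = L_{\ga_0}(s_n, \la_n) \longrightarrow L_{\ga_0}(1, \la) \ge \min_{\ga \in \EN(z,q)} L_\ga\!\left(1, \tfrac{q \cdot \dot\ga(1)}{|\dot\ga(1)|^2}\right) = L(z, q).
\end{equation*}
If $q = 0$, then $\la = 0$ and $L_{\ga_0}(s_n, \la_n) \to L_{\ga_0}(1, 0) \ge \min_s L_{\ga_0}(s, 0) \ge \wha c_z = L(z, 0)$, where the last inequality follows because $\ga_0 \in \EN^z$ and $\wha c_z$ is defined as a minimum over that set.

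The main thing to be careful about is the reduction to a single-arc subsequence at a vertex and the possibility that $z_n = z$ for infinitely many $n$; this last case is handled by treating those indices separately, noting that $L(z, \cdot)$ restricted to any one-dimensional tangent ray at $z$ is itself lower semicontinuous by the same single-arc argument. No step involves subtle estimates: the proof is a verification that the three defining formulae \eqref{link1}--\eqref{link3} match up correctly at the interfaces, with the minimum in the vertex formula providing just enough slack to absorb the transition from interior to vertex.
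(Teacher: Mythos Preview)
Your proof is correct. The paper itself does not supply an argument for this proposition but simply refers the reader to \cite[Proposition~4.1]{PozSic}, so there is no in-paper proof to compare against; your direct case analysis (interior point, vertex with $q\neq 0$, vertex with $q=0$), using local finiteness to reduce to a single incoming arc and the identity $\wha c_z=\min_{\ga\in\EN^z}\min_s L_\ga(s,0)$ for the $q=0$ case, is exactly the natural route and matches what one finds in the cited reference.
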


    \smallskip

    We record for later use. Given an interval $[t_1,t_2]$, $T_1 < T_ 2 \in [0,1]$ and an arc $\ga$ in $\NN$, we set:
    \begin{eqnarray*}
        \mathscr A &:=& \{ \text{curves $\xi: [t_1,t_2] \to [T_1,T_2]$} \; \mid \xi(t_1)= T_1, \; \xi(t_2)=T_2, \, \xi((t_1,t_2)) =(T_1,T_2)\}\\
        \mathscr B &:=& \{\text{curves } \zeta:[t_1,t_2] \to \NN \mid \zeta((t_1,t_2))= \ga((T_1,T_2)), \;\zeta(t_1)= \ga(T_1), \; \zeta(t_2)=\ga(T_2) \}.
    \end{eqnarray*}

    \medskip

    \begin{Proposition}\label{lemlemnew}
        Given an arc $\ga$ in $ \NN$, the map
        \begin{equation}\label{lemlemnew1}
            \Upsilon: \xi \mapsto \ga \circ \xi \txt{for $\xi \in \mathscr A$}
        \end{equation}
        is a bijection from $\mathscr A$ to $\mathscr B$ with
        \begin{equation}\label{lemlemnew0}
            \int_{t_1}^{t_2} L_\ga(\xi,\dot\xi) \, dt = \int_{t_1}^{t_2} L(\Upsilon(\xi), D_t (\Upsilon(\xi)) \, dt \quad\text{for any $\xi \in \mathscr A$.}
        \end{equation}
    \end{Proposition}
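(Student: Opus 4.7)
The plan is to check the three claims: well-definedness of $\Upsilon$, bijectivity, and the action identity, using that each arc $\gamma$ is a $C^1$-diffeomorphism from $[0,1]$ onto its support in $\NN$.

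For well-definedness, I would simply verify the endpoint and interior conditions defining $\mathscr B$. If $\xi \in \mathscr A$, then $\Upsilon(\xi)(t_i) = \gamma(\xi(t_i)) = \gamma(T_i)$ and, since $\xi$ maps $(t_1,t_2)$ onto $(T_1,T_2)$ and $\gamma|_{[0,1]}$ is injective (arcs are simple and by the disjointness condition $\gamma((0,1))$ contains no vertex even when $\gamma(0)=\gamma(1)$, though (P7) rules out self-loops anyway), one gets $\Upsilon(\xi)((t_1,t_2)) = \gamma((T_1,T_2))$. Absolute continuity of $\Upsilon(\xi) = \gamma \circ \xi$ follows because $\gamma$ is $C^1$, hence Lipschitz on $[T_1,T_2]$, and the composition of a Lipschitz map with an absolutely continuous curve is absolutely continuous.

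For bijectivity, I would exhibit the inverse explicitly. Since $\gamma$ is simple with non-vanishing derivative, $\gamma:[T_1,T_2]\to \gamma([T_1,T_2])$ is a bi-Lipschitz homeomorphism, so $\gamma^{-1}$ is well-defined and Lipschitz on $\gamma([T_1,T_2])$. The map $\zeta \mapsto \gamma^{-1}\circ\zeta$ sends $\mathscr B$ into $\mathscr A$ (again using that Lipschitz$\circ$absolutely continuous is absolutely continuous, and checking endpoints/interior), and is a two-sided inverse of $\Upsilon$ by construction.

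For the action identity, I would apply the chain rule. For $\xi \in \mathscr A$ one has $\xi(t)\in(T_1,T_2)$ for every $t\in(t_1,t_2)$, so $\gamma(\xi(t)) \in \gamma((T_1,T_2)) \subset \NN\setminus\VN$. For a.e.\ $t \in (t_1,t_2)$, the chain rule yields
\begin{equation*}
D_t(\Upsilon(\xi))(t) = \dot\gamma(\xi(t))\,\dot\xi(t).
\end{equation*}
Plugging this into the definition \eqref{link1} of $L$ on non-vertex points, the factor $\dot\gamma(\xi(t))\cdot\dot\gamma(\xi(t))/|\dot\gamma(\xi(t))|^2$ equals $1$, giving
\begin{equation*}
L(\Upsilon(\xi)(t), D_t(\Upsilon(\xi))(t)) = L_\gamma(\xi(t),\dot\xi(t)) \qquad \text{a.e.\ on }(t_1,t_2),
\end{equation*}
and integration over $[t_1,t_2]$ yields \eqref{lemlemnew0}. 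The only minor subtleties are (a) verifying that the chain rule holds a.e.\ for the composition of a $C^1$ map with an absolutely continuous one, which is standard since $\dot\gamma$ is continuous, and (b) handling the measure-zero set where $\xi$ might hit $T_1$ or $T_2$—these form a null set and so do not affect the integrals. I expect no real obstacle here; the proposition is essentially a change-of-variables statement made possible by the $C^1$-regularity of the arc.
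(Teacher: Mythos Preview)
Your proposal is correct and follows essentially the same route as the paper: exhibit $\gamma^{-1}\circ\zeta$ as the inverse (the paper cites \cite[Lemma 3.2]{PozSic} for this), apply the chain rule to get $\dot\zeta = \dot\xi\,\dot\gamma(\xi)$, and plug into \eqref{link1} so that the $\dot\gamma\cdot\dot\gamma/|\dot\gamma|^2$ factor collapses to $1$, yielding the pointwise identity $L(\zeta,\dot\zeta)=L_\gamma(\xi,\dot\xi)$ a.e. Your write-up is in fact more explicit about the absolute-continuity and chain-rule technicalities than the paper's.
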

    \begin{proof}
        It is clear that $\Upsilon$ maps $\mathscr A$ into $\mathscr B$. The inverse $\Upsilon^{-1}$ is given by
        \begin{equation*}
            \Upsilon^{-1}(\zeta)(t) = {\ga}^{-1} \circ \zeta(t) \in \mathscr A \qquad t \in [t_1,t_2], \, \zeta \in \mathscr B
        \end{equation*}
        see \cite[Lemma 3.2]{PozSic}. Setting $\zeta = \Upsilon(\xi)$, we derive from \eqref{lemlemnew1}
        \begin{equation*}
            \dot\zeta(t) = \dot \xi(t) \, \dot{\ga}(\xi(t)),
        \end{equation*}
        which implies by \eqref{link1} and the condition $\xi((t_1,t_2)) =(T_1,T_2)$ that
        \begin{equation*}
            L(\zeta(t),\dot\zeta(t)) = L_{\ga} \left (\xi(t), \frac{\dot \xi(t) \, (\dot{\ga}(\xi(t)) \cdot \dot{ \ga}(\xi(t)))}{|\dot{\ga}(\xi(t))|^2} \right )= L_{\ga}(\xi(t),\dot \xi(t))
        \end{equation*}
        for a.e. $t \in (t_1,t_2)$. This in turn yields \eqref{lemlemnew0} and concludes the proof.
    \end{proof}

    \medskip

    \subsubsection{Action functional in \texorpdfstring{$\NN$}{N}}\label{action}

    \begin{Definition}
        For any curve $\xi:[0,T] \to \NN$ we define its {\it (Lagrangian) action}
        \begin{equation*}
            A_L(\xi):= \int_0^T L(\xi(\tau),\dot\xi(\tau)) \, d\tau.
        \end{equation*}
    \end{Definition}

    \begin{Definition}\label{defPhiLnet}
        Given $z_1$, $z_2$ in $ \NN$, $T >0$, we consider the minimization problem
        \begin{equation*}
            \inf_\xi A_L(\xi),
        \end{equation*}
        where the infimum is over the curves defined in $[0,T]$ linking $z_1$ to $z_2$. We call the corresponding {value function}, {\em minimal action functional} and denote it by $\Phi_L(z_1,z_2,T)$.
    \end{Definition}

    \smallskip

    We proceed defining a class of curves that will have a relevant role in the analysis, see \cite{PozSic}.

    \begin{Definition}\label{adcurve}
        We say that a curve $\xi:[0,T]\to \NN$ is {\it admissible} if there exists a finite partition $\{t_0,\dotsc,t_k\}$ of the interval $[0,T]$ with
        \begin{itemize}
            \item[--] $t_0=0$, $t_k=T$, {$t_i\le t_{i+1}$} for every $i=0,\ldots, k-1$;
            \item[--] $\xi(t_i)\in\VN$ for $i=1,\dotsc,k-1$;
            \item[--] for any $i$, either $\xi((t_i,t_{i+1}))\cap\VN=\emptyset$ and $\xi(t_i)\ne\xi(t_{i+1})$ or $\xi(t)\equiv z\in\VN$ for $t\in(t_i,t_{i+1})$.
        \end{itemize}
    \end{Definition}

    \smallskip

    The key property of admissible curves is:

    \begin{Proposition}\label{admi}
        Given $T > 0$, and a curve $\xi$ defined in $[0, T]$, we can find an admissible curve $\zeta$ defined in the same interval with $\xi(0) = \zeta(0)$, $\xi(T) = \zeta(T)$ and
        \begin{equation*}
            \int_0^T L(\xi (\tau),\dot\xi (\tau)) \, d\tau \ge \int_0^T L(\zeta (\tau),\dot\zeta (\tau)) \, d\tau.
        \end{equation*}
    \end{Proposition}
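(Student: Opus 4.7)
The strategy is to construct $\zeta$ by leaving $\xi$ untouched on intervals where it either traverses an arc between two distinct vertices or already sits at a vertex, and flattening it to a constant whenever it makes an excursion into an arc and returns to the same vertex; hypothesis \textbf{(P1)} on the uniform positive infimum of arc lengths will then force only finitely many traversals to occur.

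First, I would set $K := \xi^{-1}(\VN) \subset [0,T]$, which is closed by continuity of $\xi$. Since the open arc interiors $\gamma((0,1))$ are precisely the connected components of $\NN \setminus \VN$, each connected component $J_i = (a_i,b_i)$ of $[0,T]\setminus K$ is an interval on which $\xi$ stays in a single arc $\gamma_i$, so that $\xi|_{J_i} = \gamma_i \circ \xi_i$ for some absolutely continuous $\xi_i : J_i \to (0,1)$. When both endpoints of $J_i$ lie in $K$, the limits $\alpha_i, \beta_i \in \{0,1\}$ exist by continuity. I would then split such $J_i$'s into \emph{excursions} ($\alpha_i=\beta_i$, so $\xi(a_i)=\xi(b_i)$ is a single vertex $z$) and \emph{traversals} ($\alpha_i\ne\beta_i$), and define $\zeta := \xi$ on $K$ and on every traversal interval, and $\zeta \equiv z$ on every excursion. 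The at most two boundary components of $[0,T]\setminus K$ containing $0$ or $T$ (when $\xi(0)$ or $\xi(T)$ is not a vertex) are treated as traversals of partial arcs, with $\zeta := \xi$ on them.

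For the action inequality, since $\zeta=\xi$ on $K$ and on traversals, only excursions contribute a difference. On an excursion inside $\gamma_i$ with $\xi(a_i)=\xi(b_i)=z$, I would apply the Fenchel--Legendre inequality
\begin{equation*}
L_{\gamma_i}(s,\lambda) \ge L_{\gamma_i}(s,0) + \rho_*(s)\lambda,
\end{equation*}
where $\rho_*(s)$ is a minimizer of $H_{\gamma_i}(s,\cdot)$; the linear term integrates to zero by the chain rule since $\xi_i(a_i^+) = \xi_i(b_i^-)$. The remaining term is controlled by the pointwise bound $L_{\gamma_i}(s,0) \ge \wha c_z$, which is immediate from the expression of the maximal flux limiter recalled in Subsection \ref{subsecexistenceuni}. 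Combining with the identity $L(\xi,\dot\xi)=L_{\gamma_i}(\xi_i,\dot\xi_i)$ inside the arc, which follows from \eqref{link1}, gives $\int_{J_i} L(\xi,\dot\xi)\,dt \ge \wha c_z (b_i-a_i) = \int_{J_i} L(\zeta,\dot\zeta)\,dt$ on every excursion, and summing yields $A_L(\zeta)\le A_L(\xi)$.

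The last step is to check that $\zeta$ is admissible with a \emph{finite} partition. On each inner traversal interval $J_i$ the curve $\xi$ covers the entire arc $\gamma_i$, whose Euclidean length is at least the positive infimum $\ell$ given by \textbf{(P1)}; since the total length $\int_0^T|\dot\xi|\,dt$ is finite by absolute continuity, only finitely many traversals can occur, and together with the at most two boundary traversals this yields finitely many non-constant pieces. The complement of these finitely many open intervals is a finite union of closed subintervals of $[0,T]$, on each of which $\zeta$ takes values in the discrete set $\VN$ and is therefore constant at a single vertex by connectedness. Concatenating these with the traversal intervals produces the required finite partition. The main obstacle is the verification that no accumulation of excursions can spoil the finiteness of the partition: this is resolved precisely by the observation that every excursion gets absorbed into an adjacent constant-at-vertex piece, so only the uniformly-long-arc traversals contribute to the count, and these are finite in number thanks to \textbf{(P1)}.
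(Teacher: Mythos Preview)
Your argument is essentially correct and follows the same strategy as the paper, which defers to \cite[Proposition~4.5]{PozSic} and notes that \textbf{(P1)} is the additional ingredient needed in the non-compact setting. The decomposition via $K=\xi^{-1}(\VN)$, the surgery replacing excursions by constant-at-vertex pieces, the Fenchel inequality coupled with the expression $\wha c_z=\min_{\gamma\in\EN^z}\min_s L_\gamma(s,0)$, and the use of \textbf{(P1)} to bound the number of traversals are exactly the right ideas.

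There is one boundary case you have not covered: when $K=\emptyset$ and $\xi(0)=\xi(T)$. Then the whole interval $[0,T]$ is a single ``boundary component'' which you treat as a traversal by setting $\zeta=\xi$, but the admissibility condition with the trivial partition $\{0,T\}$ requires $\zeta(0)\neq\zeta(T)$, which fails. Your construction produces a non-admissible curve here. A clean fix is to observe that in this situation $\xi$ is a closed curve inside a single arc $\gamma$, so one may instead take $\zeta$ to be an action-minimizing curve in $[0,1]$ from $s_0:=\gamma^{-1}(\xi(0))$ to $s_0$ in time $T$; by the theory of one-dimensional Tonelli problems (as used elsewhere in the paper, e.g.\ around Theorem~\ref{babasic}) such a minimizer either touches $\{0,1\}$, yielding an admissible curve after composing with $\gamma$, or is the constant $s_0$, in which case one replaces it by a short visit to the nearer vertex and uses the same Fenchel estimate on the outgoing and incoming legs. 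This is a routine patch, but it should be stated.
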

    \begin{proof}
        The proof goes as in \cite{PozSic} Proposition 4.5, exploiting in addition {\bf (P1)}.
    \end{proof}

    \medskip

    \begin{Remark}
        The uniform superlinearity of the $L_\ga$'s (condition {\bf (P3)}) implies that $L$ is superlinear as well, namely for any $a >0$ there exists $B_a >0$ with
        \begin{equation}\label{super}
            L(z,q ) \ge a \, |q| - B_a \qquad\text{for any $(z,q) \in T \NN$.}
        \end{equation}
        In the above formula it is actually enough to take
        \begin{equation*}
            B_a = \max \{- L(z,q) + a \, |q| \mid (z,q) \in T \NN\}.
        \end{equation*}
        Moreover, item {\bf (P4)} implies:
        \begin{equation*}
            \text{The Lagrangian $L$ is bounded by below in $T \NN$.}
        \end{equation*}
    \end{Remark}

    \bigskip

    \begin{Proposition}\label{curvacce}
        \hfill
        \begin{itemize}
            \item[{\bf (i)}] There exist admissible curves minimizing the action between two given points of $ \NN$ in a given time, and any of them is Lipschitz continuous;
            \item[{\bf (ii)}] Given $C >0$, the curves minimizing the action between $z_1$ and $z_2$ in a time $T$ are equi--Lipschitz continuous provided that
            \begin{equation}\label{enri0}
                d_{ \NN}(z_1,z_2) \le C \, T.
            \end{equation}
        \end{itemize}
    \end{Proposition}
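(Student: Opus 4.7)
The plan is to proceed by the direct method of the calculus of variations, reducing the infinite-network setting to a compact subnetwork. For part \textbf{(i)}, I would take a minimizing sequence $\{\xi_n\}$ of curves on $[0,T]$ linking $z_1$ to $z_2$, and invoke Proposition \ref{admi} to replace each $\xi_n$ by an admissible curve with no larger action. Using the superlinearity estimate \eqref{super}, applied for some $a>0$, the uniform upper bound on $A_L(\xi_n)$ translates into a uniform bound on the Euclidean length $\int_0^T |\dot\xi_n|\, d\tau$. Since by \textbf{(P1)} every traversed arc has Euclidean length bounded below by a positive constant, the number of pieces $k_n$ in the admissible partitions is uniformly bounded, and by local finiteness \textbf{(P5)} the supports $\xi_n([0,T])$ are contained in a common finite subnetwork $\NN_0 \subset \NN$. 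On this finite subnetwork the existence of a minimizer proceeds as in \cite{PozSic}: up to extraction, the partition points $t_i^n$ converge, the underlying arcs stabilize, and on each non-constant piece Proposition \ref{lemlemnew} conjugates the problem to a classical scalar variational problem on $[0,1]$ with Lagrangian $L_\ga$; Tonelli's theorem then delivers a limit curve which is an admissible minimizer. Lipschitz continuity of any such minimizer follows, on each arc segment, from classical Tonelli regularity applied to the strictly convex, superlinear $L_\ga$ via the pull-back $\ga^{-1}\circ\xi$, and is trivial on constant vertex segments.

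For part \textbf{(ii)}, I would construct an explicit competitor. Let $\eta:[0,T]\to\NN$ be a curve from $z_1$ to $z_2$ of minimal Euclidean length, parametrized at constant speed; then $|\dot\eta|\equiv d_\NN(z_1,z_2)/T \le C$ by \eqref{enri0}. Applying Proposition \ref{lemlemnew} piece-by-piece and \textbf{(P6)} yields $A_L(\eta)\le M_C\, T$, hence any minimizer $\xi$ between $z_1$ and $z_2$ in time $T$ satisfies $A_L(\xi)\le M_C\, T$. Uniform Lipschitz continuity then follows from a standard Tonelli-type argument: on each arc piece the pull-back $\ga^{-1}\circ\xi$ minimizes a scalar action with a smooth, strictly convex, superlinear Lagrangian, and the usual $L^\infty$ bound on the velocity in terms of the action and the length of the subinterval applies. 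By $G$-periodicity only the finitely many Lagrangians $L_\ga$ indexed by edges of the base graph $\G_0$ effectively enter the estimate, so the resulting bound on $\|\dot\xi\|_\infty$ depends only on $C$.

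The main obstacle is the compactification step in part \textbf{(i)}: producing a single finite subnetwork that captures every minimizing sequence, which is what allows the direct method to run in the non-compact ambient $\NN$. The mechanism is the interplay between the length bound coming from the superlinearity \eqref{super} and the positive infimum of arc lengths in \textbf{(P1)}, combined with local finiteness \textbf{(P5)}; once this is secured, the remaining analysis parallels the finite-network treatment of \cite{PozSic}, with conditions \textbf{(P1)}--\textbf{(P7)} substituting in the obvious way for the finiteness of $\NN$.
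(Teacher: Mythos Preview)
Your treatment of part \textbf{(i)} is sound and parallels the paper's: reduce to a finite subnetwork via the length bound from \eqref{super} combined with \textbf{(P1)} and \textbf{(P5)}, then invoke the finite-network argument of \cite{PozSic}. The paper phrases this more tersely (``superlinearity of $L$ plus Dunford--Pettis'') but the content is the same.

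Part \textbf{(ii)} has a genuine gap. You correctly build the constant-speed competitor and obtain $A_L(\xi)\le M_C\,T$ for any minimizer $\xi$. But the subsequent step --- applying the ``usual $L^\infty$ bound in terms of the action and the length of the subinterval'' piece by piece --- does not by itself yield a bound depending only on $C$, because the lengths $t_{i+1}-t_i$ of the admissible partition are not a priori bounded from below. On a piece traversing a full arc in time $\delta$, the average speed of the pull-back is $1/\delta$, and the Tonelli regularity bound on that piece degenerates as $\delta\to 0$. Your closing remark that $G$-periodicity reduces the Lagrangians to finitely many does not address this: finitely many $L_\ga$ still allow arbitrarily short time-subintervals. (One can in fact bound $\delta$ from below using superlinearity and the global action bound, but that is a separate argument you have not supplied, and even then one must argue why the velocity bound propagates across vertices where the arc changes.)

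The paper's route avoids this issue entirely. From the competitor estimate $\Phi_L(z_1,z_2,T)\le R\,T$ and the superlinearity of $L$, it extracts $l_0$ such that any minimizer $\zeta$ satisfies $|\dot\zeta|\le l_0$ on a set of \emph{positive measure} in $[0,T]$ --- a global, not piecewise, statement. It then appeals to \cite[Lemma~A.3]{SicMTA} to propagate this to a full $L^\infty$ bound. This sidesteps any control on the partition and works directly with the overall Lagrangian $L$ on $T\NN$.
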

    \begin{proof}
        The existence of a minimal curve and the fact that such curve is Lipschitz continuous, is obtained as in \cite[Theorems 5.2 and 5.3]{PozSic} through superlinearity of $L$ plus Dunford Pettis, see \cite{ButtazzoGiaquintaHildebrandt99}.

        As a first step for proving item {\bf (ii)}, we show that if \eqref{enri0} holds true, then for any $\zeta:[0,T] \to \NN$ minimizing the action between $z_1$ and $z_2$ there exists $l_0 >0$ such that
        \begin{equation}\label{enri01}
            |\dot \zeta(t)| \le l_0 \txt{for $t$ in a set of positive measure.}
        \end{equation}
        Let $\xi$ be a geodesic for $d_{ \NN}(z_1,z_2)$, parametrized with the Euclidean arc-length, then $\eta(t):= \xi \left (\frac{d_{ \NN}(z_1,z_2)}T t \right )$ is defined in $[0,T]$ and links $z_1$ to $z_2$; consequently
        \begin{equation*}
            A_L(\eta) = \int_0^T L \left ( \xi \left (\frac{d_{ \NN}(z_1,z_2)}T t \right ), \frac{d_{\NN}(z_1,z_2)}T \dot\xi \left (\frac{d_{ \NN}(z_1,z_2)}T t \right ) \right ) \, dt \ge \Phi_L(z_1,z_2,T).
        \end{equation*}
        If $R$ is an upper bound for $L(z,q)$ when $|q| \le C$, we get
        \begin{equation}\label{enri1}
            \Phi_L(z_1,z_2,T) \le R \, T.
        \end{equation}
        Using the fact the Lagrangian is superlinear, we find $l_0$ with
        \begin{equation*}
            |q| > l_0 \Rightarrow L(z,q) > R \txt{for any $z$}.
        \end{equation*}
        Taking into account \eqref{enri1}, we derive that any optimal curve $\zeta$ must indeed satisfy \eqref{enri01}. To get the assertion, we can now proceed as in the proof of \cite[Lemma A.3]{SicMTA}.
    \end{proof}

    \medskip

    \subsubsection{{Lax--Oleinik representation formula and comparison result}}

    We define
    \begin{equation}\label{lapulapu}
        w(z,t):= \inf \left \{g(\xi(0)) + A_L(\xi) \right \},
    \end{equation}
    where the infimum is over the curves defined in $[0,t]$ with $\xi(t)=z$.

    Recall that $g: \NN \to \R$ is uniformly continuous, we can prove the following.

    \smallskip

    \begin{restatable}{Proposition}{uffaz}\label{uffaz}
        The function $w$ defined in \eqref{lapulapu} is uniformly continuous in $\NN \times [0,+\infty)$.
    \end{restatable}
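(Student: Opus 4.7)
The plan is to establish uniform continuity of $w$ separately in the time variable and in the space variable, then combine via the triangle inequality. The principal tools will be: the superlinearity bound \eqref{super}; the lower bound on $L$ from \textbf{(P4)}; the $G$-periodicity of $L$ (which, since $\G_0$ is finite and $L_\ga$ continuous, yields a uniform upper bound on $L(\cdot,0)$ over $\NN$ and, via \textbf{(P6)}, on $L(\cdot,q)$ for any bounded range of $q$); and the uniform modulus of continuity $\omega_g$ of $g$. A preliminary step is to show that $w(z,t)$ is finite and controlled: comparing the infimum in \eqref{lapulapu} against the constant curve at $z$ yields $w(z,t)\le g(z)+L(z,0)\,t$, and the uniform upper bound on $L(\cdot,0)$ shows this is a genuine a priori bound; together with \eqref{super} applied to a near-minimizer $\xi$, this controls $\int_0^t|\dot\xi|\,d\tau$ in terms of $\omega_g(\int|\dot\xi|)$ plus data, which via a Young-type absorption bounds the $L^1$-norm of the velocity of any near-minimizer independently of fine details of the curve.

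For continuity in $t$ at fixed $z$, I would prove $w(z,t_2)-w(z,t_1)\le M(t_2-t_1)$ for $0<t_1<t_2$ by taking an $\eps$-minimizer $\xi_1$ on $[0,t_1]$ and prepending the constant curve equal to $\xi_1(0)$ on $[0,t_2-t_1]$, whose action contributes at most $M(t_2-t_1)$ with $M:=\sup_z L(z,0)$. Conversely, $w(z,t_1)-w(z,t_2)$ is controlled by taking a minimizer $\xi_2$ for $w(z,t_2)$ (which exists by Proposition \ref{curvacce}), translating $\xi_2|_{[t_2-t_1,t_2]}$ to $[0,t_1]$, and absorbing the change in initial datum via $|g(\xi_2(t_2-t_1))-g(\xi_2(0))|\le\omega_g\bigl(\int_0^{t_2-t_1}|\dot\xi_2|\bigr)$ together with the missing action bounded below by $-B(t_2-t_1)$ via \textbf{(P4)}. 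The needed bound on $\int_0^{t_2-t_1}|\dot\xi_2|$ follows from the a priori bound on $A_L(\xi_2)$ and superlinearity.

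For continuity in $z$ at fixed $t$, given $z_1,z_2\in\NN$ with $\delta:=d_\NN(z_1,z_2)$ small, I would take an $\eps$-minimizer $\xi_1$ for $w(z_1,t)$ and construct a competitor for $w(z_2,t)$ by reparametrizing $\xi_1$ onto $[0,t-h]$ and concatenating it with a curve realizing $d_\NN(z_1,z_2)$ traversed in time $h$ (using Proposition \ref{admi} and \textbf{(P7)} to work with admissible curves, and Proposition \ref{lemlemnew} to compute the bridge's action through the local Lagrangians $L_\ga$). By \textbf{(P6)} the bridge costs at most $h\cdot M_{\delta/h}$, and the reparametrized interior changes by a controllable amount thanks to the uniform bounds on $L_\ga$ over bounded momentum sets. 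Choosing $h$ with $\delta/h\to0$ as $\delta\to0$ (say $h=\sqrt\delta$ when $t$ is bounded away from $0$, otherwise appealing to the time estimate above combined with $w(z,0)=g(z)$) produces a modulus of continuity in $z$ which is uniform in $t$.

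The main obstacle is keeping every one of these estimates uniform over the non-compact network $\NN$: a priori speed bounds on minimizers do not hold globally, and $g$ need not be bounded. The way around is to never use absolute bounds on individual minimizers, only $L^1$-type bounds obtained from combining the upper bound $w(z,t)\le g(z)+Mt$ with superlinearity, and then to absorb the dependence on $g(z)$ using only the modulus $\omega_g$. The $G$-periodicity of the $L_\ga$'s, together with the finiteness of $\G_0$, reduces all sup/inf over edges to suprema over the fundamental cell, which makes the constants $M$, $B$, and $M_r$ genuinely uniform; the locally finite, self-loop-free structure (\textbf{(P5)}, \textbf{(P7)}) is used precisely as in \cite{PozSic} to enable the admissible-curve reduction of Proposition \ref{admi}, so that the finite-network arguments carry over to the present infinite setting.
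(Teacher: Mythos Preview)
Your overall plan is reasonable in spirit, but there is a genuine gap in the step ``The needed bound on $\int_0^{t_2-t_1}|\dot\xi_2|$ follows from the a priori bound on $A_L(\xi_2)$ and superlinearity.'' Superlinearity together with the upper bound $w(z,t_2)\le g(z)+Mt_2$ only controls the \emph{total} quantity $\int_0^{t_2}|\dot\xi_2|\,d\tau$ (by a constant of the form $C(1+t_2)$); it gives no reason why the partial integral $\int_0^{t_2-t_1}|\dot\xi_2|\,d\tau$ should become small as $t_2-t_1\to 0$. An equi-Lipschitz bound on the minimizer $\xi_2$ would do this, but that (via Proposition~\ref{curvacce}~(ii)) requires $d_\NN(\xi_2(0),z)\le C\,t_2$ with a \emph{uniform} $C$, and the constant you would obtain from the a~priori estimate behaves like $C_0+b/t_2$, blowing up as $t_2\to 0$. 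The same difficulty reappears in your space estimate when you ``reparametrize $\xi_1$ onto $[0,t-h]$'': controlling the change of action under reparametrization needs a velocity bound on $\xi_1$ that you do not have uniformly for small $t$.

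The missing idea is a cancellation between the modulus $\omega_g$ and the superlinearity of $L$. Since $\omega_g$ may be taken concave, for every $\eps>0$ there exists $a_\eps>0$ with $\omega_g(r)\le a_\eps r+\eps$ for all $r\ge 0$; pairing this with $L(z,q)\ge a_\eps|q|-B_{a_\eps}$ yields, for any curve $\eta$ on an interval of length $\tau$,
\[
\omega_g\!\left(\int|\dot\eta|\right)-\int L(\eta,\dot\eta)\,ds\;\le\;\eps+B_{a_\eps}\,\tau,
\]
\emph{independently} of the size of $\int|\dot\eta|$. The paper uses exactly this to show $|w(z,t)-g(z)|\le \eps+R_\eps t$ uniformly in $z$, which handles the boundary layer $t\to 0$; away from $t=0$ it proves Lipschitz continuity of $w$ by first showing $\Phi_L$ is Lipschitz on sets $\{d_\NN(z_1,z_2)\le Ct\}$ and then using a Lemma~\ref{ester}-type bound $d_\NN(\xi(0),z)\le C_0 t$ valid for $t\ge t_0$. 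Your competitor constructions can be made to work, but only once you replace the attempt to bound $\int_0^{t_2-t_1}|\dot\xi_2|$ by the $\omega_g$--superlinearity cancellation above; without it the argument does not close near $t=0$.
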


    The proof is in section \ref{uffazz}

    \smallskip

    \begin{Lemma}\label{nosleep}
        The infimum in the definition of $w$ is a minimum.
    \end{Lemma}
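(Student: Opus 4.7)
The plan is to take a minimizing sequence $\xi_n:[0,t]\to\NN$ with $\xi_n(t)=z$ and $g(\xi_n(0)) + A_L(\xi_n) \to w(z,t)$, then argue that (after a controlled modification) the $\xi_n$ are equi-Lipschitz with initial points confined to a bounded region of $\NN$, hence convergent (up to subsequence) to a curve $\xi_\infty$ realizing the infimum. The key ingredients will be superlinearity of $L$ (see \eqref{super}), Proposition \ref{curvacce}, and lower semicontinuity of the action.

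First, I would establish an a priori bound on $d_\NN(\xi_n(0),z)$. Uniform continuity of $g$ implies at most linear growth: there is $M>0$ with $|g(y)-g(z)|\le M\bigl(d_\NN(y,z)+1\bigr)$ for every $y\in\NN$, obtained by chaining unit-length subintervals of a geodesic. Combining with \eqref{super} applied at $a=M+1$,
\begin{equation*}
g(\xi(0)) + A_L(\xi) \;\ge\; g(z) - M - M\,d_\NN(\xi(0),z) + (M{+}1)\,d_\NN(\xi(0),z) - B_{M+1}\,t,
\end{equation*}
so any $\xi$ with $g(\xi(0))+A_L(\xi)\le w(z,t)+1$ satisfies $d_\NN(\xi(0),z)\le D$ for a constant $D=D(z,t)$. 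By \textbf{(P1)}, the closed ball of radius $D$ around $z$ in $\NN$ is compact, so the points $\xi_n(0)$ lie in a compact subset of $\NN$ and admit a convergent subsequence $\xi_n(0)\to z_0\in\NN$.

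Next, for each $n$ I replace $\xi_n$ by a curve $\zeta_n:[0,t]\to\NN$ minimizing the action between $\xi_n(0)$ and $z$ in time $t$; this exists and is Lipschitz by Proposition \ref{curvacce}(i), and $A_L(\zeta_n)\le A_L(\xi_n)$, so $(g(\zeta_n(0)) + A_L(\zeta_n))$ is still a minimizing sequence. Since $d_\NN(\zeta_n(0),z)\le D$, hypothesis \eqref{enri0} of Proposition \ref{curvacce}(ii) holds with $C=D/t$, and the $\zeta_n$ are equi-Lipschitz. Viewing them as curves in $\R^N$ with values in the closed set $\NN$, Arzelà–Ascoli yields a subsequence converging uniformly to a Lipschitz curve $\xi_\infty:[0,t]\to\NN$ with $\xi_\infty(0)=z_0$ and $\xi_\infty(t)=z$. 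Passing if necessary to admissible representatives via Proposition \ref{admi} and exploiting local finiteness \textbf{(P5)} (so that on each subinterval only finitely many arcs are involved), a standard Tonelli-type semicontinuity argument based on lower semicontinuity of $L$ and convexity of each $L_\ga$ in the velocity variable gives $A_L(\xi_\infty)\le\liminf_n A_L(\zeta_n)$. Continuity of $g$ at $z_0$ then yields
\begin{equation*}
g(\xi_\infty(0)) + A_L(\xi_\infty) \;\le\; \lim_n \bigl(g(\zeta_n(0))+A_L(\zeta_n)\bigr) \;=\; w(z,t),
\end{equation*}
so the infimum is attained at $\xi_\infty$.

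The delicate step will be the lower semicontinuity of $A_L$ along a uniformly convergent sequence of curves whose supports can jump between arcs near vertices: because $L$ is only lower semicontinuous globally (though nicely convex and continuous on each arc), one must combine the admissibility decomposition with the bounded number of vertices visited in bounded time (courtesy of \textbf{(P1)} and the equi-Lipschitz bound) so as to reduce the argument to the classical Tonelli lower semicontinuity applied arc by arc. Controlling this behavior at vertices, together with the free-endpoint nature of the problem at time $0$, is the principal technical obstacle.
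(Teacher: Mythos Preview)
Your proposal is correct, and the crucial first step---bounding $d_\NN(\xi_n(0),z)$ by playing the at-most-linear growth of the uniformly continuous $g$ against the superlinearity of $L$---is exactly the heart of the paper's argument as well (the paper phrases it as a proof by contradiction: assuming $|\xi_n(0)|\to+\infty$ and deducing that $-g(\xi_n(0))/|\xi_n(0)|\to+\infty$, which violates uniform continuity; your direct estimate with $a=M+1$ does the same job more explicitly).

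Where you diverge is in the compactness/existence step after the initial points are confined. The paper does not go through Arzel\`a--Ascoli and lower semicontinuity of $A_L$ on the infinite network. Instead, once the $\xi_n(0)$ are bounded, it observes (via \eqref{super} and \textbf{(P1)}) that the Euclidean lengths of the $\xi_n$ are equibounded, hence the number of arcs each $\xi_n$ can visit is uniformly bounded, and by local finiteness \textbf{(P5)} all the $\xi_n$ live in a fixed \emph{finite} subnetwork. At that point the existence of a minimizer is simply imported from the finite-network theory \cite[Theorem~5.4]{PozSic}. This neatly sidesteps the ``delicate step'' you flag---lower semicontinuity of $A_L$ across vertices---because that issue has already been settled in the compact setting. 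Your route is more self-contained and would work, but the vertex/arc-switching semicontinuity argument you sketch is precisely what the finite-network reference encapsulates; the paper's reduction buys you that machinery for free.
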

    \begin{proof}
        By Proposition \ref{admi}, there is a sequence $\xi_n$ of admissible curves defined in $[0,t]$ and ending at $z$ with
        \begin{equation*}
            \lim_{n\to +\infty} \left( g(\xi_n(0)) + \int_0^t L(\xi_n,\dot \xi_n) \, d\tau \right)= w( z,t).
        \end{equation*}
        If $|\xi_n(0)|$ is bounded, and consequently $g(\xi_n(0))$ is bounded by below, then the lengths of the $\xi_n$'s are equibounded by \eqref{super} and so the number of arcs visited by the $\xi_n$'s is equibounded because of {\bf (P1)}. By {\bf (P5)}, we therefore get the assertion as in the case of finite networks, see \cite[Theorem 5.4]{PozSic}. \\
        Consequently, we can reduce to the case when $ |\xi_n(0)| \to + \infty. $ We show that it is actually not possible. In fact, from \eqref{super} with an arbitrary positive $a$ and a related $B_a$, we deduce:
        \begin{eqnarray*}
            w(z,t) &\ge& \int_0^t L(\xi_n,\dot\xi_n) \, d\tau + g(\xi_n(0)) - \de_n \\
            &\ge& a \int_0^t |\dot \xi_n| \, d \tau - B_a t + g(\xi_n(0)) - \de_n\\
            &\ge& a \, d_{\NN}(z,\xi_n(0))- B_a t + g(\xi_n(0)) - \de_n,
        \end{eqnarray*}
        where $\de_n$ is a suitable infinitesimal positive sequence. Therefore
        \begin{equation*}
            \liminf_n \frac{- g(\xi_n(0))}{|\xi_n(0)|} \ge \liminf _n \frac{ a \, d_{\NN}(z,\xi_n(0)) - B_a t - w(z,t) - \de_n}{|\xi_n(0)|} \ge a,
        \end{equation*}
        which implies, because of the arbitrary choice of $a$,
        \begin{equation*}
            \lim_n \frac{- g( \xi_n(0))}{|\xi_n(0)|} = + \infty
        \end{equation*}
        in contradiction with the uniform continuity of $g$.
    \end{proof}

    \smallskip

    The same argument as in \cite[Theorem 6.7 ]{PozSic} finally yields:

    \begin{Theorem}[{\bf Existence of solutions}]
        The function $w$ defined in \eqref{lapulapu} is solution to \eqref{eq:HJ} according to Definition \ref{defsol}.
    \end{Theorem}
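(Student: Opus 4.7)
The plan is to proceed through the dynamic programming principle (DPP), which is the standard engine behind viscosity properties of Lax--Oleinik representations, and then verify one by one each item of Definition \ref{defsol}. The key structural inputs are: the existence of a minimizer in \eqref{lapulapu} (Lemma \ref{nosleep}); the fact that minimizers can be taken admissible and Lipschitz (Propositions \ref{admi} and \ref{curvacce}); the identification between the arc-Lagrangian $L_\ga$ and the overall Lagrangian $L$ provided by Proposition \ref{lemlemnew}; and local finiteness (P5) together with $L(z,0)=\wha c_z$ at vertices (relation \eqref{link3}).

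First, I would establish the DPP: for every $z\in\NN$ and $0\le s<t$,
\begin{equation*}
    w(z,t) = \inf_{\xi}\Bigl\{ w(\xi(s),s) + \int_s^t L(\xi,\dot\xi)\,d\tau\Bigr\},
\end{equation*}
the infimum being over curves $\xi:[s,t]\to\NN$ with $\xi(t)=z$. The inequality $\le$ follows by concatenating any competitor on $[0,s]$ that nearly attains $w(\xi(s),s)$ with the given $\xi$ on $[s,t]$. The opposite inequality is obtained by invoking Lemma \ref{nosleep}: given a minimizer $\xi^\star$ for $w(z,t)$, its restriction to $[s,t]$ is a competitor on the right-hand side, while its restriction to $[0,s]$ is a competitor for $w(\xi^\star(s),s)$.

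Next I would verify item (i). Fix an arc $\ga\in\EN$ and set $w_\ga(s,t):=w(\ga(s),t)$. Restricting the DPP to curves that stay inside $\ga((0,1))$ and invoking the bijection $\Upsilon$ of Proposition \ref{lemlemnew}, which preserves the action, one sees that $w_\ga$ satisfies the classical one-dimensional DPP on $(0,1)\times(0,\infty)$ relative to $L_\ga$. The standard Lax--Oleinik argument for continuous, superlinear Lagrangians on an open interval then yields that $w_\ga$ is a viscosity solution of (HJ$_\ga$). The initial condition $w(z,0)=g(z)$ is obtained by choosing the trivial constant competitor $\xi\equiv z$ and then using the uniform continuity of $g$ together with the superlinear lower bound \eqref{super}.

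Finally I would address the vertex conditions at $z\in\VN$ and $t_0>0$. For condition (ii-b), the competitor $\xi\equiv z$ on $[s,t_0]$ gives, via \eqref{link3},
\begin{equation*}
    w(z,t_0)\le w(z,s) + (t_0-s)\,\wha c_z,
\end{equation*}
so any $C^1$ supertangent $\psi$ must satisfy $\psi'(t_0)\le\wha c_z$. For condition (ii-a), suppose $\psi$ is a $C^1$ subtangent with $\psi'(t_0)<\wha c_z$. I would pick a minimizer $\xi^\star$ ending at $z$ at time $t_0$ (Lemma \ref{nosleep}); the strict inequality $\psi'(t_0)<\wha c_z$ forces $\xi^\star$ to leave $z$ on some left-neighbourhood of $t_0$, because otherwise the constant-at-$z$ competitor combined with the subtangent inequality would yield $\psi'(t_0)\ge\wha c_z$. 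By admissibility (Definition \ref{adcurve}) and local finiteness (P5), $\xi^\star$ approaches $z$ along exactly one arc $\ga\in\EN^z$. Translating this optimal trajectory through Proposition \ref{lemlemnew} back into the 1-dimensional setting and applying the DPP there yields that every constrained $C^1$ subtangent $\varphi$ to $w\circ\ga$ at $(1,t_0)$ satisfies $\varphi_t(1,t_0)+H_\ga(1,\varphi'(1,t_0))\ge 0$, which is exactly the required condition.

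The main obstacle I expect is isolating the arc $\ga\in\EN^z$ in condition (ii-a): while in a finite network this is elementary because $\EN^z$ is finite and one can blow up a minimizer near $t_0$, here one must ensure uniformly (as minimizers are perturbed) that an optimal trajectory does not oscillate between infinitely many arcs near $z$. This is where (P5) enters decisively together with the equi-Lipschitz estimate of Proposition \ref{curvacce}\,(ii), which guarantees that sufficiently close to $t_0$ the optimal curve remains on a single arc of $\EN^z$, reducing the vertex analysis to the finite network case treated in \cite[Theorem 6.7]{PozSic}.
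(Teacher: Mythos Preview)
Your proposal is correct and follows essentially the same route as the paper, which does not give a new argument but simply invokes \cite[Theorem~6.7]{PozSic}; your outline (DPP, arc--by--arc reduction via Proposition~\ref{lemlemnew}, constant competitor for (ii-b), and optimal trajectory along a single incoming arc for (ii-a)) is precisely the skeleton of that proof. One small clarification: in your last paragraph you frame the obstacle as the optimal curve possibly oscillating ``between infinitely many arcs near $z$'', but (P5) already guarantees $\EN^z$ is finite; the genuine point is that admissibility (Definition~\ref{adcurve}) provides a \emph{finite} partition, so on the last subinterval the minimizer is either constant at $z$ (excluded by $\psi'(t_0)<\wha c_z$) or supported in a single arc, after which the argument is local and identical to the finite case.
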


    \medskip

    We finally state a comparison result yielding that the function defined in \eqref{lapulapu} is actually the unique solution to \eqref{eq:HJ}.

    \begin{restatable}[{\bf Comparison}]{Theorem}{compa}\label{compa}
        Given a subsolution $u$ and a supersolution $v$ to \eqref{eq:HJ}, we have $u \le v$ on $\NN \times (0,+\infty)$.
    \end{restatable}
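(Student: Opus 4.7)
The approach is to show $u \le w$ and $w \le v$, where $w$ is the Lax--Oleinik function \eqref{lapulapu}, already known to be a solution by the existence theorem just above. This decouples the comparison into matching sub- and supersolution bounds against the explicit value function, following the variational strategy of \cite{PozSic, Sic} and exploiting the admissible-curve structure of Definition \ref{adcurve} together with Proposition \ref{admi}.

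To prove $u \le w$: for any admissible curve $\xi \colon [0,t] \to \NN$ with $\xi(t)=z$, partition $[0,t]$ as in Definition \ref{adcurve}. On sub-intervals $[t_i,t_{i+1}]$ where $\xi$ traces the open interior of an arc $\gamma$, pull back via $\gamma^{-1}$ (Proposition \ref{lemlemnew}) to a one-dimensional subsolution on $(0,1)$ with Hamiltonian $H_\gamma$, and apply the classical Euclidean viscosity integration inequality (valid under \textbf{(H$_\ga$1)}--\textbf{(H$_\ga$3)}) to obtain $u(\xi(t_{i+1}),t_{i+1}) - u(\xi(t_i),t_i) \le \int_{t_i}^{t_{i+1}} L(\xi,\dot\xi)\, d\tau$. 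On sub-intervals where $\xi \equiv z \in \VN$, clause \textbf{(ii-b)} forces every $C^1$ supertangent to $u(z,\cdot)$ to have slope $\le \wha c_z$, so $t\mapsto u(z,t) - \wha c_z t$ is non-increasing and by \eqref{link3} one gets $u(z,t_{i+1}) - u(z,t_i) \le \wha c_z (t_{i+1}-t_i) = \int_{t_i}^{t_{i+1}} L(z,0)\, d\tau$. Summing and using the initial inequality $u(\xi(0),0) \le g(\xi(0))$ yields $u(z,t) \le g(\xi(0)) + A_L(\xi)$ for every admissible $\xi$; taking the infimum gives $u \le w$.

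For $v \ge w$, run a parallel argument along a minimizing admissible curve $\xi^*$ supplied by Lemma \ref{nosleep}. On interior arc segments, the supersolution property of $v_\gamma$ on $(0,1)\times(0,+\infty)$ and strict convexity \textbf{(H$_\ga$2)} yield the matching one-sided inequality $v(\xi^*(t_{i+1}),t_{i+1}) - v(\xi^*(t_i),t_i) \ge \int_{t_i}^{t_{i+1}} L(\xi^*,\dot\xi^*)\, d\tau$. On vertex-constant segments at $z\in \VN$, clause \textbf{(ii-a)} must be invoked: a putative strict decrease $v(z,t_{i+1})-v(z,t_i) < \wha c_z (t_{i+1}-t_i)$ would provide a $C^1$ subtangent $\psi$ to $v(z,\cdot)$ at some $t_0$ with $\psi'(t_0) < \wha c_z$, producing an arc $\gamma\in \EN^z$ along which the constrained inequality $\varphi_t(1,t_0) + H_\gamma(1,\varphi'(1,t_0)) \ge 0$ holds for every $C^1$ subtangent $\varphi$ to $v\circ \gamma$ at $(1,t_0)$; combined with the identity $\wha c_z = \min_{\gamma\in \EN^z}\min_s L_\gamma(s,0)$ and Fenchel duality, this upgrades to the desired vertex bound $v(z,t_{i+1}) - v(z,t_i) \ge \wha c_z (t_{i+1}-t_i)$. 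Summing and using $v(\xi^*(0),0) \ge g(\xi^*(0))$ together with $A_L(\xi^*) = w(z,t) - g(\xi^*(0))$ gives $v(z,t) \ge w(z,t)$, and concatenation with the first step produces $u \le v$.

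The main obstacle I anticipate is the supersolution vertex estimate in Step 2: clause \textbf{(ii-a)} is a dichotomy rather than a plain one-sided bound, so one must construct an appropriate $C^1$ subtangent to $v(z,\cdot)$ (typically by smoothing a chord), extract the arc $\gamma \in \EN^z$ delivered by the dichotomy (using local finiteness \textbf{(P5)} to pass to a subsequential selection), and then carefully convert the one-dimensional constrained viscosity inequality along $\gamma$ into the required flux bound at the vertex. This is precisely the spot where the argument of \cite{Sic} must be adjusted: in the finite-network case one can freely exploit global Lipschitz bounds and a priori compactness, whereas in our infinite periodic setting the uniform structure guaranteed by \textbf{(P1)}--\textbf{(P7)} is essential to keep the vertex construction quantitative and to propagate the estimate across the finite partition of the admissible curve.
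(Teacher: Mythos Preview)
Your Step 1 ($u\le w$) is essentially correct and parallels the paper's Lemma \ref{subdued}, though the paper routes the arc-interior estimate through the semidiscrete operators $F_\ga$, $G$ and Proposition \ref{filone} rather than invoking a ``classical viscosity integration inequality'' directly.

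Step 2, however, has a genuine gap, and not only at the vertex. The inequality
\[
v(\xi^*(t_{i+1}),t_{i+1}) - v(\xi^*(t_i),t_i) \ \ge\ \int_{t_i}^{t_{i+1}} L(\xi^*,\dot\xi^*)\, d\tau
\]
does \emph{not} follow from the supersolution property along an \emph{arbitrary} curve $\xi^*$, even one that is optimal for $w$. The asymmetry is basic: from $u_t+H(s,u')\le 0$ one gets $u_t + u'\,\la \le L_\ga(s,\la)$ for \emph{every} $\la$, which is what makes the subsolution bound integrable along any curve. From $v_t+H(s,v')\ge 0$ one only gets $v_t + v'\,\la^* \ge L_\ga(s,\la^*)$ for the \emph{particular} $\la^*$ realizing the Fenchel supremum at $v'$, so the integration inequality holds only along curves whose velocity matches that $\la^*$. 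The minimizer $\xi^*$ of $w$ is adapted to $w$, not to $v$; there is no reason its velocity field should be the one selected by $v'$.

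This is why the paper, in Lemma \ref{superdued}, does not pick a curve a priori but \emph{constructs} one backward from $(z_0,t_0)$ using $v$ itself: at each step the semidiscrete identity $v(z,t)\ge G[F_z[v],\wha c_z](t)$ selects a time $r$ and an arc $\ga$ (the one realizing $F_z[v](r)$), and the Lax--Oleinik representation of $F_\ga[v]$ (Lemma \ref{calmas}) then furnishes the next curve segment with the correct Lagrangian bound. Clause \textbf{(ii-a)} enters precisely here, not as a device to force $v(z,\cdot)$ to have slope $\ge \wha c_z$, but as the mechanism that guarantees the existence of such an arc $\ga$; it does \emph{not} by itself yield $v(z,t_{i+1})-v(z,t_i)\ge \wha c_z(t_{i+1}-t_i)$, and your dichotomy argument does not close that implication. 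The finiteness of the backward construction (showing the sequence of times reaches $0$) is where non-compactness actually bites, and the paper handles it via superlinearity of $L$ together with \textbf{(P1)} and the uniform continuity of $v$. Your proposal would need to be rewritten along these lines to go through.
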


    The proof is given in Section \ref{compaz}

    \bigskip

    \section{Maximal topological crystal over \texorpdfstring{$\G_0$}{Γ₀}}\label{construens}

    As we have remarked above, the underlying abstract graph associated to our periodic network $\NN$ plays a crucial role in our strategy, particularly as far as the discretization process that we are going to implement is concerned. Moreover, it encodes the complexity of the network, which is a fundamental aspect of the asymptotic analysis of the limit problem.

    We recall that the maximal topological crystal $\G$ over a base graph $\Gamma_0$ is unique up to isomorphisms (see Remark \ref{remarkunicityuptoiso} ({\bf ii})). In this section, we describe how to construct a (unique up to isomorphisms) representation of $\G$ in terms of the base graph $\Gamma_0$ and $\Z^{b(\G_0)}$.

    We start by recalling some preliminary notions and properties of $\Gamma_0$. {In this respect the paper is self-contained, namely all the few concepts on graph theory and algebraic topological properties of graphs that we need, are preliminarily introduced and explained in the text.} {See \cite{Sunada} for a more comprehensive presentation.}

    \smallskip

    \subsection{Spanning trees, incidence and rotation vectors in \texorpdfstring{$\G_0$}{Γ₀}}

    {Let $\G_0=(\VV_0, \EE_0)$ be the base graph.}

    \begin{Definition}
        An {\it orientation} of $\G_0$ is a subset $\EE_0^+$ of the edges satisfying
        \begin{equation*}
            - (\EE_0^+) \cap \EE_0^+ = \emptyset \qquad{\rm and} \qquad - (\EE_0^+) \cup \EE_0^+ = \EE_0.
        \end{equation*}
        In other words, an orientation of $\G_0$ consists of a choice of exactly one edge in each pair $\{e,-e\}$.
    \end{Definition}

    \smallskip

    We fix an orientation $\EE_0^+$ on $\G_0$ with cardinality denoted by $|\EE_0^+| = \frac{1}{2}|\EE_0|$.

    \smallskip

    \begin{Definition}
        For any path $\xi$ in $\G_0$, we call {\it multiplicity} of an edge $e$ in $\xi$, and denote by $\#_{\xi}(e)$, the number of passages through $e$ needed to go from $\oo(\xi)$ to $\tt(\xi)$ following $\xi$. Note that it is vanishing if and only $e \not\in \xi$.
    \end{Definition}

    \smallskip

    We consider the $|\EE_0^+|$-dimensional vector space over $\R$, denoted by $\Cf_1(\G_0,\R)$, spanned by the edges of $\EE_0^+$ with the reversed edge identified to the opposite, as suggested by the notation. We further indicate by $\Cf_0(\G_0,\R)$ the $|\VV_0|$-dimensional vector space over $\R$ spanned by the vertices of $\G_0$.

    \begin{Definition}
        To any path $\xi$ of $\G_0$, we associate an integer vector belonging to $\Cf_1(\G_0,\R)$ given by
        \begin{equation*}
            [\xi]:= \sum_{e \in{ \EE_0}^+} ( \#_{\xi}(e)-\#_{\xi}(-e)) \, e.
        \end{equation*}
        $[\xi]$ is called {\em incidence vector} of $\xi$.

        We call {\it cycle} any path $\xi$ satisfying $\oo(\xi)= \tt(\xi)$. A cycle is said to be {\it trivial} if its incidence vector is vanishing.
    \end{Definition}

    \medskip

    We proceed considering the linear transformation $\partial: \Cf_1(\G_0,\R) \longrightarrow \Cf_0(\G_0,\R)$ given by
    \begin{equation*}
        \partial(e) := \tt(e) - \oo(e) \qquad\text{for every $e \in \EE_0^+$}
    \end{equation*}
    and then extended linearly. We focus on its kernel that we denote $\ker \, \partial =: H_1(\G_0,\R)$; it is called the {\it first homology group} of $\G_0$ with real coefficients. We will provide in what follows the few simple properties from homology theory we need, without mentioning homology any further.

    A relevant fact, that we are going to use in the following, is:

    \begin{Lemma}\label{cy}
        The vector subspace $H_1(\G_0,\R) \subset \Cf_1(\G_0,\R)$ is spanned by the incidence vectors of the cycles in $\G_0$.
    \end{Lemma}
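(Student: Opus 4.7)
The plan is to prove the two inclusions separately. The inclusion $\mathrm{span}\{[\xi] : \xi \text{ cycle}\} \subset H_1(\G_0,\R)$ is the easy direction. If $\xi = (e_1,\dots,e_m)$ is a cycle, its incidence vector can be written, after the identification $-e \sim -e$ in $\Cf_1(\G_0,\R)$, as $[\xi]=\sum_{i=1}^m e_i$. Applying $\partial$ gives $\partial[\xi]=\sum_{i=1}^m\bigl(\tt(e_i)-\oo(e_i)\bigr)$, and the concatenation property $\oo(e_{i+1})=\tt(e_i)$ together with $\oo(e_1)=\tt(e_m)$ makes this telescopic sum vanish, so $[\xi]\in\ker\partial$.

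For the reverse inclusion, I would argue by a dimension count via a spanning tree. Fix a spanning tree $\TT\subset\G_0$; since $\G_0$ is connected, $|\EE_\TT^+|=|\VV_0|-1$. For every edge $e\in\EE_0^+\setminus\EE_\TT^+$, let $\xi_e$ be the cycle formed by traversing $e$ and then returning from $\tt(e)$ to $\oo(e)$ along the unique path in $\TT$. The family $\{[\xi_e]\}_{e\in\EE_0^+\setminus\EE_\TT^+}$ is linearly independent in $\Cf_1(\G_0,\R)$, because the coefficient of $e$ in $[\xi_e]$ is $\pm 1$, while every other $[\xi_{e'}]$ has coefficient $0$ on $e$ (only tree-edges other than $e$ appear in the returning path). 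Hence
\begin{equation*}
  \dim\,\mathrm{span}\{[\xi_e]\}= |\EE_0^+|-(|\VV_0|-1)=b(\G_0).
\end{equation*}

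On the other hand, to see that $\dim\ker\partial\le b(\G_0)$, I would observe that the image of $\partial$ lies in the hyperplane $\{\sum c_v v : \sum c_v=0\}\subset\Cf_0(\G_0,\R)$ (since each $\tt(e)-\oo(e)$ has vanishing sum of coefficients), and by connectedness of $\G_0$ this hyperplane is actually reached: for any two vertices $x,y$, a path from $x$ to $y$ in $\G_0$ has incidence vector mapped by $\partial$ to $y-x$, so the differences $y-x$ are all in the image, and they span the hyperplane. Therefore $\dim\,\mathrm{im}\,\partial=|\VV_0|-1$, and by the rank-nullity theorem $\dim\ker\partial=|\EE_0^+|-(|\VV_0|-1)=b(\G_0)$.

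Combining the two facts, the cycle incidence vectors form a linearly independent subset of $H_1(\G_0,\R)$ of maximal dimension, and hence span it. The only delicate step is the dimension computation of $\mathrm{im}\,\partial$, which rests entirely on connectedness of $\G_0$; everything else is bookkeeping on the spanning tree. No assumption on the orientation is used beyond consistency of signs, and the argument carries over verbatim to integer coefficients, which will be useful later when rotation vectors are introduced.
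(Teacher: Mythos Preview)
Your proof is correct. The paper does not give its own argument for this lemma; it simply defers to \cite[p.~40--41]{Sunada}. Your self-contained route via a spanning tree and a rank--nullity count is the standard one, and in fact it anticipates material the paper develops only afterwards: the fundamental circuits $\xi_e$ for $e\in\EE_0^+\setminus\EE_\TT^+$ are exactly the objects used in Lemma~\ref{crycry} and Proposition~\ref{cycy}, and your dimension identity $\dim H_1(\G_0,\R)=|\EE_0^+|-|\VV_0|+1$ is precisely Corollary~\ref{incide}. So what you have written effectively merges the proofs of Lemma~\ref{cy}, Proposition~\ref{cycy}, and Corollary~\ref{incide} into one argument, whereas the paper separates them and outsources the first to Sunada. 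One small cosmetic point: the phrase ``the identification $-e\sim -e$'' is presumably a typo for the identification of the reversed edge $-e$ with the additive inverse of $e$ in $\Cf_1(\G_0,\R)$.
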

    \begin{proof}
        See \cite[p. 40--41]{Sunada}.
    \end{proof}

    \begin{Definition}
        Given $\bar\VV \subset \VV_0$, $\bar \EE \subset \EE_0$, the pair $\bar\G= (\bar\VV,\bar\EE)$ is called a {\it subgraph} of $\G_0$ if
        \begin{eqnarray*}
            e \in \bar\EE &\Rightarrow& \oo(e) \in \bar\VV \\
            e \in \bar \EE & \Rightarrow& - e \in \bar\EE,
        \end{eqnarray*}
        this clearly implies that $\tt(e) \in \bar\VV$, whenever $e \in \bar\EE$. A subgraph is clearly a graph by itself.
    \end{Definition}

    \smallskip

    \begin{Definition}
        A {\em tree} $\TT =(\VV_{\TT},\EE_{\TT})$ in $ \G_0$ is a connected subgraph of $ \G_0$ without nontrivial cycles. A {\em spanning tree} is a subgraph of $\G_0$ which is maximal, with respect to the inclusion, among all the trees that are subgraphs of $\G_0$. Equivalently, it is a tree satisfying $\VV_{\TT}= \VV_0$. Any {connected} graph contains a spanning tree, even if it is in general not unique, see \cite[p. 31]{Sunada}.
    \end{Definition}

    We fix a spanning tree $\TT =(\VV_{\TT},\EE_{\TT})$ in $ \G_0$, and denote by $\EE_{\TT}^+$ an orientation of $\TT$ contained in $\EE_0^+$.

    \smallskip

    \begin{Definition}
        We call {\it simple} a path without repetition of vertices, except possibly the initial and terminal vertices; in other terms $\xi=(e_i)_{i=1}^M$ is simple if
        \begin{equation*}
            \tt(e_i) = \tt(e_j) \, \Rightarrow i=j.
        \end{equation*}
        Note there are finitely many simple paths in $\G_0$. We call {\it circuit}, a simple closed path.
    \end{Definition}

    \smallskip

    {It is easy to deduce the following: }

    \begin{Proposition}\label{cry}
        A path is simple if and only there is no circuit properly contained in it.
    \end{Proposition}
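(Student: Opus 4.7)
The plan is to translate the statement into purely combinatorial information about the sequence of vertices traversed by a path. Writing $\xi=(e_1,\dots,e_M)$, set $v_0:=\oo(e_1)$ and $v_i:=\tt(e_i)$ for $i=1,\dots,M$, so that the definition of simplicity recalled in the excerpt becomes: $\xi$ is simple iff the only possible coincidence $v_i=v_j$ with $i<j$ is the pair $(i,j)=(0,M)$. In the same spirit, a consecutive subpath $\eta=(e_{i+1},\dots,e_j)$ of $\xi$ is automatically closed precisely when $v_i=v_j$, and is properly contained in $\xi$ precisely when $(i,j)\neq(0,M)$.

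For the forward implication, I would argue by contrapositive. Assume a circuit $\eta$ is properly contained in $\xi$; by the above translation, $\eta$ corresponds to indices $0\le i<j\le M$ with $v_i=v_j$ and $(i,j)\neq(0,M)$. This is exactly a forbidden repetition among $v_0,\dots,v_M$, so $\xi$ is not simple.

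For the converse, assume $\xi$ is not simple, so the set
\[
\mathcal R:=\bigl\{(i,j)\ :\ 0\le i<j\le M,\ v_i=v_j,\ (i,j)\neq(0,M)\bigr\}
\]
is nonempty. I would choose $(i_0,j_0)\in\mathcal R$ minimizing $j-i$ and consider the closed subpath $\eta=(e_{i_0+1},\dots,e_{j_0})$. The key claim is that $\eta$ has no internal vertex repetition, i.e.\ no pair $i_0\le k<l\le j_0$ with $v_k=v_l$ other than $(k,l)=(i_0,j_0)$; otherwise such a pair would still lie in $\mathcal R$ (it cannot be $(0,M)$ because it is shorter than $(i_0,j_0)$) and contradict minimality. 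Hence $\eta$ is a circuit, and it is properly contained in $\xi$ because $(i_0,j_0)\neq(0,M)$ forces its length $j_0-i_0$ to be strictly less than $M$.

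The only point requiring a little care, and which is the main potential obstacle, is ruling out that the minimal pair produced in the converse could coincide with $(0,M)$ (which would yield only a non-proper circuit); this is exactly what the exclusion $(i,j)\neq(0,M)$ built into the definition of $\mathcal R$ ensures, together with the observation that any internal repetition within $\eta$ would automatically be a shorter admissible pair. No further ingredients beyond the definitions of simple path, circuit, and proper containment are needed.
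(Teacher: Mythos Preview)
Your argument is correct and is precisely the standard combinatorial proof; the paper itself does not prove this proposition, prefacing it only with ``It is easy to deduce the following''. Your choice of a pair $(i_0,j_0)$ in $\mathcal R$ with minimal gap, together with the observation that any internal repetition would yield a strictly shorter element of $\mathcal R$ (and in particular cannot equal $(0,M)$ since $j_0-i_0<M$), is exactly the expected extraction of a circuit from a non-simple path.
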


    \medskip

    \begin{Lemma}\label{crycry}
        Given $e_0 \in \EE_0 \setminus \EE_{\TT}$, there exists one and only one circuit $\xi$ containing $e_0$ and with all the other edges in $\EE_{\TT}$.
    \end{Lemma}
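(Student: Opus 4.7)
The plan is to reduce everything to the standard fact that in any tree there is a unique simple path joining any two prescribed vertices (which follows from the absence of nontrivial cycles in a tree).

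First I would prove existence. Set $u=\oo(e_0)$ and $v=\tt(e_0)$. Since $\TT$ is a spanning tree, $\VV_\TT=\VV_0$, so both $u,v\in\VV_\TT$. Because $\TT$ has no nontrivial cycles, there is a unique simple path $\eta$ in $\TT$ from $v$ to $u$ (if $u=v$, i.e.\ $e_0$ is a self-loop, take $\eta$ to be the empty path). I would then form $\xi:=(e_0)\cup\eta$. The path $\xi$ is closed by construction, and it is simple: the edges of $\eta$ lie in $\EE_\TT$ while $e_0\notin\EE_\TT$, so no edge is repeated; moreover, the internal vertices of $\eta$ are pairwise distinct and different from $u=v$ by simplicity of $\eta$, hence $\xi$ is a circuit containing $e_0$ with all other edges in $\EE_\TT$.

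Next I would address uniqueness. Suppose $\xi'$ is any circuit containing $e_0$ with all remaining edges in $\EE_\TT$. By cyclically reordering (which does not change the underlying closed path), we may write $\xi'=(e_0)\cup\eta'$, where $\eta'$ is a path from $v$ to $u$ made of edges in $\EE_\TT$. Since $\xi'$ is a circuit, hence simple, no vertex is repeated along $\eta'$ (except possibly its endpoints when $u=v$), so $\eta'$ is a simple path in $\TT$ from $v$ to $u$. The uniqueness of simple paths in the tree $\TT$ forces $\eta'=\eta$, and therefore $\xi'=\xi$.

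The only point that deserves care is the appeal to the uniqueness of simple paths in a tree: if two distinct simple paths from $v$ to $u$ existed in $\TT$, their concatenation (one reversed) would contain a circuit by Proposition~\ref{cry}, hence a nontrivial cycle in $\TT$, contradicting the definition of tree. This, together with the self-loop case handled by the empty $\eta$, is the main (but mild) obstacle; everything else is a direct assembly of the two constructions above.
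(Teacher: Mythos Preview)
Your proof is correct and follows essentially the same approach as the paper's: both construct the circuit by concatenating $e_0$ with the unique simple path in $\TT$ between its endpoints, and both derive uniqueness from the fact that two distinct such circuits would yield a nontrivial cycle in the tree $\TT$. Your version is slightly more explicit (handling the self-loop case and spelling out why the concatenation is simple), but the argument is the same.
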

    \begin{proof}
        Since $\tt(e_0)$ and $\oo(e_0)$ are vertices of $\TT$, which is connected, there is a path $\xi$ in $\TT$ linking them, that must be simple by Proposition \ref{cry}. Then, the circuit $\zeta = \xi \cup e_0$ satisfies the assertion. If there were another circuit $\zeta_1 \ne \zeta$ satisfying the assertion as well, then we could link $\tt(e_0)$ to $\oo(e_0)$ through $\zeta$ and go back from $\oo(e_0)$ to $\tt(e_0)$ following $- \zeta_1$. In this way, we would construct a nontrivial cycle contained in $\TT$, contradicting the property of $\TT$ being a tree.
    \end{proof}

    \smallskip

    \begin{Definition}
        We call {\it fundamental} the circuits appearing in the previous statement.
    \end{Definition}

    \smallskip

    \begin{Remark}\label{electricity}
        There is an interpretation of the fundamental circuits in terms of electricity. In $\TT$ there is no flow of electricity since there are no nontrivial circuits. However, because of the maximality of $\TT$, the addition of any edge outside $\TT $ allows closing a uniquely determined circuit, along which electricity will flow.
    \end{Remark}

    \smallskip

    We define a map $\theta$ associating to any $e \in\EE^+_0\setminus \EE_{\TT}$ the incidence vector of the circuit provided by the Lemma \ref{crycry}, and $0$ to any $e \in\EE_{\TT}$. Taking into account Lemma \ref{cy}, we can extend it by linearity to a map
    \begin{equation*}
        \theta: \Cf_1(\G_0,\R) \to H_1(\G_0,\R).
    \end{equation*}

    The following key fact holds true, see \cite[Section 5.5]{Sunada}.

    \begin{Proposition}\label{cycy}
        The map $\theta$ defined above is surjective and, in addition, the incidence vectors $\{\theta(e)\}_{e \in \EE_0^+ \setminus \EE^+_{\TT}}$ form a basis of $H_1(\G_0,\R)$.
    \end{Proposition}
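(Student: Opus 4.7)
The plan is to exploit the fact that each $\theta(e)$ for $e \in \EE_0^+ \setminus \EE_\TT$ has a very structured form: by construction (Lemma \ref{crycry}), the fundamental circuit through $e$ uses $e$ exactly once and all its other edges lie in $\EE_\TT$. Consequently, expanding in the basis $\EE_0^+$ of $\Cf_1(\G_0,\R)$, the coefficient of $e$ in $\theta(e)$ equals $1$, while the coefficient of any other $e' \in \EE_0^+ \setminus \EE_\TT$ equals $0$. This triangular structure will give both linear independence and a concrete algorithm for surjectivity.

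For linear independence I would argue as follows. Suppose $\sum_{e \in \EE_0^+ \setminus \EE_\TT} c_e \, \theta(e) = 0$ in $\Cf_1(\G_0,\R)$. Fix $e' \in \EE_0^+ \setminus \EE_\TT$ and read off the coefficient of $e'$ on both sides: by the observation above only the term $c_{e'} \theta(e')$ contributes an $e'$-component, and it contributes exactly $c_{e'}$. Hence $c_{e'} = 0$ for every $e' \in \EE_0^+ \setminus \EE_\TT$.

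For surjectivity I would first note that $\theta(e) \in H_1(\G_0,\R)$ for every $e$, since $\theta(e)$ is the incidence vector of a circuit and hence lies in $\ker \partial$. By Lemma \ref{cy}, it therefore suffices to show that the incidence vector $[\xi]$ of any cycle $\xi$ lies in the span of $\{\theta(e)\}_{e \in \EE_0^+ \setminus \EE_\TT}$. Writing $[\xi] = \sum_{e \in \EE_0^+} a_e \, e$ with $a_e \in \R$, I would consider
\begin{equation*}
    v \;:=\; [\xi] - \sum_{e \in \EE_0^+ \setminus \EE_\TT} a_e \, \theta(e).
\end{equation*}
By the triangular property used above, $v$ has vanishing coefficient on every $e \in \EE_0^+ \setminus \EE_\TT$, so $v$ is supported on $\EE_\TT^+$. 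On the other hand $v \in H_1(\G_0,\R)$ because both $[\xi]$ and each $\theta(e)$ lie in this subspace. Thus $v$ is a closed $1$-chain concentrated on the tree $\TT$.

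The final, and in my view the only delicate, step is showing that such a $v$ must vanish. Here I would invoke Lemma \ref{cy} applied to the subgraph $\TT$: since $\TT$ contains no nontrivial cycles by definition, the incidence vectors of its cycles are all zero, and so $H_1(\TT,\R) = 0$. Because $\partial$ computed in $\TT$ agrees with the restriction of $\partial$ computed in $\G_0$ to chains supported on $\TT$, membership $v \in H_1(\G_0,\R)$ with $\operatorname{supp} v \subset \EE_\TT^+$ forces $v \in H_1(\TT,\R) = 0$. This yields $[\xi] = \sum_{e \in \EE_0^+ \setminus \EE_\TT} a_e \, \theta(e)$, proving surjectivity and simultaneously that the family $\{\theta(e)\}_{e \in \EE_0^+ \setminus \EE_\TT}$ spans $H_1(\G_0,\R)$. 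Combined with the independence step, this gives the basis claim.
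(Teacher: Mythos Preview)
Your proof is correct and is the standard argument for this classical fact: the ``triangular'' structure of the fundamental circuits (each $\theta(e)$ has coefficient $1$ on $e$ and $0$ on every other edge outside $\EE_\TT$) gives linear independence immediately, and the subtraction trick reduces surjectivity to the vanishing of $H_1(\TT,\R)$, which follows from the definition of tree together with Lemma~\ref{cy}.

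The paper itself does not prove this proposition; it simply refers to \cite[Section 5.5]{Sunada}. Your self-contained argument is exactly what one would find expanded there, so there is nothing to compare in terms of approach.
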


    This implies:

    \begin{Corollary}\label{incide}
        The dimension of $H_1(\G_0,\R)$ is given by
        \begin{equation*}
            |\EE_0^+| - |\VV_0| +1 = \frac12 |\EE_0| - |\VV_0| +1.
        \end{equation*}
    \end{Corollary}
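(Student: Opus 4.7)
The corollary is a direct consequence of Proposition \ref{cycy} together with a standard counting property of spanning trees. My plan is the following.

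First, by Proposition \ref{cycy}, the family $\{\theta(e)\}_{e \in \EE_0^+ \setminus \EE_{\TT}^+}$ is a basis of $H_1(\G_0,\R)$, so immediately
\begin{equation*}
    \dim H_1(\G_0,\R) \;=\; |\EE_0^+ \setminus \EE_{\TT}^+| \;=\; |\EE_0^+| - |\EE_{\TT}^+|.
\end{equation*}
Thus the only nontrivial content to establish is the tree-counting identity $|\EE_{\TT}^+| = |\VV_{\TT}| - 1$, which combined with $\VV_{\TT} = \VV_0$ will yield the claim. The second equality in the statement is just the definition $|\EE_0^+| = \tfrac12 |\EE_0|$ coming from the fact that the orientation $\EE_0^+$ selects exactly one edge per pair $\{e,-e\}$.

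To prove the tree identity I would argue by induction on $|\VV_{\TT}|$. The base case $|\VV_{\TT}|=1$ is trivial: a connected graph on a single vertex with no nontrivial cycles has no edges (any edge $e$ would force $\oo(e)=\tt(e)$, and then $e$ together with $-e$ would be a nontrivial cycle), so $|\EE_{\TT}^+|=0=|\VV_{\TT}|-1$. For the inductive step, pick a \emph{leaf} of $\TT$, namely a vertex $x$ such that the star $\EE_{\TT,x}$ contains exactly one oriented pair $\{e,-e\}$; such a vertex exists because otherwise every vertex would have at least two distinct incident oriented pairs, and a standard walk argument (start at any vertex and keep moving along a fresh edge at each step) would produce a circuit, contradicting the absence of nontrivial cycles guaranteed by the tree property. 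Removing $x$ and its unique incident pair yields a subgraph $\TT'$ that is still connected (the removed edge was a pendant) and still has no nontrivial cycles, hence a tree with $|\VV_{\TT}|-1$ vertices. Applying the inductive hypothesis gives $|\EE_{\TT'}^+| = |\VV_{\TT}|-2$, so $|\EE_{\TT}^+| = |\VV_{\TT}| - 1$.

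Combining, one gets
\begin{equation*}
    \dim H_1(\G_0,\R) \;=\; |\EE_0^+| - (|\VV_0|-1) \;=\; |\EE_0^+| - |\VV_0| + 1 \;=\; \tfrac12|\EE_0| - |\VV_0| + 1,
\end{equation*}
which is the claim. The only real obstacle is the existence of a leaf, which can alternatively be bypassed by simply quoting the standard result (attributed e.g.\ to Sunada, as done elsewhere in the paper) that a finite tree on $n$ vertices has $n-1$ unoriented edges; with the counting convention here this becomes $|\EE_{\TT}^+|=|\VV_{\TT}|-1$ and the corollary follows immediately.
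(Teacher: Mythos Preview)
Your proof is correct and follows essentially the same route as the paper: apply Proposition~\ref{cycy} to obtain $\dim H_1(\G_0,\R) = |\EE_0^+| - |\EE_{\TT}^+|$, then use the tree identity $|\EE_{\TT}^+| = |\VV_0|-1$, which the paper simply cites from Diestel rather than proving by induction. One small slip in your base case: the path $(e,-e)$ has vanishing incidence vector and is therefore a \emph{trivial} cycle, but the single self-loop $(e)$ alone already has nonzero incidence vector, so the argument stands.
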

    \begin{proof}
        We deduce from the previous proposition that
        \begin{equation*}
            \dim \, H_1(\G_0,\R) = |\EE_0^+| - | \EE^+_{\TT}|.
        \end{equation*}
        Our claim then follows from the identity $|\EE_{\TT}^+|=|\VV_0|-1$, which is well known in graph theory, see, e.g., \cite[Corollary 1.5.3]{Diestel17}.
    \end{proof}

    \smallskip

    \begin{Definition}
        $b(\G_0):= \frac{|\EE_0|}2 - |\VV_0| +1$ is called the Betti number of $\G_0$.
    \end{Definition}

    \begin{Remark}\label{Euler}
        $b(\G_0)$ can be also characterized in a combinatorial way via the so-called {\it cyclomatic number} introduced by Kirchhoff (see \cite[Formula (4.3)]{Sunada}). This is a graph version of the famous formula due to Euler (1750) which asserts that if $v$, $e$, $f$ are the number of vertices, edges and faces of a convex polyhedron, respectively, then $v - e + f = 2$.\\
        More generally, if $\G_0$ were not connected, one could show that $b(\G_0)$ is equal to $\frac{1}{2}|{ \EE_0}| - |{ \VV_0}| + {\mathbf c}(\G_0)$, where ${\mathbf c}(\G_0)$ denotes the number of connected components of $\Gamma_0$.
    \end{Remark}

    \begin{Definition}
        Given a path $\xi$ contained in $\G_0$, we call $\theta([\xi])$ the {\it rotation vector} of $\xi$.
    \end{Definition}

    \begin{Remark}
        Taking into account Remark \ref{electricity}, one can also interpret $\theta([\xi])$ as the number of different fundamental circuits one closes going through $\xi$. Since orientation is also taken into account, this yields an integer vector with dimension equal to the number of edges of $\EE_0^+$ not belonging to the maximal tree $\TT$.
    \end{Remark}

    \medskip

    One can prove the following:

    \begin{Lemma}
        If $\xi$ is a cycle, then its incidence and rotation vector coincide. Namely, $\theta$ is the identity when restricted $H_1(\G_0,\R)$.
    \end{Lemma}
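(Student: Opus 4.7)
The plan is to show that $\theta$ restricted to $H_1(\G_0,\R)$ is the identity, which is the second (and equivalent) formulation of the statement; once this is established, the first statement follows immediately since for a cycle $\xi$ one has $\partial[\xi] = \tt(\xi) - \oo(\xi) = 0$, so $[\xi] \in H_1(\G_0,\R)$, and then $\theta([\xi]) = [\xi]$.

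The key structural observation is that for each $e \in \EE_0^+ \setminus \EE_\TT^+$, the fundamental circuit associated with $e$ by Lemma \ref{crycry} contains $e$ exactly once and all of its other edges lie in $\EE_\TT$. Consequently, when one reads off its incidence vector, the coefficient of $e$ is $+1$, while all other non-tree edges have coefficient $0$. In symbols,
\begin{equation*}
\theta(e) = e + t_e, \qquad t_e \in \Cf_1(\TT,\R),
\end{equation*}
for every $e \in \EE_0^+ \setminus \EE_\TT^+$, and $\theta(e) = 0$ for $e \in \EE_\TT^+$ by definition.

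Now I would take an arbitrary $v = \sum_{e \in \EE_0^+} v_e \, e \in \Cf_1(\G_0,\R)$ and compute, using linearity,
\begin{equation*}
\theta(v) \;=\; \sum_{e \in \EE_0^+ \setminus \EE_\TT^+} v_e \, \theta(e) \;=\; \sum_{e \in \EE_0^+ \setminus \EE_\TT^+} v_e \, e \;+\; \sum_{e \in \EE_0^+ \setminus \EE_\TT^+} v_e \, t_e.
\end{equation*}
The second summand lies in $\Cf_1(\TT,\R)$, so the difference $v - \theta(v)$ has the coefficient of every non-tree edge equal to zero; that is, $v - \theta(v) \in \Cf_1(\TT,\R)$.

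Specialize now to $v \in H_1(\G_0,\R) = \ker\partial$. The image of $\theta$ lies in $H_1(\G_0,\R)$ by construction, so $\partial\theta(v) = 0$ and hence $v - \theta(v) \in \ker\partial \cap \Cf_1(\TT,\R)$. The final step, which is the only piece that needs any genuine argument, is to observe that $\ker\partial \cap \Cf_1(\TT,\R) = H_1(\TT,\R) = 0$: this follows from the dimension count already used in the proof of Corollary \ref{incide}, since applying it to the connected subgraph $\TT$ (which satisfies $|\EE_\TT^+| = |\VV_0|-1$ and has no nontrivial cycles) yields $\dim H_1(\TT,\R) = (|\VV_0|-1) - |\VV_0| + 1 = 0$. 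Therefore $v = \theta(v)$, which is the claim. The only non-routine point is pinning down this last fact, but it is immediate from the basic identity relating the number of edges and vertices of a tree.
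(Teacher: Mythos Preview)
Your proof is correct, and it follows a somewhat different route from the paper's. The paper observes that $\theta^2(e)=\theta(e)$ for every $e\in\EE_0^+\setminus\EE_\TT$ (which is immediate from the same structural fact you isolate, namely that $\theta(e)-e$ lies in $\Cf_1(\TT,\R)$ and $\theta$ kills tree edges), then invokes Proposition~\ref{cycy} to write any $[\xi]\in H_1(\G_0,\R)$ as $\sum_e l_e\,\theta(e)$ and applies $\theta$ term by term. You instead bypass the basis statement of Proposition~\ref{cycy}: you show directly that $v-\theta(v)\in\Cf_1(\TT,\R)$ for \emph{every} $v\in\Cf_1(\G_0,\R)$, and then appeal to $H_1(\TT,\R)=0$ to conclude. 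Your approach is a little more self-contained in that it does not need the $\{\theta(e)\}$ to form a basis, only that $\theta$ lands in $H_1$; the paper's approach is marginally slicker once Proposition~\ref{cycy} is in hand, since idempotence of a linear map on a spanning set immediately gives the identity on the span. One small remark: rather than citing the dimension formula of Corollary~\ref{incide} for $H_1(\TT,\R)=0$, it is cleaner to invoke Lemma~\ref{cy} directly---$H_1(\TT,\R)$ is spanned by incidence vectors of cycles in $\TT$, and a tree has none that are nontrivial.
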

    \begin{proof}
        The claim is a consequence of the fact that
        \begin{equation}\label{cycy01}
            \theta^2(e)=\theta(e) \txt{for any $e \in \EE_0^+ \setminus \EE_{\TT},$}
        \end{equation}
        as it follows from the definition of $\theta$. If $\xi$ is a cycle then $[\xi] \in H_1(\G_0,\R)$ by Lemma \ref{cy}; this implies by Proposition \ref{cycy} that
        \begin{equation*}[\xi]
            =\sum_{e \in \EE_0^+ \setminus \EE_\TT} l_e \theta(e)
        \end{equation*}
        for some integer coefficients $l_e$, which in turn yields by \eqref{cycy01}
        \begin{equation*}
            \theta([\xi]) = \sum_{e \in \EE_0^+ \setminus \EE_\TT} l_e \theta^2(e) = \sum_{e \in\EE^+_0\setminus \EE_\TT} l_e \theta(e) = [\xi].
        \end{equation*}
    \end{proof}

    \smallskip

    \begin{Remark}
        According to Proposition \ref{cycy}, we can fix a basis of $H_1(\G_0,\R)$ formed by $\theta(e)$ with $e$ varying in of $\EE_0^+ \setminus \EE_\TT$. This allows one to identify $H_1(\G_0,\R)$ and $H_1(\G_0,\Z)$, the first homology group with real and integer coefficients, to $\R^{b(\G_0)}$ and $\Z^{b(\G_0)}$, respectively. \\
        From now on, we suppose to have fixed a basis and we assume the above identifications. Same identification will apply for the dual spaces $H^1(\G_0,\R)$, $H^1(\G_0,\Z)$, which are called the {\it cohomology groups} with real and integer coefficients, respectively.
    \end{Remark}

    \medskip

    \begin{Example}\label{graphenebase}
        We present some graphs with their spanning trees in figure \ref{fig:graphenebase}. They are the base graphs of the periodic networks seen in Examples \ref{bouquetexe} and \ref{graphenecryexe}.
        \begin{itemize}
            \item Let $\G_1$ and $\TT_1$ be the graph and the spanning tree defined in figure \ref{fig:graphenebase}. We have that
            \begin{equation*}
                \theta(e_0)=0,\qquad\theta(e_1)=e_1-e_0,\qquad\theta(e_2)=e_2-e_0.
            \end{equation*}
            We know from Proposition \ref{cycy} that $H_1(\Gamma_1,\R)$ is generated by $\theta(e_1)$ and $\theta(e_2)$.

            \item Let $\G_2$ and $\TT_2$ be the graph and the spanning tree defined in figures \ref{subfig:bouquet} and \ref{subfig:bouquettree}. In this case we have that the spanning tree is just a point and $H_1(\G_2,\R)$ is generated by $\theta(f_1)=f_1$ and $\theta(f_2)=f_2$.
        \end{itemize}
    \end{Example}

    \begin{figure}[ht]
        \begin{subcaptiongroup}
            \phantomcaption\label{subfig:graphenebase}
            \includegraphics{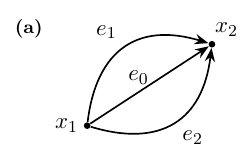}
            \qquad
            \phantomcaption\label{subfig:graphenetree}
            \includegraphics{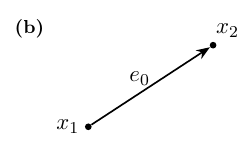}
            \\
            \hspace{-27mm}
            \phantomcaption\label{subfig:bouquet}
            \raisebox{-53.68419pt}{\includegraphics{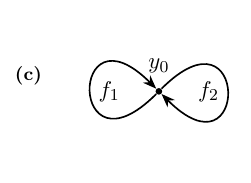}}
            \hspace{7mm}
            \phantomcaption\label{subfig:bouquettree}
            \raisebox{-15.67137pt}{\includegraphics{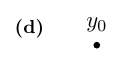}}
        \end{subcaptiongroup}
        \caption{We call $\G_1$, $\G_2$ the (oriented) graphs \subref{subfig:graphenebase}, \subref{subfig:bouquet} and $\TT_1$, $\TT_2$ their spanning trees in \subref{subfig:graphenetree}, \subref{subfig:bouquettree}, respectively.}\label{fig:graphenebase}
    \end{figure}

    \medskip

    \subsection{A representation of the maximal topological crystal}\label{murano}

    We recall that the maximal topological crystal over the base graph is unique up to isomorphism. In this section we describe a particular representative whose structure will be particularly convenient for our analysis.

    \begin{Definition}\label{defstandtopcry}
        We define a graph $\wtd\Gamma:=(\wtd\VV, \wtd\EE)$ with sets of vertices and edges given by
        \begin{equation*}
            \wtd\VV:= \VV_0 \times \Z^{b(\G_0)} \quad{\rm and} \quad \wtd\EE:= \EE_0 \times \Z^{b(\G_0)}
        \end{equation*}
        and the associated maps:
        \begin{eqnarray*}
            \oo: \wtd\EE &\longrightarrow & \wtd\VV \nonumber\\
            (e, h) &\longmapsto& (\oo(e), h),
        \end{eqnarray*}
        and
        \begin{eqnarray}\label{defbartopcry}
            -: \wtd \EE &\longrightarrow &\wtd\EE \nonumber\\
            (e, h) &\longmapsto& (-e, h + \theta(e)).
        \end{eqnarray}
    \end{Definition}

    Since $\theta: \Cf_1(\G_0,\Z) \to \Z^{b(\G_0)}$ is a group homomorphism, the map in \eqref{defbartopcry} is indeed a fixed point involution:
    \begin{eqnarray*}
        -(-(e,h)) &=& - (- e, h + \theta( e)) \\
        &=& (- (- e), h + \theta( e)+ \theta(- e) )= (e,h).
    \end{eqnarray*}
    We also have
    \begin{equation*}
        \tt( e,h)= \oo(- ( e,h)) = \oo(- e,h + \theta( e))= ( \tt( e),h + \theta( e)).
    \end{equation*}

    \smallskip

    \begin{Remark}\label{treecopy}
        Since $\theta \equiv 0$ on $\Cf_1( \TT,\Z)$, we obtain for any edge $e \in \EE_\TT$ and $h \in \Z^{b(\G_0)}$
        \begin{equation*}
            \oo(e,h)= (\oo(e),h) \quad\text{and} \quad \tt(e,h)= (\tt(e),h).
        \end{equation*}
        This shows that for any $h \in \Z^{b(\G_0)}$ the graph
        \begin{equation*}
            \TT \times \{h\} := ( \VV_\TT \times \{ h\}, \EE_\TT \times \{ h\})
        \end{equation*}
        is an isomorphic copy of $ \TT$ in $\wtd\G$.
    \end{Remark}

    \medskip

    \begin{Proposition}
        $\wtd\G = (\wtd\VV, \wtd\EE)$ is a maximal topological crystal over $\Gamma_0$, so it isomorphic to $\Gamma$.
    \end{Proposition}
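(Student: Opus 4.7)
The plan is to exhibit an explicit copy of $\Z^{b(\G_0)}$ inside $\mathrm{Aut}(\wtd\G)$ acting freely on $\wtd\G$ with quotient $\Gamma_0$, and then invoke the uniqueness (up to isomorphism) of the maximal topological crystal over $\Gamma_0$ to conclude $\wtd\G\simeq\G$. For each $k\in\Z^{b(\G_0)}$, I would introduce the translation $\tau_k=(\tau_k^{\VV},\tau_k^{\EE})$ defined by $\tau_k^{\VV}(v,h)=(v,h+k)$ and $\tau_k^{\EE}(e,h)=(e,h+k)$. To check that $\tau_k$ is a graph morphism in the sense of Definition \ref{morph}, I verify the two compatibility identities: the condition on origins is immediate from the definition of $\oo$ on $\wtd\EE$, while compatibility with the involution follows from $-\tau_k^{\EE}(e,h)=(-e,h+k+\theta(e))=\tau_k^{\EE}(-e,h+\theta(e))=\tau_k^{\EE}(-(e,h))$, which holds because the vector $\theta(e)$ does not depend on $k$. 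The map $k\mapsto\tau_k$ is manifestly an injective group homomorphism, so $G:=\{\tau_k\}_{k\in\Z^{b(\G_0)}}\simeq\Z^{b(\G_0)}$.

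Next I would establish that $G$ acts freely in the sense of the definition preceding Definition \ref{deftopcrystruct}. Inversion is impossible because $\tau_k^{\EE}(e,h)=-(e,h)$ would force the equality $e=-e$ in $\EE_0$, contradicting the fixed-point-free involution on $\Gamma_0$. If $\tau_k$ fixes some $(v,h)\in\wtd\VV$ then $h+k=h$, hence $k=0$ and $\tau_k$ is the identity. Identifying the quotient $\wtd\G/G$ with $\Gamma_0$ is then straightforward: orbits of vertices are $\{v\}\times\Z^{b(\G_0)}$, orbits of edges are $\{e\}\times\Z^{b(\G_0)}$, and the induced morphism is the projection $(v,h)\mapsto v$, $(e,h)\mapsto e$, which intertwines origins and involutions on the two sides. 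Since $|\EE_0|$ and $|\VV_0|$ are unchanged under this identification, the Betti number condition in Definition \ref{deftopcrystruct}(ii) is automatic.

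The step requiring a small argument is the verification that $\wtd\G$ is connected, which is needed because all graphs considered in the paper are assumed connected. From any starting vertex $(v_1,h_1)$, Remark \ref{treecopy} allows me to reach every $(w,h_1)$ by traversing the isomorphic copy $\TT\times\{h_1\}$ of the spanning tree. To change the second coordinate I use edges $(e,h)$ with $e\in\EE_0\setminus\EE_\TT$, which by the definition of the involution connect heights $h$ and $h+\theta(e)$. By Proposition \ref{cycy} together with the identification fixed after Example \ref{graphenebase}, the family $\{\theta(e)\}_{e\in\EE_0^+\setminus\EE_\TT^+}$ realizes the chosen integral basis of $\Z^{b(\G_0)}$, so by composing such edges (and tree edges to adjust vertices) I can move between any two prescribed heights. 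Combining these two moves produces a path between any two vertices of $\wtd\G$. Having verified all the requirements in Definition \ref{deftopcrystruct}, the uniqueness statement recalled in Remark \ref{remarkunicityuptoiso} yields $\wtd\G\simeq\G$. The only genuinely nontrivial point is the basis identification used in the connectedness argument, and that is exactly what Proposition \ref{cycy} provides.
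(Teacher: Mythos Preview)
Your proof is correct and follows essentially the same route as the paper: define the $\Z^{b(\G_0)}$-action by translation in the second coordinate, verify the morphism axioms, check freeness, and identify the quotient with $\Gamma_0$. You actually go slightly further than the paper by supplying an explicit connectedness argument for $\wtd\G$, which the paper leaves implicit. One small correction: the uniqueness of the maximal topological crystal over $\Gamma_0$ that you invoke at the end is stated in the (unlabelled) remark immediately following Definition~\ref{deftopcrystruct}, not in Remark~\ref{remarkunicityuptoiso}, which concerns the underlying graph of a network.
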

    \begin{proof}
        We denote $\wtd\G = (\wtd\VV, \wtd\EE)$ as in Definition \ref{defstandtopcry}. For every $h_0 \ne 0 \in \Z^{b(\G_0)}$, we define the maps $\FN^{h_0}_{\wtd\VV}: \wtd\VV \to \wtd\VV$ and $\FN^{h_0}_{\wtd\EE}:\wtd\EE \to\wtd\EE $ by
        \begin{equation*}
            \FN^{h_0}_{\wtd\VV}(x,h) =(x,h+h_0) \quad\text{and}\quad \FN^{h_0}_{\wtd\EE}(e,h)=(e, h+h_0) \quad\text{for any $x \in \VV_0$, \,$e \in \EE_0$, $h \in \Z^{b(\G_0)}$.}
        \end{equation*}
        It is clear that $\FN^{h_0}_{\wtd\VV}$ and $\FN^{h_0}_{\wtd\EE}$ are both bijective. Moreover:
        \begin{eqnarray*}
            \oo(\FN^{h_0}_{\wtd\EE}(e,h)) &=& \oo(e,h+h_0) = \FN^{h_0}_{\wtd\VV}(\oo(e),h)= \FN^{h_0}_{\wtd\VV}(\oo(e,h)) \\
            - \FN^{h_0}_{\wtd\EE}(e,h) &=&-(e, h+h_0)= (-e,h+h_0+\theta(e))= \FN^{h_0}_{\wtd\EE}(-e,h+ \theta(e)) \\
            &=& \FN^{h_0}_{\wtd\EE}(-(e,h)).
        \end{eqnarray*}
        Therefore, $(\FN^{h_0}_{\wtd\VV},\FN^{h_0}_{\wtd\EE})$ is a graph automorphism according to Definition \ref{morph}. If we denote by
        \begin{equation*}
            \wtd G:= \left\{(\FN^{h_0}_{\wtd\VV},\FN^{h_0}_{\wtd\EE}): \; h_0\in \Z^{b(\G_0)}\right\}
        \end{equation*}
        it is easy to verify that $G$ is a group under composition, which is isomorphic to $\Z^{b(\G_0)}$; the isomorphism is given by:
        \begin{eqnarray*}
            \varphi: (\Z^{b(\G_0)},+) &\longrightarrow& (\wtd G, \circ)\\
            h_0 &\longmapsto& (\FN^{h_0}_{\wtd\VV},\FN^{h_0}_{\wtd\EE}).
        \end{eqnarray*}
        To conclude the proof, we need to show that $\wtd G$ acts freely on $\wtd \G$. In fact:
        \begin{itemize}
            \item It is clear from \eqref{defbartopcry} that $\FN^{h_0}_{\wtd\EE} (e,h) \ne -(e,h)$ for every $(e,h)\in \wtd \G$ and $h_0\in \Z^{b(\G_0)}$. So, $G$ acts without inversion.
            \item If for some $h_0\in \Z^{b(\G_0)}$ and $(x,h) \in \wtd \VV $ it happens that $\FN^{h_0}_{\wtd\VV} (e,h) = (e,h)$, then:
            \begin{equation*}
                (e, h+ h_0) = (e, h)
            \end{equation*}
            which implies that $h_0=0$ and therefore $(\FN^{0}_{\wtd\VV},\FN^{0}_{\wtd\EE})$ is the identity.
        \end{itemize}
        Moreover, the quotient graph is isomorphic to $\G_0$. This completes the proof of the assertion.
    \end{proof}

    \medskip

    We identify from now on the maximal topological crystal $\G$ over $\G_0$ with the graph $\wtd\G$ constructed in this section.

    \begin{Notation}\label{pi}
        Given an arbitrary periodic network $\NN =(\VN, \EN)$ with base graph $\G_0$, we fix bijections $\Psi_\VN$, $\Psi_\EN$ relating it to its underlying graph (the maximal crystal) $\G$, see Definition \ref{sottostante}, and set to ease notations
        \begin{equation*}
            \pi(z) = \Psi_\VN(z) \txt{for any $z \in \VN$}.
        \end{equation*}
        Taking into account that $\pi(z) \in \VV_0 \times{\Z}^{b(\G_0)}$, we further define $\pi_1(z)$, $\pi_2(z)$, for $z \in \VN$, as the projection of $\pi(z)$ into $\VV_0$, ${\Z}^{b(\G_0)}$, respectively.
    \end{Notation}

    \medskip

    Given a path $ \xi = ( e_i,h_i)_{1=1}^m$ in $\G$, the concatenation principle reads
    \begin{equation}\label{prepatch}
        \oo( e_{i+1},h_{i+1})=(\tt( e_i),h_i+\theta( e_i))= \tt( e_i,h_i),
    \end{equation}
    or in other terms
    \begin{equation}\label{patch}
        h_{i+1}= h_i +\theta( e_i) \quad\text{and}\quad \oo( e_{i+1})= \tt( e_i) \quad\text{for $i=1,\ldots,m$.}
    \end{equation}
    Therefore $ \xi_0= ( e_i)_{i=1}^m$ is a path in $\G_0$ that we call the {\it projected path} of $\xi$. \\
    Conversely, it is clear from \eqref{patch} that if we take a path $\xi_0= ( e_i)_{i=1}^m$ in $\G_0$ and choose $h \in \Z^{b(\G_0)}$, then there exists one and only one path $\xi$ in $\G$ starting at $(\oo( e_1),h)$ with projection on $\G_0$ equal to $\xi_0$, and it is given by
    \begin{equation}\label{eq:pathlift}
        \xi:= \left ( e_i, h + \sum_{j= 1}^{i} \theta( e_j) \right )_{i=1}^m.
    \end{equation}
    This is called the {\it unique path-lifting property}, see \cite[page 54]{Sunada}. Note that the terminal vertex of $\xi$ is
    \begin{equation}\label{patchbis}
        \tt(\xi)= \left (\tt( \xi_0), h + \theta \left (\sum_{j= 1}^{m} e_j \right ) \right ).
    \end{equation}
    We see from \eqref{patchbis} that $\xi$ is a cycle, not necessarily trivial, in $\G$ if and only if the projected path $\xi_0$ is a trivial cycle in $ \Gamma_0$, namely with $\theta([\xi_0])=0$.

    \smallskip

    We have:

    \begin{Proposition}\label{luppolo}
        The graph $ \G$ does not contain any self-loop, see Definition \ref{loop}.
    \end{Proposition}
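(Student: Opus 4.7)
The plan is to unpack what it means for an edge $(e,h) \in \wtd\EE$ to be a self-loop using the explicit construction of $\wtd\G$ from Definition \ref{defstandtopcry}, and then argue that the two arithmetic conditions that emerge cannot be simultaneously satisfied, thanks to the basis property of fundamental circuits in $H_1(\G_0,\R)$.

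First, I would compute directly from the formulas $\oo(e,h) = (\oo(e),h)$ and $\tt(e,h) = (\tt(e), h + \theta(e))$ that $(e,h)$ is a self-loop in $\wtd\G$ if and only if
\begin{equation*}
    \oo(e) = \tt(e) \quad \text{and} \quad \theta(e) = 0.
\end{equation*}
Notice that neither condition depends on the choice of $h \in \Z^{b(\G_0)}$, so the question reduces purely to a statement about edges of $\G_0$.

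Next, I would argue by contradiction that no $e \in \EE_0$ can satisfy both conditions at once. Suppose $\oo(e) = \tt(e)$; then the single-edge path $e$ is a cycle in $\G_0$ whose incidence vector $[e] = e$ is a nonzero element of $\Cf_1(\G_0,\R)$, hence a nontrivial cycle. Since $\TT$ is a tree and contains no nontrivial cycles, necessarily $e \notin \EE_\TT$. Up to replacing $e$ by $-e$ (which flips the sign of $\theta$ by linearity, so does not affect whether $\theta(e)=0$), we may assume $e \in \EE_0^+ \setminus \EE_\TT^+$. But then Proposition \ref{cycy} asserts that $\theta(e)$ is a member of a basis of $H_1(\G_0,\R)$, so in particular $\theta(e) \ne 0$, contradicting the assumption $\theta(e)=0$. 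This concludes the argument.

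There is no real obstacle here: once the self-loop condition has been translated via the definition of $\wtd\G$, the result is an immediate consequence of the fact, already available in the paper, that the fundamental circuit incidence vectors form a basis of the first homology group. The only mild subtlety is handling orientations (ensuring that one can reduce to $e \in \EE_0^+ \setminus \EE_\TT^+$), which is settled by the linearity of the extension of $\theta$ to $\Cf_1(\G_0,\R)$.
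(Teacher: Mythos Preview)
Your proof is correct and follows essentially the same route as the paper's: both reduce the self-loop condition in $\wtd\G$ to the pair of conditions $\oo(e)=\tt(e)$ and $\theta(e)=0$ on $\G_0$, and then derive a contradiction from the fact that $\theta$ vanishes precisely on $\EE_\TT$ while trees contain no self-loops. The only difference is the direction of the contrapositive: the paper argues $\theta(e)=0 \Rightarrow e \in \EE_\TT \Rightarrow e$ is not a self-loop, whereas you argue $e$ is a self-loop $\Rightarrow e \notin \EE_\TT \Rightarrow \theta(e)\ne 0$ via Proposition~\ref{cycy}; your version is slightly more explicit about why $\theta$ is nonzero off the spanning tree.
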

    \begin{proof}
        Assume by contradiction that the assertion is false, then, according to \eqref{patchbis}, the projection of a self-loop in $\G$ is a self-loop in $\G_0$, denoted by $e_0$. Then, it follows from the definition of $\theta$, that we must have $\theta(e_0)=0$, which would $e_0 \in \EE_\TT$, which is impossible since $T$ is a tree.
    \end{proof}

    \smallskip

    \begin{Remark}\label{periodicgraph}
        Taking in mind Remark \ref{treecopy}, we see that $\G$ can be thought as composed by infinite copies of $\TT$, indexed by $h\in \Z^{b(\G_0)}$ and linked together by the lifts of the edges $e\in\EE_0\setminus\EE_\TT$. {The $\Z^{b(\G_0)}$ components of the elements of $\Gamma$ are related to which different copies of $\TT$ these elements belong to; in particular, thanks to the unique path-lifting property \eqref{eq:pathlift} and Proposition \ref{cycy}, the difference in these components indicates how these copies are connected to each other.} Namely a path $\xi$ in $\G$, with projection $\xi_0$ in $\G_0$, links a vertex in $\TT\times\{h\}$ to one in $\TT\times\{h'\}$ if and only if $h'=h+\theta([\xi_0])$.
    \end{Remark}

    Remark \ref{periodicgraph} provides a simple way to identify a vertex in $\G$ using its position relative to a given reference vertex, as it can be seen in the example below.

    \begin{Example}
        Let $\G'_1$ be the maximal topological crystal of the graph $\G_1$ given in figure \ref{fig:graphene}. We recall that in Example \ref{graphenebase} it is given a spanning tree $\TT_1$ of $\G_1$ and a basis of $H_1(\G_1,\R)$. It is apparent, in view of Remark \ref{periodicgraph}, that $\G'_1$ is made of infinite copies of the spanning tree $\TT_1$ linked together by the lifts of the edges $e_1$, $e_2$.\\
        Now let $\wha x$ and $\wha y$ be as in figure \ref{fig:graphene}. In view of figure \ref{subfig:graphenelift} there is an $h\in H_1(\G_1,\Z)$ such that $\wha x=(x_1,h)$. Then it is easy to check, using the unique path-lifting property, that
        \begin{equation*}
            \wha y=(x_2,h+2\theta(e_2)-3\theta(e_1)).
        \end{equation*}
    \end{Example}

    \smallskip

    \begin{figure}[ht]
        \begin{subcaptiongroup}
            \phantomcaption\label{subfig:graphenebasecov}
            \raisebox{-39.2475pt}{\includegraphics{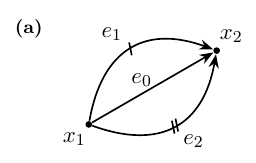}}
            \qquad
            \phantomcaption\label{subfig:graphenelift}
            \raisebox{-54.81616pt}{\includegraphics{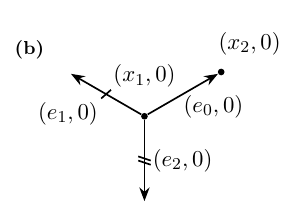}}
            \\
            \phantomcaption\label{subfig:graphene}
            \includegraphics{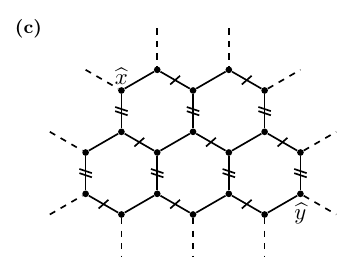}
        \end{subcaptiongroup}
        \caption{In \subref{subfig:graphene} is represented a maximal topological graph $\wha\G_1$ of the graph $\G_1$ in \subref{subfig:graphenebasecov}. In \subref{subfig:graphenelift} it is shown a lift of $\G_1$.}\label{fig:graphene}
    \end{figure}

    \bigskip

    \section{Discretization of the problem: from the network to the base graph}

    One of the main idea behind our strategy consists in {\it discretizing} the problem, posing it on the base graph associated to the network. In order to do this, we need to transfer the Hamiltonians and the Lagrangians to discrete objects $\{\HH(e, \cdot)\}_{e \in \EE_0}$ and $\{\LL(e, \cdot)\}_{e \in \EE_0}$ defined on the base graph $\G_0$, extrapolating all the information that are relevant to our problem. Our aim is to study the corresponding average action functional on $\G_0$, in particular its asymptotic behaviour as time goes to infinity.

    \medskip

    \subsection{Hamiltonians and Lagrangians on \texorpdfstring{$\G_0$}{Γ₀}}

    \begin{Notation}
        Let $\Psi_\EN: \EN \to \EE$ one of the bijections relating $\NN$ to its underlying graph $\G$, see Notation \ref{pi}. In order to ease notation, given $e \in \EE_0$, we denote by $H_e$ the Hamiltonian $H_\ga$ associated to the arcs $\ga$ such that
        \begin{equation*}
            \Psi_\EN(\ga)= (e,h) \txt{for some $h \in \Z^{b(\G_0)}$.}
        \end{equation*}
        The Hamiltonian does not depend on $h \in \Z^{b(\G_0)}$ because of the $G$-periodicity of $H$, see assumption {\bf (H$_\ga$4)}. Similarly, we use the same convention for Lagrangian $L_e$.
    \end{Notation}

    \smallskip

    Let us introduce the following {\it critical values}: for any $e \in \EE_0$, we set:
    \begin{eqnarray}
        a_e &:=& \max_{s\in[0,1]}\min_{\rho \in \R} H_e(s,\rho)\label{a00} \\
        a_0 &:=& \max_{e \in \EE_0} a_e.
    \end{eqnarray}

    \smallskip

    \begin{Remark}\label{minus}
        It is clear that
        \begin{equation*}
            a_e= a_{-e} \qquad\text{for any $e \in \EE_0$.}
        \end{equation*}
    \end{Remark}

    \medskip

    Given $a \in [a_e, + \infty)$, we set
    \begin{equation}\label{deffi}
        \si^+_a(e,s)= \max \{\rho \mid H_e(s,\rho)=a\} \qquad{\rm and} \qquad \quad \si(e,a) := \int_0^1 \si^+_{a}(e,s) \, ds.
    \end{equation}
    It is apparent that
    \begin{equation*}
        \si^+_a(-e,s) := - \min \{\rho \mid H_e(1-s,\rho)=a\}.
    \end{equation*}

    \medskip

    Before defining the discrete Hamiltonian $\HH(e,\cdot)$'s, we need a preliminary result.

    \begin{Lemma}\label{newborn}
        The function $a \mapsto \si(e,a)$ from $[a_e, + \infty)$ to $\R$ is continuous and strictly increasing. In addition, it is strictly concave, of class $C^1$ in $(a_e,+\infty)$ and satisfies
        \begin{equation*}
            \lim_{a \to + \infty} \frac{\si(e,a)}a =0.
        \end{equation*}
    \end{Lemma}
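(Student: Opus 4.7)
The plan is to reduce all four claims about $a \mapsto \si(e,a)$ to analogous pointwise claims for the function $a \mapsto \si^+_a(e,s)$ at each fixed $s \in [0,1]$, then pass through the integral.

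\emph{Step 1 (pointwise analysis).} For $a > a_e$ and any $s \in [0,1]$, strict convexity and $C^1$ regularity of $\rho \mapsto H_e(s,\rho)$ give a unique minimizer $\rho^*_s$, and since $a > a_e \ge \min_\rho H_e(s,\rho)$, the level set $\{H_e(s,\rho)=a\}$ consists of exactly two points, the larger being $\si^+_a(e,s) > \rho^*_s$. On this upper branch $\partial_\rho H_e(s,\si^+_a(e,s)) > 0$, so the implicit function theorem yields joint continuity of $(s,a) \mapsto \si^+_a(e,s)$ on $[0,1] \times (a_e,+\infty)$ together with
\[
\partial_a \si^+_a(e,s) \;=\; \frac{1}{\partial_\rho H_e(s,\si^+_a(e,s))} \;>\; 0.
\]
Strict convexity makes $\rho \mapsto \partial_\rho H_e(s,\rho)$ strictly increasing on $(\rho^*_s,+\infty)$; increasing $a$ strictly increases $\si^+_a(e,s)$, hence strictly decreases $\partial_a\si^+_a(e,s)$. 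So $a \mapsto \si^+_a(e,s)$ is strictly increasing and strictly concave on $(a_e,+\infty)$ for each fixed $s$.

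\emph{Step 2 (pass to the integral).} Strict monotonicity and strict concavity transfer to $\si(e,a)$ by integrating the strict pointwise inequalities over $s$. For $C^1$ regularity on $(a_e,+\infty)$, on any compact subinterval $[a_1,a_2]\subset(a_e,+\infty)$ the map $(s,a)\mapsto \partial_a\si^+_a(e,s)$ is continuous on the compact rectangle $[0,1]\times[a_1,a_2]$ and thus uniformly bounded, which justifies differentiation under the integral sign and gives $\partial_a\si(e,a)=\int_0^1 \partial_a\si^+_a(e,s)\,ds$, continuous in $a$.

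\emph{Step 3 (continuity at $a=a_e$).} I would handle this separately because at those $s$ with $\min_\rho H_e(s,\cdot)=a_e$ the implicit function theorem degenerates (the upper and lower branches collide). The argument is instead monotonic: for each fixed $s$, $a \mapsto \si^+_a(e,s)$ is increasing on $[a_e,+\infty)$, and as $a \downarrow a_e$ one has $\si^+_a(e,s) \downarrow \si^+_{a_e}(e,s)$ (either by Step 1 when $\min_\rho H_e(s,\cdot)<a_e$, or because the two roots both converge to $\rho^*_s$ when $\min_\rho H_e(s,\cdot)=a_e$). Since on $[a_e,a_e+1]$ the family $\si^+_a(e,\cdot)$ is dominated by $\si^+_{a_e+1}(e,\cdot) \in L^1([0,1])$, monotone (or dominated) convergence gives $\si(e,a) \to \si(e,a_e)$.

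\emph{Step 4 (asymptotic limit).} By the superlinearity in \textbf{(H$_\ga$3)}, for every $M>0$ there exists $C_M$ with $H_e(s,\rho)\ge M\rho - C_M$ for all $(s,\rho)\in[0,1]\times[0,+\infty)$. Comparing values of $H_e$ on $[0,1]\times[0,R]$ shows $\si^+_a(e,s)\to +\infty$ uniformly in $s$ as $a\to +\infty$, so eventually $\si^+_a(e,s)\ge 0$, and then
\[
a \;=\; H_e(s,\si^+_a(e,s)) \;\ge\; M\,\si^+_a(e,s) - C_M,
\]
giving $\si^+_a(e,s) \le (a+C_M)/M$ uniformly in $s$. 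Integrating and sending $a \to +\infty$ then $M \to +\infty$ yields $\lim_{a\to+\infty}\si(e,a)/a = 0$.

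The main obstacle is the delicate behavior at the left endpoint $a=a_e$: the implicit function machinery which drives the interior regularity breaks down precisely at the $s$ achieving the outer maximum in \eqref{a00}, and one has to recover continuity through the monotonicity-plus-domination argument sketched in Step 3 rather than through an IFT-based estimate.
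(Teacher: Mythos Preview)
Your proof is correct and follows essentially the same strategy as the paper: establish the relevant properties pointwise for $a \mapsto \si^+_a(e,s)$ and then integrate, obtaining the same derivative formula $\partial_a\si^+_a(e,s)=1/\partial_\rho H_e(s,\si^+_a(e,s))$. The only minor differences are cosmetic --- the paper derives this formula via the mean value theorem rather than the implicit function theorem, proves strict concavity by checking the defining inequality directly rather than via monotonicity of the derivative, and obtains the limit $\si(e,a)/a\to 0$ by contradiction rather than by your direct superlinearity bound.
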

    \begin{proof}
        The monotonicity directly follows from the definition; the continuity is obtained exploiting the fact that $a \mapsto \si^+_a(e,s)$ is continuous and using the dominated convergence theorem to bring the limit inside the integral. By the strict convexity assumption on $H_e$, we derive for any $s \in [0,1]$, $\rho_0 \in (0,1)$, $b > a$ in $[a_e,+\infty)$:
        \begin{eqnarray}
            H_e\left (s, \si^+_{(1-\rho_0)a +\rho_0 b}(e,s) \right ) &=& (1-\rho_0) \, a +\rho_0 \, b = (1-\rho_0) \, H_e(s,\si^+_a(e,s)) +\rho_0 \,H_e(s, \si^+_b(e,s)) \nonumber\\
            &>& H_e(s, (1-\rho_0) \, \si^+_a(e,s)+\rho_0\, \si^+_b(e,s)).\label{newborn0}
        \end{eqnarray}
        Taking into account that $H_e(s,\cdot)$ is increasing in the interval $(\si^+_a(e,s), + \infty)$, the inequality in \eqref{newborn0} yields
        \begin{equation*}
            \si^+_{(1-\rho_0)a +\rho_0 b}(e,s) > (1-\rho_0) \, \si^+_a(e,s)+\rho_0\, \si^+_b(e,s).
        \end{equation*}
        By integrating the above relation over $[0,1]$, we finally get
        \begin{equation*}
            \si(e,(1-\rho_0)a +\rho_0 b) > (1-\rho_0) \,\si(e,a)+\rho_0\, \si(e,b),
        \end{equation*}
        which accounts for the strictly concave character of $\si(e,\cdot)$. We proceed showing that $ a \mapsto \si^+_a(e,s)$ is differentiable in $(a_e,+\infty)$ for any $s \in [0,1]$. Given $s \in [0,1]$ and an increment $r \in \R$, we have by Lagrange Theorem
        \begin{equation*}
            H_e(s,\si^+_{a+r}(e,s)) - H_e(s,\si^+_{a}(e,s)) = \frac \partial{\partial \rho} H_e(s,\rho_0) \, \big ( \si^+_{a+r}(e,s) - \si^+_{a}(e,s) \big )
        \end{equation*}
        for a suitable $\rho_0 \in [\si^+_{a}(e,s), \si^+_{a+r}(e,s)]$. Taking into account that $\frac \partial{\partial \rho} H_e(s,\rho_0) > 0$, we derive that
        \begin{equation*}
            \frac{\si^+_{a+r}(e,s) - \si^+_{a}(e,s)}r = \frac 1{\frac \partial{\partial \rho} H_e(s,\rho_0)}
        \end{equation*}
        and, sending $r$ to $0$, we get
        \begin{equation*}
            \lim_{r \to 0} \frac{\si^+_{a+r}(e,s) - \si^+_{a}(e,s)}r = \frac 1{\frac \partial{\partial \rho} H_e(s,\si^+_a(e,s))}.
        \end{equation*}
        Then, we prove that $a \mapsto \si(e,a)$ is of class $C^1$ by differentiating under the integral sign. Finally, to show the limit relation in the statement, we exploit the uniform superlinearity assumption on $H_e$, which reads
        \begin{equation}\label{newborn1}
            \lim_{r \to + \infty} \min \left \{ \frac{H_e(s,\rho)}\rho \mid \rho > r, \, s \in [0,1] \right \} = + \infty.
        \end{equation}
        It is clear from the definitions in \eqref{deffi} and the superlinearity of $H_e(s,\cdot)$ that $\frac{\si(e,a)}a$ is positive for $a$ large enough. Assume by contradiction that there exist a sequence $a_n \to + \infty$ and a positive $M$ with
        \begin{equation*}
            \lim_n \frac{\si (e,a_n)}{a_n} > M.
        \end{equation*}
        Consequently, for any $n$, there are $s_n \in [0,1]$, $\rho_n \in \R$ with
        \begin{equation*}
            H_e(s_n,\rho_n)=a_n \quad\text{and} \quad \frac{ \rho_n }{a_n}> M;
        \end{equation*}
        we deduce that
        \begin{equation*}
            \rho_n \to + \infty \quad\text{and} \quad \frac{H_e(s_n,\rho_n)}{\rho_n} < \frac 1M,
        \end{equation*}
        which is in contradiction with \eqref{newborn1}.
    \end{proof}

    \smallskip

    \begin{Definition}
        For every $e\in \EE_0$, we define the {\it discrete Hamiltonian} $\HH(e,\cdot)$ associated to $H_e$ as the inverse, with respect to the composition, of the map $a \mapsto \si(e,a)$:
        \begin{equation*}
            \HH(e, \cdot): [b_e,+ \infty) \to [a_e,+\infty),
        \end{equation*}
        where $b_e= \si(e,a_e)$.
    \end{Definition}

    We directly derive from Lemma \ref{newborn}:

    \begin{Proposition}\label{acca}
        $\HH(e, \cdot)$ is of class $C^1$ in $(b_e,+\infty)$, strictly increasing, strictly convex and superlinear.
    \end{Proposition}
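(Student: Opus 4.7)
The proposition asserts four properties of $\HH(e,\cdot)$: $C^1$ regularity on $(b_e,+\infty)$, strict monotonicity, strict convexity, and superlinearity. All four are standard consequences of the corresponding properties of $\si(e,\cdot)$ established in Lemma \ref{newborn}, passed through the inverse-function correspondence $\HH(e,\si(e,a))=a$. The plan is to verify each property in turn, after a preliminary remark ensuring that $\HH(e,\cdot)$ is genuinely defined on $[b_e,+\infty)$, i.e.\ that $\si(e,\cdot)$ is surjective onto $[b_e,+\infty)$.

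For the preliminary remark, I would observe that by the superlinearity of $H_e$ in $\rho$, for every fixed $s\in[0,1]$ the map $\rho\mapsto H_e(s,\rho)$ is continuous, strictly convex and tends to $+\infty$ as $|\rho|\to+\infty$, so $\si^+_a(e,s)\to+\infty$ as $a\to+\infty$; by monotone convergence this forces $\si(e,a)\to+\infty$. Combined with the continuity and strict monotonicity already proved in Lemma \ref{newborn}, this yields that $\si(e,\cdot)\colon[a_e,+\infty)\to[b_e,+\infty)$ is a homeomorphism, so $\HH(e,\cdot)$ is well defined on $[b_e,+\infty)$ and strictly increasing.

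Next, for the $C^1$ property on $(b_e,+\infty)$, I would first argue that $\partial_a\si(e,a)>0$ throughout $(a_e,+\infty)$: indeed, $\si(e,\cdot)$ is strictly concave and strictly increasing, so if its derivative vanished at some $a_0>a_e$, concavity would force $\partial_a\si(e,a)<0$ for $a>a_0$, contradicting monotonicity. With $\partial_a\si(e,\cdot)>0$ on $(a_e,+\infty)$, the inverse function theorem gives that $\HH(e,\cdot)$ is $C^1$ on $(b_e,+\infty)$ with
\begin{equation*}
\partial_q\HH(e,q)=\frac{1}{\partial_a\si(e,\HH(e,q))}>0.
\end{equation*}

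For strict convexity, I would use the standard inverse-of-strictly-concave-and-increasing argument: for any $q_1<q_2$ in $[b_e,+\infty)$ and $\rho_0\in(0,1)$, set $a_i:=\HH(e,q_i)$; by strict concavity of $\si(e,\cdot)$,
\begin{equation*}
(1-\rho_0)q_1+\rho_0 q_2=(1-\rho_0)\si(e,a_1)+\rho_0\si(e,a_2)<\si(e,(1-\rho_0)a_1+\rho_0 a_2),
\end{equation*}
and applying the strictly increasing function $\HH(e,\cdot)$ gives $\HH(e,(1-\rho_0)q_1+\rho_0 q_2)<(1-\rho_0)\HH(e,q_1)+\rho_0\HH(e,q_2)$. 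Finally, for superlinearity I would combine the asymptotic $\lim_{a\to+\infty}\si(e,a)/a=0$ of Lemma \ref{newborn} with the change of variables $q=\si(e,a)$: since $q\to+\infty$ iff $a\to+\infty$, and since in that limit $a=\HH(e,q)$, one gets $q/\HH(e,q)=\si(e,a)/a\to0$, hence $\HH(e,q)/q\to+\infty$. No step looks delicate; the main point to keep in mind is the preliminary surjectivity check, without which the superlinearity statement would be vacuous at the right endpoint of the domain.
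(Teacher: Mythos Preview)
Your proof is correct and follows exactly the approach the paper indicates: the paper simply writes ``We directly derive from Lemma \ref{newborn}'' without giving details, and your argument is precisely the detailed verification that each asserted property of $\HH(e,\cdot)$ follows from the corresponding property of $\si(e,\cdot)$ via the inverse-function correspondence. Your preliminary surjectivity check and the positivity argument for $\partial_a\si(e,\cdot)$ are the natural ingredients one needs to make ``directly derive'' rigorous.
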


    \smallskip

    Similarly, we can define the discrete Lagrangian.

    \begin{Definition}
        For every $e\in \EE_0$, we define the {\it discrete Lagrangian} $\LL(e,\cdot)$ associated to $H_e$ by Fenchel duality:
        \begin{equation}\label{deflag}
            \LL(e,\la) := \max_{\rho \in [b_e, + \infty)} \left(\rho \, \la - \HH(e,\rho)\right) \qquad\text{for $\la \ge 0$}
        \end{equation}
    \end{Definition}

    \medskip

    \begin{Remark}
        Note that
        \begin{equation*}
            \LL(e,0) = - a_e \qquad\text{for all $e \in \EE_0$},
        \end{equation*}
        and consequently by Remark \ref{minus}

        \begin{equation}\label{superminus}
            \LL(e,0)= \LL(-e,0) =- a_e= -a_{-e} \qquad\text{for all $e \in \EE_0$.}
        \end{equation}
    \end{Remark}

    \medskip

    We can put in relation the maximal flux limiter $\wha c_z$, for $z \in \VN$, and $\LL(e,\cdot)$, for $e \in \EE_0$:

    \begin{Lemma}\label{flusso}
        We have
        \begin{equation*}
            \wha c_z= \min - a_e= \min -a_{-e}= \min \LL(e,0) =\min \LL(-e,0) \txt{for any $z \in \VN$,}
        \end{equation*}
        where the minima in the above formula are over the edges $e$/$-e$ in $\EE_0$ with $(\Psi_\EN^{-1}((e,h)) \in \NN^z)$/$(\Psi_\EN^{-1}((-e,h)) \in \NN^z)$ for some $h \in \Z^{b(\G_0)}$, see Notation \ref{pi}.
    \end{Lemma}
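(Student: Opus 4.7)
The proof is essentially bookkeeping: unfolding definitions, invoking the $G$-periodicity through the notation convention for $H_e$, and applying the identifications already recorded (Remark \ref{minus} and the observation $\LL(e,0)=-a_e$ just above the lemma). The plan breaks into three independent verifications.

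First I would establish the rightmost three equalities, which do not involve $\NN$ at all. From the Fenchel definition \eqref{deflag}, $\LL(e,0)=\max_{\rho\ge b_e}(-\HH(e,\rho))=-\min_{\rho\ge b_e}\HH(e,\rho)=-a_e$, since by Proposition \ref{acca} (strict monotonicity and range $[a_e,+\infty)$) the minimum of $\HH(e,\cdot)$ is attained at $b_e$ with value $a_e$. Together with Remark \ref{minus} ($a_e=a_{-e}$), this gives $\LL(-e,0)=-a_{-e}=-a_e=\LL(e,0)$, so the minima of $\LL(e,0)$, of $\LL(-e,0)$, of $-a_e$ and of $-a_{-e}$ over the same underlying range of edges coincide.

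Next I would connect $\wha c_z$ to $a_e$. Fix $\ga\in\EN^z$ and let $\Psi_\EN(\ga)=(e,h)\in\EE$ for some $h\in\Z^{b(\G_0)}$; the condition $\ga\in\EN^z$ translates via \eqref{patchbis} into $\tt(e,h)=\pi(z)$, i.e.\ $\tt(e)=\pi_1(z)$ and $h+\theta(e)=\pi_2(z)$. By the notation convention (which is well-posed thanks to assumption {\bf (H$_\ga$4)}), $H_\ga=H_e$ as functions on $[0,1]\times\R$, so
\begin{equation*}
-\max_{s\in[0,1]}\min_{\rho}H_\ga(s,\rho)=-\max_{s\in[0,1]}\min_{\rho}H_e(s,\rho)=-a_e.
\end{equation*}

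Finally I would match the ranges of the two minima. For each $e\in\EE_0$ with $\tt(e)=\pi_1(z)$, the choice $h:=\pi_2(z)-\theta(e)$ yields an edge $(e,h)\in\EE$ ending at $\pi(z)$, hence an arc $\Psi_\EN^{-1}((e,h))\in\EN^z$; conversely, every arc in $\EN^z$ arises this way. Thus as $\ga$ varies over $\EN^z$, the associated $e$ varies exactly over $\{e\in\EE_0\mid\exists h:\,\Psi_\EN^{-1}((e,h))\in\EN^z\}$, and
\begin{equation*}
\wha c_z=\min_{\ga\in\EN^z}\bigl(-a_e\bigr)=\min\,\{-a_e\mid\exists h:\Psi_\EN^{-1}((e,h))\in\EN^z\},
\end{equation*}
which together with the first step concludes the proof. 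The $(-e)$--version is obtained by the involution $e\mapsto -e$, using Remark \ref{minus} and the fact that an edge terminates at $\pi(z)$ iff its reverse originates there. There is no real obstacle; the only delicate point is making sure the minimum over $\EN^z$ really corresponds to a minimum over the projected edges, which is exactly what the $G$-periodicity and the unique path-lifting property guarantee.
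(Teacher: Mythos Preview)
Your proof is correct and follows essentially the same approach as the paper's. Both arguments reduce to the identity $-a_e=\LL(e,0)$ (already recorded in \eqref{superminus}), the symmetry $a_e=a_{-e}$ from Remark \ref{minus}, and the observation that via $G$-periodicity the arcs $\gamma\in\EN^z$ are in bijection with the edges $e\in\EE_0$ satisfying the stated condition, so the minimum defining $\wha c_z$ in \eqref{maxxi} becomes $\min(-a_e)$. The paper's version is slightly terser: it computes $-a_{-e}=\min_s L_{-e}(s,0)$ directly from the continuous Lagrangian and then invokes the Lagrangian form of \eqref{maxxi}, whereas you work on the Hamiltonian side and spell out the range-matching via the terminal-vertex condition more explicitly; but the content is the same.
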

    \begin{proof}
        According to \eqref{a00}, we have for any $e \in \EE_0$
        \begin{eqnarray*}
            -a_{-e} &=& - \left [ \max_s \, \min_\rho H_{-e}(s,\rho) \right ] = \min_s - \left [\min_\rho H_{-e}(s,\rho) \right] \\ &=& \min_s \max_\rho - H_{-e}(s,\rho) = \min_s L_{-e}(s,0).
        \end{eqnarray*}
        Minimizing the above quantity over the edges $-e$ with $\Psi_\EN^{-1}((-e,h)) \in \NN^z$ for some $h \in \Z^{b(\G_0)}$ is equivalent to consider
        \begin{equation*}
            \min_{\ga \in \EN^z} \, \left [\min_s L_\ga(s,0) \right ]
        \end{equation*}
        which is in turn equal to $\wha c_z$ thanks to \eqref{maxxi}.
    \end{proof}

    \medskip

    As a consequence of Proposition \ref{acca} and the properties of Fenchel transform (see for instance \cite{Rockafellar}), we have:

    \begin{Proposition}
        Given $e \in\EE_0$, the function $\la \mapsto \mathcal L(e,\la)$ from $[0,+\infty)$ to $\R$ is strictly convex, of class $C^1$ in $(0,+\infty)$ and superlinear as $\la$ goes to $+ \infty$.
    \end{Proposition}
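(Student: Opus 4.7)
The plan is to derive each of the three claimed properties of $\LL(e,\cdot)$ as the Fenchel-dual counterpart of the corresponding properties of $\HH(e,\cdot)$ that have already been established in Proposition \ref{acca}. Since $\HH(e,\cdot)$ is strictly convex, $C^1$ on $(b_e,+\infty)$, strictly increasing, and superlinear, the result is essentially an application of classical convex analysis (see Rockafellar), with only minor care needed because the domain of $\HH(e,\cdot)$ is a half-line rather than all of $\R$.

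First, convexity of $\LL(e,\cdot)$ is immediate from \eqref{deflag}, since it is the pointwise supremum of a family of affine functions $\la \mapsto \rho\la-\HH(e,\rho)$. For superlinearity, the plan is to work directly from the definition: for any fixed $M>b_e$ and any $\la\ge 0$, taking $\rho=M$ in the supremum yields
\begin{equation*}
\LL(e,\la) \ge M\la-\HH(e,M),
\end{equation*}
hence $\liminf_{\la\to+\infty}\LL(e,\la)/\la\ge M$; since $M$ can be chosen arbitrarily large, superlinearity follows.

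For the $C^1$ regularity and strict convexity on $(0,+\infty)$, I would argue via the envelope theorem. By Proposition \ref{acca}, $\partial_\rho\HH(e,\cdot)$ is a continuous strictly increasing function on $(b_e,+\infty)$ with $\partial_\rho\HH(e,\rho)\to+\infty$ as $\rho\to+\infty$ (by superlinearity). Thus for every $\la$ greater than $\lim_{\rho\to b_e^+}\partial_\rho\HH(e,\rho)$, the maximum in \eqref{deflag} is attained at the unique interior point $\rho^*(\la)$ characterized by $\partial_\rho\HH(e,\rho^*(\la))=\la$, while for smaller $\la$ the maximum is attained at the boundary $\rho=b_e$. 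In the interior case, the envelope identity $\partial_\la\LL(e,\la)=\rho^*(\la)$ yields a continuous derivative that is strictly increasing in $\la$ (by strict monotonicity of $\partial_\rho\HH(e,\cdot)$), which gives both $C^1$ regularity and strict convexity.

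The main technical point, more than an obstacle, is to check that the boundary contribution at $\rho=b_e$ does not spoil the $C^1$/strict convexity picture in the open half-line $(0,+\infty)$. This is resolved by observing that the two regimes (boundary-attaining versus interior-attaining maximum) are separated by a single threshold value of $\la$, and a standard matching argument, together with the uniqueness of the maximizer granted by the strict concavity of $\si(e,\cdot)$ in Lemma \ref{newborn}, ensures that the derivative $\partial_\la\LL(e,\la)=\rho^*(\la)$ remains continuous and strictly increasing across the transition, so the desired properties hold throughout $(0,+\infty)$.
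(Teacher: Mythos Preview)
Your approach is precisely the paper's: the proposition is stated there as an immediate consequence of Proposition~\ref{acca} and the properties of the Fenchel transform, with a bare reference to Rockafellar and no further argument, so your write-up is a faithful elaboration of that citation.

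One point in your final paragraph is not quite right, though. You assert that the maximizer $\rho^*(\la)$ ``remains continuous and strictly increasing across the transition'', but in the boundary regime, i.e.\ for $\la$ below the threshold $\la_0:=\lim_{\rho\to b_e^+}\partial_\rho\HH(e,\rho)\ge 0$, the maximizer is the constant $\rho^*(\la)\equiv b_e$, and hence $\LL(e,\la)=b_e\la-a_e$ is \emph{affine} on $(0,\la_0)$. Your matching argument gives continuity of $\rho^*$ at $\la_0$, but not strict monotonicity below it. To obtain strict convexity on the whole of $(0,+\infty)$ you would need to argue that $\la_0=0$, i.e.\ that $\partial_a\si(e,a)\to+\infty$ as $a\to a_e^+$; this does not follow from what you have written and is not automatic from the hypotheses (for instance, $H_e(s,\rho)=\rho^2-s(1-s)$ gives a finite limit). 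The paper's one-line proof does not address this point either, so it is a shared subtlety rather than a defect particular to your attempt; but since you chose to spell out the boundary analysis, the hand-wave is visible in your version.
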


    \smallskip

    We record for later use, the following property:

    \begin{Lemma}\label{inutile}
        For any $\la > 0$ we have
        \begin{equation*}
            \LL(e,\la)= \max_{a\in [a_e,+\infty)} \left( \la \,\si(e,a) - a \right).
        \end{equation*}
    \end{Lemma}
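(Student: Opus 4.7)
The plan is to reduce the claim to a change of variables in the Fenchel maximization formula \eqref{deflag}. Indeed, by definition $\HH(e,\cdot)$ is the inverse of the map $a \mapsto \si(e,a)$, and by Lemma \ref{newborn} the latter is a continuous, strictly increasing bijection from $[a_e,+\infty)$ onto $[b_e,+\infty)$. Consequently, setting $\rho = \si(e,a)$ gives a strictly increasing continuous bijection between the domains of maximization on the two sides, and along this bijection $\HH(e,\rho) = a$.

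Substituting into the definition \eqref{deflag}, I would write
\begin{equation*}
    \LL(e,\la) \;=\; \max_{\rho \in [b_e,+\infty)} \bigl(\rho\la - \HH(e,\rho)\bigr) \;=\; \max_{a \in [a_e,+\infty)} \bigl(\la\,\si(e,a) - a\bigr),
\end{equation*}
which is exactly the identity in the statement. The only mild point to check is that the maximum is actually attained on both sides (so the two ``$\max$''es make literal sense and the equality is of finite numbers). On the $\rho$--side this follows from the strict convexity and superlinearity of $\HH(e,\cdot)$ established in Proposition \ref{acca}, together with the assumption $\la > 0$, which force $\rho\la - \HH(e,\rho) \to -\infty$ as $\rho \to +\infty$, so a maximizer $\rho^\star \in [b_e,+\infty)$ exists. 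Via the bijection $a = \HH(e,\rho)$, the point $a^\star := \HH(e,\rho^\star) \in [a_e,+\infty)$ is then a maximizer on the right-hand side with the same value.

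In short, there is no genuine obstacle here: the lemma is a direct rewriting of Fenchel duality using the fact that $\HH(e,\cdot)$ and $\si(e,\cdot)$ are mutually inverse. The only care needed is to justify that the change of variable is a well-defined bijection (from Lemma \ref{newborn}) and that superlinearity of $\HH(e,\cdot)$ (from Proposition \ref{acca}) ensures the supremum in \eqref{deflag} is attained, so that the identity holds as a pointwise equality between two maxima.
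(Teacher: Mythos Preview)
Your proof is correct and follows essentially the same approach as the paper: both arguments amount to the change of variables $\rho = \si(e,a)$ in the Fenchel formula \eqref{deflag}, using that $\HH(e,\cdot)$ and $\si(e,\cdot)$ are mutual inverses (Lemma \ref{newborn}). Your version is in fact slightly more explicit, since you also justify that the maximum is attained via the superlinearity of $\HH(e,\cdot)$ from Proposition \ref{acca}.
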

    \begin{proof}
        By the very definition of $\HH$, we have
        \begin{equation*}
            \HH(e,\rho)= a \quad \Longleftrightarrow \quad \rho= \si(e,a).
        \end{equation*}
        Therefore, given $\la > 0$, we get
        \begin{equation}\label{inutile1}
            \rho \, \la - \HH(e,\rho)= \si(e,a) \, \la - a \txt{for a suitable $a \in [a_e,+\infty)$.}
        \end{equation}
        Conversely, given $a\in [a_e,+\infty)$, \eqref{inutile1} is satisfied for a suitable $\la >0$. This proves the assertion.
    \end{proof}

    \medskip

    The next result will {provide a relation between the Lagrangian on the network and the discrete Lagrangian on the base graph, which can be interpreted as a sort of {\it minimal average action} on the arc corresponding to the edge}. This observation will be relevant for studying the action on $\G_0$ and to relate it to the action on the periodic network $ \NN$. We will crucially use it in the proof of Proposition \ref{lemnew}.

    \begin{Theorem}\label{babasic}
        We fix an edge $e$ in $\G_0$. Given any positive time $T$, we have
        \begin{equation}\label{basic}
            T \, \LL(e,1/T) = \min \left \{ \int_{0}^{T} L_e(\xi, \dot\xi) \, d\tau \mid \xi(0)=0, \, \xi(T)=1 \right \},
        \end{equation}
        where the minimum is over the curves $\xi: [0,T] \to [0,1]$.
    \end{Theorem}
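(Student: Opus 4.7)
The plan is to establish the equality by showing both inequalities separately, using Fenchel--Young as the key tool.

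\emph{Lower bound.} For any curve $\xi:[0,T]\to[0,1]$ with $\xi(0)=0$, $\xi(T)=1$, and any $a\ge a_e$, the Fenchel--Young inequality applied with $\rho=\sigma^+_a(e,\xi(\tau))$ gives, using $H_e(s,\sigma^+_a(e,s))=a$,
\begin{equation*}
L_e(\xi(\tau),\dot\xi(\tau))\;\ge\;\sigma^+_a(e,\xi(\tau))\,\dot\xi(\tau)-a.
\end{equation*}
Since $s\mapsto\sigma^+_a(e,s)$ is continuous, hence bounded on $[0,1]$, its primitive $F(s):=\int_0^s\sigma^+_a(e,s')\,ds'$ is Lipschitz. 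Then $F\circ\xi$ is absolutely continuous with $(F\circ\xi)'=\sigma^+_a(e,\xi)\,\dot\xi$ a.e., so
\begin{equation*}
\int_0^T\sigma^+_a(e,\xi(\tau))\,\dot\xi(\tau)\,d\tau\;=\;F(1)-F(0)\;=\;\si(e,a),
\end{equation*}
\emph{regardless} of whether $\xi$ is monotone. Integrating and taking the sup over $a\ge a_e$, Lemma \ref{inutile} yields $\int_0^T L_e(\xi,\dot\xi)\,d\tau\ge T\,\LL(e,1/T)$.

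\emph{Upper bound via an optimal curve.} Since $a\mapsto \sigma(e,a)-aT$ is continuous, strictly concave, and $\sigma(e,a)/a\to 0$ (Lemma \ref{newborn}), it attains its maximum at some $a^*\ge a_e$. In the generic case $a^*>a_e$, smoothness of $\sigma(e,\cdot)$ on $(a_e,+\infty)$ gives the first-order condition $\partial_a\sigma(e,a^*)=T$. Differentiating the identity $H_e(s,\sigma^+_a(e,s))=a$ in $a$ yields $\partial_a\sigma^+_a(e,s)=1/\partial_\rho H_e(s,\sigma^+_a(e,s))$, hence
\begin{equation*}
\partial_a\sigma(e,a)\;=\;\int_0^1\frac{ds}{\partial_\rho H_e(s,\sigma^+_a(e,s))}.
\end{equation*}
I would then define $\xi:[0,T^*]\to[0,1]$ as the solution of the energy-preserving ODE $\dot\xi=\partial_\rho H_e(\xi,\sigma^+_{a^*}(e,\xi))$ with $\xi(0)=0$; the traversal time equals $T^*=\partial_a\sigma(e,a^*)=T$ by the above formula together with the first-order condition. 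Along $\xi$ the Fenchel--Young equality holds ($\dot\xi=\partial_\rho H_e(\xi,\sigma^+_{a^*}(e,\xi))$ is the conjugate variable), so
\begin{equation*}
\int_0^T L_e(\xi,\dot\xi)\,d\tau\;=\;\int_0^T\bigl(\sigma^+_{a^*}(e,\xi)\,\dot\xi-a^*\bigr)d\tau\;=\;\si(e,a^*)-a^*T\;=\;T\,\LL(e,1/T),
\end{equation*}
again using the fundamental-theorem-of-calculus identity from the lower bound step.

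\emph{Main obstacle.} The principal difficulty is the boundary case $a^*=a_e$, which occurs precisely when $\lim_{a\to a_e^+}\partial_a\sigma(e,a)<T$. Here the ODE construction degenerates because $\partial_\rho H_e(s,\sigma^+_{a_e}(e,s))=0$ at points where $\min_\rho H_e(s,\rho)=a_e$. My plan is to handle this by an approximation/concatenation argument: pick $s_0$ attaining $a_e=\max_s\min_\rho H_e(s,\rho)$, take $a_n\downarrow a_e$ to get monotone curves traversing $[0,1]$ in times $T_n\uparrow \partial_a\sigma(e,a_e^+)<T$, and splice in a lingering segment of length $T-T_n$ at $s_0$, where $L_e(s_0,0)=-a_e$. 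Computing the action and passing to the limit produces $\sigma(e,a_e)-a_eT=T\,\LL(e,1/T)$, and a compactness/minimizer existence argument (available from the setting of Proposition \ref{curvacce}) promotes this to a genuine minimum. Together the two bounds give \eqref{basic}.
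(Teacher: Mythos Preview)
Your proof is correct and takes a somewhat different route from the paper. For the lower bound, the paper also starts from $L_e(\xi,\dot\xi)\ge\sigma^+_b(e,\xi)\,\dot\xi-b$, but then reparametrizes $\xi$ at constant speed (citing \cite[Lemma 3.11]{Davini07}) and removes closed subloops in order to evaluate $\int_0^T\dot\xi\,\sigma^+_b(e,\xi)\,d\tau$; your direct use of the primitive $F$ and the chain rule for Lipschitz $\circ$ absolutely continuous compositions accomplishes the same thing in one line and avoids the reparametrization detour. For the upper bound the paper goes the opposite way and simply invokes \cite[Theorem 3.16]{Davini07} as a black box to obtain an $a\in[a_e,\infty)$ with $\min_{\dot\zeta\ge0}\int_0^T L_e(\zeta,\dot\zeta)\,d\tau=\sigma(e,a)-aT$; your explicit energy-level ODE construction, together with the lingering-at-$s_0$ argument where $L_e(s_0,0)=-a_e$, is essentially the content behind that cited result, so your version is self-contained at the price of handling the degenerate case $a^*=a_e$ by hand. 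One small correction: the boundary case occurs when $\partial_a\sigma(e,a_e^+)\le T$ rather than strictly $<$, but your approximation already absorbs the equality case since then $T-T_n\to0$.
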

    \begin{proof}
        According to \cite[Theorem 3.16]{Davini07}, there is an $a\in[a_e,\infty)$ such that
        \begin{equation*}
            \min\left\{\int_0^TL_e(\zeta,\dot\zeta)\,d\tau\right\}=\sigma(e,a)-aT,
        \end{equation*}
        where the minimum is over the curves $\zeta:[0,T]\to[0,1]$ with $\zeta(0)=0$, $\zeta(T)=1$ and $\dot\zeta\ge0$ a.e.. Lemma \ref{inutile} then yields
        \begin{equation}\label{eq:babasic1}
            \min\left\{\int_0^TL_e(\xi, \dot\xi)\,d\tau\,|\,\xi(0)=0,\,\xi(T)=1\right\}\le T\LL(e,1/T).
        \end{equation}
        Next we observe that by definition, for any $\lambda\in\R$ and $s\in[0,1]$,
        \begin{equation*}
            L_e(s,\la)=\max_\rho(\rho\la-H_e(s,\rho))=\max_{b\in[a_e,\infty)}\left(\max\left\{\la\sigma^+_b(e,s),-\la\sigma^+_b(-e,1-s)\right\}-b\right).
        \end{equation*}
        Therefore, for a fixed curve $\xi$ as in the statement and $b\in[a_e,\infty)$, we get
        \begin{equation}\label{eq:babasic2}
            \int_0^TL_e(\xi,\dot\xi)d\tau\ge\int_0^T\dot\xi\sigma^+_b(e,\xi)d\tau-bT.
        \end{equation}
        It is known, see for instance \cite[Lemma 3.11]{Davini07}, that $\xi$ is the reparametrization of a curve with constant speed defined in $[0,T]$, that we denote by $\eta$. We thus get
        \begin{equation}\label{eq:babasic3}
            \int_0^T\dot\xi \, \sigma^+_b(e,\xi)d\tau=\int_0^T\dot\eta \,\sigma^+_b(e,\eta)\,d\tau.
        \end{equation}
        Since it is apparent that
        \begin{equation*}
            \int \dot\eta_0\,\sigma^+_b(e,\eta_0)d\tau=0
        \end{equation*}
        whenever $\eta_0$ is a closed curve with constant speed, we infer from \eqref{eq:babasic3}, possibly removing all the cycles in $\eta$, that
        \begin{equation}\label{eq:babasic4}
            \int_0^T\dot\xi\sigma^+_b(e,\xi)d\tau = \int_0^T\dot\eta\sigma^+_b(e,\eta)\, d\tau=\int_0^1\sigma^+_b(e,s)ds.
        \end{equation}
        Finally, we get from \eqref{eq:babasic2}, \eqref{eq:babasic3} and \eqref{eq:babasic4} that
        \begin{equation}\label{eq:babasic5}
            \int_0^T L_e(\xi,\dot\xi)d\tau\ge\sigma(e,b)-bT.
        \end{equation}
        The equality in \eqref{basic} is then a consequence of \eqref{eq:babasic1}, \eqref{eq:babasic5} and Lemma \ref{inutile}.
    \end{proof}

    \medskip

    \subsection{Discrete action functional on \texorpdfstring{$\G_0$}{Γ₀}}

    To define a {discrete} action functional on $\G_0$, we need introducing a notion of parametrized path, namely to assign to any edge of a given path a non-negative {\it average speed} and a corresponding positive {\it time} needed to go through it, see \cite{SicSor1}. To keep a relation between minimization problems involving the action in $\G_0$ and $ \NN$, we prescribe, in view of \eqref{basic}, that time is the inverse of the speed, if the latter is positive. It can be instead any positive number if the speed is vanishing.

    \begin{Definition}
        We say that $\xi=(e_i,q_i,T_i)_{i=1}^m$ is a {\it parametrized path} if
        \begin{itemize}
            \item[{\bf (i)}] $(e_i)_{i=1}^m$ is a family of concatenated edges in $\G_0$, which is called the {\em support} of $\xi$;
            \item[{\bf (ii)}] the $q_i$'s are non-negative numbers and
            \begin{equation*}
                T_i = \left \{
                \begin{array}{cc}
                    \frac 1{q_i} & \quad\text{if $q_i > 0$} \\
                    \text{a positive constant} & \quad\text{if $q_i = 0$};
                \end{array}
                \right .
            \end{equation*}
            we denote by $ T_\xi:=\sum_i T_i$ the {\em total time} of the parametrization of $\xi$;
            \item[{\bf (iii)}] if $q_i >0$ for every $i$, we call the parametrization {\em nonsingular}. We further call {\em equilibrium circuit} a path of the form
            \begin{equation*}
                (e, -e,0,0,T_1,T_2) \qquad\text{for any $e \in \EE_0$, $T_1$, $T_2$ positive constants;}
            \end{equation*}
            \item[{\bf (iv)}] a general parametrized path is any concatenation of equilibrium paths and paths with nonsingular parametrization.
        \end{itemize}
        By {\em parametrized cycle} we mean a parametrized path supported on a cycle, by {\em parametrized circuit} a parametrized path supported on a circuit.
    \end{Definition}

    \medskip

    \begin{Remark}
        An equilibrium circuit represents a steady state, it should be thought as a fluctuation with zero speed along an edge and its opposite, entering and exiting from the same starting point of the edge.
    \end{Remark}

    \medskip

    \begin{Definition}
        Given a parametrized path $\xi=(e_i,q_i,T_i)_{i=1}^m$, we define the {\it (discrete) action functional} on $\xi$ as
        \begin{equation*}
            A_\LL(\xi):= \sum_{i=1}^m T_i \, \LL(e_i,q_i).
        \end{equation*}
    \end{Definition}

    \begin{Definition}\label{defPhiT}
        We consider the problem of minimizing $\A_\LL$ among the parametrized paths with total time of parametrization $T$ and rotation vector $h \in \Z^{b(\G_0)}$ linking two given vertices $ x$, $ y$. We denote by $\Phi_\LL( x, y,T;h)$ the corresponding {\it (discrete) minimal action functional}.
    \end{Definition}

    {In the following, in the same spirit fo Aubry--Mather theory (see \cite{SicSor1}), we will associate to the discrete action functional a variational problem defined on a suitable space of measures, which explains the necessity of considering average quantities. To find minimizers, we pass to a relaxed problem that we are going to describe in the next section. Such minimizers are related to circuits in $\G_0$, up to convex combinations, to be understood in the sense of measures.}

    \bigskip

    \subsection{Stationary Hamilton--Jacobi equations on \texorpdfstring{$\G_0$}{Γ₀}}\label{stattHJ}

    Given $p \in H^1(\G_0,\R)= \R^{b(\G_0)}$, we consider the $p$--modified Hamiltonian
    \begin{equation*}
        \HH^p(e,\rho):=\HH(e,\rho + \langle p, \theta(e) \rangle) \quad\text{for $\rho \in [\si(e,a_e) - \langle p, \theta(e) \rangle, + \infty)$,}
    \end{equation*}
    where $\langle \cdot, \cdot \rangle$ denotes the standard scalar product in $\R^{b(\G_0)}$. We set
    \begin{equation*}
        \si^p(e,a) := \si(e,a) - \langle p, \theta(e) \rangle \qquad\text{for any $e \in \EE_0$, $a \ge a_0$}.
    \end{equation*}
    Given a path $\xi=(e_i)_{i=1}^M$ in $\G$, we define the intrinsic length of $\xi$ corresponding to $p$ and $a$ as
    \begin{equation*}
        \si_a^p(\xi):= \sum_{i=1}^M \si^p(e,a).
    \end{equation*}
    Note that the above definition does not require $\xi$ to have a parametrization.

    Given $p \in \R^{b(\G_0)}$, we further consider the following family of equations defined on $\VV_0$:
    \begin{equation}\label{HJE}
        \max_{- e\in\EE_x} \HH^p(e,u(\tt(e)) - u(\oo(e)))= a \qquad\text{for $a \ge a_0$}.
    \end{equation}

    \begin{Definition}
        A function $u:\VV_0 \to \R$ is a {\it solution} to \eqref{HJE} if the above equality holds for any vertex $x$. $u$ is instead a {\it subsolution} to \eqref{HJE} if $=$ is replaced by $\le$.
    \end{Definition}

    The analysis of \eqref{HJE} is performed in \cite{SicSor} with the extra assumption that
    \begin{itemize}
        \item[($H_0$)] For any edge $e$ with $a_e=a_0$ the map $s \mapsto \min_\rho H_\ga(s,\rho)$ is constant.
    \end{itemize}

    \smallskip

    The following characterization is independent of ($H_0$).

    \begin{Proposition}
        \emph{\cite[Propositions 6.14, 6.15]{SicSor}} Equation \eqref{HJE} admits solutions in $\VV_0$ if and only the corresponding intrinsic length is nonnegative for all cycles and vanishing for some of them.
    \end{Proposition}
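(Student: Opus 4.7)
The plan is to recast \eqref{HJE} as a combinatorial inequality involving the intrinsic edge weights $\si^p(\cdot,a)$, and then to exploit the standard equivalence between subsolutions on a graph and shortest-path distance functions. Since $\HH(e,\cdot)$ is the inverse of $a\mapsto\si(e,a)$, the inequality $\HH^p(e,\rho)\le a$ is equivalent to $\rho\le\si^p(e,a)$. Consequently $u\colon\VV_0\to\R$ is a subsolution of \eqref{HJE} precisely when
$$u(\tt(e))-u(\oo(e))\le \si^p(e,a)\qquad\text{for every }e\in\EE_0,$$
and a solution precisely when, in addition, at every vertex $x\in\VV_0$ at least one edge $e$ with $\tt(e)=x$ saturates this inequality.

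For the \emph{necessity}, I would first sum the subsolution inequality along any cycle $\xi=(e_1,\dots,e_m)$: the left-hand sides telescope to $0$, so $\si_a^p(\xi)\ge 0$, which proves nonnegativity on all cycles. To produce a cycle of vanishing intrinsic length I would iterate the solution property backwards: starting from any $x_0\in\VV_0$ pick an edge $e_1$ with $\tt(e_1)=x_0$ that saturates the inequality, then an edge $e_2$ with $\tt(e_2)=\oo(e_1)$ of the same kind, and so on. Since $|\VV_0|<\infty$, this backward sequence of vertices must eventually revisit a previous one, closing a cycle $\zeta$ every edge of which saturates the inequality; telescoping the equalities yields $\si_a^p(\zeta)=0$.

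For the \emph{sufficiency}, I would assume the intrinsic length is nonnegative on all cycles and vanishes on some cycle $\zeta$, fix a base vertex $x_0\in\zeta$, and define
$$u(x):=\inf\{\si_a^p(\xi)\mid \xi\text{ is a path in }\G_0\text{ from }x_0\text{ to }x\}.$$
The nonnegativity of $\si_a^p$ on cycles prevents the infimum from being $-\infty$ (a loop pre-concatenated to any candidate path only adds a nonnegative quantity), while the connectedness and finiteness of $\G_0$ make $u$ finite throughout $\VV_0$. A direct concatenation argument gives the triangle-type bound $u(\tt(e))\le u(\oo(e))+\si^p(e,a)$, i.e.\ the subsolution property. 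At any $x\neq x_0$, selecting a finite minimizing path $(e_1,\dots,e_k)$ and combining subsolution with truncation at $\oo(e_k)$ forces $u(x)=u(\oo(e_k))+\si^p(e_k,a)$, so $e_k$ saturates at $x$. At $x_0$ itself, I would use $\zeta=(f_1,\dots,f_m)$: the path $(f_1,\dots,f_{m-1})$ gives $u(\oo(f_m))\le -\si^p(f_m,a)$, while the matching lower bound $u(\oo(f_m))\ge -\si^p(f_m,a)$ follows from nonnegativity of cycles applied to the concatenation of any path from $x_0$ to $\oo(f_m)$ with $f_m$; this equality makes $f_m$ saturate the subsolution inequality at $x_0$.

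The main delicate point lies precisely in this verification at the base vertex: the distance construction always yields a subsolution, but the solution equality at $x_0$ cannot be deduced from subsolution considerations alone. It is exactly the hypothesis that some cycle has vanishing intrinsic length that supplies the missing saturating edge, thereby closing the equivalence and making the two halves of the statement match.
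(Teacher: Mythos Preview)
Your argument is correct. The paper does not give its own proof of this proposition; it simply cites \cite[Propositions 6.14, 6.15]{SicSor}. Your approach---recasting the (sub)solution condition as the edge-wise inequality $u(\tt(e))-u(\oo(e))\le\si^p(e,a)$ via the monotonicity of $\HH(e,\cdot)$, then using a shortest-path (Bellman--Ford type) construction from a vertex on a zero-length cycle---is the standard one and matches what is done in the cited reference. One small caveat worth making explicit: $\HH(e,\cdot)$ is only defined on $[b_e,+\infty)$ in the paper, so the equivalence ``$\HH^p(e,\rho)\le a\Leftrightarrow\rho\le\si^p(e,a)$'' tacitly uses the natural extension $\HH(e,\rho)=a_e$ for $\rho<b_e$; since $a\ge a_0\ge a_e$, this extension is harmless and is indeed the convention used in \cite{SicSor}.
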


    \smallskip

    \begin{Remark}\label{accazero}
        Condition ($H_0$) is effective only at $a=a_0$: it guarantees that the cycle $(e,-e)$ has vanishing intrinsic length $\si_{a_0}^p(\cdot)$ for any $p$, and any $e$ with $a_e=a_0$; this condition implies the existence of solutions thanks to the previous proposition, provided that the intrinsic length of the other cycles is nonnegative. We can exhibit an example, with ($H_0$) failing, where equation \eqref{HJE} has not solutions for any $a$. Take a finite network $ \NN_0$ in $\R^N$ and consider the following Hamiltonian on $ \NN_0$
        \begin{equation*}
            H_\ga(s,\rho)= |\rho|^2 -f(s) \qquad\text{for any arc $\ga$,}
        \end{equation*}
        where $f$ is a continuous potential, defined in $[0,1]$, strictly positive everywhere except at a point where it vanishes; it is clear that $a_0=0$. When we transfer the Hamiltonians to the underlying graph, we get that all the intrinsic lengths for $a=0$ and $p=0$ are strictly positive and the same clearly holds true for $a >0$, which proves the claim of nonexistence of solutions for $a \ge 0$.
    \end{Remark}

    \smallskip

    We can state the following result:

    \begin{Theorem}
        Given $ p \in \R^{b(\G_0)}$, there exists at most one value of $a$, called critical value, for which the equation \eqref{HJE} admits solutions in $\VV_0$.
    \end{Theorem}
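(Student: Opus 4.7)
The plan is to combine the characterization recalled just above (existence of a solution to \eqref{HJE} at level $a$ iff the intrinsic length $\si^p_a$ is nonnegative on every cycle and vanishes on at least one) with strict monotonicity of $a \mapsto \si^p_a(\xi)$ for every cycle $\xi$. Once this monotonicity is established, the uniqueness follows from a one-line contradiction argument.

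First I would check that for every edge $e \in \EE_0$, the map $a \mapsto \si(e,a)$ is strictly increasing on $[a_e,+\infty)$. For the representative in $\EE_0^+$ this is contained in Lemma \ref{newborn}. For the reversed edge, I use
\begin{equation*}
\si(-e,a) = -\int_0^1 \min\{\rho \mid H_e(1-s,\rho)=a\}\,ds,
\end{equation*}
together with the observation that, by strict convexity of $H_e(s,\cdot)$ (see \textbf{(H$_\ga$2)}), the map $a \mapsto \min\{\rho \mid H_e(s,\rho)=a\}$ is strictly decreasing on $[a_e,+\infty)$ for every $s \in [0,1]$; dominated convergence then yields strict monotonicity of $a \mapsto \si(-e,a)$ as well. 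Next I observe that for any cycle $\xi = (e_i)_{i=1}^M$, linearity of $\theta$ gives $\sum_{i=1}^M \theta(e_i) = \theta([\xi])$, and since $[\xi] \in H_1(\G_0,\R)$ the lemma relating $\theta$ to the identity on $H_1$ gives $\theta([\xi]) = [\xi]$. Hence
\begin{equation*}
\si^p_a(\xi) = \sum_{i=1}^M \si(e_i,a) - \langle p, [\xi] \rangle,
\end{equation*}
and the right-hand side is strictly increasing in $a$, since each summand $\si(e_i,a)$ is and the pairing $\langle p, [\xi] \rangle$ is independent of $a$.

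With strict monotonicity in hand, the uniqueness is immediate. Assume by contradiction that \eqref{HJE} admits solutions for two distinct levels $a_1 < a_2$ in $[a_0,+\infty)$. By the characterization, for each $i \in \{1,2\}$ the intrinsic length $\si^p_{a_i}$ is nonnegative on every cycle, and there exists a cycle $\xi^{(i)}$ with $\si^p_{a_i}(\xi^{(i)}) = 0$. Applying strict monotonicity to $\xi^{(2)}$ at $a_1 < a_2$ yields
\begin{equation*}
\si^p_{a_1}(\xi^{(2)}) < \si^p_{a_2}(\xi^{(2)}) = 0,
\end{equation*}
contradicting the fact that $\si^p_{a_1}$ is nonnegative on every cycle. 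Therefore at most one critical value exists.

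The only delicate point I anticipate is the bookkeeping for reversed edges in Step 1; once this is handled via strict convexity of $H_e(s,\cdot)$, the proof reduces to a clean monotonicity argument, and no compactness or approximation is needed.
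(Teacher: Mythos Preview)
Your proof is correct and takes a genuinely different route from the paper's. The paper invokes an external result (\cite[Proposition 6.5]{SicSor}) characterizing the critical value as $\inf\{a \ge a_0 \mid \text{\eqref{HJE} admits subsolutions}\}$, so uniqueness is inherited from that characterization. You instead work directly from the cycle-length characterization stated in the Proposition immediately preceding the Theorem, and reduce uniqueness to strict monotonicity of $a \mapsto \si^p_a(\xi)$ on cycles via Lemma \ref{newborn}. Your argument is more self-contained relative to this paper (no appeal to the subsolution machinery beyond what is already quoted), and the contradiction step is clean. One small remark: Lemma \ref{newborn} is stated for an arbitrary edge $e\in\EE_0$, and its proof only uses \textbf{(H$_\ga$1)}--\textbf{(H$_\ga$3)}, which hold for $H_{-e}$ as well; so your separate treatment of reversed edges, while correct, is redundant.
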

    \begin{proof}
        The unique value $a$ where there could be solutions to \eqref{HJE} is
        \begin{equation*}
            \inf\{a \ge a_0 \mid \;\text{\eqref{HJE} admits subsolutions}\},
        \end{equation*}
        see \cite[Proposition 6.5]{SicSor}. If ($H_0$) holds then a solution for the equation corresponding to the critical value does exist according to \cite[Theorem 6.16]{SicSor}. If instead ($H_0$) is not assumed, this existence result holds only if the critical value is strictly greater than $a_0$.
    \end{proof}

    \smallskip

    \begin{Definition}\label{defHbar}
        We define a function $\bar \HH: H^1(\G_0,\R) \to \R$ associating to $p$ the unique value for which \eqref{HJE} admits solution if such a value does exist, or $a_0$ if no equation of the family has solution.
    \end{Definition}

    \bigskip

    \section{Relaxed action-minimizing problems}\label{secrelaxed}

    In this section we introduce a relaxed version of the action-minimizing problems defined on curves, namely we want to define related variational problems on the suitable sets of probability measures.

    \subsection{Probability measures on the tangent cone \texorpdfstring{$T^+\G_0$}{T⁺Γ}}

    We start by introducing the ambient space, namely the notion of tangent cone of $\G_0$.

    \begin{Definition}
        The {\it tangent cone} of $ \G_0$ is defined as
        \begin{equation*}
            T^+ \G_0 = \EE_0\times \R^+ / \sim,
        \end{equation*}
        where $\R^+=[0,+\infty)$ and $\sim$ is the identification $(e,0)\sim (-e,0)$.\\
        We denote each fiber over $e$ by $\R^+_e :=\{e\}\times \R^+$.
    \end{Definition}

    We endow $T^+ \G_0$ with a distance defined as:
    \begin{eqnarray*}
        d((e_1,q_1), (e_2,q_2)) :=\left\{
        \begin{array}{ll}
            q_1+q_2+1 & \qquad\text{if $e_1\ne \pm e_2$}\\
            q_1+q_2 & \qquad\text{if $e_1=-e_2$}\\
            |q_1-q_2| & \qquad\text{if $e_1=e_2$}.
        \end{array}
        \right.
    \end{eqnarray*}
    This makes $T^+ \G_0$ a Polish space. A set $A$ is open in $T^+ \G_0$, with respect to the induced topology, if and only if $A \cap \R^+_e$ is open in the natural topology of $\R^+$ for any $e\in \EE_0$. Accordingly, $F$ is a Borelian set of $T^+ \G_0$ if and only if $F \cap \R^+_e$ is Borelian in $\R_e^+$ for any edge $e\in \EE_0$.

    \begin{Definition}
        Given a Borel probability measure $\mu$ on $T^+ \G_0$, we define its {\it support } as the set
        \begin{equation*}
            \supp_{\EE_0} \mu= \{e \in \EE_0 \mid \mu(\R^+_e) >0 \}.
        \end{equation*}
    \end{Definition}

    \smallskip

    \begin{Remark}\label{postdef}
        Note that $\mu$ can be written in the form
        \begin{equation}\label{postdef1}
            \mu = \sum_{e \in \supp_{\EE_0} \mu} \la_e \mu_e,
        \end{equation}
        where $\la_e >0$, $\sum_e \la_e =1$, and $\mu_e$ is the restriction of $\mu$ to $\R^+_e$, normalized so to make it a probability measure. The $\mu_e$'s make up a family of Borel probability measures on $\R^+_e$, for $e$ varying in $\supp_{\EE_0} \mu$, satisfying the condition
        \begin{equation*}
            \pm e \in \supp_{\EE_0} \mu \quad \Longrightarrow \quad \mu_e(0)= \mu_{-e}(0).
        \end{equation*}
    \end{Remark}

    \medskip

    \begin{Notation}
        Sometimes we will write for short in place of \eqref{postdef1}
        \begin{equation*}
            \mu = \sum_{e \in \EE_0} \la_e \mu_e \txt{or} \qquad \mu = \sum_e \la_e \mu_e
        \end{equation*}
        meaning that all the terms corresponding to $e \not\in \supp_{\EE_0} \mu$ are vanishing.

        We will further write $\spt \mu_e$ to indicate the usual support of $\mu_e$ as measure on $\R^+$.

    \end{Notation}

    \medskip

    A Borel probability measure $\mu = \sum_{e\in \spt_{\EE_0} \mu} \la_e \, \mu_e$ has finite first momentum if and only such property holds for any $\mu_e$, namely
    \begin{equation*}
        \int_0^{+\infty} q \, d\mu_e < +\infty \txt{for any $e \in \EE_0$.}
    \end{equation*}

    \smallskip

    We denote by $\mathbb P$ the family of Borel probability measures on $T^+ \G_0$ with finite first momentum and we endow it with the (first) Wasserstein distance (see, for example, \cite{Villani}). The corresponding convergence of measures can be expressed in duality with continuous functions $F(e,q)$ on $T^+ \G_0$ possessing linear growth at infinity; namely, given a sequence $\{\mu_n\}_n$ and $\mu$ in $\mathbb P$
    \begin{equation*}
        \mu_n \to \mu \quad \Longleftrightarrow \quad \int F(e,q) \, d \mu_n \to \int F(e,q) \, d \mu
    \end{equation*}
    for any function $F$ continuous on $T^+\G_0$ such that for any $e\in \EE_0$ there exist $a_e, b_e \in \R$ such that
    \begin{equation*}
        \forall\, e\in \EE_0 \quad \exists\, a_e, b_e \in \R: \qquad |F(e,q)| \le a_e \, q + b_e \qquad \forall\, q \ge 0.
    \end{equation*}

    \medskip

    \subsubsection{Occupation and closed measures}

    \begin{Definition}
        Given a parametrized path $\xi=(e_i,q_i,T_i)_{i=1}^m$, we define a corresponding probability measure on $T ^+\G_0$ as
        \begin{equation*}
            \mu_\xi= \frac 1{T_\xi} \, \sum_{i=1}^m T_i \, \de(e_i, q_i),
        \end{equation*}
        where $T_\xi = \sum_{i=1}^m T_i$ is the total time of parametrization, and $\delta(e,q)$ denotes the Dirac delta concentrated on the point $(e,q)$. We call it the {\it occupation measure supported on $\xi$}.
    \end{Definition}

    \smallskip

    We recover the usual formula for measures in $\bP$
    \begin{equation*}
        \mu_\xi= \sum_e \la^\xi_e \mu^\xi_e,
    \end{equation*}
    see Remark \ref{postdef}, setting
    \begin{eqnarray}
        \la^\xi_e &=& \frac 1{T_\xi} \, \sum_{i \in I(e)} T_i\label{chiudi1} \\
        \mu^\xi_e &=& \frac 1{\sum_{i \in I(e)} T_i} \sum_{i \in I(e)} T_i \, \de(e_i, q_i),\label{chiudi2}
    \end{eqnarray}
    where
    \begin{equation*}
        I(e) = \{i = 1, \cdots m \mid e=e_i\} \qquad\text{for $e \in \EE_0$.}
    \end{equation*}

    \begin{Definition}\label{Defrhoxi}
        If $\xi$ is a cycle and $\mu_\xi$ is the associated occupation measure supported on $\xi$, we define the {\it rotation vector of $\mu_\xi$} as
        \begin{equation*}
            \rho(\mu_\xi) := \frac{[\xi]}{T_\xi},
        \end{equation*}
        where $\rho(\mu_\xi) \in H_1(\G_0,\R)= \R^{b(\G_0)}$ by Lemma \ref{cy}.
    \end{Definition}

    \begin{Remark}
        $\rho(\mu_\xi)$, as in Definition \ref{Defrhoxi}, admits the following integral representation
        \begin{equation}\label{ruota}
            \rho(\mu_\xi)= \sum_{e\in \EE_0^+} \left ( \la^\xi_e \int q \, d\mu^\xi_e - \la^\xi_{-e} \int q \, d \mu^\xi_{-e} \right ) \, e,
        \end{equation}
        \\
        where $\la^\xi_e$, $\mu^\xi_e$ are defined as in \eqref{chiudi1}, \eqref{chiudi2}.
    \end{Remark}

    \smallskip

    \begin{Notation}
        We denote by $\bM$ the closure of the set of occupation measures of cycles in $\mathbb P$ with the topology induced by the (first) Wasserstein distance. It is convex, according to \cite[Proposition 4.9]{SicSor1}. We call {\it closed} any measure belonging to it.
    \end{Notation}

    \smallskip

    Taking into account that $(e,0)$ and $(-e,0)$ are identified in $T^+\G_0$, $\de(e,0) = \de(-e,0)$ is the closed occupation measure corresponding to the equilibrium circuit based on $e$, for any $e \in \EE_0$.

    \smallskip

    If $\mu = \sum_e \la_e \mu_e$ is a closed measure we define its rotation vector $\rho(\mu)$ via the right hand-side of \eqref{ruota} with obvious replacements, namely
    \begin{eqnarray*}
        \rho(\mu)&=& \sum_{e\in \EE_0^+} \left ( \la_e \int_0^{+\infty} q \, d\mu_e - \la_{-e} \int_0^{+\infty} q \, d \mu_{-e} \right ) \, e \\
        &=& \sum_{e\in \EE_0^+} \left ( \la_e \int_0^{+\infty} q \, d\mu_e - \la_{-e} \int_0^{+\infty} q \, d \mu_{-e} \right ) \, \theta(e).
    \end{eqnarray*}

    \smallskip

    \smallskip

    \begin{Proposition}\label{proprho}
        \emph{\cite[Proposition 4.10]{SicSor1}} The map $\rho: \bM \to H_1(\G_0,\R)$ is continuous and affine (for convex combinations), {\it i.e.}, for every $\lambda\in [0,1]$ and $\mu_1, \mu_2 \in \bM$
        \begin{equation*}
            {\rho}\left(\la \mu_1 + (1-\la)\mu_2 \right) = \la{\rho}(\mu_1) + (1-\la) {\rho}(\mu_2).
        \end{equation*}
        In particular, it is surjective.
    \end{Proposition}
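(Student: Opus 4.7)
The plan has three parts, one for each property, and the unifying idea is to write the rotation vector as integration of a single linearly-growing continuous vector-valued function against the measure. This makes continuity and affinity immediate, and surjectivity is then handled by exhibiting explicit closed measures realizing prescribed rotation vectors.

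For continuity and affinity, I would define a function $F : T^+\G_0 \to H_1(\G_0,\R)$ by $F(e,q) := q\,e$ on the fiber $\R^+_e$ for $e \in \EE_0^+$ and $F(-e,q) := -q\,e$, where $e$ is viewed as a basis vector in the ambient space $\Cf_1(\G_0,\R) \supset H_1(\G_0,\R)$. Both formulas vanish at $q=0$, so the definition is compatible with the identification $(e,0) \sim (-e,0)$ in $T^+\G_0$, and $F$ is continuous with linear growth on each fiber. Reading the definition \eqref{ruota} off in these terms then gives the integral representation
\begin{equation*}
\rho(\mu) \;=\; \int_{T^+\G_0} F(e,q)\,d\mu(e,q) \qquad \text{for every } \mu \in \bM.
\end{equation*}
Continuity of $\rho$ with respect to the Wasserstein topology on $\bM$ follows at once from the duality characterization of convergence in $\bP$ recalled in the text, since the coordinates of $F$ are exactly of the admissible test class. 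Affinity is then just linearity of integration combined with the convexity of $\bM$ (and it is meaningful precisely because of that convexity).

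For surjectivity, the plan is to exploit Lemma \ref{cy}, according to which $H_1(\G_0,\R)$ is spanned by the incidence vectors of the cycles in $\G_0$. Given $h\in H_1(\G_0,\R)$, I would write $h=\sum_{k=1}^r c_k[\zeta_k]$ for cycles $\zeta_k$ and real coefficients $c_k$; after reversing the orientation of any $\zeta_k$ with $c_k<0$ and using $[-\zeta]=-[\zeta]$, we may assume $c_k>0$ (the case $h=0$ is realized by the equilibrium measure $\de(e,0)$ for any edge $e$). I would then pick, for each $k$, a nonsingular parametrization of $\zeta_k$ with prescribed total time $T_k= M:=\bigl(\sum_j c_j\bigr)^{-1}$, which is allowed since the per-edge speeds in a parametrized path are free parameters. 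Setting $\alpha_k:=c_k M$ yields non-negative weights with $\sum_k \alpha_k = 1$, so that $\mu:=\sum_k \alpha_k\,\mu_{\zeta_k}$ is a convex combination of occupation measures of cycles and hence belongs to $\bM$ by convexity. The affinity proved above then gives
\begin{equation*}
\rho(\mu) \;=\; \sum_k \alpha_k\,\rho(\mu_{\zeta_k}) \;=\; \sum_k \alpha_k\,\frac{[\zeta_k]}{M} \;=\; \sum_k c_k[\zeta_k] \;=\; h.
\end{equation*}

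I do not foresee a serious obstacle. The main conceptual move, namely writing $\rho$ as $\mu\mapsto\int F\,d\mu$, is a direct unpacking of the definition; the only care required is checking compatibility of $F$ (and of the equilibrium Dirac masses) with the identification at $q=0$, which is built into the set-up. Surjectivity is then just a bookkeeping exercise once the total times of the parametrized cycles are used as a normalization degree of freedom.
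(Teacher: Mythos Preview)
The paper does not supply its own proof of this proposition; it is quoted verbatim from \cite[Proposition 4.10]{SicSor1} and simply cited. So there is no in-paper argument to compare against.

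Your argument is correct and self-contained. The integral representation $\rho(\mu)=\int F\,d\mu$ with $F(e,q)=q\,e$ (valued in $\Cf_1(\G_0,\R)$) is exactly how \eqref{ruota} unpacks, and each scalar component of $F$ is continuous with linear growth, so the duality description of Wasserstein convergence in $\bP$ gives continuity componentwise; affinity is then just linearity of the integral, and the convexity of $\bM$ (cited from \cite[Proposition 4.9]{SicSor1}) makes the statement meaningful. For surjectivity your construction is clean: Lemma \ref{cy} gives the cycle decomposition, reversing orientations disposes of negative coefficients, and the freedom in choosing the total parametrization time of each cycle lets you normalize so that the convex weights sum to one. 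One small point worth making explicit is that $F$ takes values in $\Cf_1(\G_0,\R)$ rather than $H_1(\G_0,\R)$, so a priori $\int F\,d\mu$ lands in the larger space; that it actually lies in $H_1(\G_0,\R)$ for $\mu\in\bM$ follows from the fact that this holds for occupation measures of cycles (Lemma \ref{cy}) and passes to the closure by the continuity you have just established. This is implicit in your write-up but could be stated.
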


    \medskip

    Exploiting the continuity property asserted in Proposition \ref{proprho}, we see that
    \begin{equation*}
        \rho(\mu) \in H_1(\G_0,\R) = \R^{b(\G_0)} \txt{for any $\mu \in \mathbb M$}.
    \end{equation*}
    A converse property is true as well, we in fact have:

    \begin{Proposition}\label{proprhobis}
        A probability measure $\mu$ is closed if and only if
        \begin{equation*}
            \sum_{e\in \EE_0^+} \left ( \la_e \int_0^{+\infty} q \, d\mu_e - \la_{-e} \int_0^{+\infty} q \, d \mu_{-e} \right ) \, e \in H_1(\G_0,\R) = \R^{b(\G_0)}.
        \end{equation*}
    \end{Proposition}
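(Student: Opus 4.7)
\textbf{Forward direction.} First I would verify the claim directly on occupation measures. For a parametrized cycle $\xi=(e_i,q_i,T_i)_{i=1}^m$, using the explicit formulas \eqref{chiudi1}--\eqref{chiudi2} together with $T_iq_i=1$ whenever $q_i>0$ (and $T_iq_i=0$ on equilibrium circuits), a direct computation shows
\begin{equation*}
\sum_{e\in\EE_0^+}\Bigl(\la_e^\xi\int q\,d\mu_e^\xi-\la_{-e}^\xi\int q\,d\mu_{-e}^\xi\Bigr)\,e \;=\; \frac{[\xi]}{T_\xi},
\end{equation*}
which lies in $H_1(\G_0,\R)$ by Lemma \ref{cy} since $\xi$ is a cycle. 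The functional $\mu\mapsto \sum_{e\in\EE_0^+}(\la_e\int q\,d\mu_e-\la_{-e}\int q\,d\mu_{-e})\,e$ is continuous on $\bP$ in the Wasserstein topology, as it is built from integrals of functions of linear growth on $T^+\G_0$. Since $H_1(\G_0,\R)$ is a finite-dimensional (hence closed) linear subspace of $\Cf_1(\G_0,\R)$, the condition is preserved under Wasserstein limits, and every $\mu\in\bM$ satisfies it.

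\textbf{Reverse direction.} Given $\mu=\sum_e\la_e\mu_e\in\bP$ with the vector $v(\mu):=\sum_{e\in\EE_0^+}(\la_e\int q\,d\mu_e-\la_{-e}\int q\,d\mu_{-e})\,e$ in $H_1(\G_0,\R)$, the plan is to produce, for each $n\in\N$, a parametrized cycle $\xi_n$ whose occupation measure approximates $\mu$ in Wasserstein. I would proceed in three steps. First, approximate each $\mu_e$ by a finitely supported probability measure $\mu_e^n$ on $\R_e^+$, with $\int q\,d\mu_e^n\to\int q\,d\mu_e$, and set $\nu_n:=\sum_e\la_e^n\mu_e^n$. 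Since the map $\nu\mapsto v(\nu)$ is linear in the atomic weights, a small controlled perturbation of the $\la_e^n$'s and of the quantities $\int q\,d\mu_e^n$ (redistributing mass within each fibre $\R_e^+$) can be performed so that $v(\nu_n)$ has rational coordinates and still belongs to $H_1(\G_0,\R)$, without destroying the convergence $\nu_n\to\mu$. Second, the condition $v(\nu_n)\in H_1(\G_0,\R)=\ker\partial$ says precisely that the nonnegative rational weights $c_j q_j$ attached to the atoms $(e_j,q_j)$ with $q_j>0$ form a flow on $\G_0$ obeying Kirchhoff's law at every vertex; such a flow admits a decomposition into rational combinations of incidence vectors of simple circuits (a standard consequence of Lemma \ref{cy}). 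Third, I would concatenate these circuits (chosen with appropriate multiplicities after clearing denominators) into a single closed path in $\G_0$, travel between them along edges of the spanning tree $\TT$ (which contribute $\theta=0$), and insert equilibrium circuits at the atoms with $q_j=0$; assigning times $T_i=1/q_i$ on nonequilibrium edges, and using the freedom in the times on equilibrium circuits, yields a parametrized cycle $\xi_n$ with $\mu_{\xi_n}=\nu_n$.

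\textbf{Main obstacle.} The delicate point is the third step: translating a circuit decomposition of the rational flow into a single parametrized cycle whose occupation measure matches $\nu_n$ on the nose. The weights $T_i/T_{\xi_n}$ produced by concatenation are rigidly tied to $T_i=1/q_i$, so reproducing arbitrary prescribed weights $\la_e^n\mu_e^n(\{q\})$ forces one to repeat circuits a precise integer number of times and to calibrate the equilibrium-circuit times. This combinatorial realization must be done while controlling the Wasserstein distance $W_1(\mu_{\xi_n},\mu)\to0$; tree-connecting detours between disjoint circuits are harmless because they traverse each tree edge equally often in both orientations, so they contribute to $\la_e^n$ only in pairs and vanish from $v(\nu_n)$. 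Once this construction is in place, $\{\mu_{\xi_n}\}$ witnesses $\mu\in\bM$, completing the proof.
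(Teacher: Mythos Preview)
The paper does not prove this proposition in the text; it simply refers the reader to \cite{SicSor}. So there is no in-paper argument to compare against, and your proposal has to stand on its own.

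Your forward direction is correct and is essentially a restatement of the continuity asserted in Proposition~\ref{proprho} together with Lemma~\ref{cy}.

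In the reverse direction the strategy is reasonable, but there is a real gap in the third step. You assert that the construction ``yields a parametrized cycle $\xi_n$ with $\mu_{\xi_n}=\nu_n$'', and you declare the tree-connecting detours ``harmless'' because they cancel in $v(\nu_n)$. That is the wrong invariant: the detours leave the rotation vector unchanged, but they \emph{do} add mass to the occupation measure. A detour along a tree edge $f$ and back along $-f$ at speed $q>0$ contributes weight $\tfrac{2}{q\,T_{\xi_n}}$ at the atoms $(\pm f,q)$, which are typically absent from $\nu_n$; and equilibrium circuits (speed zero) start and end at the same vertex, so they cannot serve as connectors. Consequently $\mu_{\xi_n}=\nu_n$ is generally impossible whenever the Euler multigraph assembled from your integer multiplicities $k_j=c_jq_jT_{\xi_n}$ is disconnected.

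The repair is to drop exact equality and argue by approximation. After clearing denominators to obtain integer multiplicities $k_j$, repeat the resulting Eulerian concatenation $N$ times while inserting only a \emph{single} fixed family of tree detours (at unit speed, say) to link the components and reach any needed equilibrium vertices. The detour contribution then has total weight $O(1/N)$ in $\mu_{\xi_n}$ with bounded first moment, hence tends to zero in $W_1$ as $N\to\infty$, giving $W_1(\mu_{\xi_n},\nu_n)\to 0$. Combined with $\nu_n\to\mu$ and a diagonal extraction, this shows $\mu\in\bM$.
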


    See \cite{SicSor}.

    \subsection{Action of a measure}

    \begin{Definition}
        Given a probability measure $\mu = \sum_{e \in \EE_0} \la_e \, \mu_e$, we define its {\it action} as
        \begin{equation*}
            A(\mu)= \int \LL \, d\mu = \sum_{e \in \spt_{\EE_0}\mu} \int_0^{+\infty} \LL(e,q) \, d\mu_e(q).
        \end{equation*}
        Note that it can be infinite.
    \end{Definition}

    \medskip

    \begin{Remark}
        If $\de(e,0) = \de(-e,0)$, for $e\in \EE_0$, denotes the closed occupation measure corresponding to the equilibrium circuit supported on $e$ (recall that $(e,0)$ and $(-e,0)$ are identified in $T^+\G_0$), then \eqref{superminus} implies
        \begin{equation}\label{uffa}
            A(\de(e,0)) = - a_e \qquad\text{for any $e \in \EE_0$,}
        \end{equation}
        where $a_e$ is defined as in \eqref{a00}.
    \end{Remark}

    \medskip

    Given $h \in \R^{b(\G_0)}$, we consider the variational problem
    \begin{equation}\label{relax}
        \inf \{A(\mu) \mid \mu \in \bM \;\text{with} \; \rho(\mu) =h\}
    \end{equation}
    which can be thought as a timeless version of the average minimal action problem for paths in $\G_0$ {(or equivalently $\Gamma$)} with rotation vector $h$ linking two given vertices. Note that in this relaxed variational problem, initial and final points are irrelevant, and only the rotation vector plays a role.

    \begin{Definition}\label{defbeta}
        We indicate by $\be: H_1(\G_0,\R) \to \R$, the {\it Mather's $\be$--function}, the value function of \eqref{relax}:
        \begin{eqnarray*}
            \be: H_1(\G_0,\R) &\longrightarrow& \R \\
            h &\longmapsto& \inf \{A(\mu) \mid \mu \in \bM \;\text{with} \; \rho(\mu) =h\}.
        \end{eqnarray*}
    \end{Definition}

    We have:

    \begin{Lemma}
        \emph{\cite[Section 5.2]{SicSor1}} The function $\be$ is convex and superlinear.
    \end{Lemma}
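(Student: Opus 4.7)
The proof splits cleanly into two independent parts — convexity and superlinearity — both of which rely on structural properties of $\bM$ and the functionals $A, \rho$ already established in the paper.

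For convexity, I would fix $h_1, h_2 \in H_1(\G_0,\R)$ and $\la \in [0,1]$; if $\be(h_i) = +\infty$ for either $i$, the inequality is trivial, so assume both are finite. For any $\eps > 0$, I pick $\mu_i \in \bM$ with $\rho(\mu_i) = h_i$ and $A(\mu_i) \le \be(h_i) + \eps$. Then $\mu := \la \mu_1 + (1-\la)\mu_2$ lies in $\bM$ by convexity of $\bM$ (stated in its defining notation), its rotation vector equals $\la h_1 + (1-\la) h_2$ by the affineness of $\rho$ from Proposition \ref{proprho}, and its action equals $\la A(\mu_1) + (1-\la) A(\mu_2)$ because $A$ is linear in $\mu$ (integration against the fixed function $\LL$). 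Taking the infimum in the definition of $\be$ and letting $\eps \to 0$ yields $\be(\la h_1 + (1-\la)h_2) \le \la \be(h_1) + (1-\la) \be(h_2)$.

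For superlinearity, the key inputs are the uniform superlinearity of the finite family $\{\LL(e,\cdot)\}_{e \in \EE_0}$ and the control of $|\rho(\mu)|$ by the first moments of $\mu$. Uniform superlinearity (which holds because $\EE_0$ is finite and each $\LL(e,\cdot)$ is superlinear) provides, for every $a > 0$, a constant $B_a$ with $\LL(e,q) \ge a q - B_a$ for all $e \in \EE_0$ and $q \ge 0$. Writing $\mu = \sum_{e} \la_e \mu_e$ and setting $m_e := \int_0^{+\infty} q \, d\mu_e$, this gives $A(\mu) \ge a \sum_e \la_e m_e - B_a$. On the other hand, the expression for $\rho(\mu)$ as $\sum_{e \in \EE_0^+}(\la_e m_e - \la_{-e} m_{-e})\, \theta(e)$, combined with the fact that $\{\theta(e)\}_{e \in \EE_0^+ \setminus \EE_\TT^+}$ is a basis of $H_1(\G_0,\R)$ (Proposition \ref{cycy}) and $\theta(e)=0$ on the spanning tree, produces a constant $C > 0$ depending only on this basis with $|\rho(\mu)| \le C \sum_e \la_e m_e$. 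Combining the two estimates and taking the infimum over all $\mu \in \bM$ with $\rho(\mu) = h$ yields $\be(h) \ge a |h|/C - B_a$ for every $a > 0$. Dividing by $|h|$ and sending first $|h| \to +\infty$ and then $a \to +\infty$ gives superlinearity.

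I do not expect a serious obstacle: every ingredient — convexity of $\bM$, affineness of $\rho$, linearity of $A$, and uniform superlinearity of $\LL$ — is either already proved in the paper or follows directly from the definitions. The only point that requires a little care is the constant $C$ in the bound $|\rho(\mu)| \le C \sum_e \la_e m_e$, but this is purely a matter of fixing any norm on $H_1(\G_0,\R) \cong \R^{b(\G_0)}$ compatible with the chosen basis and using the equivalence of norms in finite dimension.
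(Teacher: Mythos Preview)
Your argument is correct in both parts. Note, however, that the paper does not actually supply a proof of this lemma: it simply records the result with a citation to \cite[Section 5.2]{SicSor1}, so there is no in-paper argument to compare against. Your proof is the standard one and uses exactly the ingredients the paper makes available --- convexity of $\bM$, affineness of $\rho$ (Proposition \ref{proprho}), linearity of $\mu \mapsto A(\mu)$, and the superlinearity of each $\LL(e,\cdot)$ together with the finiteness of $\EE_0$ --- so it would serve perfectly well as a self-contained replacement for the citation.
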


    \medskip

    Since $\beta$ is convex and superlinear, we can consider its convex conjugate by Fenchel duality, that will be also relevant in the upcoming discussion.

    \begin{Definition}\label{defalpha}
        We call {\it Mather $\al$--function} the Fenchel transform of $\beta$:
        \begin{eqnarray*}
            \al: H^1(\G_0,\R) &\longrightarrow& \R \\
            p &\longmapsto& \sup_{h\in H_1(\G_0,\R)} \left( \langle p, h\rangle - \beta(h)\right),
        \end{eqnarray*}
        where $\langle \cdot, \cdot \rangle$ indicates the scalar product in $H^1(\G_0,\R)= \R^{b(\G_0)}$.
    \end{Definition}

    \medskip

    \begin{Lemma}
        \emph{\cite[Section 5.2]{SicSor1}} The function $\al$ is also convex and superlinear.
    \end{Lemma}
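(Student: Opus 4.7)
The plan is to deduce both properties directly from the Fenchel duality definition of $\al$, using only general convex-analytic facts plus the information already available on $\be$ from the preceding discussion.

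First, for convexity, I would simply observe that
\begin{equation*}
\al(p)=\sup_{h\in H_1(\G_0,\R)}\bigl(\langle p,h\rangle-\be(h)\bigr)
\end{equation*}
exhibits $\al$ as a pointwise supremum, over $h$, of the family of affine (hence convex) maps $p\mapsto\langle p,h\rangle-\be(h)$. The supremum of any family of convex functions is convex, so $\al$ is convex. No information on $\be$ is required for this half.

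For superlinearity, the key input I need is that $\be$ is finite-valued on all of $H_1(\G_0,\R)=\R^{b(\G_0)}$. This comes from Proposition \ref{proprho}: the rotation-vector map $\rho\colon\bM\to H_1(\G_0,\R)$ is surjective, so for every $h$ there is at least one $\mu\in\bM$ with $\rho(\mu)=h$, making the infimum defining $\be(h)$ attained over a nonempty set. Since $\be$ is moreover convex (already shown in \cite{SicSor1}) and finite-valued on the finite-dimensional space $\R^{b(\G_0)}$, it is automatically continuous, and therefore bounded on compact sets; in particular, for each $M>0$ we may set $C_M:=\sup_{|h|=M}\be(h)<+\infty$.

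Given an arbitrary $M>0$ and any nonzero $p\in H^1(\G_0,\R)$, I would then plug the specific choice $h=M\,p/|p|$ into the supremum defining $\al(p)$:
\begin{equation*}
\al(p)\;\ge\;\bigl\langle p,\tfrac{M\,p}{|p|}\bigr\rangle-\be\!\left(\tfrac{M\,p}{|p|}\right)\;\ge\;M\,|p|-C_M.
\end{equation*}
Dividing by $|p|$ and letting $|p|\to+\infty$ gives $\liminf_{|p|\to\infty}\al(p)/|p|\ge M$, and since $M$ was arbitrary this yields $\lim_{|p|\to\infty}\al(p)/|p|=+\infty$, which is precisely superlinearity.

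There is essentially no obstacle here: both facts are standard consequences of Fenchel duality, once one knows that $\be$ is convex, finite-valued, and superlinear. The only point that must be highlighted (and invoked explicitly from the earlier material) is the surjectivity of $\rho$, which guarantees $\be<+\infty$ everywhere and thereby allows the upper bound $C_M$ to be formed; superlinearity of $\be$, though not used directly above, in fact implies that the supremum in the definition of $\al(p)$ is finite for every $p$, confirming that $\al$ is real-valued (not merely $+\infty$).
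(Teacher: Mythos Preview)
Your argument is correct and entirely standard: convexity of $\al$ as a supremum of affine maps, and superlinearity from the finiteness of $\be$ on spheres. Note, however, that the paper does not actually prove this lemma---it simply cites \cite[Section 5.2]{SicSor1}---so there is no ``paper's own proof'' to compare against; your explicit Fenchel-duality argument is exactly the kind of verification one would expect to find in the cited reference. One small remark: you invoke surjectivity of $\rho$ (Proposition \ref{proprho}) to get $\be<+\infty$, but this is already implicit in the immediately preceding lemma, which asserts that $\be$ is convex and superlinear as a real-valued function on $H_1(\G_0,\R)$; you could therefore streamline by taking that lemma as the sole input.
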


    \smallskip

    \begin{Definition}
        We say that $h \in H_1(\G_0,\R)= \R^{b(\G_0)}$ and $p \in H^1(\G_0,\R)=\R^{b(\G_0)}$ are {\em{conjugate}} if
        \begin{equation*}
            \langle p,h\rangle= \al(p) + \be (h),
        \end{equation*}
        where $\langle \cdot, \cdot \rangle$ indicates the scalar product in $\R^{b(\G_0)}$.
    \end{Definition}

    \smallskip

    Problem \eqref{relax} admits minimizers thanks to the fact that $\bM$ is closed in $\mathbb P$ by construction and to the following crucial property:

    \begin{Lemma}\label{compatto}
        \emph{\cite[Proposition 5.1]{SicSor1}} Given $ a \in \R$, the set
        \begin{equation*}
            \{ \mu \in \bM \mid A(\mu) \le a\}
        \end{equation*}
        is compact in $\bP$.
    \end{Lemma}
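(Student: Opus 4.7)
The plan is to establish compactness of the sublevel set by combining a tightness/uniform-integrability argument, driven by the superlinearity of $\LL$, with the lower semicontinuity of $A$ under Wasserstein convergence and the fact that $\bM$ is closed in $\bP$ by construction. Take a sequence $\{\mu_n\} \subset \bM$ with $A(\mu_n) \le a$. Since $\EE_0$ is finite and each $\LL(e,\cdot)$ is superlinear (by Proposition \ref{acca} and Fenchel duality), for every $M>0$ there is a constant $K(M)$ with $\LL(e,q) \ge Mq - K(M)$ for all $e \in \EE_0$ and $q \ge 0$. Moreover, since $\LL$ is continuous on $T^+\G_0$ (continuity at $q=0$ is guaranteed by \eqref{superminus}), it is bounded below by a constant $-C$. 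Integrating the superlinearity estimate against $\mu_n$ gives $\int q \, d\mu_n \le (a + K(M))/M$, so the first moments are uniformly bounded; and restricting the lower bound to $\{q>R\}$ and using $\LL \ge -C$ on $\{q \le R\}$ yields $\int_{\{q>R\}} q\, d\mu_n \le (a+C)/M$ for $R$ chosen large in terms of $M$, giving the required uniform integrability of $q$.

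Next, I would extract a limit. The moment bound implies tightness of the sequence in the ordinary sense on the Polish space $T^+\G_0$, so by Prokhorov's theorem a subsequence converges narrowly to some $\mu \in \bP$. The uniform integrability of the first moment upgrades this narrow convergence to convergence in the first Wasserstein distance (via the characterization stated just after the definition of $\bP$, in duality with continuous functions of linear growth).

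Having secured convergence $\mu_n \to \mu$ in $\bP$, the remaining task is to show $A(\mu) \le a$ and $\mu \in \bM$. For the first, I invoke lower semicontinuity of $A$: since $\LL$ is continuous and bounded below on $T^+\G_0$, the standard Portmanteau-type argument (approximate $\LL$ from below by bounded continuous functions, and use the Wasserstein duality with linear-growth test functions for the unbounded part controlled by the uniform integrability of $q$) gives $A(\mu) \le \liminf_n A(\mu_n) \le a$. For the second, $\mu \in \bM$ is automatic, because by definition $\bM$ is the closure in $\bP$ of the set of occupation measures of cycles, so a $\bP$-limit of elements of $\bM$ lies in $\bM$. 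Combining, $\mu$ lies in the sublevel set, which is therefore sequentially compact, and since $\bP$ is metrizable by $W_1$ this is the same as compactness.

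The main obstacle is handling the geometry of the ambient space $T^+\G_0$: the identification $(e,0) \sim (-e,0)$ must be respected throughout (in particular when checking continuity of $\LL$ at the origin of each ray, which is precisely where \eqref{superminus} is used), and the Wasserstein distance on this space is non-standard enough that the upgrade from narrow to $W_1$ convergence via uniform integrability of $q$ must be verified by hand, using the stated characterization of convergence in $\bP$ in duality with continuous functions of linear growth at infinity.
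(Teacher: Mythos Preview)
Your argument is correct and follows the standard route for this kind of result: superlinearity of $\LL$ yields uniform first-moment bounds and uniform integrability of $q$, Prokhorov gives a narrowly convergent subsequence, uniform integrability upgrades narrow convergence to $W_1$-convergence, and then lower semicontinuity of $A$ together with the closedness of $\bM$ in $\bP$ closes the argument. Note, however, that the paper does not supply its own proof of this lemma: it is simply quoted from \cite[Proposition 5.1]{SicSor1}, so there is no in-paper argument to compare against. Your sketch is precisely the kind of proof one expects that reference to contain, and the points you flag as needing care (continuity of $\LL$ at the apex of each cone via \eqref{superminus}, and the explicit check that uniform integrability of $q$ suffices to pass from narrow to $W_1$ convergence on this particular Polish space) are the right ones.
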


    \begin{Notation}
        We denote the set of minimizers of \eqref{relax} by $\bM^h$, and call its elements {\em Mather measures} with rotation vector $h$.
    \end{Notation}

    Any element of $\bM^h$ is an occupation measure, up to a convex combination. More precisely:

    \begin{Proposition}\label{combi}
        \emph{\cite[Proposition 6.1, Theorem 6.6]{SicSor1}} Given $h \in \R^{b(\G_0)}$, any $\mu \in \bM^h$ is the convex combination of occupation measures supported on circuits.
    \end{Proposition}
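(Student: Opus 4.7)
My strategy would be to analyze the extreme points of $\bM^h$ and then invoke a Carathéodory-type finiteness argument that leverages the finiteness of $\G_0$.

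First, I would verify that $\bM^h$ is a nonempty, compact, convex subset of $\bP$. Nonemptiness follows from the existence of minimizers: by Lemma \ref{compatto}, the sublevel set $\{\mu \in \bM : A(\mu) \le \be(h) + 1\}$ is compact, and $\rho$ is continuous on it by Proposition \ref{proprho}, so a minimizer of $A$ on $\rho^{-1}(h)$ exists. Compactness of $\bM^h$ is then immediate, and convexity follows because $A$ is linear in $\mu$ while $\rho$ is affine (Proposition \ref{proprho}), so $\bM^h$ is the intersection of the convex set $\rho^{-1}(h) \cap \bM$ with the affine hyperplane $\{A = \be(h)\}$.

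Second, I would apply Krein--Milman to write $\mu$ as a limit of convex combinations of extreme points of $\bM^h$, and then show that each extreme point is an occupation measure supported on a parametrized circuit. The key idea is a cycle-decomposition: if $\mu \in \bM^h$ is a closed measure whose support contains a non-simple cycle pattern, then Proposition \ref{cry} lets one peel off a circuit contribution, yielding two closed measures $\mu_1$, $\mu_2$ with $\mu = \la \mu_1 + (1-\la)\mu_2$ for some $\la \in (0,1)$. The flow-decomposition must be arranged so that both $\mu_1, \mu_2$ have rotation vectors summing correctly to $h$ (using the integer-flow interpretation of cycles via $\theta$), and—since $\mu$ minimizes the linear functional $A$ on the convex set $\bM \cap \rho^{-1}(h)$—both $\mu_i$ must themselves lie in $\bM^h$. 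This contradicts extremality unless $\mu$ was already supported on a circuit. Strict convexity of $\LL(e,\cdot)$ then pins down the speeds on each edge of the circuit.

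Third, to upgrade the Krein--Milman representation from closed convex hulls to \emph{finite} convex combinations, I would use the fact that $\G_0$ is finite and so admits only finitely many circuits. Each circuit supports at most one occupation measure that is an extreme minimizer (by the strict convexity of $\LL$ forcing a unique optimal speed profile on each edge), so the extreme points of $\bM^h$ form a finite set. Any $\mu \in \bM^h$ thus lies in the convex hull of a finite set, which by Carathéodory is a finite convex combination.

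The main obstacle is the second step: rigorously converting a graph-theoretic peeling of a circuit from a cycle support into a clean measure-theoretic decomposition $\mu = \la\mu_1 + (1-\la)\mu_2$ that simultaneously respects the rotation-vector constraint and preserves action minimality. This requires carefully bookkeeping the contribution of each edge in the decomposition \eqref{ruota}, balancing the time-weights $\la_e$ and the speed measures $\mu_e$ so that the $\pm$ edges match up at vertices, and ensuring the $\la_e^{\mu_i}$ are nonnegative; the existence of equilibrium circuits $\de(e,0)$ provides the necessary flexibility to absorb leftover mass while keeping both pieces closed.
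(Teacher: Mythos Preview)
The paper does not prove this proposition; it is quoted verbatim from \cite[Proposition~6.1, Theorem~6.6]{SicSor1}, so there is no in-paper argument to compare against. That said, your outline has a structural gap in step~2 that is not just a technicality.

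You want the extreme points of $\bM^h$ to be occupation measures supported on circuits. But an occupation measure $\mu_\xi$ on a parametrized circuit $\xi$ has rotation vector $[\xi]/T_\xi$, a scalar multiple of the fixed vector $[\xi]\in H_1(\G_0,\R)$. Unless $h$ happens to be parallel to $[\xi]$, no such $\mu_\xi$ lies in $\rho^{-1}(h)$ at all, so it cannot be an extreme point of $\bM^h$. Your peeling argument produces $\mu=\la\mu_1+(1-\la)\mu_2$ with $\la\rho(\mu_1)+(1-\la)\rho(\mu_2)=h$, but there is no reason for the individual $\rho(\mu_i)$ to equal $h$; without that, the minimality of $A(\mu)$ on the constrained set $\bM\cap\rho^{-1}(h)$ tells you nothing about whether $\mu_i\in\bM^h$, and the extremality contradiction does not fire. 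Your acknowledged ``main obstacle'' is exactly this, and the sketch does not resolve it.

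The route in \cite{SicSor1}, echoed in Proposition~\ref{utilissima} here, sidesteps the issue by passing to the dual problem. One first shows via Fenchel duality that $\bM^h\subset\bM_p$ for any $p$ conjugate to $h$, and then analyses $\bM_p$ instead. Minimising $\mu\mapsto\int\LL^p\,d\mu$ over all of $\bM$ carries no rotation-vector constraint, so the circuit-peeling decomposition works cleanly: circuit occupation measures with \emph{any} rotation vector are admissible candidates for extreme points of $\bM_p$, and the linear minimisation forces both pieces of a split to remain in $\bM_p$. Your step~3 finiteness argument then runs correctly on $\bM_p$, with Proposition~\ref{alfinale} fixing the unique speed $\mathcal Q_p(e)$ on each edge. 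The architecture you propose is essentially right, but it must be executed on $\bM_p$, not on $\bM^h$.
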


    \medskip

    To characterize the measures in the above convex combination we introduce the family of variational problems, indexed by $p\in H^1(\G_0,\R)$, that are in some sense dual to \eqref{relax} (and can be thought as a Lagrange multiplier version of the previous constrained minimization problems). The corresponding value function coincides with $-\alpha$.

    For $p \in H^1(\G_0,\R)$ let us define the {\it $p$--modified Lagrangian} $\LL^p$ as
    \begin{equation*}
        \LL^p(e,\la):= \LL(e,\la) - \la \langle p, \theta(e) \rangle \qquad\text{for $\la \ge 0$,}
    \end{equation*}
    and consider the problem:
    \begin{equation}\label{relax0}
        \inf \left \{\int \LL^p \, d\mu \mid \mu \in \bM \right \}.
    \end{equation}

    \begin{Remark}
        Note that $\LL^p$ is the Fenchel--Legendre transform the $p$--modified Hamiltonian $\HH^p$ defined in Section \ref{stattHJ}.
    \end{Remark}

    \smallskip

    We notice that \eqref{ruota} yields
    \begin{align}
        -\inf_{\mu\in\bM}\int\LL^pd\mu=\,&\sup_{\mu\in\bM}\left(\langle p,\rho(\mu)\rangle-\int\LL d\mu\right)=\sup_{h\in H_1(\G_0,\R)}\left(\langle p,h\rangle-\inf_{\mu\in\bM^h}\int\LL d\mu\right) \nonumber\\
        =\,&\sup_{h\in H_1(\G_0,\R)}(\langle p,h\rangle-\be(h))=\al(p).\label{ciaociao}
    \end{align}

    \smallskip

    \begin{Proposition}\label{effetto}
        The function $\al$ coincides with the function $\bar \HH$ defined in Definition \ref{defHbar}.
    \end{Proposition}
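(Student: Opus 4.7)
The strategy is to prove both inequalities $\al(p) \le \bar\HH(p)$ and $\al(p) \ge \bar\HH(p)$ for every $p \in H^1(\G_0,\R)$, starting from the dual formula \eqref{ciaociao}, which reduces the claim to $-\inf_{\mu\in\bM}\int \LL^p\,d\mu = \bar\HH(p)$. The two central ingredients are the Fenchel identity $\LL^p(e,q) = \max_{a\ge a_e}(q\,\si^p(e,a) - a)$ from Lemma \ref{inutile}, and the elementary fact that the $1$-cochain $e\mapsto u(\tt(e))-u(\oo(e))$ attached to any vertex function $u:\VV_0\to\R$ pairs trivially with every element of $H_1(\G_0,\R)$, being a coboundary.

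For the upper bound, fix $a$ for which \eqref{HJE} admits a subsolution $u$; by the very definition of $\HH^p$ this means $u(\tt(e))-u(\oo(e)) \le \si^p(e,a)$ for all $e\in\EE_0$, and Fenchel duality gives
\[
\LL^p(e,q) \;\ge\; q\,\bigl(u(\tt(e)) - u(\oo(e))\bigr) - a \qquad \forall\, q\ge 0.
\]
Integrating against any closed $\mu = \sum_e \la_e\mu_e$ and grouping contributions from pairs $\{e,-e\}$, the linear part is exactly the pairing of the coboundary of $u$ with $\rho(\mu)\in H_1(\G_0,\R)$, hence it vanishes. Thus $\int \LL^p\,d\mu \ge -a$, so $\al(p)\le a$ for every such $a$; combined with the characterization of the critical value recalled before Definition \ref{defHbar}, this yields $\al(p)\le\bar\HH(p)$.

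For the lower bound, I will exhibit, for every $a<\bar\HH(p)$, a closed measure $\mu_a$ with $\int \LL^p\,d\mu_a<-a$. If $a<a_0$, take $\de(e_0,0)$ for any edge with $a_{e_0}=a_0$: by \eqref{uffa} its action is $-a_0<-a$. If instead $a\in[a_0,\bar\HH(p))$, no subsolution to \eqref{HJE} exists at level $a$, so there is a cycle $\xi=(e_1,\dotsc,e_m)$ with $\si^p_a(\xi)<0$; choose $q_i := 1/\partial_a\si(e_i,a)>0$ (positive and finite by Lemma \ref{newborn}, since $a>a_{e_i}$) and $T_i := 1/q_i$, which is precisely the speed realizing the Fenchel maximum at $b=a$, giving $\LL^p(e_i,q_i) = q_i\,\si^p(e_i,a) - a$. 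The occupation measure $\mu_a := T^{-1}\sum_i T_i\,\de(e_i,q_i)$, $T:=\sum_i T_i$, is closed because $\xi$ is a cycle, and $T_i q_i = 1$ gives $\int\LL^p\,d\mu_a = \si_a^p(\xi)/T - a < -a$, as required. The main delicate point is this cycle construction: it hinges on the characterization of non-subsolvability by negative-intrinsic-length cycles and on the differentiability of $\si(e,\cdot)$ from Lemma \ref{newborn} to make the Fenchel-extremal speeds explicit.
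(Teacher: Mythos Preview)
Your argument is correct and, unlike the paper's proof, is essentially self-contained: the paper simply invokes \cite[Theorem~8.1]{SicSor1} for the case $\bar\HH(p)>a_0$ and then observes that $\min\al=\min\bar\HH=a_0$ via \eqref{uffa}. You instead run the duality directly, which has the advantage of making transparent \emph{why} the two quantities coincide: the upper bound comes from pairing the subsolution inequality $u(\tt(e))-u(\oo(e))\le\si^p(e,a)$ with any closed $\mu$ and using that coboundaries annihilate $H_1(\G_0,\R)$; the lower bound comes from turning a negative-length cycle into an occupation measure with action below $-a$ by choosing the Fenchel-extremal speeds. This is exactly the weak-KAM/LP duality underlying \cite{SicSor1}, but spelled out in the present notation.

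One small point to tighten. In Case~2 you write ``since $a>a_{e_i}$'', but you only assumed $a\in[a_0,\bar\HH(p))$, and $a=a_0=a_{e_i}$ is possible for some edge on the cycle; at such an edge Lemma~\ref{newborn} does not guarantee differentiability of $\si(e_i,\cdot)$, so $q_i$ may fail to be finite. This is harmless for the conclusion: when $\bar\HH(p)>a_0$ you may restrict Case~2 to $a\in(a_0,\bar\HH(p))$, where $a>a_0\ge a_{e_i}$ holds strictly for every $i$, and Case~1 already covers all $a<a_0$; together these yield $\al(p)\ge\bar\HH(p)$. When $\bar\HH(p)=a_0$, Case~2 is vacuous and Case~1 alone gives $\al(p)\ge a_0=\bar\HH(p)$. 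So just replace the closed interval by the open one and the argument goes through unchanged.
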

    \begin{proof}
        If $\bar \HH(p) > a_0$ then the statement is proven in \cite[Theorem 8.1]{SicSor1}, see Remark \ref{accazero}. Note that by \eqref{uffa}
        \begin{equation*}
            A((e,\de_0)) = -a_0 \qquad\text{for any $e$ with $a_e=a_0$}
        \end{equation*}
        which implies that $\min \al = a_0 =\min \bar \HH$. This concludes the proof.
    \end{proof}

    \medskip

    We deduce from Lemma \ref{compatto} and \eqref{ciaociao} that \eqref{relax0} admits minimizers making up a convex compact set in $\bM$ denoted by $\bM_p$, see \cite[Proposition 5.7]{SicSor1}.

    We are now in position to complete the information provided in Proposition \ref{combi}.

    \begin{Proposition}\label{utilissima}
        \cite[Proposition 5.8, Theorems 6.10, 8.1]{SicSor1} Let $h \in H_1(\G_0,\R)$ and let $\mu \in \bM^h$, {\it i.e.}, $\mu$ is a Mather measure with rotation vector $h$. Then:
        \begin{equation*}
            \mu \in \bM_p \qquad \Longleftrightarrow \qquad \text{$p$ is conjugate to $h$}.
        \end{equation*}
        Moreover, if this is the case, $\mu$ is the convex combination of occupation measures supported on circuits belonging to $\bM_p$.
    \end{Proposition}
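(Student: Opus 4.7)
The plan is to reduce everything to the defining identity \eqref{ciaociao}, which says $\al(p)=-\min_{\mu\in\bM}\int\LL^p\,d\mu$. First I would unwrap the definition of $\LL^p(e,\la)=\LL(e,\la)-\la\langle p,\theta(e)\rangle$ together with the expression \eqref{ruota} for the rotation vector to obtain, for every $\mu=\sum_e\la_e\mu_e\in\bM$, the identity
\[
\int\LL^p\,d\mu \;=\; A(\mu)-\langle p,\rho(\mu)\rangle.
\]
(Here one uses that $\theta(-e)=-\theta(e)$ in $\Cf_1(\G_0,\R)$, so the sum over $\EE_0$ splits into the $\EE_0^+$--form of \eqref{ruota}.) Combined with \eqref{ciaociao}, this yields the characterization
\[
\mu\in\bM_p \;\Longleftrightarrow\; A(\mu)-\langle p,\rho(\mu)\rangle=-\al(p).
\]

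Next, for $\mu\in\bM^h$ we have $\rho(\mu)=h$ and $A(\mu)=\be(h)$ by Definition \ref{defbeta}. Substituting into the previous display, the condition $\mu\in\bM_p$ becomes $\be(h)-\langle p,h\rangle=-\al(p)$, i.e.\ $\langle p,h\rangle=\al(p)+\be(h)$, which is exactly the conjugacy condition. This gives the first (equivalence) half of the statement.

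For the ``moreover'' part, assume $p$ and $h$ are conjugate and pick $\mu\in\bM^h$, so $\mu\in\bM_p$ by the equivalence. By Proposition \ref{combi}, I can write $\mu=\sum_{i=1}^N c_i\,\mu_{\xi_i}$ as a convex combination (with $c_i>0$, $\sum c_i=1$) of occupation measures supported on circuits $\xi_i$. Each $\mu_{\xi_i}$ lies in $\bM$, so \eqref{ciaociao} gives $\int\LL^p\,d\mu_{\xi_i}\ge -\al(p)$. Since
\[
-\al(p)=\int\LL^p\,d\mu=\sum_{i=1}^N c_i\int\LL^p\,d\mu_{\xi_i},
\]
a convex combination (with strictly positive weights) of terms each $\ge -\al(p)$ that equals $-\al(p)$ forces every term to equal $-\al(p)$. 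Hence $\mu_{\xi_i}\in\bM_p$ for all $i$, as required.

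I do not foresee a serious obstacle: the argument is essentially the saturation of the two inequalities $\be(h)\le A(\mu)$ on $\bM^h$ and $A(\mu)-\langle p,\rho(\mu)\rangle\ge -\al(p)$ on $\bM$, which meet precisely when the Fenchel identity $\langle p,h\rangle=\al(p)+\be(h)$ holds. The only bookkeeping step is the identity $\int\LL^p\,d\mu=A(\mu)-\langle p,\rho(\mu)\rangle$, which follows directly from the definitions and the linearity of $\theta$.
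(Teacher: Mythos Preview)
Your argument is correct. The paper does not actually supply a proof of this proposition; it simply cites \cite[Proposition 5.8, Theorems 6.10, 8.1]{SicSor1}. What you have written is a clean, self-contained derivation using only ingredients already present in the paper: the identity implicit in \eqref{ciaociao} (namely $\int\LL^p\,d\mu=A(\mu)-\langle p,\rho(\mu)\rangle$, which you correctly isolate), the definitions of $\al$, $\be$, and conjugacy, and Proposition~\ref{combi} for the decomposition into circuit measures. The equivalence reduces, as you observe, to the Fenchel equality $\langle p,h\rangle=\al(p)+\be(h)$, and the ``moreover'' part is the standard observation that a convex combination of terms bounded below by $-\al(p)$ that achieves $-\al(p)$ forces each term to be optimal. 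The only minor point worth making explicit is that the occupation measures $\mu_{\xi_i}$ supported on circuits are indeed closed (circuits are cycles), so that the lower bound $\int\LL^p\,d\mu_{\xi_i}\ge -\al(p)$ from \eqref{ciaociao} applies to each of them; you use this implicitly and it is immediate from the definitions.
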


    We point out that
    \begin{equation}\label{eq:minbe}
        \beta(0)=-\min_{p\in H^1(\G_0,\R)}\al(p)=-a_0.
    \end{equation}

    \medskip

    We further have:

    \begin{Proposition}\label{alfinale}
        Let $\mu \in \bM_p$, for some $p \in H^1(\G_0,\R)$, and $e \in \supp_{\EE_0} \mu$. Then, $\mu_e= \de(e,\mathcal Q_p(e))$, where
        \begin{equation}\label{alfinale1}
            \mathcal Q_p(e):= \frac \partial{\partial\rho} \HH(e, \si(e,\al(p))) \ge 0.
        \end{equation}
        If $Q_p(e)=0$ for some $e \in \spt_{\EE_0} \mu$, then $p$ is a minimizer of $\al$ and $a_e = \al(p) =a_0$.
    \end{Proposition}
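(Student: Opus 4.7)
The strategy is to combine the circuit decomposition of Proposition \ref{utilissima} with pointwise Fenchel duality and the nonnegativity of critical intrinsic lengths. By Proposition \ref{utilissima}, the measure $\mu\in\bM_p$ is a convex combination of occupation measures $\mu_\xi$ of circuits $\xi\in\bM_p$. I would first establish $(\mu_\xi)_e = \de(e,\mathcal Q_p(e))$ for each individual circuit in the decomposition, and then invoke the strict convexity of $\LL(e,\cdot)$ (equivalently, Jensen's inequality on $T^+\G_0$) to rule out any mixture placing positive mass at distinct speeds on the same edge.

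For a nonsingular circuit $\xi=(e_i,q_i,T_i)_{i=1}^m\in\bM_p$, the Fenchel inequality evaluated at the momentum $\rho=\si^p(e_i,\al(p))=\si(e_i,\al(p))-\langle p,\theta(e_i)\rangle$ (admissible because $\al(p)\ge a_0\ge a_{e_i}$ and $\HH^p(e_i,\rho)=\HH(e_i,\si(e_i,\al(p)))=\al(p)$ by the very definition of $\si$) gives
\[
\LL^p(e_i,q_i)\ \ge\ q_i\,\si^p(e_i,\al(p))-\al(p),
\]
with equality if and only if $q_i=\partial_\rho\HH(e_i,\si(e_i,\al(p)))=\mathcal Q_p(e_i)$. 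Multiplying by $T_i$, exploiting $T_iq_i=1$, and summing over the circuit yields
\[
A_{\LL^p}(\xi)\ \ge\ \si^p_{\al(p)}(\xi)-T_\xi\,\al(p).
\]
The definition of $\al(p)$ as the infimum of levels admitting subsolutions of \eqref{HJE} (see \cite[Proposition 6.5]{SicSor}) ensures the existence of subsolutions already at $a=\al(p)$, and summing $\HH^p(e_i,u(\tt(e_i))-u(\oo(e_i)))\le\al(p)$ along any cycle forces $\si^p_{\al(p)}(\eta)\ge 0$ on every cycle $\eta$. Combining this with the identity $A_{\LL^p}(\xi)=-T_\xi\al(p)$, which encodes $\mu_\xi\in\bM_p$ via \eqref{ciaociao}, squeezes $\si^p_{\al(p)}(\xi)=0$ and forces equality in Fenchel on every edge, delivering $q_i=\mathcal Q_p(e_i)$.

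For an equilibrium circuit $(e,-e)$ in the decomposition, the occupation measure is just $\de(e,0)$, and membership in $\bM_p$ reads $\LL(e,0)=-a_e=-\al(p)$; combined with $\al(p)\ge a_0\ge a_e$ and \eqref{eq:minbe}, this produces $\al(p)=a_0=a_e$ and $p\in\arg\min\al$. Putting the two cases together, $\mu_e$ can have its (unique) Dirac mass at $0$ only if every circuit through $e$ in the decomposition is equilibrium, which by the preceding observation forces $a_e=\al(p)=a_0$, in line with the final implication. The single delicate point is the need for the intrinsic-length bound $\si^p_{\al(p)}(\xi)\ge 0$ at the critical level itself, which is however available here because only the existence of subsolutions (and not of actual solutions) at $a=\al(p)$ enters the argument, so no appeal to the hypothesis $(H_0)$ from Remark \ref{accazero} is needed.
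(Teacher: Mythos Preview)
Your argument is correct and self-contained, whereas the paper simply defers to \cite[Proposition 8.6 and Corollaries 8.2, 8.7]{SicSor1}. The route you take---Fenchel duality edge by edge along each circuit in the decomposition of Proposition \ref{utilissima}, squeezed against the nonnegativity of the critical intrinsic length $\si^p_{\al(p)}(\xi)\ge 0$---is essentially the mechanism behind the cited results, so you are reconstructing rather than bypassing them. Two small points worth making explicit: first, once you know $a_e=\al(p)$ in the equilibrium case, the strict convexity of $\LL(e,\cdot)$ on $[0,\infty)$ forces $\partial_\rho\HH(e,b_e^+)=0$, hence $\mathcal Q_p(e)=0$, so the equilibrium and nonsingular contributions to $\mu_e$ automatically agree and the Jensen step becomes redundant; second, the existence of subsolutions at the exact level $\al(p)$ (needed for $\si^p_{\al(p)}\ge 0$ on cycles) follows by a straightforward limit argument on the finite vertex set, as you note. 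What your approach buys over the bare citation is transparency about why the speed is pinned to $\mathcal Q_p(e)$ and why the degenerate case singles out $\min\al$; what the citation buys is brevity.
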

    \begin{proof}
        This is a consequence of \cite[Proposition 8.6 and Corollaries 8.2, 8.7]{SicSor1}.
    \end{proof}

    \smallskip

    We record for later use:

    \begin{Corollary}\label{coralfinale}
        Given $R >0$, there exists $b >0$ such that if $p \in H^1(\G_0,\R)=\R^{b(\G_0)}$ with $|p| \le R$, then
        \begin{equation*}
            \mathcal Q_p(e) \le b \qquad\text{for any $\mu \in \bM_p$, $ e \in \spt_{\EE_0} \mu$.}
        \end{equation*}
    \end{Corollary}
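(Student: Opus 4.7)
The plan is to combine Proposition \ref{alfinale} with the continuity and monotonicity of the ingredients it involves, exploiting the fact that the base graph has finitely many edges. By \eqref{alfinale1}, for any $\mu \in \bM_p$ and any $e \in \supp_{\EE_0}\mu$ we have
$$\mathcal Q_p(e)=\frac{\partial}{\partial\rho}\HH(e,\si(e,\al(p)))\ge 0,$$
so it suffices to bound this expression uniformly for $|p|\le R$, and since $\EE_0$ is finite, it is enough to bound it edge-by-edge and take the maximum over $\EE_0$ at the end.

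Three facts drive the argument. First, by the lemma following Definition \ref{defalpha}, $\al$ is convex and finite on $\R^{b(\G_0)}$, hence continuous, so $A:=\max_{|p|\le R}\al(p)<+\infty$. Second, by Proposition \ref{effetto} together with \eqref{eq:minbe} we have $\al(p)\ge a_0\ge a_e$, so that $\si(e,\al(p))$ lies in the domain $[b_e,+\infty)$ of $\HH(e,\cdot)$; by Lemma \ref{newborn} the map $a\mapsto \si(e,a)$ is continuous and strictly increasing on $[a_e,+\infty)$, giving $\si(e,\al(p))\le \si(e,A)$. Third, by Proposition \ref{acca}, $\HH(e,\cdot)$ is $C^1$ and (strictly) convex on $(b_e,+\infty)$, so its derivative is continuous and non-decreasing there.

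Putting these together, for each $e\in\EE_0$ fix any $r_e>\si(e,A)$. Whenever $\si(e,\al(p))>b_e$, the monotonicity of $\frac{\partial}{\partial\rho}\HH(e,\cdot)$ yields
$$\mathcal Q_p(e)\le \frac{\partial}{\partial\rho}\HH(e,r_e)=:b_e^\star<+\infty.$$
In the remaining boundary case $\si(e,\al(p))=b_e$, strict monotonicity of $\si(e,\cdot)$ forces $\al(p)=a_e$, and then the second assertion of Proposition \ref{alfinale} gives $\mathcal Q_p(e)=0\le b_e^\star$. Setting $b:=\max_{e\in\EE_0}b_e^\star$ (a finite maximum) yields the required uniform bound.

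The only subtle point is the boundary case $\si(e,\al(p))=b_e$, where $\HH(e,\cdot)$ need not be classically differentiable; I expect this to be the main (and really only) obstacle, and it is resolved by the dichotomy already provided in Proposition \ref{alfinale}. Everything else reduces to the continuity of finitely many functions on the compact ball $\{|p|\le R\}$.
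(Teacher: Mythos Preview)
Your argument is correct and follows essentially the same route as the paper's proof: bound $\al$ on the compact ball by continuity, feed this into \eqref{alfinale1}, and take a maximum over the finitely many edges. The paper phrases the second step as pure continuity of $p\mapsto \frac{\partial}{\partial\rho}\HH(e,\si(e,\al(p)))$, while you make the monotonicity explicit via convexity of $\HH(e,\cdot)$; either is fine.

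One small slip: in the boundary case $\si(e,\al(p))=b_e$ you invoke the second assertion of Proposition \ref{alfinale} to conclude $\mathcal Q_p(e)=0$, but that assertion is stated in the opposite direction (it says $\mathcal Q_p(e)=0$ implies $\al(p)=a_e=a_0$, not the converse). You do not actually need this. By the convexity of $\HH(e,\cdot)$ on $[b_e,+\infty)$ (Proposition \ref{acca}) its one-sided derivative is non-decreasing, so whatever nonnegative value $\mathcal Q_p(e)$ takes at $b_e$ is still dominated by $\frac{\partial}{\partial\rho}\HH(e,r_e)=b_e^\star$; your own monotonicity step already covers this without appealing to Proposition \ref{alfinale} again.
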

    \begin{proof}
        If $p$ is bounded then $\al(p)$ is bounded as well because it is continuous, and the same holds true for $\frac \partial{\partial \rho} \HH(e, \si(e,\al(p)))$, whenever $ e \in \spt_{\EE_0} \mu$ for $\mu \in \bM_p$. The estimates are uniform in $e$ because the edges are finitely many. This proves the assertion by means of \eqref{alfinale1}.
    \end{proof}

    \medskip

    \medskip

    \subsection{Fundamental Mather's asymptotic Theorems}

    We can now state two key convergence theorems, which relate the asymptotic behaviours of, respectively, the minimal average action functionals on the base graph $\Phi_\LL$ (see Definition \ref{defPhiT}) and on the periodic network $\Phi_L$ (see Definition \ref{defPhiLnet}, to Mather's $\beta$-function, {\it i.e.}, the average minimal action functional (see Definition \ref{defbeta}). {These crucial results, which are interesting by themselves, are based on non-trivial adaptations in this setting of \cite[Corollary p. 181]{Mather91}.}

    The first key convergence theorem, as $T$ goes to infinity, reads:

    \begin{restatable}{Theorem}{new}\label{new}
        For any positive constants $A$, $\de$, there exists $T_0=T_0(A,\de)$ such that
        \begin{equation*}
            \left | \frac 1T \, \Phi_\LL(x,y,T;h) - \be \left ( \frac{h}T \right ) \right | < \de
        \end{equation*}
        for $T \ge T_0$, $x$, $y$ in $\VV_0$, and $h \in \Z^{b(\G_0)}$ with $\left | \frac{h}T \right | < A $.
    \end{restatable}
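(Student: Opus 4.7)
The strategy is to establish matching upper and lower bounds on $\frac{1}{T}\Phi_\LL(x,y,T;h)$ in terms of $\be(h/T)$, using the variational characterization of $\be$ on $\bM$ together with the description of Mather measures as convex combinations of occupation measures on circuits (Propositions \ref{combi} and \ref{utilissima}) and the compactness of action-sublevel sets (Lemma \ref{compatto}). Uniformity in $x,y$ is essentially free since $\VV_0$ is finite; uniformity in $h/T$ over $\{|h/T|\le A\}$ will follow from continuity of $\be$ on the compact set $\{|v|\le A\}$.

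For the \textbf{upper bound}, fix $(x,y,h,T)$ with $|h/T|\le A$ and pick a Mather measure $\mu\in\bM^{h/T}$. By Proposition \ref{combi}, write $\mu=\sum_{k=1}^N\lambda_k\mu_{\xi_k}$ as a convex combination of occupation measures supported on parametrized circuits $\xi_k$ (each lying in $\bM_p$ for the conjugate $p$, which is bounded since $|h/T|\le A$; by Corollary \ref{coralfinale} the average speeds are uniformly bounded and hence the $T_{\xi_k}$ are bounded below). I will build a parametrized path $\eta$ of total time exactly $T$, rotation vector exactly $h$, linking $x$ to $y$, by: (i) a short $\TT$-path from $x$ into the support of $\xi_1$; (ii) $n_k:=\lfloor \lambda_k T/T_{\xi_k}\rfloor$ traversals of $\xi_k$ for each $k$; (iii) a short connecting path to $y$ carrying the residual rotation $h-\sum_k n_k[\xi_k]$, which is bounded by an $O(1)$ vector independent of $T$; and (iv) padding with equilibrium circuits to make $T_\eta=T$. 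Since each correction contributes $O(1)$ to the action while the bulk carries action $\sum_k n_k T_{\xi_k}\,A(\mu_{\xi_k})=T\,A(\mu)+O(1)=T\be(h/T)+O(1)$, dividing by $T$ yields $\frac{1}{T}\Phi_\LL(x,y,T;h)\le \be(h/T)+o(1)$ uniformly.

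For the \textbf{lower bound}, suppose by contradiction there exist $\de>0$, sequences $T_n\to\infty$, $x_n,y_n\in\VV_0$, $h_n\in\Z^{b(\G_0)}$ with $|h_n/T_n|\le A$, and near-minimizing parametrized paths $\xi_n$ (linking $x_n$ to $y_n$ in time $T_n$ with rotation $h_n$) such that $\frac{1}{T_n}A_\LL(\xi_n)\le \be(h_n/T_n)-\de$. After passing to a subsequence, $x_n\equiv x$, $y_n\equiv y$, and $h_n/T_n\to v$ with $|v|\le A$; by continuity of $\be$, $\be(h_n/T_n)\to\be(v)$. The occupation measures $\mu_n:=\mu_{\xi_n}\in\bP$ satisfy $A(\mu_n)=\frac{1}{T_n}A_\LL(\xi_n)\le \be(v)-\de/2$ for large $n$, so by Lemma \ref{compatto} they lie in a compact subset of $\bP$ and, up to subsequence, $\mu_n\to\mu$. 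The measures $\mu_n$ fail to be closed only through a boundary contribution coming from $\oo(\xi_n)=x$, $\tt(\xi_n)=y$, which via the representation \eqref{ruota} is of order $1/T_n$; hence the limit $\mu$ satisfies the integrality condition of Proposition \ref{proprhobis} and is closed with $\rho(\mu)=v$. Lower semicontinuity of the action then gives $\be(v)\le A(\mu)\le\liminf A(\mu_n)\le \be(v)-\de/2$, a contradiction.

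The \textbf{main technical obstacle} is the exact matching of the rotation vector $h$ and the total time $T$ in the upper bound: the circuits $\xi_k$ have their own fixed periods $T_{\xi_k}$ (which are determined by the Lagrange multiplier $p$ via Proposition \ref{alfinale}), and the residue $h-\sum_k n_k[\xi_k]$ is in general nonzero. I will absorb this residue by inserting a single correction path in $\G_0$ of edge-length bounded uniformly in $A$ (using the finiteness of $\G_0$ and a basis of $H_1(\G_0,\Z)$ given by fundamental circuits), parametrize it with constant speed comparable to $|v|$, and finally fill the remaining time gap with equilibrium circuits based at $y$, whose cost is $-\wha c_y$ per unit time and hence contributes only $O(1)$ to $A_\LL(\eta)$. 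A secondary subtle point, already flagged above, is verifying that the boundary-term deficit in the closedness of $\mu_n$ really does vanish in the limit; this is where the bound $|h_n/T_n|\le A$ enters, ensuring that all mass of $\mu_n$ is supported where $q\le b$ for a constant $b=b(A)$.
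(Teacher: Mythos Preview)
Your lower bound is sound in spirit but takes a different route from the paper. The paper closes each near-optimal path $\xi$ to a cycle by appending a tree path $\zeta$ from $y$ to $x$ (at most $|\EE_0|$ edges), forms the occupation measure $\nu$ of $\xi\cup\zeta$, and directly uses $A(\nu)\ge\be(h/(T+T_\zeta))$ together with the local Lipschitz property of $\be$. Your compactness/contradiction argument can be made to work, but note that Lemma \ref{compatto} is stated only for $\mu\in\bM$; the quickest repair is exactly the paper's move---close $\xi_n$ to a cycle first so that $\mu_n\in\bM$ on the nose---after which your limiting argument goes through. Your final remark that ``all mass of $\mu_n$ is supported where $q\le b$'' is not what you need (and is not true for near-minimizers in general); what is needed is tightness, and that follows from superlinearity of $\LL$.

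The genuine gap is in your upper bound. You assert that the residual time and the padding contribute $O(1)$ to $A_\LL(\eta)$, relying on the $T_{\xi_k}$ being bounded. But Corollary \ref{coralfinale} gives only an \emph{upper} bound on $\mathcal Q_p(e)$, hence a \emph{lower} bound on the circuit times $T_{\xi_k}$; there is no uniform upper bound on $T_{\xi_k}$ over $|h/T|\le A$. Indeed, when $h/T$ is near a point whose conjugate $p$ minimizes $\al$, some $\mathcal Q_p(e)$ can be arbitrarily small (Proposition \ref{alfinale}), so $T_{\xi_k}\to\infty$ and your leftover $\sum_k(\lambda_kT - n_kT_{\xi_k})$ is not $O(1)$ uniformly. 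The paper confronts exactly this difficulty and it is the heart of the ``second estimate''. First, via Proposition \ref{lubiana} the Mather measure is split as $(1-\bar\lambda)\nu+\bar\lambda\,\de(\bar e,0)$ with $\nu$ nonsingular; the equilibrium part is realized exactly by waiting at $\bar e$ for time $\bar\lambda T$. Second, Lemma \ref{lequile} recasts the nonsingular part so that the residual coefficients $m_e$ are bounded by the number $C_0$ of circuits in $\G_0$ and are \emph{integers} on edges outside the spanning tree, allowing the residual rotation to be realized by at most $C_0|\EE_0|^2$ extra edges. Third---and this is what replaces your padding---the resulting path $\zeta$ has total time $T+k$ with $k=O(1)$, and the paper rescales the times on the original edges multiplicatively by $1-k/T$ to obtain $\bar\zeta$ with total time exactly $T$; the action change is controlled by splitting edges into $\EE_1=\{a\le 1/\mathcal Q_p(e)\le b\}$ (where $t\mapsto t\LL(e,1/t)$ is Lipschitz) and $\EE_2=\{1/\mathcal Q_p(e)>b\}$ (where the monotonicity \eqref{new201} ensures the rescaling only decreases the action). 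Without this rescaling device or an equivalent, your upper bound does not close uniformly in $h/T$.
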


    The proof is given in Section \ref{newnew}.

    \medskip

    A similar asymptotic result is true for the minimal action between the vertices of $\NN$.

    \begin{Theorem}\label{melania}
        Given any positive constants $A$, $\de$, there exists $T_0=T_0(A,\de)$ such that
        \begin{equation*}
            \left | \frac 1T \, \Phi_L(z_1,z_2,T) - \be \left ( \frac{\pi_2(z_2)-\pi_2(z_1))}T \right ) \right | < \de
        \end{equation*}
        for $T \ge T_0$, $z_1$, $z_2$ in $ \VN$ with $\left | \frac{\pi_2(z_2)-\pi_2(z_1)}T \right | < A $.
    \end{Theorem}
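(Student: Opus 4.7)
The plan is to reduce Theorem \ref{melania} to Theorem \ref{new} by establishing the identity
\begin{equation*}
    \Phi_L(z_1,z_2,T) \;=\; \Phi_\LL\bigl(\pi_1(z_1),\pi_1(z_2),T;\,\pi_2(z_2)-\pi_2(z_1)\bigr),
\end{equation*}
after which the theorem follows immediately by applying Theorem \ref{new} with $x=\pi_1(z_1)$, $y=\pi_1(z_2)$, $h=\pi_2(z_2)-\pi_2(z_1)$, since the constraint $|h/T|<A$ is exactly the hypothesis of Theorem \ref{melania}. The two inequalities in this identity follow by dualizing between admissible curves on $\NN$ and parametrized paths on $\G_0$, using Theorem \ref{babasic} as the bridge.

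For the inequality $\Phi_L \le \Phi_\LL$, I take an $\varepsilon$-minimizing parametrized path $\eta_0=(e_i,q_i,T_i)_{i=1}^m$ on $\G_0$ with total time $T$ and rotation vector $h$. I lift the underlying sequence of edges to a path in $\G$ starting at $\pi(z_1)$; by the unique path-lifting property and \eqref{patchbis}, it terminates at $\pi(z_2)$. Using $\Psi_\EN^{-1}$, this corresponds to a concatenation of arcs of $\NN$ from $z_1$ to $z_2$. I then build a curve $\zeta$ on $\NN$ that, on each non-equilibrium segment $(e_i,q_i,T_i)$, traverses the lifted arc in time $T_i$ via the optimal profile provided by Theorem \ref{babasic} combined with Proposition \ref{lemlemnew}, contributing exactly $T_i\LL(e_i,1/T_i)$ to $A_L(\zeta)$; on each equilibrium circuit $(e,-e,0,0,T',T'')$ sitting at a vertex $z$, $\zeta$ remains at $z$, producing an action $(T'+T'')\wha c_z$ which, by Lemma \ref{flusso}, is bounded by $(T'+T'')\LL(e,0)=-a_e(T'+T'')$. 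Summing, $A_L(\zeta)\le A_\LL(\eta_0)$ and the claim follows after letting $\varepsilon\to 0$.

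For the reverse inequality, I start with a minimizer of $\Phi_L(z_1,z_2,T)$, which I may take admissible by Proposition \ref{admi} and Proposition \ref{curvacce}. On each subinterval $[t_i,t_{i+1}]$ where the curve traverses an arc $\ga_i$, I reparametrize via $\ga_i^{-1}$ (Proposition \ref{lemlemnew}) and use Theorem \ref{babasic} to obtain the pointwise lower bound $(t_{i+1}-t_i)\LL(\tilde e_i,1/(t_{i+1}-t_i))$, where $\tilde e_i\in\EE_0$ is the projected edge oriented in the direction of traversal; I record these as non-equilibrium pieces in a parametrized path on $\G_0$. On each subinterval where the curve is constant at a vertex $z$, I pick $e^\star\in\EE_0$ with $\oo(e^\star)=\pi_1(z)$ realizing $\wha c_z=-a_{e^\star}$ (possible by Lemma \ref{flusso}) and insert the equilibrium circuit $(e^\star,-e^\star,0,0,(t_{i+1}-t_i)/2,(t_{i+1}-t_i)/2)$, whose discrete action $(t_{i+1}-t_i)\wha c_z$ equals the network contribution. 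The resulting parametrized path on $\G_0$ joins $\pi_1(z_1)$ to $\pi_1(z_2)$ in total time $T$; its rotation vector equals $\theta([\xi_0])$, which by \eqref{patchbis} applied to the original curve in $\NN$ must be $\pi_2(z_2)-\pi_2(z_1)$. Hence $\Phi_\LL(\pi_1(z_1),\pi_1(z_2),T;h)\le A_L(\xi)=\Phi_L(z_1,z_2,T)$.

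The main subtlety will be the careful matching between equilibrium circuits on $\G_0$ and sojourns at a vertex of $\NN$, which is where the flux-limiter term $\wha c_z$ enters the Lagrangian on $\NN$ through \eqref{link3}; without Lemma \ref{flusso} identifying $\wha c_z=\min_e(-a_e)$, equality between the two action functionals would fail. All other ingredients—Theorem \ref{babasic}, the path-lifting formula, and the projection compatibility—are essentially bookkeeping tied together by the structure of the maximal topological crystal constructed in Section \ref{construens}.
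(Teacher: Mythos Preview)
Your proposal is correct and follows essentially the same route as the paper: the paper derives Theorem \ref{melania} from Theorem \ref{new} via Proposition \ref{lemnew}, which is precisely the identity $\Phi_L(z_1,z_2,T)=\Phi_\LL(\pi_1(z_1),\pi_1(z_2),T;\pi_2(z_2)-\pi_2(z_1))$ that you establish. Your two inequalities match the paper's proof of Proposition \ref{lemnew} (Section \ref{lemmanuovo}) almost step for step---Theorem \ref{babasic} and Proposition \ref{lemlemnew} for the arc-traversing pieces, Lemma \ref{flusso} for the equilibrium/sojourn pieces, and the unique path-lifting property (resp.\ Lemma \ref{stracuzzi}) for the rotation-vector bookkeeping; the only cosmetic difference is that you take an $\varepsilon$-minimizer for $\Phi_\LL$ where the paper invokes an exact optimal parametrized path.
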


    This result is consequence of Theorem \ref{new} and the following fact:

    \begin{restatable}{Proposition}{lemnew}\label{lemnew}
        We have
        \begin{equation*}
            \Phi_{L}(z_1,z_2,T)= \Phi_\LL(\pi_1(z_1),\pi_1(z_2),T;\pi_2(z_2) - \pi_2(z_1))
        \end{equation*}
        for any pair of vertices $z_1$, $z_2$ in $ \VN$, $T > 0$.
    \end{restatable}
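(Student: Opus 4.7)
The plan is to prove the equality by establishing both inequalities, using Theorem \ref{babasic} as the bridge between the continuous Lagrangian action on $\NN$ and the discrete action on $\G_0$, together with the unique path-lifting property \eqref{eq:pathlift}--\eqref{patchbis}. In both directions, the key mechanism is to decompose a curve (resp.\ parametrized path) into pieces that either traverse a single arc (edge) or remain at a vertex, and to match each piece's continuous action with its discrete counterpart via Theorem \ref{babasic} and Proposition \ref{lemlemnew}. The $G$-periodicity of $H$ is what makes $\LL(e,\cdot)$ and $\wha c_{\pi_1(z)}$ well-defined on $\EE_0$ and $\VV_0$.

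For the direction $\Phi_L \le \Phi_\LL$, given a parametrized path $\xi=(e_i,q_i,T_i)_{i=1}^m$ in $\G_0$ linking $\pi_1(z_1)$ to $\pi_1(z_2)$ with total time $T$ and rotation vector $\pi_2(z_2)-\pi_2(z_1)$, the strategy is to lift its underlying sequence of edges (ignoring equilibrium circuits, which have zero rotation vector) to a path in $\G$ starting at $\pi(z_1)$; formula \eqref{patchbis} together with the rotation vector constraint guarantees that the lift ends at $\pi(z_2)$. Via $\Psi_\EN^{-1}$ this determines a sequence of arcs in $\NN$ from $z_1$ to $z_2$. On each non-singular segment $(e_i,q_i,T_i)$ with $q_i>0$, Theorem \ref{babasic} provides a curve $\xi_i:[0,T_i]\to[0,1]$ whose $L_{e_i}$-action equals $T_i\LL(e_i,q_i)$; composing with the arc and applying Proposition \ref{lemlemnew} produces a curve in $\NN$ with the same action. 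On each equilibrium circuit $(e,-e,0,0,T_i,T_{i+1})$ based at $x\in\VV_0$, the curve simply remains at the vertex $z$ with $\pi_1(z)=x$ for time $T_i+T_{i+1}$; its action is $(T_i+T_{i+1})\wha c_z \le (T_i+T_{i+1})\LL(e,0)=T_i\LL(e,0)+T_{i+1}\LL(-e,0)$ by Lemma \ref{flusso} and \eqref{superminus}. Concatenating these pieces gives a curve on $\NN$ from $z_1$ to $z_2$ in time $T$ with action at most $A_\LL(\xi)$; taking the infimum over $\xi$ yields the inequality.

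For the direction $\Phi_L \ge \Phi_\LL$, pick a curve $\xi:[0,T]\to\NN$ from $z_1$ to $z_2$, which by Proposition \ref{admi} we may take to be admissible with partition $\{t_0,\dots,t_k\}$. On each interval where $\xi$ traverses an arc $\ga_i$, set $e_i$ equal to $\Psi_\EN(\ga_i)$ projected to $\EE_0$, $T_i:=t_{i+1}-t_i$ and $q_i:=1/T_i$; by Proposition \ref{lemlemnew} and Theorem \ref{babasic}, the action of $\xi$ on this interval is at least $T_i\LL(e_i,q_i)$. On each interval where $\xi$ is constant at some vertex $z$, choose $e^\ast\in\EE_0$ attaining the minimum in Lemma \ref{flusso} so that $\wha c_z=\LL(e^\ast,0)$, and replace the piece by the equilibrium circuit $(e^\ast,-e^\ast,0,0,T_i/2,T_i/2)$, whose discrete action $T_i\LL(e^\ast,0)$ matches the contribution $T_i\wha c_z$. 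Concatenating produces a parametrized path in $\G_0$ of total time $T$. The underlying sequence of non-trivial edges is precisely the projection to $\G_0$ of the path in $\G$ determined by the admissible curve via $\Psi_\EN$; hence by \eqref{eq:pathlift}--\eqref{patchbis} its endpoints are $\pi_1(z_1)$ and $\pi_1(z_2)$ and its rotation vector is $\pi_2(z_2)-\pi_2(z_1)$. Since equilibrium circuits leave endpoints and rotation vector unchanged, the constructed parametrized path is admissible for $\Phi_\LL$, and $A_L(\xi)\ge A_\LL \ge \Phi_\LL(\pi_1(z_1),\pi_1(z_2),T;\pi_2(z_2)-\pi_2(z_1))$; taking the infimum over $\xi$ concludes. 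The main delicate point is the rotation-vector bookkeeping: one must check carefully that the sequence of arcs traversed by the admissible curve lifts (through $\Psi_\EN$) to a well-defined path in $\G$ with the prescribed endpoints, so that \eqref{patchbis} unambiguously identifies the rotation vector of its $\G_0$-projection with $\pi_2(z_2)-\pi_2(z_1)$.
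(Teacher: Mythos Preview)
Your proposal is correct and follows essentially the same approach as the paper's proof: both directions are obtained by decomposing admissible curves (resp.\ parametrized paths) into arc-traversing and vertex-resting pieces, matching each via Theorem \ref{babasic} and Proposition \ref{lemlemnew}, and handling equilibrium circuits through Lemma \ref{flusso}. The only cosmetic differences are that the paper works with an optimal curve/path directly (invoking Proposition \ref{curvacce} and minimality of $\xi_0$ to get equality $\LL(e_i,0)=\wha c_{z_i}$ on equilibrium pieces) while you work with arbitrary competitors and take the infimum, and the paper packages the rotation-vector bookkeeping into a separate Lemma \ref{stracuzzi}.
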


    We postpone the proof of Proposition \ref{lemnew} to Section \ref{lemmanuovo}.

    \bigskip

    \section{Epilogue: proof of the Main Theorem}\label{epi}

    In this section we show the main homogenization result, namely the convergence, in an appropriate sense, of the solutions to the problems \eqref{HJeps} to that of \eqref{HJlim} coupled with a datum $g$, at $t=0$, suitably related to the $g_\eps$'s. A crucial preliminary step consists in identifying the {\it limit space} on which the average equation and its solutions are defined. Then, a subsequent problem will be dealing with the convergence of functions defined on different spaces.

    As we have mentioned, the rescaling in \eqref{HJeps} can be interpreted as a resizing of the metric by a factor $\eps$. The limit space will be accordingly given as the limit, in the Gromov--Hausdorff sense, of the rescaled metric spaces as $\eps$ goes to $0$, and it can be identified as what is called the {\it asymptotic cone} of $\VN$ endowed with the metric $d_\VN$. We will show that this cone is the space $\R^{b(\G_0)}$ with the appropriate norm. This means, roughly speaking, that the two spaces look alike when watched from the distance. In addition, we will introduce a family of transferring maps from $\VN$ to $\R^{b(\G_0)}$ playing a key role in the definition of convergence that we adopt in the homogenization procedure.

    \medskip

    \subsection{Asymptotic cone}\label{conogelato}

    Before discussing the definition of asymptotic cone, let us recall some preliminary notion.

    We start by recalling the notion of {\it distortion} of a map between two metric spaces, see \cite[Definition 7.1.4]{Burago}

    \begin{Definition}
        Given two metric spaces $(X,d_X)$, $(Y,d_Y)$ and a map $f: X \to Y$, not necessarily continuous, we call {\it distortion of $f$} the quantity
        \begin{equation*}
            \sup_{x_1,x_2 \in X} |d_X(x_1,x_2) - d_Y(f(x_1),f(x_2))|.
        \end{equation*}
    \end{Definition}

    We proceed giving the definition of {\it pointed Gromov--Hausdorff convergence}, see \cite[Definition 8.1.1]{Burago}.

    \begin{Definition}\label{pointed}
        The pointed metric spaces $( X_\eps, d_\eps, p_\eps)$ converge in the Gromov--Hausdorff sense to the pointed metric space $(X,d,p_0)$, with $p_\eps\in X_\eps$ for any $\eps$, $p_0 \in X$, if there exist maps
        \begin{equation*}
            f_\eps: X_\eps \to X
        \end{equation*}
        with $f_\eps(p_\eps)=p_0$, satisfying the following conditions: for any $\de > 0$, $r > 0$, there exist $\eps_0=\eps_0(\de,r)$ such that for every $\eps<\eps_0$
        \begin{itemize}
            \item[{\bf (i)}] $ \sup \, \{ |d(f_\eps(p_1),f_\eps(p_2)) - d_\eps(p_1,p_2)| \mid p_1, \, p_2 \in B_{X_\eps}(p_\eps,r)\} < \de$;
            \item[{\bf (ii)}] $\{p \in X \mid d(p, f_\eps(B_{X_\eps}(p_\eps,r))) \le \de\} \supset B_X(p_0,r -\de)$
        \end{itemize}
        where $B_X$, $B_{X_\eps}$ stand for the metric balls in $X$, $X_\eps$, respectively, with given centers and radii.
    \end{Definition}

    \smallskip

    Note that {\bf (i)} is a condition on the local distortion of $f_\eps$, while {\bf (ii)} is a sort of uniform quasi-surjectivity condition.

    \smallskip

    We can now define the asymptotic cone of a metric space.

    \begin{Definition}
        The {\it asymptotic cone} of a metric space $(X, d_X)$ is defined as the pointed Gromov–Hausdorff limit as $\eps \to 0$, provided that it does exist, of $( X, \eps d_X, p_0)$, with $p_0 \in X$.
    \end{Definition}

    We remark that the limit, if it exists, is actually unique up to isometries and independent of the choice of $p_0$, see \cite[Section 8.2.2]{Burago}. Here cone must be understood in a metric sense, namely it is a pointed metric space $(Y,d_Y,q_0)$ such that for any $\eps >0$ there exists an isometry $f: (Y,d_Y) \to (Y, \eps d_Y)$ with $f(q_0)=q_0$.

    \smallskip

    \medskip

    We recall for later use:

    \begin{Proposition}\label{hihi}
        \cite[Exercise 8.2.10]{Burago} Given two metric spaces $X$, $Y$ with finite Gromov--Hausdorff distance, if $X$ possesses an asymptotic cone, the same property holds true for $Y$. In addition the two asymptotic cones are isometric.
    \end{Proposition}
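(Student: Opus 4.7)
The plan is to build the asymptotic cone of $Y$ by pulling back, through a bounded-distortion selection, the approximating maps that witness the asymptotic cone of $X$. First I would record the standard consequence of $d_{GH}(X,Y) < \infty$: fixing any $C_0 > d_{GH}(X,Y)$, one can embed $X$ and $Y$ isometrically into a common metric space in which their Hausdorff distance is at most $C_0$. Using this, I would select maps $\sigma: Y \to X$ and $\tau: X \to Y$ associating to each point of one space a point of the other at ambient distance at most $C_0$. A short triangle-inequality argument shows that both $\sigma$ and $\tau$ have distortion at most $2C_0$, and that $d_X(\sigma(\tau(x)), x) \le 2C_0$ for every $x \in X$. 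Fixing a basepoint $p_0 \in X$, I would set $q_0 := \tau(p_0) \in Y$ and, after a harmless redefinition at the single point $q_0$, assume $\sigma(q_0) = p_0$ (absorbing the constant shift into $C_0$).

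Let $(\mathcal{C}, d_\mathcal{C}, o)$ denote the asymptotic cone of $X$ and let $\phi_\eps : X \to \mathcal{C}$ (with $\phi_\eps(p_0) = o$) be the witnessing maps supplied by Definition \ref{pointed}. My candidate approximating maps for $Y$ are $\psi_\eps := \phi_\eps \circ \sigma : Y \to \mathcal{C}$, which satisfy $\psi_\eps(q_0) = o$. To verify condition \textbf{(i)} of pointed Gromov--Hausdorff convergence for $(Y, \eps d_Y, q_0) \to (\mathcal{C}, d_\mathcal{C}, o)$ through $\psi_\eps$, given $r, \de > 0$ and $y_1, y_2 \in B_{(Y,\eps d_Y)}(q_0, r)$, I would insert $\eps d_X(\sigma(y_1), \sigma(y_2))$ as an intermediate term via the triangle inequality. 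The bounded distortion of $\sigma$ contributes an error of at most $2\eps C_0 \to 0$, while condition \textbf{(i)} applied to $\phi_\eps$ on the slightly enlarged ball $B_{(X,\eps d_X)}(p_0, r + 2\eps C_0)$, which contains both $\sigma(y_1)$ and $\sigma(y_2)$, controls the remaining term.

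For condition \textbf{(ii)}, given $c \in B_\mathcal{C}(o, r - \de)$, I would apply condition \textbf{(ii)} for $X$'s convergence with tightened parameters (radius $r - \de/2$ and error $\de/4$) to extract $x \in B_{(X, \eps d_X)}(p_0, r - \de/2)$ with $d_\mathcal{C}(c, \phi_\eps(x)) \le \de/4$, and then take $y := \tau(x) \in Y$. The distortion bound yields $\eps d_Y(q_0, y) \le (r - \de/2) + 2\eps C_0 < r$ for $\eps$ small, while the pointwise estimate $d_X(\sigma(\tau(x)), x) \le 2C_0$ combined with condition \textbf{(i)} for $\phi_\eps$ gives $d_\mathcal{C}(\phi_\eps(x), \psi_\eps(y)) \le 2\eps C_0 + \de/4$, so $d_\mathcal{C}(c, \psi_\eps(y)) < \de$ as required. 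This proves that $(Y, \eps d_Y, q_0)$ also converges in the pointed Gromov--Hausdorff sense to $(\mathcal{C}, d_\mathcal{C}, o)$, so $\mathcal{C}$ is the asymptotic cone of $Y$; by the uniqueness up to basepoint-preserving isometry recalled just before the statement, the two asymptotic cones are isometric.

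There are no substantial obstacles: the construction is entirely formal, and the only delicate point is the parameter bookkeeping — matching the $\de$'s and $r$'s of the two convergences so that the vanishing contributions $\eps C_0$ never spoil the required inclusions in balls of $X$ used to invoke conditions \textbf{(i)} and \textbf{(ii)} for $\phi_\eps$.
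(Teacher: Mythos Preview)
Your argument is correct and is the standard resolution of the cited exercise. Note, however, that the paper does not supply its own proof of this proposition: it simply records the statement as \cite[Exercise 8.2.10]{Burago} and uses it as a black box. So there is no paper proof to compare against; your write-up fills in exactly what the reference leaves to the reader, namely composing the witnessing maps $\phi_\eps$ with bounded-distortion selections $\sigma,\tau$ arising from the finite Gromov--Hausdorff distance and checking that the $O(\eps C_0)$ errors are harmless in both conditions of Definition~\ref{pointed}. The parameter bookkeeping you flag is the only thing to be careful about, and you have handled it correctly (in particular, enlarging the $X$-ball to radius $r+2\eps C_0$ in condition \textbf{(i)}, and tightening to radius $r-\de/2$ with error $\de/4$ in condition \textbf{(ii)}, both work once $\eps$ is small enough).
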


    For the Definition of Gromov--Hausdorff distance see \cite[Definition 7.3.10]{Burago}.

    \smallskip

    We aim at determining the asymptotic cone of $(\VN, d_\VN)$. Since $(\VN, d_\VN)$ is isometric to $(\VV,d_\VV)$, we equivalently study the asymptotic cone of the latter.

    \smallskip

    We will use the following crucial property of $d_\VV$:

    \begin{Lemma}\label{inva}
        We have
        \begin{equation*}
            d_\VV((x_0,h),(x_0,\overline h)) = d_\VV((x_0,0),(x_0,\overline h-h)) \txt{for any $x_0 \in \VV_0$, $h$, $\overline h$ in $\Z^{b(\G_0)}$}.
        \end{equation*}
    \end{Lemma}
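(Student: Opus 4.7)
The plan is to exploit the fact that the maximal topological crystal $\G$ comes equipped with the group $\wtd G \simeq \Z^{b(\G_0)}$ of graph automorphisms constructed in Section \ref{murano}, and that any graph automorphism is an isometry of $(\VV,d_\VV)$.

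More precisely, recall that for every $h_0 \in \Z^{b(\G_0)}$ the pair $(\FN^{h_0}_{\wtd\VV}, \FN^{h_0}_{\wtd\EE})$ defined by
\[
\FN^{h_0}_{\wtd\VV}(x,h) = (x, h+h_0), \qquad \FN^{h_0}_{\wtd\EE}(e,h) = (e, h+h_0),
\]
was shown to be an element of $\mathrm{Aut}(\wtd\G)$. In particular $\FN^{h_0}_{\wtd\EE}$ is a bijection of $\wtd\EE$ preserving the adjacency relations, so it sends paths in $\wtd\G$ to paths of the same length. Since $d_\VV$ is by definition the minimal length of a path connecting two vertices, it follows that
\[
d_\VV\bigl(\FN^{h_0}_{\wtd\VV}(y_1), \FN^{h_0}_{\wtd\VV}(y_2)\bigr) = d_\VV(y_1, y_2)
\]
for all $y_1, y_2 \in \wtd\VV$ and all $h_0 \in \Z^{b(\G_0)}$.

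Applying this invariance with $y_1 = (x_0,h)$, $y_2 = (x_0, \overline h)$, and the choice $h_0 = -h$, the two vertices are mapped respectively to $(x_0,0)$ and $(x_0, \overline h - h)$, and the identity stated in the lemma follows immediately. No estimate or case analysis beyond invoking the automorphism property of $(\FN^{-h}_{\wtd\VV},\FN^{-h}_{\wtd\EE})$ is needed; the only (very minor) subtlety is simply making explicit that bijective morphisms of $\wtd\G$ preserve the combinatorial length of paths and hence the distance $d_\VV$, which is transparent from Definition \ref{morph}. No real obstacle is expected.
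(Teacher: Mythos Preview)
Your proof is correct. The key observation that each $(\FN^{h_0}_{\wtd\VV},\FN^{h_0}_{\wtd\EE})$ is a graph automorphism, and hence an isometry of $(\VV,d_\VV)$, immediately yields the translation invariance; applying it with $h_0=-h$ gives exactly the claimed identity.

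The paper's own argument is slightly different in presentation: rather than invoking the automorphism group abstractly, it takes a geodesic path $\xi$ from $(x_0,h)$ to $(x_0,\overline h)$, projects it to a cycle $\xi_0$ in $\G_0$, and then lifts $\xi_0$ back to $\G$ with initial point $(x_0,0)$, using the unique path-lifting property and formula \eqref{patchbis} to identify the new terminal vertex as $(x_0,\overline h - h)$. This produces one inequality, and the same construction in reverse gives the other. Of course this projection--and--relift is precisely what the automorphism $\FN^{-h}$ does to a path, so the two arguments are the same idea viewed at different levels of abstraction. Your version is more concise and leverages directly the group structure already established in Section \ref{murano}; the paper's version is more explicit about how the path is transported, which fits its overall hands-on style.
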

    \begin{proof}
        We take a path $\xi =(e_i,h_i)_{i=1}^m$ linking $(x_0,h)$ to $(x_0, \overline h)$ with length equal to $d_\VV ((x_0,h),(x_0,\overline h))$. By the concatenation principle
        \begin{equation}\label{inva1}
            \sum_{i=1}^m \theta(e_i) = \overline h-h.
        \end{equation}
        We thereafter project $\xi$ to $\G_0$ obtaining a cycle $\xi_0$ passing through $x_0$, and finally lift $\xi_0$ to $\G$ choosing as initial point $(x_0,0)$, which gives by \eqref{patchbis}, \eqref{inva1} a path linking $(x_0,0)$ to $(x_0,\overline h-h)$. We have therefore proven that
        \begin{equation*}
            d_\VV((x_0,h),(x_0, \overline h)) \ge d_\VV((x_0,0),(x_0, \overline h-h)).
        \end{equation*}
        The above argument can be easily modified to get the converse inequality as well.
    \end{proof}

    \medskip

    Henceforth we fix $x_O\in\VV_0$ and define
    \begin{equation*}
        d_O(h_1,h_2):=d_\VV((x_O,h_1),(x_O,h_2)),\qquad\text{for }h_1,h_2\in\Z^{b(\G_0)}.
    \end{equation*}
    It is easy to check that $d_O$ is a metric on $\Z^{b(\G_0)}$. In addition, thanks to Lemma \ref{inva}, we have that this metric is \emph{invariant}, {\it i.e.}, $d_O(h_1,h_2)=d_O(0,h_2-h_1)$ for any $h_1,h_2\in\Z^{b(\G_0)}$.

    \begin{Lemma}\label{stabisnorm}
        For each $h\in\Z^{b(\G_0)}$ the limit
        \begin{equation}\label{eq:stabisnorm.1}
            \|h\|:=\lim_{n\to\infty}\frac{d_O(0,nh)}n
        \end{equation}
        does exist. Furthermore, $\|\cdot\|$ can be uniquely extended to a norm on $\R^{b(\G_0)}$.
    \end{Lemma}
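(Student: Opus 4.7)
\emph{Proof plan.} The existence of the limit is a classical application of Fekete's subadditive lemma. First I would show that the sequence $a_n := d_O(0, nh)$ is subadditive: combining the invariance $d_O(h_1,h_2)=d_O(0,h_2-h_1)$ established in Lemma \ref{inva} with the triangle inequality for $d_O$ yields
$$a_{m+n} \le d_O(0,mh) + d_O(mh,(m+n)h) = a_m + a_n.$$
Fekete's lemma then guarantees that $a_n/n$ converges to $\inf_n a_n/n$, proving the existence of the limit in \eqref{eq:stabisnorm.1}.

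Next, I would verify that $\|\cdot\|$ satisfies the algebraic axioms of a seminorm on $\Z^{b(\G_0)}$: positive integer homogeneity $\|kh\|=k\|h\|$ (by passing to the subsequence $kn$ in the defining limit, which is legitimate since the full limit is already known to exist), the symmetry $\|-h\|=\|h\|$ (from symmetry of $d_O$, which itself inherits it from $d_\VV$), and the triangle inequality $\|h_1+h_2\|\le\|h_1\|+\|h_2\|$ (once more from invariance combined with the triangle inequality for $d_O$).

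To extend $\|\cdot\|$ to a norm on $\R^{b(\G_0)}$ I would establish two-sided comparisons with the Euclidean norm $|\cdot|$ on $\Z^{b(\G_0)}$. The upper bound $d_O(0,h)\le C_1|h|$ is obtained by an explicit construction: using the spanning tree $\TT$ (see Remark \ref{treecopy}) together with a basis of $H_1(\G_0,\Z)\simeq\Z^{b(\G_0)}$ given by the fundamental circuits of Proposition \ref{cycy}, one exhibits a path in $\G$ from $(x_O,0)$ to $(x_O,h)$ whose edge-length is linear in $|h|$. The lower bound is the main obstacle: given any path $\xi=(e_i,h_i)_{i=1}^m$ in $\G$ from $(x_O,0)$ to $(x_O,h)$, the concatenation principle \eqref{patchbis} forces $h=\sum_{i=1}^m\theta(e_i)$, and hence $|h|\le m\cdot\max_{e\in\EE_0}|\theta(e)|$; minimising over $\xi$ yields $d_O(0,h)\ge C_2|h|$ with $C_2:=1/\max_e|\theta(e)|>0$. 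Consequently $C_2|h|\le\|h\|\le C_1|h|$ for every $h\in\Z^{b(\G_0)}$.

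Finally, I would extend $\|\cdot\|$ from $\Z^{b(\G_0)}$ to $\Q^{b(\G_0)}$ by rational homogeneity (setting $\|q\|:=\|kq\|/k$ for any positive integer $k$ making $kq$ integral, and checking that this is independent of the chosen $k$), and then to the whole $\R^{b(\G_0)}$ by uniform continuity, which is in fact Lipschitz since the triangle inequality together with the upper bound yield $\bigl|\|h_1\|-\|h_2\|\bigr|\le\|h_1-h_2\|\le C_1|h_1-h_2|$ on $\Q^{b(\G_0)}$. All seminorm properties and the lower bound $\|\cdot\|\ge C_2|\cdot|$ pass to the limit, so the extended map is a genuine norm, and uniqueness of the extension is automatic by density of $\Q^{b(\G_0)}$ in $\R^{b(\G_0)}$.
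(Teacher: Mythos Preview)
Your proposal is correct and follows essentially the same route as the paper: the paper delegates the existence of the limit and the seminorm extension to \cite[Propositions 8.5.1 and 8.5.3]{Burago} (which is precisely the Fekete-subadditivity argument plus rational-homogeneous extension you spell out), and then proves nondegeneracy via the same lower bound you give, namely $|h|=\bigl|\sum_i\theta(e_i)\bigr|\le m=d_O(0,h)$ for a geodesic $\xi=(e_i,h_i)_{i=1}^m$. The only cosmetic difference is that the paper observes $|\theta(e)|\in\{0,1\}$ in the chosen basis, yielding the sharper constant $C_2=1$, whereas you use the generic $C_2=1/\max_e|\theta(e)|$.
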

    \begin{proof}
        Exploiting \cite[Propositions 8.5.1 and 8.5.3]{Burago} we see that the limit in \eqref{eq:stabisnorm.1} exists and can be uniquely extended to a seminorm on $\R^{b(\G_0)}$. To show that it is actually a norm, we claim that
        \begin{equation*}
            d_O(0,h) \ge |h| \txt{for any $h \in \Z^{b(\G_0)}$.}
        \end{equation*}
        In fact, let $\xi=((e_i,h_i))_{i=1}^m$ be a geodesic for $d_\VV((x_O,0),(x_O,h))= d_O(0,h)$. We derive from \eqref{eq:pathlift} that
        \begin{equation*}
            \xi= \left ( e_i, \sum_{j= 1}^{i} \theta( e_j) \right )_{i=1}^m\!\!\!\!\!\!\!\!.
        \end{equation*}
        Since $|\theta(e_i)|$ can be either equal to $1$ or to $0$ and $\xi$ is a geodesic for $ d_O(0,h)$, we in turn deduce that
        \begin{equation*}
            |h| = \left | \sum_{j= 1}^{m} \theta( e_j) \right | \le m = d_O(0,h),
        \end{equation*}
        which proves the claim.
    \end{proof}

    \medskip

    We call $\|\cdot\|$ the {\it stable norm} of $d_O$. Lemma \ref{stabisnorm} implies:

    \begin{Lemma}\label{Zcone}
        \emph{\cite[Theorem 8.5.4]{Burago}} $(\R^{b(\G_0)},\|\cdot\|)$ is the asymptotic cone of $(\Z^{b(\G_0)},d_O)$.
    \end{Lemma}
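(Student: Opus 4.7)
The plan is to verify directly that the maps $f_\varepsilon: \Z^{b(\G_0)} \to \R^{b(\G_0)}$ defined by $f_\varepsilon(h):=\varepsilon h$ realize the pointed Gromov--Hausdorff convergence of $(\Z^{b(\G_0)},\varepsilon d_O,0)$ to $(\R^{b(\G_0)},\|\cdot\|,0)$ in the sense of Definition \ref{pointed}. Since $f_\varepsilon(0)=0$ is automatic, the task reduces to checking the distortion bound (i) on every metric ball and the approximate surjectivity (ii) of that definition.

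The core step is condition (i). Using the translation invariance from Lemma \ref{inva}, it suffices to show that
\[
    \sup\bigl\{\,\varepsilon\,\bigl|d_O(0,h)-\|h\|\bigr|\;:\;h\in\Z^{b(\G_0)},\; \varepsilon d_O(0,h)\le 2r\,\bigr\}\xrightarrow[\varepsilon\to 0^+]{}0.
\]
Writing $\tilde h:=\varepsilon h$, which ranges over a bounded subset of $\R^{b(\G_0)}$, this is equivalent to the uniform convergence $\varepsilon\, d_O(0,\tilde h/\varepsilon)\to\|\tilde h\|$ on compact sets. I would prove this via a Fekete-type argument: the map $a(h):=d_O(0,h)$ is subadditive (by the triangle inequality and the invariance property), so $a(nh)/n\to\|h\|$ for each fixed lattice direction $h$ by Lemma \ref{stabisnorm}; the convergence is then upgraded to be uniform on a finite $\eta$-net of the Euclidean unit sphere, and extended to the whole bounded region using the Lipschitz estimates $|a(h_1)-a(h_2)|\le C|h_1-h_2|$ and $\bigl|\|\tilde h_1\|-\|\tilde h_2\|\bigr|\le C|\tilde h_1-\tilde h_2|$. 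Both Lipschitz bounds follow from the bilateral inequality $|h|\le d_O(0,h)\le C|h|$, the lower half being established inside the proof of Lemma \ref{stabisnorm} and the upper half obtained by decomposing $h$ on a fixed basis of $\Z^{b(\G_0)}$ and concatenating geodesics for the basis elements. This is the argument underlying \cite[Theorem 8.5.4]{Burago}.

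For condition (ii), I would observe that $\varepsilon\Z^{b(\G_0)}$ is $C\varepsilon$-dense in $\R^{b(\G_0)}$ with respect to $\|\cdot\|$ (again via the norm equivalence on a finite-dimensional space), so any $p\in B_{\|\cdot\|}(0,r-\delta)$ admits a lattice approximation $\varepsilon h$ with $\|p-\varepsilon h\|\le\delta$ for $\varepsilon$ small enough. Granting (i), $\varepsilon d_O(0,h)$ can be made arbitrarily close to $\|\varepsilon h\|\le r$, which after a harmless enlargement of the radius in the domain yields the required covering property.

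The chief obstacle is the passage from pointwise to uniform convergence in condition (i): in the rescaling $\tilde h/\varepsilon$ there is no arithmetic compatibility between $\tilde h$ and $\varepsilon$, so $\varepsilon\, a(\tilde h/\varepsilon)$ cannot be identified directly with a term of the sequence $a(nh_0)/n$ used to define $\|h_0\|$. I would handle this by rounding $\tilde h/\varepsilon$ to a nearby integer multiple of an element of the chosen $\eta$-net, controlling the rounding error via the Lipschitz estimate on $a$, and absorbing the residual discrepancy into the $\eta$-net error. This produces the desired uniform statement and completes the verification of both conditions of Definition \ref{pointed}.
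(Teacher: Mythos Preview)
Your proposal is correct. Note, however, that the paper does not give its own proof of this lemma: it simply invokes \cite[Theorem 8.5.4]{Burago}, relying on Lemma \ref{stabisnorm} to verify the hypotheses of that theorem (namely, that $d_O$ is a translation-invariant metric on $\Z^{b(\G_0)}$ whose stable norm is indeed a norm). What you have written is essentially a reconstruction of the argument behind Burago's theorem, carried out explicitly via the scaling maps $f_\varepsilon(h)=\varepsilon h$ and the subadditivity/Fekete reasoning; this is the standard route, and your handling of the passage from pointwise to uniform convergence through an $\eta$-net plus the bilateral Lipschitz bound $|h|\le d_O(0,h)\le C|h|$ is the right mechanism. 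So your approach and the paper's are the same in spirit; you are just unpacking the cited black box.
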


    This in turn yields:

    \begin{Theorem}
        The asymptotic cone of $(\VN,d_{\VN})$ is $(\R^{b(\G_0)},\|\cdot \|)$.
    \end{Theorem}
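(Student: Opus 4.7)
The plan is to combine the three preliminary results just established: the identification $(\VN, d_\VN) \cong (\VV, d_\VV)$, the isometric embedding of $(\Z^{b(\G_0)}, d_O)$ into $(\VV, d_\VV)$, and Proposition \ref{hihi} applied in conjunction with Lemma \ref{Zcone}. The whole strategy is to show that $(\VV, d_\VV)$ differs from $(\Z^{b(\G_0)}, d_O)$ only by a uniformly bounded ``spanning tree fluctuation'', so that their Gromov--Hausdorff distance is finite, and then transport the asymptotic cone across this bounded perturbation.

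First, I would recall that by Remark \ref{remarkunicityuptoiso} the bijection $\Psi_\VN: \VN \to \VV$ is, by construction, an isometry between $(\VN, d_\VN)$ and $(\VV, d_\VV)$, so it suffices to prove the statement with $(\VV, d_\VV)$ replacing $(\VN, d_\VN)$. Next I would consider the map
\begin{equation*}
    \iota: \Z^{b(\G_0)} \longrightarrow \VV, \qquad h \longmapsto (x_O, h).
\end{equation*}
By the very definition $d_O(h_1,h_2) := d_\VV((x_O,h_1),(x_O,h_2))$, the map $\iota$ is an isometric embedding of $(\Z^{b(\G_0)}, d_O)$ into $(\VV, d_\VV)$.

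The second step is to bound the Hausdorff distance between $\iota(\Z^{b(\G_0)})$ and $\VV$ inside $(\VV, d_\VV)$. Given any vertex $(x,h) \in \VV$ with $x \in \VV_0$, Remark \ref{treecopy} tells us that $(x,h)$ and $(x_O,h) = \iota(h)$ both lie in the isomorphic copy $\TT \times \{h\}$ of the spanning tree $\TT$, which is connected. Hence there is a path in $\TT \times \{h\}$ joining them of length at most the diameter of $\TT$, which in turn is bounded by $|\VV_0| - 1$. Therefore
\begin{equation*}
    \sup_{(x,h) \in \VV} \, d_\VV\bigl((x,h), \iota(\Z^{b(\G_0)})\bigr) \le |\VV_0| - 1 < + \infty,
\end{equation*}
so the Hausdorff distance between $\VV$ and the isometric copy $\iota(\Z^{b(\G_0)})$ inside $(\VV, d_\VV)$ is finite, which in particular gives a finite Gromov--Hausdorff distance between the two metric spaces $(\VV, d_\VV)$ and $(\Z^{b(\G_0)}, d_O)$ (see \cite[Definition 7.3.10]{Burago}).

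Finally, Lemma \ref{Zcone} asserts that $(\Z^{b(\G_0)}, d_O)$ admits $(\R^{b(\G_0)}, \|\cdot\|)$ as asymptotic cone; by Proposition \ref{hihi}, the existence of an asymptotic cone is preserved under finite Gromov--Hausdorff distance, and asymptotic cones of metric spaces at finite GH distance are isometric. Combining this with the first step yields the claim. I do not expect a genuine obstacle in this argument: the only delicate point is that the embedding $\iota$ is truly an isometry, which relies on the invariance property $d_O(h_1,h_2) = d_O(0, h_2 - h_1)$ established in Lemma \ref{inva}, and that the bounded fluctuation argument indeed uses only the structure of $\G$ as infinitely many copies of $\TT$ glued by the edges outside $\EE_\TT$ (cf.\ Remark \ref{periodicgraph}).
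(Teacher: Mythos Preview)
Your proposal is correct and follows essentially the same approach as the paper: reduce to $(\VV,d_\VV)$, show that it is at finite Gromov--Hausdorff distance from $(\Z^{b(\G_0)},d_O)$ because the spanning-tree copies $\TT\times\{h\}$ have uniformly bounded diameter, and then invoke Lemma~\ref{Zcone} together with Proposition~\ref{hihi}. The only cosmetic difference is the direction of the comparison map: the paper uses the surjective projection $f:\VV\to\Z^{b(\G_0)}$, $(x,h)\mapsto h$, and bounds its distortion via Lemma~\ref{inva} and the triangle inequality, whereas you use the isometric inclusion $\iota:\Z^{b(\G_0)}\to\VV$, $h\mapsto(x_O,h)$, and bound the Hausdorff distance of its image; both arguments encode the same bounded-tree observation. (Incidentally, the fact that $\iota$ is an isometry is immediate from the definition of $d_O$ and does not require the invariance from Lemma~\ref{inva}.)
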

    \begin{proof}
        We consider the map $f$ between $(\VV,d_\VV)$ and $(\Z^{b(\G_0)},d_O)$ given by $f((x,h)):= h$. We claim that it possesses finite distortion. In fact, by Lemma \ref{inva}, we have for any $(x_1,h_1),(x_2,h_2)\in\VV$,
        \begin{align*}
            |d_\VV((x_1,h_1),(x_2,h_2))-d_O(h_1,h_2)|=\,&|d_\VV((x_1,h_1),(x_2,h_2))-d_\VV((x_O,h_1),(x_O,h_2))|\\
            \le\,&|d_\VV((x_1,h_1),(x_O,h_1))+d_\VV((x_2,h_2),(x_O,h_2))|\\
            \le\,&2\max_{y_1,y_2\in\VV_0}d_\VV((y_1,0),(y_2,0))=:C,
        \end{align*}
        which shows the claim. Taking also into account that $f$ is surjective, we derive that it is a $C$--isometry, according to \cite[Definition 7.3.27]{Burago}. This implies by \cite[Corollary 7.3.28]{Burago} that $(\VV,d_\VV)$ and $(\Z^{b(\G_0)},d_O)$ have finite {Gromov-Hausdorff} distance. Therefore we get the assertion exploiting Proposition \ref{hihi}, Lemma \ref{Zcone}, plus the fact that $(\VV,d_\VV)$ and $(\VN, d_\VN)$ are isometric.
    \end{proof}

    \bigskip

    \subsection{Convergence result}

    We give a notion of convergence of a family of functions $g_\eps: \NN \to \R$ to a function $g: \R^{b(\G_0)} \to \R$ driven, so to say, by the maps $\eps \pi_2$. Compare with the convergence used in \cite{ConItSic}. Note that in the definitions below only the {\it trace} of the approximating functions on $\VN$ enters into play, {namely the restriction of these functions to $\VN$.}

    \begin{Definition}\label{semi}
        We say that $g_\eps$ is {\em $\eps \pi_2 $--convergent} to $g$ {\em locally uniformly} if for any subsequence $\eps_n \to 0^+$ as $n\to +\infty$, and $z_n \in \VN$ with $\eps_n \pi_2(z_n)$ converging to $h \in \R^{b(\G_0)}$ as $n\to +\infty$, one has
        \begin{equation*}
            \lim_{n\to +\infty} g_{\eps_n}(z_n)=g(h).
        \end{equation*}
    \end{Definition}

    \smallskip

    The above notion of convergence can be easily generalized to the case of functions $v_\eps: \NN \times [0,+\infty) \to \R$ with limit $v: \R^{b(\G_0)} \times [0,+\infty) \to \R$.

    \smallskip

    \begin{Definition}\label{semibis}
        We say that $v_\eps$ is {\em $\eps \pi_2$--convergent} to $v$ {\em locally uniformly} if
        \begin{equation*}
            \lim_{n\to +\infty} v_{\eps_n}(z_n,t_n) = v(h,t)
        \end{equation*}
        for any subsequences $\eps_n \to 0^+$ as $n\to +\infty$, and $z_n \in \VN$ with $(\eps_n \pi_2(z_n) ,t_n)$ converging to $(h,t)$.
    \end{Definition}

    \smallskip

    \begin{Definition}\label{semiter}
        We say that a family of functions $g_\eps: \NN \to \R$ is {\it $\eps d_{\NN}$--equi--uniformly continuous} if there exists a modulus of continuity $\om$ such that
        \begin{equation*}
            |g_\eps(z_1) - g_\eps(z_2)| < \om(\eps d_{\NN}(z_1, z_2)) \txt{for any $\eps >0$, $z_1,\,z_2 \in \NN$.}
        \end{equation*}
        Since $d_\NN$ and $d_\VN$ are equivalent on $\VN$ this also implies, up to a modification of $\om$, that
        \begin{equation*}
            |g_\eps(z_1) - g_\eps(z_2)| < \om(\eps d_{\VN}(z_1, z_2)) \txt{for any $\eps >0$, $z_1,\,z_2 \in \VN$.}
        \end{equation*}
    \end{Definition}

    \medskip

    Now that we have introduced all the needed notions, let us rewrite the statement of the Main Theorem given in Section \ref{secHam}.

    \smallskip

    \begin{MainThm}
        {Let $\NN =(\VN,\EN)$ a periodic network and let $\G=(\VV,\EE)$ denote its underlying abstract graph, with base graph $\Gamma_0=(\VV_0,\EE_0)$ and Bravais lattice $G\simeq \Z^{b(\G_0)}$}. Let ${H=}\{H_\gamma\}_{\gamma\in\EN}$ be a Hamiltonian on $\NN$, satisfying conditions {\bf(H$_\ga$1)--(H$_\ga$4)}. Let $u_\eps$ be the solutions to the rescaled time-dependent Hamilton--Jacobi equations:
        \begin{equation*}\tag{HJ$^\eps$}
            \left\{
            \begin{array}{lll}
                \partial_t u_\eps(x,t) + H\left(x, \frac{1}{\eps} \partial_x u_\eps(x,t)\right) = 0 && x\in \NN,\; t>0\\
                u_\eps(x,0) = g_\e (x) && x\in \NN
            \end{array}
            \right.
        \end{equation*}
        coupled with {maximal flux limiters}, $\eps d_\NN$--equi-uniformly continuous initial data $g_\eps: \NN \longrightarrow \R$ at $t=0$ and locally uniformly $\eps\pi_2$--convergent, as $\eps$ goes to $0^+$, to a function $g: \R^{b(\G_0)} \longrightarrow \R$.\\
        \noindent Then, there exists a convex and superlinear effective Hamiltonian $\bar{H}: \R^{b(\G_0)} \rightarrow \R$, such that the family $ u_\eps$ locally uniformly $\eps\pi_2$--converge to the unique viscosity solution $u: \R^{b(\G_0)} \times (0,+\infty) \longrightarrow \R$ to
        \begin{equation*}\tag{HJ$^{\rm lim}$}
            \left\{
            \begin{array}{lll}
                \partial_t u ( h,t) + \bar{H}(\partial _{h} u(h,t)) = 0 && (h,t) \in \R^{b(\G_0)} \times (0,\infty)\\
                u(h,0) = g(h) && h\in \R^{b(\G_0)}.
            \end{array}
            \right.
        \end{equation*}
    \end{MainThm}

    \medskip

    \begin{Remark}
        We recall that the limit solution to \eqref{HJlim} coupled with a uniformly continuous initial datum $g : \R^{b(\G_0)} \to \R$ is given by
        \begin{equation}\label{solim}
            u(h,t) := \inf \left \{ g(h_0) + t \, \be \left (\frac{h-h_0}t \right ) \mid h_0 \in \R^{b(\G_0)} \right \},
        \end{equation}
        where we used the fact that $\bar H= \al: \R^{b(\G_0)} \to \R$, see Proposition \ref{effetto}, and $\beta$ is its convex conjugate.
    \end{Remark}

    \medskip

    Before proving the Main Theorem we discuss three preliminary results:

    \smallskip

    \begin{Lemma}
        Assume that $g_\eps : \NN \to \R$ is an $\eps d_{\NN}$ ($\eps d_{\VN}$)--equi--uniformly continuous family of functions, with modulus of continuity $\om$, locally uniformly $\eps \pi_2$--convergent to a function $g: \R^{b(\G_0)} \to \R$. Then, $g$ is uniformly continuous, with respect to $\|\cdot\|$, with the same modulus of continuity.
    \end{Lemma}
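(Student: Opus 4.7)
The plan is to establish the uniform continuity of $g$ by transferring the estimate from the family $g_\eps$, using suitable approximating sequences that realize the asymptotic cone structure. Concretely, given $h_1, h_2 \in \R^{b(\G_0)}$, I would select a sequence $\eps_n \to 0^+$ and points $z_n^1, z_n^2 \in \VN$ with $\eps_n \pi_2(z_n^i) \to h_i$ for $i=1,2$. Such points exist because $\pi_2 : \VN \to \Z^{b(\G_0)}$ is surjective (both vertex components in $\VV_0 \times \Z^{b(\G_0)}$ are free), so one can take $\pi_2(z_n^i)$ to be the integer part of $h_i/\eps_n$.

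By Definition \ref{semi}, $\eps\pi_2$-convergence then yields $g_{\eps_n}(z_n^i) \to g(h_i)$. On the other hand, the $\eps d_\VN$-equi-uniform continuity (in the second form stated in Definition \ref{semiter}) gives
\begin{equation*}
    |g_{\eps_n}(z_n^1) - g_{\eps_n}(z_n^2)| \le \om\bigl(\eps_n\, d_\VN(z_n^1,z_n^2)\bigr).
\end{equation*}
Passing to the limit in this inequality, assuming $\om$ is taken (as usual) to be continuous and nondecreasing with $\om(0)=0$, reduces the problem to proving that
\begin{equation*}
    \lim_{n\to\infty}\eps_n\, d_\VN(z_n^1, z_n^2) = \|h_1 - h_2\|.
\end{equation*}

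To do this, I would transport everything to $\VV$ via the isometry $d_\VN \simeq d_\VV$, setting $m_n^i := \pi_2(z_n^i)$. The key ingredient is the estimate already obtained in the proof of the previous theorem (asymptotic cone of $(\VN,d_\VN)$), namely that the map $(x,h) \mapsto h$ has distortion bounded by the constant $C := 2\max_{y_1,y_2\in\VV_0} d_\VV((y_1,0),(y_2,0))$, so that
\begin{equation*}
    \bigl|d_\VV(\pi(z_n^1),\pi(z_n^2)) - d_O(m_n^1, m_n^2)\bigr| \le C.
\end{equation*}
After multiplying by $\eps_n$ the constant vanishes, and the invariance of $d_O$ (Lemma \ref{inva}) gives $d_O(m_n^1, m_n^2) = d_O(0, m_n^2 - m_n^1)$. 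Since $\eps_n(m_n^2 - m_n^1) \to h_2 - h_1$, the desired limit reduces to the statement that $\eps_n d_O(0, m_n) \to \|h\|$ whenever $\eps_n m_n \to h$.

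The main obstacle is precisely this last assertion, which is the quantitative content of Lemma \ref{Zcone} (i.e.\ of $(\R^{b(\G_0)},\|\cdot\|)$ being the asymptotic cone of $(\Z^{b(\G_0)},d_O)$). For $h \in \Z^{b(\G_0)}$ with $m_n = nh$ it is tautologically the defining limit of the stable norm in Lemma \ref{stabisnorm}. The general case can be reduced to this by a rational approximation argument: given $\eta > 0$, pick $q\in\N$ and $p \in \Z^{b(\G_0)}$ with $\|h - p/q\| < \eta$, and use the sub-additivity of $d_O$ together with the uniform bound $d_O(0,m) \le c|m|$ (which follows from $\VV$ being locally finite with bounded generating set) to control the difference $\eps_n|d_O(0,m_n) - d_O(0, k_n p)|$ for suitable integers $k_n \approx \eps_n^{-1}/q$. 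Combining the upper and lower bounds obtained in this way, and letting $\eta \to 0$, yields the claim. With this in hand, the limit passage in the equi-uniform continuity estimate produces $|g(h_1) - g(h_2)| \le \om(\|h_1 - h_2\|)$, which is the assertion.
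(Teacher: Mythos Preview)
Your approach is correct and follows the same overall strategy as the paper: pick approximants $z_n^i \in \VN$ with $\eps_n\pi_2(z_n^i)\to h_i$, use the $\eps\pi_2$-convergence to pass $g_{\eps_n}(z_n^i)\to g(h_i)$, and transfer the equi-uniform continuity bound via control of $\eps_n d_\VN(z_n^1,z_n^2)$ in terms of $\|h_1-h_2\|$.

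The execution differs. The paper invokes the pointed Gromov--Hausdorff convergence of Definition~\ref{pointed} as a black box: conditions \textbf{(i)} and \textbf{(ii)} directly furnish, for each $n$, points $z_n^i$ with $\|\eps_n\pi_2(z_n^i)-h_i\|<1/n$ \emph{and} the distortion estimate $\bigl|\eps_n d_\VN(z_n^1,z_n^2)-\eps_n\|\pi_2(z_n^1)-\pi_2(z_n^2)\|\bigr|<1/n$, from which $\eps_n d_\VN(z_n^1,z_n^2)<\|h_1-h_2\|+3/n$ follows in two lines. You instead construct the $z_n^i$ by hand (integer parts), use the fixed distortion bound $C$ from the proof of the asymptotic cone theorem, and then reprove the key limit $\eps_n d_O(0,m_n)\to\|h\|$ for $\eps_n m_n\to h$ via a rational approximation and subadditivity argument. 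This works (the upper bound $d_O(0,m)\le c|m|$ you need is immediate from the finiteness of $\G_0$), but it is precisely the quantitative content already packaged in Lemma~\ref{Zcone}; you are re-deriving part of the asymptotic cone result rather than citing it. Note also that you aim for full convergence of $\eps_n d_\VN(z_n^1,z_n^2)$, whereas only the upper bound $\limsup_n \eps_n d_\VN(z_n^1,z_n^2)\le\|h_1-h_2\|$ is needed, since $\om$ is nondecreasing; the paper establishes just this one-sided estimate.
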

    \begin{proof}
        Taking into account conditions {\bf (i)}, {\bf (ii)} of Definition \ref{pointed}, we can find, for any given $z_0\in\VN$, an infinitesimal sequence $\eps_n$ and $r >0$ satisfying
        \begin{equation*}
            \sup \left \{ |\eps_n \, d_{\VN}(z_1,z_2)- \eps_n \, \|\pi_2(z_1)-\pi_2(z_2)\| | \mid z_1 , \, z_2 \in B_{\eps_n}(z_0,r) \right \} < \frac 1n,
        \end{equation*}
        where $B_{\eps_n}$ stands for the ball in $\VN$ with respect to the metric $ \eps_n \, d_{\VN}$. In addition, for any $h$ with $\|h\| < r - \frac 1n$, we can find an element $z_n \in \VN$ with
        \begin{equation*}
            \eps_n \, d_{\VN}(z_n,z_0) < r
        \end{equation*}
        satisfying
        \begin{equation*}
            \|h -\eps_n \pi_2(z_n)\| < \frac 1n.
        \end{equation*}
        Given $h_1,h_2\in\R^{b(\G_0)}$ with $\|h_1\|, \, \|h_2\| < r - \frac 1n$, we therefore find, exploiting the above inequalities and the fact that the $g_\eps$'s $\eps \pi_2$--converge to $g$, up to extracting a subsequence of $\eps_n$, two sequences $\{z_n^1\}$, and $\{z_n^2\}$ in $\VN$ such that (for $i=1,\,2$)
        \begin{eqnarray*}
            \eps_n \, d_{\VN}(z^i_n,z_0) &<& r \\
            \|h_i- \eps_n \pi_2(z^i_n)\| &<& \frac 1n \\
            |g_{\eps_n}(z^i_n) - g(h_i)| &<& \frac 1n\\
            |\eps_n \, d_{\VN}(z_n^1,z_n^2)- \eps_n \|\pi_2(z_n^1)-\pi_2(z_n^2)\| | & <& \frac 1n.
        \end{eqnarray*}
        We derive
        \begin{eqnarray*}
            \eps_n \, d_{\VN}(z_n^1,z_n^2) &<& \eps_n \|\pi_2(z_n^1)-\pi_2(z_n^2)\| + \frac 1n \\
            & \le & \|h_1- \eps_n \pi_2(z^1_n)\| + \|h_2- \eps_n \pi_2(z^2_n)\| + \|h_1-h_2\| + \frac 1n \\ &<& \|h_1-h_2\| + \frac 3n
        \end{eqnarray*}
        and consequently
        \begin{eqnarray*}
            |g(h_1)-g(h_2)| &<& |g_{\eps_n}(z^1_n)- g_{\eps_n}(z^2_n)| + \frac 2n \\
            &< & \om( \eps_n \, d_{\VN}(z_n^1,z_n^2) ) + \frac 2n\\
            &<& \om \left (\|h_1-h_2\| + \frac 3n \right ) + \frac 2n.
        \end{eqnarray*}
        This shows that $\om$ is a uniform continuity modulus for $g$ as well.
    \end{proof}

    \bigskip

    We define for $\eps >0$, $(z,t) \in \NN \times (0, + \infty)$
    \begin{equation}\label{barato}
        \bar u_\eps (z,t) := \inf \left \{ g_\eps(\xi(0)) + \int_0^t L(\xi,\eps \dot \xi) \, d\tau \right \},
    \end{equation}
    where the infimum is taken over the curves in $\NN$ with $\xi(0) \in \VN$, $\xi(t) =z$. These functions differ from $u_\eps$ just because the initial point of the admissible curves is taken in $\VN$ and not at any point of $\NN$.

    \smallskip

    \begin{Lemma}\label{lekonve}
        Given a compact subset $K$ of $\R^{b(\G_0)}$, $A >0$ and a compact interval $I \subset [0,+\infty)$, we can find $R>0$ with
        \begin{equation*}
            \eps d_{ \NN}(z^*,\xi_\eps(0)) < R
        \end{equation*}
        for $\eps >0$ small enough, any $z^* \in \VN$ with $\eps \pi_2(z^*) \in K$, $t \in I$, any curve $\xi_\eps$ in $\NN$ satisfying $\xi_\eps(0) \in \VN$, $\xi_\eps(t)=z^*$ and
        \begin{equation}\label{lekonve00}
            g_\eps(\xi_\eps(0)) + \int_0^t L(\xi_\eps, \eps \, \dot \xi_\eps) \, d\tau < \bar u_\eps(z^*,t) + A.
        \end{equation}
    \end{Lemma}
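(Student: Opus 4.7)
My plan is to bound $d:=\eps d_\NN(\xi_\eps(0),z^*)$ by combining three ingredients: a cheap upper bound for $\bar u_\eps(z^*,t)$ obtained from a constant comparison curve, the superlinearity \eqref{super} of the Lagrangian $L$ to turn the rescaled action of $\xi_\eps$ into a lower bound linear in $d$, and the $\eps d_\NN$--equi--uniform continuity of the family $\{g_\eps\}$ to absorb the values of $g_\eps$ at the two endpoints against each other.

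First, since $z^*\in\VN$, the constant curve $\xi(\tau)\equiv z^*$ is admissible in \eqref{barato}, hence $\bar u_\eps(z^*,t)\le g_\eps(z^*)+t\,\wha c_{z^*}$. Thanks to (H$_\ga$4) and the finiteness of $\EE_0$, the characterization of $\wha c_z$ in Lemma \ref{flusso} shows that the flux limiter takes only finitely many values, so it is uniformly bounded above by some $c_+$ independent of $\eps$ and of $z^*$. Second, using \eqref{super} with a parameter $a>0$ to be chosen and the trivial inequality $\eps\int_0^t|\dot\xi_\eps|\,d\tau\ge\eps d_\NN(\xi_\eps(0),z^*)=d$, I would get
\begin{equation*}
    \int_0^t L(\xi_\eps,\eps\dot\xi_\eps)\,d\tau\ \ge\ a\,d-B_a\,t.
\end{equation*}
Third, since $(\NN,d_\NN)$ is a length space, a standard subdivision argument promotes the common modulus $\omega$ of $\{g_\eps\}$ to an at-most-linear majorant: there exist constants $a_0,b_0>0$, depending only on $\omega$, such that
\begin{equation*}
    |g_\eps(z_1)-g_\eps(z_2)|\ \le\ a_0\,\eps d_\NN(z_1,z_2)+b_0\qquad\text{for all }\eps>0,\ z_1,z_2\in\NN.
\end{equation*}
In particular this gives $g_\eps(\xi_\eps(0))\ge g_\eps(z^*)-a_0 d-b_0$.

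Inserting these three estimates into the assumed inequality \eqref{lekonve00}, together with the upper bound for $\bar u_\eps(z^*,t)$, the terms $g_\eps(z^*)$ cancel and I am left with
\begin{equation*}
    (a-a_0)\,d\ <\ b_0+(c_++B_a)\,|I|+A.
\end{equation*}
Choosing $a:=a_0+1$ then produces the desired bound $d<R$, for $R$ depending only on $\omega$, $A$, $|I|$ and the universal constant $c_+$. Observe that the compact set $K$ plays no quantitative role in this argument: the comparison curve for $\bar u_\eps(z^*,t)$ is centered at $z^*$ itself, so no separate control on $g_\eps(z^*)$ is required, and the hypothesis $\eps\pi_2(z^*)\in K$ only serves to delimit the family of endpoints for which the bound will later be applied.

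The step I expect to demand most care is the promotion of the uniform modulus $\omega$ into an at-most-linear majorant on the (possibly intricate) length space $(\NN,d_\NN)$: this is classical on geodesic metric spaces, but should be performed explicitly for $\NN$ using that $d_\NN$ is the path metric inherited from $\R^N$, so that any two points of $\NN$ can be joined by an arc whose Euclidean length is arbitrarily close to $d_\NN(z_1,z_2)$, which one then subdivides into pieces of a fixed length $\delta$ chosen so that $\omega(\delta)\le1$.
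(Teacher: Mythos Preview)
Your argument is correct and uses the same three ingredients as the paper: the constant comparison curve at $z^*$ to bound $\bar u_\eps(z^*,t)$ from above, the superlinearity \eqref{super} to bound the action from below linearly in $\eps d_\NN(\xi_\eps(0),z^*)$, and the affine majorant of the common modulus $\omega$ to control $g_\eps(\xi_\eps(0))-g_\eps(z^*)$. The paper obtains the affine majorant simply by assuming $\omega$ concave, which saves the subdivision detour you flag at the end; otherwise the mechanics are identical.

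The one genuine difference is organizational: the paper bounds $g_\eps(z^*)$ and $g_\eps(\xi_\eps(0))$ \emph{separately and absolutely}, using the compactness of $K$ together with the $\eps\pi_2$-convergence $g_\eps\to g$ to get $g_\eps(z^*)\le Q+1$ for $\eps$ small (this is where the ``$\eps$ small enough'' in the statement is actually used). You instead keep $g_\eps(z^*)$ on both sides and let it cancel, so neither $K$ nor the convergence hypothesis nor a smallness restriction on $\eps$ enters your estimate. This is a clean simplification of the paper's bookkeeping; it shows the lemma holds uniformly in $\eps>0$ and that $R$ depends only on $\omega$, $A$, and $\sup I$.
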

    \begin{proof}
        We fix $z^* \in \VN$ with $\eps \pi_2(z^*) \in K$, $t \in I$. Since the $g_\eps$'s have the same uniform continuity modulus $\omega$, that we can assume concave, there exist $P >0$, $Q > 0$ with
        \begin{equation}
            g_\eps(z) \ge -P \, \eps d_{\VN}(z,z^*) - Q \qquad z \in \NN, \, \eps >0.\label{lekonve0}
        \end{equation}
        Up to increasing $Q$, if necessary, we have that
        \begin{equation*}
            g(h) \le Q \txt{for $h \in K$}
        \end{equation*}
        and we therefore claim that
        \begin{equation}\label{lekonve1}
            g_\eps(z) \le Q+1 \txt{for $z \in \VN$ such that $\eps \pi_2(z) \in K$,}
        \end{equation}
        when $\eps$ is suitably small. Were \eqref{lekonve1} not true, we could in fact find an infinitesimal sequence $\eps_n$ and $z_n \in \VN$ with $\eps_n \pi_2(z_n) \in K$ such that
        \begin{equation*}
            g_{\eps_n}(z_n) > Q+1.
        \end{equation*}
        Since $K$ is compact, $\eps_n \pi_2 (z_n)$ would converge, up to subsequence, to an element $h \in K$. By the $\eps \pi_2$--convergence of the $g_\eps$'s to $g$, we would find
        \begin{equation*}
            g(h)= \lim_n g_{\eps_n}(z_n) \ge Q+1,
        \end{equation*}
        which is impossible. The claim is therefore proven. Due to $L(z,\cdot)$ being superlinear, see \eqref{super}, there exists $B >0$ with
        \begin{equation*}
            L(z,q) \ge 2 \, P \, |q| - B \txt{for all $(z,q) \in T\NN$}
        \end{equation*}
        which implies
        \begin{equation}\label{lekonve3}
            \int_0^t \, L(\xi_\eps, \eps \dot\xi_\eps) \, d\tau \ge 2 P \, \eps d_{\NN}(z^*, \xi_\eps(0)) - B \, t,
        \end{equation}
        where $\xi_\eps$ is a curve satisfying \eqref{lekonve00} for some $A >0$. Exploiting \eqref{lekonve0}, \eqref{lekonve1}, \eqref{lekonve3} plus the formula representing $u_\eps$, we finally obtain
        \begin{align*}
            A+ Q + 1 + & t \, L(z^*,0) \ge \bar u_\eps(z^*,t) + A \\ & \ge g_\eps(\xi_\eps(0)) + \int_0^t L(\xi_\eps,\eps \dot\xi_\eps) \, d\tau \\ & \ge - P\, \eps d_{\NN}(\xi_\eps(0),z^*) - Q + 2 \, P \, \eps \, d_{\NN}(\xi_\eps(0),z^*) - B \, t,
        \end{align*}
        which in turn gives
        \begin{equation*}
            \eps \, d_{\NN} (\xi_\eps(0),z^*) \le \frac 1P \, \left (2Q + 1 + t \, L(z^*,0) + B \, t + A \right)
        \end{equation*}
        and proves the assertion.
    \end{proof}

    \medskip

    \begin{Lemma}\label{novia}
        Assume the $g_\eps$'s to be $\eps d_\NN$--equi--uniformly continuous with modulus $\om$. Let $\bar u_\eps$ be the functions defined in \eqref{barato}. If $\bar u_\eps$ is $\eps \pi_2$--locally uniformly convergent to $u$, then the same holds true for $u_\eps$.
    \end{Lemma}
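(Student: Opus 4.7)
The plan is to exploit the obvious inequality $u_\eps \le \bar u_\eps$ (the infimum defining $\bar u_\eps$ ranges over a subset of the curves allowed for $u_\eps$) and to show the reverse inequality up to an $\eps$-infinitesimal error. The idea is that any near-optimal curve $\xi_\eps$ for $u_\eps(z,t)$ starting at $z_{0,\eps}\in\NN$ can be prepended with a short segment going from a nearby vertex $v_\eps$ to $z_{0,\eps}$ (along the arc containing $z_{0,\eps}$), producing a curve admissible for $\bar u_\eps$ at a slightly larger time. By \textbf{(P1)}, the prepending distance is at most $C_0:=\sup_\ga|\ga|<+\infty$, so we only move between points close in the original (non-rescaled) metric.

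Concretely, fix $\eps_n\to 0$, $z_n\in\VN$ and $t_n\to t$ with $\eps_n\pi_2(z_n)\to h$. Given $\delta>0$, select a curve $\xi_n:[0,t_n]\to\NN$ with $\xi_n(t_n)=z_n$ realizing $u_{\eps_n}(z_n,t_n)$ up to $\delta$, let $z_{0,n}=\xi_n(0)$, and pick a vertex $v_n$ on the arc containing $z_{0,n}$ with $d_\NN(v_n,z_{0,n})\le C_0$. Set $\tau_n:=\sqrt{\eps_n}$ and define $\tilde\xi_n:[0,t_n+\tau_n]\to\NN$ by traversing the portion of the arc from $v_n$ to $z_{0,n}$ at constant Euclidean speed on $[0,\tau_n]$, and by $\tilde\xi_n(\sigma)=\xi_n(\sigma-\tau_n)$ on $[\tau_n,t_n+\tau_n]$. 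On the initial segment, the rescaled speed satisfies $|\eps_n\dot{\tilde\xi}_n|\le C_0\sqrt{\eps_n}$, so by \textbf{(P6)} combined with \textbf{(P1)} the Lagrangian is bounded by some constant $M$ for all $n$ large. Meanwhile, $\eps_n d_\NN(v_n,z_{0,n})\to 0$, so $|g_{\eps_n}(v_n)-g_{\eps_n}(z_{0,n})|\le\omega(\eps_n C_0)\to 0$ by the $\eps d_\NN$-equi-uniform continuity assumption.

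Putting these estimates together, since $\tilde\xi_n$ is admissible for $\bar u_{\eps_n}(z_n, t_n+\tau_n)$,
\[
\bar u_{\eps_n}(z_n,t_n+\tau_n)\le g_{\eps_n}(v_n)+M\tau_n+\int_0^{t_n}L(\xi_n,\eps_n\dot\xi_n)\,d\sigma\le u_{\eps_n}(z_n,t_n)+\delta+\omega(\eps_n C_0)+M\sqrt{\eps_n}.
\]
Because $(\eps_n\pi_2(z_n),t_n+\tau_n)\to(h,t)$, the assumed $\eps\pi_2$-locally uniform convergence of $\bar u_\eps$ to $u$ yields $\bar u_{\eps_n}(z_n,t_n+\tau_n)\to u(h,t)$, hence $\liminf_n u_{\eps_n}(z_n,t_n)\ge u(h,t)-\delta$. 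Letting $\delta\to 0$, and combining with $u_{\eps_n}(z_n,t_n)\le \bar u_{\eps_n}(z_n,t_n)\to u(h,t)$, we conclude $u_{\eps_n}(z_n,t_n)\to u(h,t)$, which is precisely $\eps\pi_2$-local uniform convergence of $u_\eps$ to $u$ in the sense of Definition~\ref{semibis}.

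The one delicate point is the choice of the auxiliary time $\tau_\eps$: it must vanish (so that the extra time is negligible and we can invoke the convergence of $\bar u_\eps$ at the limit $(h,t)$), yet the rescaled speed $\eps/\tau_\eps$ on the prepended segment must also vanish (so that \textbf{(P6)} gives a uniform bound on the Lagrangian there). The scaling $\tau_\eps=\sqrt{\eps}$ balances both requirements, and no uniform-in-$\eps$ $t$-continuity statement for $\bar u_\eps$ is required, thanks to the fact that the test sequence $t_n+\tau_{\eps_n}$ still converges to $t$.
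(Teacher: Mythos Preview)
Your proof is correct and follows the same overall sandwich strategy as the paper: use $u_\eps\le\bar u_\eps$ for the upper bound, and for the lower bound exploit that any (near-)optimal curve for $u_{\eps_n}(z_n,t_n)$ starts within distance $\sup_\ga|\ga|$ of a vertex, so one can compare with $\bar u_{\eps_n}$ evaluated at a slightly shifted time.

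The technical route differs, however. The paper passes to the unrescaled time $t_n/\eps_n$, writes $u_{\eps_n}(z_n,t_n)=g_{\eps_n}(z_n^*)+\eps_n\Phi_L(z_n^*,z_n,t_n/\eps_n)$, and then invokes the Lipschitz continuity of $\Phi_L$ (Proposition~\ref{minactlip}) together with Lemma~\ref{ester} to replace $z_n^*$ by a nearby vertex $z_n'$ with an $O(1)$ error in $\Phi_L$, hence an $O(\eps_n)$ error after multiplying by $\eps_n$. Your argument is more self-contained: you explicitly prepend a short segment over time $\tau_n=\sqrt{\eps_n}$, which simultaneously makes $\tau_n\to0$ (so the shifted time still converges to $t$) and the rescaled speed $\eps_n/\tau_n\to0$ (so the added action is bounded by $M\tau_n$). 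This avoids Proposition~\ref{minactlip} and Lemma~\ref{ester} entirely, at the price of a slightly worse error $O(\sqrt{\eps_n})$ which is immaterial here. One minor refinement: parametrise the prepended segment affinely in the arc parameter~$s$ (i.e., take $\eta$ linear with $|\dot\eta|\le1/\tau_n$) rather than at constant Euclidean speed; then by Proposition~\ref{lemlemnew} the added action equals $\int_0^{\tau_n}L_\ga(\eta,\eps_n\dot\eta)\,d\sigma$ with $|\eps_n\dot\eta|\le\sqrt{\eps_n}$, and \textbf{(P6)} applies directly without needing a uniform lower bound on $|\dot\ga(s)|$.
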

    \begin{proof}
        Let $\eps_n$ be an infinitesimal subsequence and $(z_n,t_n)$ in $\VN \times (0,+\infty)$ with $(\eps_n \pi_2(z_n),t_n)$ converging to an element $(h_0,t_0) \in \R^{b(\G_0)}\times (0,+\infty)$; our task is to show that
        \begin{equation}\label{novia1}
            u_{\eps_n}(z_n,t_n) \longrightarrow u(h_0,t_0).
        \end{equation}
        We can write
        \begin{equation*}
            u_{\eps_n}(z_n,t_n) = g_{\eps_n}(z^*_n) + \int_0^{t_n} L(\xi_n,\eps_n \dot \xi_n) \, d\tau
        \end{equation*}
        for suitable curves $\xi_n$ in $\NN$ satisfying $\xi_n(0)= z^*_n$, $\xi_n(t)= z_n$. By changing the integral variable in the above formula from $t$ to $\frac t\eps$, we derive
        \begin{equation*}
            u_{\eps_n}(z_n,t_n) = g_{\eps_n}(z^*_n) + \eps_n \, \Phi_L(z^*_n,z_n,t_n/\eps_n).
        \end{equation*}
        By exploiting Proposition \ref{minactlip}, Lemma \ref{ester} and \textbf{(P1)}, we find $z'_n \in \VN$, $t'_n$ with
        \begin{equation*}
            d_\NN(z^*_n,z'_n) \le\sup_{\ga\in\EN}|\ga|=:M_\NN \txt{and} \qquad |t_n-t'_n| < \eps_n
        \end{equation*}
        satisfying
        \begin{equation*}
            \Phi_L(z^*_n,z_n,t_n/\eps_n) \ge \Phi_L(z'_n,z_n,t'_n/\eps_n) - M
        \end{equation*}
        for a suitable $M >0$. Also exploiting the uniform equicontinuity of the $g_\eps$'s, we derive
        \begin{eqnarray}
            \bar u_{\eps_n}(z_n,t_n) &\ge& u_{\eps_n}(z_n,t_n)\label{novia2} \\
            &=& g_{\eps_n}(z^*_n) + \eps_n \, \Phi_L(z^*_n,z_n,t_n/\eps_n) \nonumber \\ &\ge& g_{\eps_n}(z'_n) - \om(\eps_n d_\NN(z'_n, z^*_n)) + \eps_n \Phi_L(z'_n,z_n,t'_n/\eps_n)- \eps_n M \nonumber\\
            &\ge& \bar u_\eps(z_n,t'_n) - \om(\eps_n M_\NN) - \eps_n M. \nonumber
        \end{eqnarray}
        Since $\lim_{n \to \infty} t_n = \lim_{n \to \infty} t'_n=t_0$, we have by assumption
        \begin{equation*}
            \lim_{n \to \infty} \bar u_{\eps_n}(z_n,t_n) =\lim_{n \to \infty} \left(\bar u_\eps(z_n,t'_n) - \om(\eps_n M_\NN) - \eps_n M \right) = u(h_0,t_0),
        \end{equation*}
        which implies \eqref{novia1} by \eqref{novia2}.
    \end{proof}

    \medskip

    We proceed by proving the Main Theorem.

    \begin{proof}[{\bf Proof of Main Theorem:}]
        \, Given $(h_0,t_0) \in \R^{b(\G_0)} \times (0,+\infty)$, we can write by \eqref{solim}
        \begin{equation*}
            u(h_0,t_0) = g(h_1) + t_0 \, \be \left ( \frac{h_0-h_1}{t_0} \right )
        \end{equation*}
        for a suitable $h_1$. Given an infinitesimal sequence $\eps_n$, we take $(z_n,t_n) \in \VN \times (0,+\infty)$ with
        \begin{equation}\label{konve00}
            (\eps_n \,\pi_2(z_n),t_n) \to (h_0,t_0)
        \end{equation}
        and $\widehat z_n \in \VN$ with
        \begin{equation}\label{konve01}
            \eps_n \, \pi_2(\wha z_n) \to h_1.
        \end{equation}
        We aim at showing that
        \begin{equation}\label{konve0}
            \eps_n \, \Phi_L(\wha z_n, z_n,t_n/\eps_n) \to t_0 \, \be \left (\frac{h_0- h_1}{t_0} \right ) \txt{as $n \to +\infty$.}
        \end{equation}
        The first step is to estimate
        \begin{equation}\label{konve1}
            \left |\frac{\eps_n}{t_n} \, \Phi_L(\wha z_n, z_n,t_n/\eps_n) - \be \left (\eps_n \frac{\pi_2(z_n)- \pi_2(\wha z_n)}{t_n} \right ) \right |.
        \end{equation}
        Note that the quantity $\eps_n \, \frac{\pi_2(z_n)- \pi_2(\wha z_n)}{t_n}$ stays bounded for any $n$, because
        \begin{equation*}
            \eps_n \, (\pi_2(z_n) - \pi_2(\wha z_n)) \to h_0-h_1 \qquad\text{and} \qquad t_n \to t_0,
        \end{equation*}
        while $\frac{t_n}{\eps_n} \to + \infty$ as $n \to + \infty$. We then invoke Theorem \ref{melania} to get that the quantity in \eqref{konve1} becomes infinitesimal as $\eps$ goes to $0$, and consequently
        \begin{equation}\label{konve2}
            \left |t_0 \, \frac{\eps_n}{t_n} \, \Phi(\widehat z_n,z_n, t_n/\eps_n) - t_0 \, \be \left (\eps \frac{\pi_2(z_n) - \pi_2(\wha z_n)}{t_n} \right ) \right | \longrightarrow 0 \qquad{\rm as}\; n\to +\infty.
        \end{equation}
        Further, by exploiting the local Lipschitz continuity of $\be$ we obtain
        \begin{eqnarray}
            && \left | t_0 \, \be \left (\eps_n \frac{\pi_2(z_n) - \pi_2(\wha z_n)}{t_n} \right ) - t_0 \, \be \left (\frac{h_0-h_1}{t_0} \right ) \right |\label{konve3} \\
            && \quad \le\; t_0\, \ell\, \left | \eps_n \frac{\pi_2(z_n) - \pi_2(\wha z_n)}{t_n}- \frac{h_0-h_1}{t_0} \right | \nonumber
        \end{eqnarray}
        for a suitable Lipschitz constant $\ell$. Finally, since
        \begin{itemize}
            \item $\eps_n \frac{\pi_2(z_n)- \pi_2(\wha z_n)}{t_n}$ is bounded;
            \item $t_n \to t_0$
            \item the quantity in \eqref{konve1} becomes infinitesimal as $n \to + \infty$,
        \end{itemize}
        then $ \eps_n \, \Phi_L(\widehat z_n,z_n,t_n/\eps_n)$ is bounded and consequently
        \begin{equation}\label{konve4}
            \eps_n \, \left | \frac{t_0}{t_n} - 1 \right | \, \Phi_L(\widehat z_n,z_n,t_n/\eps_n) \longrightarrow 0 \qquad{\rm as}\; n\to +\infty,
        \end{equation}
        so that \eqref{konve0} can be obtained by combining \eqref{konve2}, \eqref{konve3}, \eqref{konve00}, \eqref{konve01}, \eqref{konve4}. This implies
        \begin{equation*}
            g_{\eps_n}(\widehat z_n) + \eps_n \, \Phi_L(\widehat z_n,z_n,t_n/\eps_n) \to u(h_0,t_0)
        \end{equation*}
        and consequently
        \begin{equation}\label{konve5}
            \limsup_{n} \bar u_{\eps_n} (z_n,t_n) \le u(h_0,t_0).
        \end{equation}
        Given $\de >0$, assume now $\wtd z_n \in \VN$ to be a $\de$--optimal point for $\bar u_{\eps_n}(z_n,t_n)$. Since $(\eps_n \pi_2(z_n),t_n) \to (h_0,t_0)$, see \eqref{konve00}, the elements $\eps_n \pi_2(z_n)$ are contained in a compact subset of $\R^{b(\G_0)}$, say $K$, and the $t_n$'s are contained in a compact interval, say $I$. We deduce by Lemma \ref{lekonve} and \textbf{(i)} in Definition \ref{pointed} that
        \begin{eqnarray*}
            \|\eps_n \pi_2(\wtd z_n)\| &\le& \|\eps_n \pi_2(\wtd z_n) - \eps_n \pi_2(z_n) \| + \eps_n \|\pi_2( z_n)\| \\ &\le& \eps_n \, d_{ \VN}(z_n, \wtd z_n) + Q + R' \le R + Q+ R' \txt{for suitable $Q$, $R$, $R'$. }
        \end{eqnarray*}
        Consequently the $ \eps_n \pi_2(\wtd z_n)$'s converge, up to subsequences, to some element $h_2 \in \R^{b(\G_0)}$. Arguing as in the first part of the proof, we show that
        \begin{equation*}
            \eps_n \, \Phi(\wtd z_n,z_n ,t_n/\eps_n) \longrightarrow t_0 \, \be \left (\frac{h_0-h_2}{t_0} \right ) \qquad{\rm as}\; n\to +\infty,
        \end{equation*}
        and consequently
        \begin{equation}\label{konve6}
            \liminf \bar u_{\eps_n} (z_n,t_n) + \de \ge u(h_0,t_0).
        \end{equation}
        The assertion is then obtained in the light of \eqref{konve5}, \eqref{konve6} and Lemma \ref{novia}, taking into account that $\de >0$ has been arbitrarily chosen.
    \end{proof}

    \bigskip

    \bigskip

    \section{Some proofs}\label{proofs}

    In this section, we provide the proofs of some of the more technical claims that we decided to postpone in order to make the presentation more accessible.

    \subsection{Proof of Proposition \ref{uffaz}}\label{uffazz}

    \uffaz*

    \smallskip

    \begin{Remark}\label{positive}
        If we fix arbitrarily a constant $b$ and consider the family of Hamiltonians
        \begin{equation*}
            H_\ga(s,\rho) + b \qquad\text{ for $\ga \in \EN$}
        \end{equation*}
        on $\NN$, they inherit from the $H_\ga$'s the properties {\bf (H$_\ga$1)}--{\bf (H$_\ga$4)}. The corresponding Lagrangians are
        \begin{equation*}
            L_\ga(s,\la) - b.
        \end{equation*}
        and the overall Lagrangian (see Section \ref{action}) becomes
        \begin{equation*}
            L(z,q) - b.
        \end{equation*}
        Since the structure of homogenization result does not change if we replace $H_\ga$ by $H_\ga\ +b$, (up to substituting $\bar{H}$ with $\bar{H}+b$), we can exploit the coercivity of the $L_\ga$'s, and choose the additive constant $b$ in such a way that the modified Lagrangians on any arc as well as the modified overall Lagrangian are {\it strictly positive}. In conclusion, we can assume the strict positivity of all such Lagrangians, without any loss of generality. We will make use of this property in the section.
    \end{Remark}

    \smallskip

    \begin{Notation}\label{inizio}
        Throughout the section we use the following notations:
        \begin{itemize}
            \item $\om_g(\cdot)$ is a concave uniform continuity modulus for the initial datum $g$;
            \item $\ell_C$ is, for $C >0$, the Lipschitz constant of any curve minimizing the action in a time $T$ between points $z_1$, $z_2$ in $\NN$ with $d_{\NN}(z_1,z_2) \le C\, T$, see Proposition \ref{curvacce}.
        \end{itemize}
        We further set
        \begin{eqnarray*}
            M[r] &:=& \max \{L(z,q) \mid z \in \NN, |q| \le r \} \\
            R_0 &:=& \max_z L(z,0).
        \end{eqnarray*}
    \end{Notation}

    \smallskip

    Some preliminary results are needed.

    \begin{Lemma}\label{ester}
        Given $t_0 >0$, there exists $C_0 >0$ such that for any $(z,t) \in \NN \times (t_0,+\infty)$, any optimal curve $\xi$ for $w(z,t)$ satisfies
        \begin{equation*}
            d_{\NN}(\xi(0),z) \le C_0 \, t.
        \end{equation*}
    \end{Lemma}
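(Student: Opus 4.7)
The plan is to bound the displacement of an optimal curve by comparing against the trivial competitor $\xi\equiv z$, which is admissible in the definition of $w$ and gives the immediate upper bound
\begin{equation*}
w(z,t) \;\le\; g(z) + \int_0^t L(z,0)\,d\tau \;\le\; g(z) + R_0\, t.
\end{equation*}
Here $R_0 = \max_{z\in\NN} L(z,0)$ is finite: by $G$--periodicity, $L(\cdot,0)$ takes only finitely many values (namely $\wha c_z$ at vertices and $L_\ga(\ga^{-1}(z),0)$ on arc interiors), so $(\mathbf{P6})$ together with the periodicity bound of $\wha c_z$ supplies the maximum.

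Let $\xi$ be an optimal curve for $w(z,t)$, set $d:=d_\NN(\xi(0),z)$, and bring in two ingredients. First, the superlinearity estimate \eqref{super} applied to any $a>0$ gives
\begin{equation*}
A_L(\xi) \;\ge\; a\int_0^t|\dot\xi|\,d\tau - B_a\,t \;\ge\; a\,d - B_a\,t,
\end{equation*}
since the length of $\xi$ is no less than $d_\NN(\xi(0),\xi(t))=d$. Second, the uniform continuity modulus $\om_g$ of the initial datum, being concave with $\om_g(0)=0$, is dominated by a linear function: there are constants $P,Q\ge 0$ such that $\om_g(r)\le Pr+Q$ for every $r\ge 0$. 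Consequently
\begin{equation*}
g(\xi(0)) \;\ge\; g(z) - \om_g(d) \;\ge\; g(z) - P\,d - Q.
\end{equation*}

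Putting everything together in the optimality identity $g(\xi(0)) + A_L(\xi) = w(z,t)\le g(z)+R_0 t$, the $g(z)$ terms cancel and we obtain
\begin{equation*}
(a - P)\,d \;\le\; (R_0 + B_a)\,t + Q \qquad\text{for every }a>0.
\end{equation*}
Choosing $a:=P+1$ makes the coefficient of $d$ positive, and since $t\ge t_0$ we can absorb $Q$ into the linear term in $t$, yielding
\begin{equation*}
d \;\le\; \Bigl(R_0 + B_{P+1} + Q/t_0\Bigr)\,t \;=:\; C_0\,t,
\end{equation*}
which is the claimed bound. The whole argument is essentially book-keeping once the right $a$ is chosen; the only subtlety is the finiteness of $R_0$ on the infinite network, which is handled by the $G$--periodicity noted above.
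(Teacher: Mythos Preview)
Your argument is correct and follows essentially the same route as the paper's proof: compare against the constant competitor to get $w(z,t)\le g(z)+R_0 t$, bound $g(z)-g(\xi(0))$ linearly via the concave modulus $\om_g$, invoke superlinearity of $L$ with a slope one larger than that linear coefficient, and absorb the additive constant using $t\ge t_0$. The only cosmetic difference is the naming of constants; the resulting $C_0$ even matches the paper's $R_0+B+b/t_0$. One small wording slip: $L(\cdot,0)$ does not literally take \emph{finitely many values} on arc interiors, but your appeal to \textbf{(P6)} (with $r=0$) and periodicity is exactly what guarantees the finiteness of $R_0$, so the conclusion stands.
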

    \begin{proof}
        We have
        \begin{equation*}
            \om_g(r) \le a \, r +b \txt{for suitable $a$, $b$ positive.}
        \end{equation*}
        We further have by \eqref{super} that there exists $B$ such that
        \begin{equation*}
            L(z,q) > (a+1) \, |q| - B \txt{for any $(z,q) \in T\NN$}.
        \end{equation*}
        This implies
        \begin{equation*}
            g(z) + R_0 t - g(\xi(0)) \ge \int_0^t L(\xi,\dot \xi) \, d\tau \ge (a+1) \, d_{\NN}(\xi(0),z) - Bt,
        \end{equation*}
        where $\xi$ is an optimal curve for $w(z,t)$, and consequently
        \begin{equation*}
            a \, d_{\NN}(z,\xi(0)) + b + R_0 \, t \ge \om_g(d_{\NN}(z,\xi(0))) + R_0 \, t \ge (a+1) \, d_{\NN}(\xi(0),z) - B \, t,
        \end{equation*}
        which in turns yields
        \begin{equation*}
            (R_0 + B ) \, t + b \ge d_{\NN}(\xi(0),z).
        \end{equation*}
        The assertion is then proven by choosing
        \begin{equation*}
            C_0:= R_0 + B + \frac b{t_0}.
        \end{equation*}
    \end{proof}

    \smallskip

    \begin{Proposition}\label{minactlip}
        Given $C>0$, $\Phi_L$ is Lipschitz continuous on
        \begin{equation*}
            A_C:=\left\{(z_1,z_2,t)\in\NN^2\times\R^+:d_\NN(z_1,z_2)\le Ct\right\}.
        \end{equation*}
    \end{Proposition}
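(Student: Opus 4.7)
The plan is to establish the Lipschitz estimate by constructing explicit competitor curves and exploiting the uniform Lipschitz bound $\ell_C$ on optimal curves provided by Proposition \ref{curvacce}\textbf{(ii)}. Throughout, I will write $M:=M[2\ell_C+2C]$ for a cap on $L$ on the relevant velocity range; the curves I build will never exceed this speed. Since $\Phi_L(z_1,z_2,t)\le t\cdot M[\ell_C]$ on $A_C$, the minimal action is a priori bounded by a linear function of $t$, a fact I will invoke when reparametrizing in time. I will prove Lipschitz continuity separately in the three arguments and then combine the three one-variable bounds.

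First I would show Lipschitz continuity in the time variable. Given $(z_1,z_2,t),(z_1,z_2,t')\in A_C$, let $\xi$ be an optimal $\ell_C$-Lipschitz curve for $(z_1,z_2,t)$ obtained from Proposition \ref{curvacce}\textbf{(i)}, and consider the linear reparametrization $\zeta(s):=\xi(st/t')$ on $[0,t']$. Applying Proposition \ref{lemlemnew} piece by piece to recognize the reparametrized action, and exploiting the continuity and local Lipschitz behaviour of $L(z,\cdot)$ on the uniformly bounded velocity set $\{|q|\le\ell_C\max\{t/t',t'/t\}\}$, one obtains $|A_L(\zeta)-A_L(\xi)|\le K_1|t-t'|$ for a constant $K_1$ depending only on $C$.

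Next I would handle the spatial variables symmetrically. To vary $z_1$, pick $(z_1,z_2,t),(z_1',z_2,t)\in A_C$ and set $\delta:=\min\{d_\NN(z_1,z_1'),\,t/2\}>0$. Take $\xi$ optimal for $(z_1,z_2,t)$, choose a geodesic $\eta$ in $\NN$ from $z_1'$ to $\xi(\delta)$ of length at most $d_\NN(z_1,z_1')+\ell_C\delta$, parametrized on $[0,\delta]$ at constant speed, and define the competitor $\zeta:=\eta$ on $[0,\delta]$ and $\zeta:=\xi$ on $[\delta,t]$. The speed of $\eta$ is bounded by $d_\NN(z_1,z_1')/\delta+\ell_C$, which is controlled (either by $1+\ell_C$, or, in the regime $d_\NN(z_1,z_1')>t/2$, by $2C+\ell_C$ since membership in $A_C$ forces $d_\NN(z_1,z_1')\le 2Ct$). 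Hence
$$A_L(\zeta)-A_L(\xi)\le \delta\,M+\delta\cdot M[\ell_C]\le K_2\,d_\NN(z_1,z_1'),$$
where $K_2$ depends only on $C$. The argument for $z_2$ is identical, modifying $\xi$ on the terminal interval $[t-\delta,t]$.

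Finally, given two triples in $A_C$, I would interpolate through corners $(z_1,z_2,t)\to(z_1',z_2,t)\to(z_1',z_2',t)\to(z_1',z_2',t')$. Each intermediate corner lies in $A_{C'}$ for some $C'>C$ depending only on $C$, by the triangle inequality for $d_\NN$, so the three one-variable estimates combine into a joint Lipschitz bound with constant $K_1+2K_2$. The main obstacle is the regime where $t$ is small compared with the spatial displacement: here the bound $d_\NN(z_1,z_1')\le 2Ct$ enforced by membership in $A_C$ is essential, as it guarantees that the cutoff $\delta$ can be chosen of order $d_\NN(z_1,z_1')$ while staying below $t$, so the modified velocities remain uniformly bounded and the whole construction yields constants depending only on $C$.
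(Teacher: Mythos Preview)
Two steps fail as written. In the time estimate, the reparametrization $\zeta(s)=\xi(st/t')$ has speed $(t/t')|\dot\xi|\le(t/t')\ell_C$, and nothing in the hypothesis bounds the ratio $t/t'$: take $z_1=z_2$, so that $(z_1,z_2,t)\in A_C$ for every $t>0$. The velocity set $\{|q|\le\ell_C\max\{t/t',t'/t\}\}$ you call ``uniformly bounded'' is therefore not uniformly bounded at all, and for a superlinear $L$ the action $A_L(\zeta)$ diverges as $t'/t\to0$ (for $L=|q|^2$ one gets $A_L(\zeta)=(t/t')A_L(\xi)$), so no constant $K_1$ depending only on $C$ can emerge from this competitor. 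The paper repairs this by a case split: when $t_2-t_1\ge t_1$ it exploits the positivity of $L$ (Remark~\ref{positive}) together with $\Phi_L(z_1,z_2,t_1)\le M[\ell_C]\,t_1\le M[\ell_C]\,(t_2-t_1)$ to bound the difference directly; only when $t_2-t_1<t_1$ does it reparametrize, and then the dilation factor is at most~$2$, so speeds stay below $2\ell_C$.

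The corner interpolation has the same defect. From $(z_1,z_2,t),(z_1',z_2',t')\in A_C$ alone there is no control of $d_\NN(z_1,z_1')$ or $d_\NN(z_2,z_2')$ in terms of $t$ or $t'$ --- take $z_1=z_2$ and $z_1'=z_2'$ arbitrarily far apart with $t=t'=1$. Hence the first intermediate corner $(z_1',z_2,t)$ satisfies only $d_\NN(z_1',z_2)\le d_\NN(z_1,z_1')+Ct$, which need not be bounded by any fixed multiple of $t$, and your one-variable spatial estimate (which crucially used the inequality $d_\NN(z_1,z_1')\le 2Ct$, itself a consequence of \emph{both} endpoints lying in $A_C$) does not apply. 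The paper avoids this by coupling the spatial and time moves: from $(z_1,z_2,t_1)$ it passes to $(z_1,z_2',\,t_1+d_\NN(z_2,z_2'))$ via a unit-speed geodesic tail, so that the intermediate point automatically lies in $A_{\max\{C,1\}}$, and then closes the gap to $(z_1,z_2',t_2)$ with the already-established time Lipschitz bound.
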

    \begin{proof}
        We start showing that, if $(z_1,z_2,t_1),(z_1,z_2,t_2)\in A_C$ with $t_2>t_1$, there is a constant $\bar\ell>0$ such that
        \begin{equation}\label{eq:minactlip1}
            |\Phi_L(z_1,z_2,t_1)-\Phi_L(z_1,z_2,t_2)|\le\ov\ell|t_1-t_2|,
        \end{equation}
        where $\bar\ell:=M[\ell_{2C}]+R_0$, see Notation \ref{inizio} for the definition of $R_0$, $\ell_C$ and $M[\ell_C]$. We first assume that $\Phi_L(z_1,z_2,t_2)\ge\Phi_L(z_1,z_2,t_1)$, then
        \begin{equation}\label{eq:minactlip2}
            |\Phi_L(z_1,z_2,t_1)-\Phi_L(z_1,z_2,t_2)|=\Phi_L(z_1,z_2,t_2)-\Phi_L(z_1,z_2,t_1)\le R_0|t_2-t_1|.
        \end{equation}
        If instead $\Phi_L(z_1,z_2,t_2)<\Phi_L(z_1,z_2,t_1)$, we break the argument according to whether $t_2 -t_1 \ge t_1$ or $t_2 -t_1 < t_1$. In the first instance, taking into account Remark \ref{positive}, we derive
        \begin{equation}\label{eq:minactlip3}
            \begin{aligned}
                |\Phi_L(z_1,z_2,t_1)-\Phi_L(z_1,z_2,t_2)|=\;&\Phi_L(z_1,z_2,t_1)-\Phi_L(z_1,z_2,t_2)\\
                \le\;&M[\ell_C]t_1\le M[\ell_C]|t_2-t_1|.
            \end{aligned}
        \end{equation}
        If instead $t_2 -t_1 < t_1$, we denote by $\xi$ an optimal curve for $\Phi_L(z_1,z_2,t_2)$ and select $\bar t$ with $t_1- \bar t = t_2 -t_1$, we then define a curve $\eta: [0,t_1] \to \NN$ via
        \begin{equation*}
            \eta(t) := \left \{
            \begin{array}{cc}
                \xi(t) & \quad\text{for $t \in [0,\bar t]$} \\
                \xi(at +b) & \quad\text{for $t \in (\bar t, t_1]$,}
            \end{array}
            \right .
        \end{equation*}
        where $a$, $b$ are determined by the conditions
        \begin{equation*}
            a \, \bar t + b = \bar t \qquad{\rm and} \qquad a \, t_1 + b = t_2,
        \end{equation*}
        which in particular gives
        \begin{equation*}
            a = \frac{t_2- \bar t}{t_1 - \bar t} = 2,
        \end{equation*}
        so that $\eta(t)$ is a curve with Lipschitz constant less than or equal to $\ell_{2C}$. Taking into account Remark \ref{positive}, we have
        \begin{equation}\label{eq:minactlip4}
            \begin{aligned}
                |\Phi_L(z_1,z_2,t_1)-\Phi_L(z_1,z_2,t_2)|=\;&\Phi_L(z_1,z_2,t_1)-\Phi_L(z_1,z_2,t_2)\le\int_{\bar t}^{t_1}L(\eta,\dot\eta)d\tau\\
                \le\;&M[\ell_{2C}](t_1-\bar t)=M[\ell_{2C}](t_2-t_1).
            \end{aligned}
        \end{equation}
        Then, \eqref{eq:minactlip1} follows from \eqref{eq:minactlip2}, \eqref{eq:minactlip3} and \eqref{eq:minactlip4}.\\
        We now consider two points $(z_1,z_2,t_1)$, $(z_1,z_2',t_2)$ in $A_C$ and assume that $\Phi_L(z_1,z_2',t_2)\ge\Phi_L(z_1,z_2,t_1)$. Given an optimal curve $\xi$ for $\Phi_L(z_1,z_2,t_1)$, we define a curve $\eta: [0,t_1 + d_{\NN}(z_2,z_2')] \to \NN$ as
        \begin{equation*}
            \eta(t) := \left \{
            \begin{array}{cc}
                \xi(t) & \quad\text{for $t \in [0,t_1]$} \\
                \zeta(t-t_1) & \quad\text{for $t \in (t_1, t_1 + d_{\NN}(z_2,z_2') ]$,}
            \end{array}
            \right .
        \end{equation*}
        where $\zeta: [0, d_{\NN}(z_2,z_2')]\longrightarrow \NN$ is a geodesic between $z_2$ and $z'_2$. We have
        \begin{equation}\label{eq:minactlip5}
            \begin{aligned}
                \Phi_L(z_1,z_2',t_1+d_{\NN}(z_2,z_2'))-\Phi_L(z_1,z_2,t_1)\le\;&\int_{t_1}^{t_1+d_{\NN}(z_2,z'_2)}L(\eta,\dot\eta)d\tau\\
                \le\;&M[1]d_{\NN}(z_2,z_2').
            \end{aligned}
        \end{equation}
        On the other side, by the first part of the proof, we have
        \begin{equation*}
            \Phi_L(z_1,z_2',t_2)-\Phi_L(z_1,z_2',t_1+d_{\NN}(z_2,z_2'))\le\bar\ell(d_{\NN}(z_2,z'_2)+(t_2-t_1))
        \end{equation*}
        so that, combining it with \eqref{eq:minactlip5}, we get
        \begin{align*}
            |\Phi_L(z_1,z_2',t_2)-\Phi_L(z_1,z_2,t_1)|=\;&\Phi_L(z_1,z_2',t_2)-\Phi_L(z_1,z_2,t_1)\\
            =\;&\Phi_L(z_1,z_2',t_2)-\Phi_L(z_1,z_2',t_1+d_{\NN}(z_2,z_2'))\\
            &+\Phi_L(z_1,z_2',t_1+d_{\NN}(z_2,z_2'))-\Phi_L(z_1,z_2,t_1)\\
            \le\;&(M[1]+\bar\ell)[(t_2-t_1)+d_{\NN}(z_2,z_2')].
        \end{align*}
        Similarly, we can show that, given $(z_1,z_2,t_1),(z_1',z_2,t_2)\in A_C$,
        \begin{equation*}
            |\Phi_L(z_1',z_2,t_2)-\Phi_L(z_1,z_2,t_1)|\le(M[1]+\bar\ell)[(t_2-t_1)+d_{\NN}(z_1,z_1')].
        \end{equation*}
        This completes the proof.
    \end{proof}

    \smallskip

    \begin{Proposition}\label{paola}
        The function $w$ is Lipschitz continuous on $\NN \times [t_0, + \infty)$ for $t_0>0$.
    \end{Proposition}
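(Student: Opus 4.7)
The plan is to exploit the representation
\[
w(z,t) = \min_{z^* \in \NN} \bigl\{ g(z^*) + \Phi_L(z^*,z,t) \bigr\},
\]
with the minimum attained thanks to Lemma \ref{nosleep}, and to compare $w$ at two nearby points $(z_1,t_1), (z_2,t_2) \in \NN \times [t_0,\infty)$ by choosing an optimal base point $z^*$ for $w(z_1,t_1)$ and using it as a \emph{test} base point for $w(z_2,t_2)$. In the resulting difference the initial datum $g(z^*)$ cancels, and the whole estimate reduces to the regularity of $\Phi_L$ supplied by Proposition \ref{minactlip}. This is the crucial reason the argument goes through even though $g$ is only uniformly continuous, not Lipschitz.

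The technical heart of the argument is to verify that both triples $(z^*,z_1,t_1)$ and $(z^*,z_2,t_2)$ lie in a common set $A_C$ on which Proposition \ref{minactlip} applies, with $C$ independent of the base point. Lemma \ref{ester} gives $d_\NN(z^*,z_1) \le C_0 t_1$, so the triangle inequality yields $d_\NN(z^*,z_2) \le C_0 t_1 + d_\NN(z_1,z_2)$. Restricting to perturbations with $|t_1-t_2| \le 1$ and $d_\NN(z_1,z_2) \le 1$ and using the lower bound $t_2 \ge t_0$, a direct check shows $d_\NN(z^*,z_2) \le C\,t_2$ for a constant $C = C(C_0,t_0)$ depending only on $t_0$. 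Applying the Lipschitz bound of $\Phi_L$ on $A_C$ with constant $K=K(C)$, and then interchanging the roles of the two points, produces a local Lipschitz estimate
\[
|w(z_1,t_1) - w(z_2,t_2)| \le K\bigl(d_\NN(z_1,z_2) + |t_1-t_2|\bigr)
\]
with $K$ uniform in the base point.

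Finally, a standard path-concatenation argument promotes this local bound to a global Lipschitz estimate on the whole $\NN \times [t_0,\infty)$: connect any two given points by a product path of length comparable to their distance and staying above time $t_0$, subdivide into pieces of length at most one, and sum the local estimates. The main obstacle to watch out for is the uniformity of the constant $C$ in the base point, which is what makes the proof work globally: it relies crucially on the \emph{linear} growth $d_\NN(z^*,z) \le C_0 t$ given by Lemma \ref{ester} combined with the strict positivity of $t_0$. Without the restriction $t \ge t_0$ the ratios $t_1/t_2$ appearing in the estimate could blow up, explaining why the statement is false in general down to $t=0$.
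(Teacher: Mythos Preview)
Your proof is correct and follows essentially the same approach as the paper: use Lemma \ref{ester} to confine the relevant base points to a set $A_C$ and then invoke the Lipschitz estimate for $\Phi_L$ from Proposition \ref{minactlip}. The paper's version is terser, simply observing that $w$ is an infimum of an equi-Lipschitz family, whereas you spell out explicitly the verification that both triples land in a common $A_C$ and the local-to-global passage; your extra care is warranted, since the admissible family of base points depends on $(z,t)$, but the underlying mechanism is identical.
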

    \begin{proof}
        Using Lemma \ref{ester}, there is a constant $C_0>0$ such that
        \begin{equation*}
            w(z,t)=\inf\{g(z_0)+\Phi(z_0,z,t)\}\txt{for any }(z,t)\in\NN\times[t_0,\infty),
        \end{equation*}
        where the infimum is taken over the $z_0\in\NN$ with
        \begin{eqnarray*}
            d_\NN(z_0,z)\le C_0t.
        \end{eqnarray*}
        This proves our claim, since $w$, on $\NN\times[t_0,\infty)$, is the infimum of a family equiLipschitz continuous functions by Proposition \ref{minactlip}.
    \end{proof}

    \smallskip

    We can now prove the main proposition of this section.

    \begin{proof}[{\bf Proof of Proposition \ref{uffaz}:}]
        We assume for purposes of contradiction that there exist sequences $(z_n,t_n)$, $(z'_n,t'_n)$ in $\NN \times [0,+\infty)$, and a constant $\eps >0$ with
        \begin{equation}\label{paola3}
            d_{\NN}(z_n,z'_n) + |t_n - t'_n| \to 0, \qquad |w(z_n,t_n) - w(z'_n,t'_n)| > 3 \, \eps
        \end{equation}
        for a given $\eps >0$, which clearly implies the assertion to be false. We deduce from Proposition \ref{paola} that $\inf_n \{t_n, t'_n\}=0$ and consequently that both $t_n$ and $t'_n$ are infinitesimal, up to extracting a subsequence.

        We proceed estimating
        \begin{equation*}
            |w(z,t) - g(z)| \txt{for $z \in \NN$, $t >0$.}
        \end{equation*}
        If $w(z,t) \ge g(z)$, we get
        \begin{equation}\label{matilde0}
            |w(z,t) - g(z)| = w(z,t) - g(z) \le R_0 \, t.
        \end{equation}
        If instead $w(z,t) < g(z)$, we have
        \begin{equation}\label{matilde1}
            |w(z,t) - g(z)| = g(z) - w(z,t) = g(z) - g(\xi(0)) - \int_0^t L(\xi,\dot\xi) \, d\tau,
        \end{equation}
        where the curve $\xi$ is optimal for $w(z,t)$. Exploiting the concavity of $\om_g$, we find a constant $a_\eps$, with
        \begin{equation*}
            a_\eps \, r + \eps \ge \om_g(r),
        \end{equation*}
        by \eqref{super} there exists $B_\eps$ satisfying
        \begin{equation*}
            L(z,q) \ge a_\eps \, |q| - B_\eps.
        \end{equation*}
        We therefore obtain from \eqref{matilde1}
        \begin{eqnarray*}
            |w(z,t) - g(z)| &\le & \om_g(d_{\NN}(z,g(\xi(0))) - \int_0^t L(\xi,\dot\xi) \, d\tau \\ &\le& a_\eps \, d_{\NN}(z,\xi(0)) + \eps - a_\eps \, d_{\NN}(z,\xi(0)) + B_\eps t \le \eps + B_\eps t.
        \end{eqnarray*}
        Taking also into account \eqref{matilde0}, we get in the end
        \begin{equation*}
            |w(z,t) - g(z)| \le \eps + R_\eps \, t \txt{for any $(z,t) \in \NN \times (0,+\infty)$,}
        \end{equation*}
        where $R_\eps = \max\{R_0,B_\eps\}$. We consequently get
        \begin{eqnarray*}
            |w(z_n,t_n) - w(z'_n,t'_n)| &\le& |w(z_n,t_n) - g(z_n)| + |g(z_n) - g(z'_n)| + |g(z'_n) - w(z'_n,t'_n)| \\
            &\le& 2 \, \eps + R_\eps \, (t_n+t'_n) + \om_g(d_{\NN}(z_n,z'_n)),
        \end{eqnarray*}
        which contradicts \eqref{paola3}. This concludes the proof.
    \end{proof}

    \medskip

    \subsection{Proof of Theorem \ref{compa}}\label{compaz}

    \compa*

    \smallskip

    In the case of finite networks the above theorem is obtained, see \cite{Sic}, by first proving it for a semidiscrete problem and then exploiting the links between the two problems to transfer it to the continuous case. In the first step the compactness of the network is essentially exploited, and so a new argument is needed in our context.

    \smallskip

    \begin{Notation}
        We define
        \begin{eqnarray*}
            &Q:= (0,1) \times (0,+\infty) \quad \partial_p Q:= \big ((0,1) \times \{0\}\big ) \cup \big ( \{0,1\} \times [0, + \infty) \big )& \\
            &\partial^-_p Q:= \big ((0,1) \times \{0\}\big ) \cup \big ( \{0\} \times [0, + \infty) \big ). &
        \end{eqnarray*}

        We further denote by $UC (\cdot)$, $C(\cdot)$, respectively, the spaces of real valued uniformly continuous and continuous functions, defined on a given topological space.

    \end{Notation}

    \smallskip

    Following \cite{Sic}, we introduce the related semidiscrete problem.

    We define, for any $z \in \VN$, the operator
    \begin{equation*}
        F_z: UC \big ( (\NN \times \{0\}) \cup ( \VN \times (0,+\infty)) \big ) \to UC([0,+\infty))
    \end{equation*}
    through the following two steps:
    \begin{itemize}
        \item[--] Given $w \in C \big ( (\NN \times \{0\}) \cup ( \VN \times (0,+\infty)) \big )$, we indicate by $F_\ga[w]$ the maximal among the uniformly continuous solution to \eqref{HJg} in $Q$ with trace less or equal $w(\ga(s),0)$ on $[0,1]\times\{0\}$ and $w(\ga(0),t)$ on $\{0\}\times[0,\infty)$;
        \item[--] We set
        \begin{equation*}
            F_z[w]:= \min_{\ga \in \EN^z} F_\ga[w](1,\cdot).
        \end{equation*}
    \end{itemize}

    \smallskip

    We record for later use:

    \begin{Proposition}\label{unifcontloc}
        Given $g\in UC(\partial_p^-Q)$, we set
        \begin{equation}\label{eq:unifcontloc.1}
            w_0(s,t) =\min \left \{ g(s_0,t_0) + \int_{t_0}^t L_\ga(\eta,\dot \eta) \, d\tau \right \},
        \end{equation}
        where the minimum is over the elements $(s_0,t_0)$ varying in $ \partial^-_p Q$ with $t_0 <t$, and the curves $\eta:[t_0,t] \to [0,1]$ with $\eta(t_0)=s_0$ and $\eta(t)=s$. Then $w_0\in UC(\overline Q)$.
    \end{Proposition}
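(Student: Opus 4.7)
The plan is to prove the statement by adapting the strategy used for Proposition \ref{uffaz} (the network case), taking advantage of the simpler geometry of a single arc parametrized by $[0,1]$ while carefully handling the new left boundary component $\{0\}\times[0,+\infty)$. As in Remark \ref{positive}, we may assume without loss of generality that $L_\gamma$ is strictly positive; likewise we let $\omega_g$ denote a concave uniform continuity modulus of $g$ on $\partial_p^-Q$.

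The first step is to establish existence of the minimum in \eqref{eq:unifcontloc.1}. Since $\omega_g(r)\le ar+b$ and $L_\gamma$ is superlinear in the velocity, a minimizing sequence $(s_n^0,t_n^0,\eta_n)$ must have a bounded Lipschitz constant and bounded initial time $t_n^0$: indeed, from $g(s_n^0,t_n^0)+\int_{t_n^0}^tL_\gamma(\eta_n,\dot\eta_n)\,d\tau\le w_0(s,t)+1$, plus the lower bound $L_\gamma(s,\lambda)\ge (a+1)|\lambda|-B$ and the uniform continuity of $g$, one rules out $t_n^0\to-\infty$ (not admissible) and obtains equi-Lipschitz continuity of the $\eta_n$'s; then compactness and semicontinuity of the action conclude, along the same lines as Lemma \ref{nosleep}.

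The second step is to prove an interior Lipschitz estimate: for every $t_1>0$, $w_0$ is Lipschitz continuous on $[0,1]\times[t_1,+\infty)$. The argument mirrors Propositions \ref{minactlip} and \ref{paola}: thanks to Lemma \ref{ester} (specialized to the arc), optimal curves for $w_0(s,t)$ with $t\ge t_1$ start at points $(s_0,t_0)$ with $t-t_0$ bounded below or with $|s-s_0|+|t-t_0|\le C(t-t_1)$, and on such a compact slice the minimizing curves are equi-Lipschitz by Proposition \ref{curvacce}; perturbing the initial/final endpoints by test-curve concatenations as in Proposition \ref{minactlip} then yields a Lipschitz bound in $(s,t)$.

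The third and most delicate step is continuity up to the parabolic boundary $\partial_p^-Q$. For the \emph{upper} bound, given $(s_0,t_0)\in\partial_p^-Q$ and $(s,t)\to(s_0,t_0)$ with $t>t_0$, I would use as a competitor the straight segment from $(s_0,t_0)$ to $(s,t)$ (slightly adjusting $t_0$ when $s_0\in(0,1)$ and $t_0=0$; for $s_0=0$, $t_0>0$, adjust by sliding $t_0$ upward by a quantity $\ll t-t_0$): since $L_\gamma$ is bounded on bounded velocity sets, the action contribution is $o(1)$ and uniform continuity of $g$ gives $w_0(s,t)\le g(s_0,t_0)+o(1)$. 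For the \emph{lower} bound, I would reproduce the concavity/superlinearity trick used at the end of Proposition \ref{uffaz}: for every $\varepsilon>0$ there exist $a_\varepsilon,B_\varepsilon$ with $\omega_g(r)\le a_\varepsilon r+\varepsilon$ and $L_\gamma(s,\lambda)\ge a_\varepsilon|\lambda|-B_\varepsilon$; evaluating along any optimal curve $\eta$ for $w_0(s,t)$ and writing $|g(s,t_0^*)-g(s_0^*,t_0^*)|\le a_\varepsilon(|s-s_0^*|+|t-t_0^*|)+\varepsilon$, where $(s_0^*,t_0^*)$ is the optimal starting point, one obtains $w_0(s,t)\ge g(s_0,t_0)-\varepsilon-C_\varepsilon(|s-s_0|+|t-t_0|)$. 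Combining the interior Lipschitz bound with the boundary continuity, standard concatenation of moduli yields a single modulus of uniform continuity on $\overline{Q}$.

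The main obstacle I anticipate is the left-boundary regime $s_0=0$, $t_0>0$: here the starting time $t_0$ ranges over an unbounded set, and one must simultaneously control the trade-off between the value of $g$ along $\{0\}\times[0,+\infty)$ and the action cost of curves that enter the strip very close to the final time $t$. The fact that $L_\gamma$ is bounded on $\{|\lambda|\le r\}$ (property \textbf{(P6)} applied to the single arc) is exactly what makes these short-time short-space transitions cheap, and is the key ingredient that makes the upper bound work uniformly along the whole left boundary.
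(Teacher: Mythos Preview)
Your overall plan (interior regularity plus boundary estimates, in the spirit of Proposition \ref{uffaz}) is the same as the paper's, but there is a genuine gap in how you treat the left boundary $\{0\}\times[0,+\infty)$, and it shows up in both Step 2 and Step 3.

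In Step 2 you claim Lipschitz continuity on $[0,1]\times[t_1,+\infty)$ by analogy with Propositions \ref{minactlip}--\ref{paola}. That analogy breaks down here: for points $(s,t)$ with $s$ small and $t$ large the optimal starting point may be $(0,t_0^*)$ with $t-t_0^*$ arbitrarily small, so you do not get the uniform lower bound on $t-t_0^*$ that makes $\Phi_\gamma$ Lipschitz with a fixed constant. The paper accordingly works on the smaller set $Q_\delta=\{s\ge\delta,\,t\ge\delta\}$ and proves precisely this lower bound $t-t_0^*\ge\epsilon_\delta$ there (its Step 1).

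In Step 3 your lower bound ``$w_0(s,t)\ge g(s_0,t_0)-\varepsilon-C_\varepsilon(|s-s_0|+|t-t_0|)$'' does not follow from the superlinearity/concavity trick you invoke. Carrying out the computation as in Proposition \ref{uffaz} with target $(0,t_0)$ and optimal starting point $(0,t_0^*)$ produces an uncontrolled term $-B_\varepsilon(t-t_0^*)$; in the network case this term is $-B_\varepsilon t$ and vanishes because $t\to0$, but here $t$ need not be small and $t-t_0^*$ is not a priori bounded by $|s-s_0|+|t-t_0|$. The paper resolves this by a dichotomy on $t-t_0^*$ (its Step 4): if $t_n-t_{0,n}\to0$ one uses the explicit comparison estimates between $w_0$ and $g(0,\cdot)$ or $g(\cdot,0)$ (its Step 2, inequalities \eqref{eq:unifcontloc9}--\eqref{eq:unifcontloc12}); if instead $t_n-t_{0,n}\ge\kappa>0$ one invokes directly the Lipschitz continuity of $\Phi_\gamma$ on $\{|s-s_0|\le\kappa^{-1}(t-t_0)\}$, which applies even when $(s_n,t_n)$ approaches the left boundary. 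This dichotomy is the missing ingredient in your sketch.
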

    \begin{proof}
        We set
        \begin{equation*}
            \Phi_\gamma(s_0,t_0,s,t):=\inf\left\{\int_{t_0}^tL_\gamma(\eta,\dot\eta)d\tau\mid\eta\text{ curve with }\eta(t_0)=s_0,\,\eta(t)=s\right\},
        \end{equation*}
        for $(s_0,t_0,s,t) \in \ov Q^2$ with $t >t_0$, so that if $(s,t)\in Q$
        \begin{equation*}
            w_0(s,t)=\inf\{g(s_0,t_0)+\Phi_\gamma(s_0,t_0,s,t)\},
        \end{equation*}
        with infimum taken over the $(s_0,t_0)\in\partial_p^-Q$. We recall for later use that by \cite[Theorem 3.1]{Davini07}, see also Proposition \ref{minactlip}, the map $\Phi_\gamma(s_0,t_0,s,t)$ is Lipschitz continuous on the set
        \begin{equation}\label{eq:unifcontloc8}
            \left\{(s_0,t_0,s,t)\in\overline Q^2\mid|s-s_0|\le C(t-t_0)\right\}
        \end{equation}
        whenever $C$ is a positive constant. For $\delta\in(0,1)$, we further set
        \begin{equation*}
            \overline Q_\delta:=\{(s,t)\in\overline Q|s\ge\delta,\,t\ge\delta\}.
        \end{equation*}
        We break the argument in three steps: in the first two we get some preliminary estimates, while the core of the proof is contained in the final step. \\
        \textbf{Step 1.}
        We claim that, given $\delta>0$, there is $C_\delta>0$ with
        \begin{equation}\label{eq:unifcontloc1}
            |s_0-\ov s|\le C_\delta(\ov t-t_0),
        \end{equation}
        for any $(s_0,t_0)\in Q$, $(\ov s,\ov t)\in Q_\delta$ such that
        \begin{equation*}
            w_0(\ov s, \ov t)= g(s_0,t_0) + \Phi_\gamma(s_0,t_0, \ov s, \ov t).
        \end{equation*}
        Compare with the proof of Lemma \ref{ester}. More precisely, we will prove that there is a constant $\eps_\delta$ with
        \begin{equation}\label{eq:unifcontloc2}
            \ov t-t_0\ge \epsilon_\delta
        \end{equation}
        which implies \eqref{eq:unifcontloc1} taking
        \begin{equation*}
            C_\delta:=\frac1{\epsilon_\delta}\ge\frac{|\ov s-s_0|}{\ov t-t_0}.
        \end{equation*}
        If $t_0=0$ then \eqref{eq:unifcontloc2}follows with $\epsilon_\delta=\delta$. Let us then assume $t_0>0$, and consequently $s_0=0$. We first discuss the case where
        \begin{equation}\label{grilli1}
            \ov t\ge\ov s,
        \end{equation}
        and set
        \begin{equation}\label{eq:unifcontloc5}
            R_1:=\max\limits_{s\in[0,1],|\lambda|\le1}L_\gamma(s,\lambda).
        \end{equation}
        By \textbf{(P3)}, there exists, for any $A>0$, a positive constant $B_A$ with
        \begin{equation}\label{eq:unifcontloc4}
            L_\gamma(s,\lambda)>A|\lambda|-B_A,\txt{for any }(s,\lambda)\in[0,1]\times\R.
        \end{equation}
        We denote by $\eta$ an optimal curve for $w_0(\bar s, \bar t)$, then the fact that $\eta(t_0)= s_0=0$ and \eqref{eq:unifcontloc4} imply
        \begin{equation}\label{eq:unifcontloc6}
            \begin{aligned}
                g(0,\ov t-\ov s)+R_1\ov s\ge\;& g(0,\ov t-\ov s)+\int_{\ov t-\ov s}^{\ov t}L_\ga(\tau,1)d\tau\ge g(0,t_0)+\int_{t_0}^{\ov t}L_\gamma(\eta,\dot \eta)d\tau\\
                \ge\;&g(0,t_0)+A\ov s-B_A(\ov t-t_0)\ge g(0,t_0)+A\delta-B_A(\ov t-t_0).
            \end{aligned}
        \end{equation}
        We denote by $\om_g$ a concave uniform continuity modulus for $g$, which therefore satisfies
        \begin{equation*}
            \om_g(r) \le a \, r +b \txt{for suitable $a$, $b$ positive}.
        \end{equation*}
        By combining the two above estimates, we get
        \begin{equation*}
            a|\ov t-\ov s-t_0|+b+R_1 \ov s\ge\om_g(|\ov t-\ov s-t_0|)+R_1\ov s\ge g(0,\ov t-\ov s)-g(0,t_0)+R_1\ov s\ge A\delta-B_A(\ov t-t_0).
        \end{equation*}
        Since
        \begin{equation*}
            a+a(\ov t-t_0)\ge a\ov s+a(\ov t-t_0)\ge a|\ov t-\ov s-t_0|,
        \end{equation*}
        we further get
        \begin{equation*}
            (a+B_A)(\ov t-t_0)\ge a|\ov t-\ov s-t_0|-a+B_A(\ov t-t_0)\ge A\delta-a-b-R_1.
        \end{equation*}
        Since the constant $A$ can be chosen so that the rightmost term of the above inequality is strictly positive, the claim \eqref{eq:unifcontloc2} is proved, under the extra assumption \eqref{grilli1}, with
        \begin{equation}\label{eq:unifcontloc7}
            \epsilon_\delta:=\frac{A\delta-a-b-R_1}{a+B_A}.
        \end{equation}
        Next we assume that
        \begin{equation}\label{grilli2}
            \ov t<\ov s.
        \end{equation}
        We have that
        \begin{equation*}
            g(\ov s-\ov t,0)+R_1\ov t\ge g(0,t_0)+A\delta-B_A(\ov t-t_0).
        \end{equation*}
        suitably modifying the estimates in \eqref{eq:unifcontloc6}. Similarly we get
        \begin{equation*}
            a+a(\ov t-t_0)+b+R_1\ge a(\ov s-\ov t+t_0)+b+R_1\add{\ov t}\ge A\delta-B_A(\ov t-t_0),
        \end{equation*}
        and consequently
        \begin{equation*}
            (a+B_A)(\ov t-t_0)\ge A\delta-a-b-R_1,
        \end{equation*}
        which finally proves \eqref{eq:unifcontloc2} in the case \eqref{grilli2} with the same $\epsilon_\delta$ as in \eqref{eq:unifcontloc7}.\\
        \textbf{Step 2.} Given $(\ov s,\ov t)\in Q$ and $\eps>0$, $\eta:[t_0,\ov t]\to[0,1]$ optimal in \eqref{eq:unifcontloc.1} for $w_0(\ov s,\ov t)$, we claim that there exists a constant $R_\eps>0$, independent of $\ov s$, $\ov t$, with
        \begin{align}
            g(0,\ov t)-w_0(\ov s,\ov t)\le\;&R_\eps(\ov t-t_0+\ov s)+\eps,\label{eq:unifcontloc9}\\
            g(\ov s,0)-w_0(\ov s,\ov t)\le\;&R_\eps\ov t+\eps.\label{eq:unifcontloc10}
        \end{align}
        Exploiting the concavity of $\om_g$, we find a constant $a_\eps$ with
        \begin{equation*}
            a_\eps r+\eps\ge\om_g(r),
        \end{equation*}
        in addition by \eqref{eq:unifcontloc4} there exists $B_\eps$ satisfying
        \begin{equation*}
            L_\gamma(s,\lambda)\ge a_\eps|\lambda|-B_\eps,\txt{for any }(s,\lambda)\in[0,1]\times\R.
        \end{equation*}
        We therefore have
        \begin{align*}
            g(0,\ov t)-w_0(\ov s,\ov t)=\;&g(0,\ov t)-g(\eta(t_0),t_0)-\int_{t_0}^{\ov t}L_\gamma(\eta,\dot\eta)d\tau\\
            \le\;&\om_g(\eta(t_0)+\ov t-t_0)-\int_{t_0}^{\ov t}L_\gamma(\eta,\dot\eta)d\tau\\
            \le\;&a_\eps\eta(t_0)+a_\eps(\ov t-t_0)+\eps-a_\eps|\ov s-\eta(t_0)|+B_\eps(\ov t-t_0)\\
            \le\;&(a_\eps+B_\eps)(\ov t-t_0)+a_\eps\eta(t_0)-a_\eps|\ov s-\eta(t_0)|+\eps\\
            \le\;&(a_\eps+B_\eps)(\ov t-t_0)+a_\eps\ov s+\eps,
        \end{align*}
        which yields \eqref{eq:unifcontloc9} with $R_\eps=a_\eps+B_\eps$. Similarly
        \begin{align*}
            g(\ov s,0)-w_0(\ov s,\ov t)=\;&g(\ov s,0)-g(\eta(t_0),t_0)-\int_{t_0}^{\ov t}L_\gamma(\eta,\dot\eta)d\tau\\
            \le\;&\om_g(|\ov s-\eta(t_0)|+t_0)-\int_{t_0}^{\ov t}L_\gamma(\eta,\dot\eta)d\tau\\
            \le\;&a_\eps|\ov s-\eta(t_0)|+a_\eps t_0+\eps-a_\eps|\ov s-\eta(t_0)|+B_\eps(\ov t-t_0)\\
            \le\;&B_\eps(\ov t-t_0)+a_\eps t_0+\eps,
        \end{align*}
        yielding \eqref{eq:unifcontloc10} with $R_\eps=a_\eps+B_\eps$. We further claim that
        \begin{align}
            w_0(\ov s,\ov t)-g(0,\ov t)\le\;&R'_\eps\ov s+\eps,&&\text{if }\ov t\ge\ov s,\label{eq:unifcontloc11}\\
            w_0(\ov s,\ov t)-g(\ov s,0)\le\;&R''_\eps\ov t.\label{eq:unifcontloc12}
        \end{align}
        for suitable positive constants $R'_\eps$, $R''_\eps$. We take $R_1$ defined as in \eqref{eq:unifcontloc5} and $\ov t\ge\ov s$ to get
        \begin{align*}
            w_0(\ov s,\ov t)-g(0,\ov t)\le\;&\int_0^{\ov s}L_\gamma(\tau,1)d\tau+g(0,\ov t-\ov s)-g(0,\ov t)\\
            \le\;&R_1\ov s+\om_g(\ov s)\le R_1\ov s+a_\varepsilon\ov s+\eps,
        \end{align*}
        which implies \eqref{eq:unifcontloc11} with $R'_\eps=R_1+a_\eps$. Finally, setting $R_0$ as in Notation \ref{inizio}, we have
        \begin{equation*}
            w_0(\ov s,\ov t)-g(\ov s,0)\le\int_0^{\ov t}L_\gamma(\ov s,0)d\tau+g(\ov s,0)-g(\ov s,0)\le R_0\ov t,
        \end{equation*}
        which proves \eqref{eq:unifcontloc12} with $R''_\eps=R_0$.\\
        \textbf{Step 4.} In order to prove the uniform continuity of $w_0$ and conclude the proof, we will show that if $(s_n,t_n)$, $(s'_n,t'_n)$ are sequences in $Q$ with
        \begin{equation*}
            |s_n-s'_n| + |t_n - t'_n|\longrightarrow0,
        \end{equation*}
        then
        \begin{equation}\label{eq:unifcontloc13}
            |w_0(s_n',t_n')-w_0(s_n,t_n)|\longrightarrow 0.
        \end{equation}
        Possibly interchanging the role of $(s_n,t_n)$ and $(s'_n,t'_n)$, we can assume that $w_0(s_n',t_n')\ge w_0(s_n,t_n)$, up to subsequences, so that
        \begin{equation}\label{grilli01}
            |w_0(s_n',t_n')-w_0(s_n,t_n)|=w_0(s_n',t_n')-w_0(s_n,t_n).
        \end{equation}
        We denote by $(s_{0,n},t_{0,n})$ elements such that
        \begin{equation*}
            w_0(s_n,t_n)=g(s_{0,n},t_{0,n})+\Phi_\gamma(s_{0,n},t_{0,n},s_n,t_n).
        \end{equation*}
        We focus on the case where
        \begin{equation}\label{grilli001}
            \lim_n d((s_n,t_n),\partial_p^-Q)= \lim_n d((s'_n,t'_n),\partial_p^-Q)=0,
        \end{equation}
        and $s_n$, $s'_n$, $t_n$, $t'_n$ have limit, maybe infinite, as $n \to +\infty$.

        We first consider the subcase where
        \begin{eqnarray}
            t_n-t_{0,n}& \longrightarrow& 0\label{eq:unifcontloc14}\\
            t_n &\ge& s_n \txt{up to subsequences} \nonumber
        \end{eqnarray}
        which implies that $s_n$, $s'_n$ are infinitesimal otherwise \eqref{grilli001} should be violated. Inequalities \eqref{eq:unifcontloc9}, \eqref{eq:unifcontloc11}and \eqref{grilli01} yield for an arbitrary $\eps>0$,
        \begin{align*}
            |w_0(s_n',t_n')-w_0(s_n,t_n)|=\;&[w_0(s_n',t_n')-g(0,t_n')]+[g(0,t_n')-g(0,t_n)]\\
            &+[g(0,t_n)-w_0(s_n,t_n)]\\
            \le\;&2\eps+R'_\eps s_n'+\om_g(|t_n-t_n'|)+R_\eps(t_n-t_{0,n}+s_n).
        \end{align*}
        Since $t_n-t_{0,n}$, $t_n-t'_n$ go to 0 as $n\to\infty$, and the same holds true for $s_n'$, we derive
        \begin{equation*}
            \limsup_n |w_0(s_n',t_n')-w_0(s_n,t_n)| \le \eps
        \end{equation*}
        which proves \eqref{eq:unifcontloc13} since $\eps$ has been arbitrarily chosen. Next, we focus on the subcase where
        \begin{eqnarray}
            t_n-t_{0,n}& \longrightarrow& 0\label{eq:unifcontloc15}\\
            t_n &< & s_n \txt{up to subsequences} \nonumber
        \end{eqnarray}
        which implies by \eqref{grilli001} that $t_n$ is infinitesimal. We take an arbitrary $\eps >0$, and exploit \eqref{eq:unifcontloc10}, \eqref{eq:unifcontloc12} and \eqref{grilli01} to get
        \begin{align*}
            |w_0(s_n',t_n')-w_0(s_n,t_n)|=\;&[w_0(s_n',t_n')-g(s'_n,0)]+[g(s_n',0)-g(s_n,0)]\\
            &+[g(s_n,0)-w_0(s_n,t_n)]\\
            \le\;&\eps+R''_\eps t_n'+\om_g(|s_n-s_n'|)+R_\eps t_n,
        \end{align*}
        which yields \eqref{eq:unifcontloc13} since $t_n$, $t'_n$, $|s_n-s_n'|$ go to 0 as $n\to\infty$, and $\eps$ is arbitrary. Now, still keeping the hypothesis \eqref{grilli001}, assume \eqref{eq:unifcontloc14} and \eqref{eq:unifcontloc15} to be false, which implies that, up to subsequences,
        \begin{equation}\label{eq:unifcontloc16}
            t_n-t_{0,n}> 2 \kappa,\txt{for $n$ big enough and some $\kappa >0$ },
        \end{equation}
        and consequently
        \begin{equation}\label{grilli02}
            t_n'-t_{0,n}> \kappa,\txt{for $n$ big enough.}
        \end{equation}
        We have by \eqref{grilli01}
        \begin{align}
            |w_0(s_n',t_n')-w_0(s_n,t_n)|=\;&w_0(s_n',t_n')-w_0(s_n,t_n) \nonumber\\
            \le\;&g(s_{0,n},t_{0,n})+\Phi_\gamma(s_{0,n},t_{0,n},s_n',t_n')\label{grilli007} \\
            &-g(s_{0,n},t_{0,n})-\Phi_\gamma(s_{0,n},t_{0,n},s_n,t_n) \nonumber\\
            =\;&\Phi_\gamma(s_{0,n},t_{0,n},s_n',t_n')- \Phi_\gamma(s_{0,n},t_{0,n},s_n,t_n). \nonumber
        \end{align}
        By \eqref{eq:unifcontloc16}, \eqref{grilli02}
        \begin{equation*}
            \kappa^{-1}(t_n-t_{0,n})\ge1\ge|s_n-s_{0,n}|\qquad\text{and}\qquad\kappa^{-1}(t_n'-t_{0,n})\ge1\ge|s_n'-s_{0,n}|,
        \end{equation*}
        therefore $(s_{0,n},t_{0,n},s_n',t_n')$, $(s_{0,n},t_{0,n},s_n,t_n)$ belong to a subset of $\ov Q^2$ of the form \eqref{eq:unifcontloc8} with $C=\kappa^{-1}$. Since $\Phi_\gamma$ is Lipschitz continuous in such a set, as recalled at the beginning of the proof, \eqref{eq:unifcontloc13} follows, and it is therefore fully proved under the extra assumption \eqref{grilli001}. It is left to discuss the case where
        \begin{equation*}
            (s_n,t_n), \; (s'_n,t'_n) \in Q_\de \txt{for a suitable $\de >0$.}
        \end{equation*}
        By \textbf{Step 1.} there is $C_\de$ with
        \begin{equation*}
            |s_n - s_{0,n}| \le C_\delta( t_n - t_{0,n})
        \end{equation*}
        and consequently
        \begin{equation*}
            |s'_n - s_{0,n}| \le \frac{C_\delta}2( t'_n - t_{0,n}) \txt{for $n$ large.}
        \end{equation*}
        Arguing as in \eqref{grilli007} we again deduce \eqref{eq:unifcontloc13} and conclude the proof.
    \end{proof}

    \begin{Lemma}\label{calmas}
        Given $w \in UC \big ( (\NN \times \{0\}) \cup ( \VN \times (0,+\infty)) \big )$, we have
        \begin{equation}\label{calmas1}
            F_\ga[w](s,t) = \min \left \{ w(\ga(s_0),t_0) + \int_{t_0}^t L_\ga(\eta,\dot \eta) \, d\tau \right \},
        \end{equation}
        where the minimum is over the elements $(s_0,t_0)$ varying in $ \partial^-_p Q$ with $t_0 <t$, and the curves $\eta:[t_0,t] \to [0,1]$ with $\eta(t_0)=s_0$ and $\eta(t)=s$.
    \end{Lemma}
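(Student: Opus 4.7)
Setting $w_0(s,t)$ equal to the right-hand side of \eqref{calmas1}, the strategy is to prove both $w_0 \le F_\gamma[w]$ and $F_\gamma[w] \le w_0$, and then that the infimum defining $w_0$ is attained. First I would invoke Proposition \ref{unifcontloc} with the uniformly continuous boundary datum $g(s_0,t_0):=w(\gamma(s_0),t_0)$ on $\partial_p^- Q$ to secure $w_0\in UC(\overline Q)$; this makes $w_0$ an admissible candidate in the variational characterization of $F_\gamma[w]$.

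For the inequality $w_0\le F_\gamma[w]$, I would verify that $w_0$ is a viscosity solution of (HJ$_\gamma$) inside $Q$ via the standard Lax--Oleinik dynamic programming identity applied to $w_0$, and check its trace on $\partial_p^- Q$: taking $(s_0,t_0)=(s,0)$ and the constant curve $\eta\equiv s$, then letting $t\to 0^+$, yields $w_0(s,0)\le w(\gamma(s),0)$ on $[0,1]$; analogously, taking $(s_0,t_0)=(0,t')$ and the constant curve $\eta\equiv 0$, and letting $t'\to t^-$, gives $w_0(0,t)\le w(\gamma(0),t)$ on $(0,+\infty)$. The maximality in the definition of $F_\gamma[w]$ then forces $w_0\le F_\gamma[w]$.

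For the reverse inequality $F_\gamma[w]\le w_0$, I would exploit the classical integral domination satisfied by any viscosity subsolution $v$ of (HJ$_\gamma$) with strictly convex superlinear $H_\gamma$:
\begin{equation*}
v(s,t)-v(\eta(t_0),t_0)\le \int_{t_0}^t L_\gamma(\eta,\dot\eta)\,d\tau
\end{equation*}
along any absolutely continuous curve $\eta:[t_0,t]\to[0,1]$ with $\eta(t)=s$. This is obtained from Fenchel's inequality $\partial_s\varphi\,\dot\eta - H_\gamma(\eta,\partial_s\varphi)\le L_\gamma(\eta,\dot\eta)$ applied to test functions, upgraded to $v$ itself by a sup-convolution regularization that produces a Lipschitz subsolution of a perturbed equation to which the chain rule directly applies, followed by passing to the limit via the uniform continuity of $F_\gamma[w]$ and the superlinearity of $L_\gamma$. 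Applying this to $v=F_\gamma[w]$, using the trace bound $F_\gamma[w](\eta(t_0),t_0)\le w(\gamma(s_0),t_0)$ for $(s_0,t_0)\in\partial_p^-Q$, and taking the infimum over admissible $(s_0,t_0,\eta)$ gives $F_\gamma[w](s,t)\le w_0(s,t)$.

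Finally, attainment of the infimum follows from the superlinearity \textbf{(P3)}: for a minimizing sequence the lengths $\int_{t_{0,n}}^t|\dot\eta_n|\,d\tau$ are equibounded, a Dunford--Pettis / Ascoli extraction produces a limit curve, and lower semicontinuity of the action in the weak topology closes the argument as in Lemma \ref{nosleep}. The main obstacle is the integral domination in Step 3: reaching the parabolic boundary $\partial_p^- Q$ requires controlling curves that may touch $\{s=0\}$ at their initial time, which I would handle by slightly perturbing $\eta$ inward and using the equicontinuity of $F_\gamma[w]$ up to $\partial_p^- Q$ together with the strong approximation produced by sup-convolution.
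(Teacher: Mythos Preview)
Your proposal is correct and follows the same overall scheme as the paper (show $w_0\in UC(\overline Q)$ via Proposition~\ref{unifcontloc}, check that $w_0$ is an admissible competitor with trace $\le w\circ\ga$ on $\partial_p^-Q$ to obtain $w_0\le F_\ga[w]$, then establish the reverse inequality), but the argument for $F_\ga[w]\le w_0$ is genuinely different. The paper does not prove the calibration inequality directly: instead it invokes \cite[Proposition 5.9]{Sic} to produce the maximal trace $\overline w\le w\circ\ga$ on $\partial_p^-Q$ admitting a uniformly continuous subsolution extension, and then applies the Lax--Oleinik representation \cite[Theorem 6.5]{PozSic} to write $F_\ga[w]$ as the value function with boundary datum $\overline w$; since $\overline w\le w\circ\ga$, the inequality $F_\ga[w]\le w_0$ is immediate. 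Your route via the integral domination $v(s,t)-v(\eta(t_0),t_0)\le\int_{t_0}^t L_\ga(\eta,\dot\eta)\,d\tau$ applied to $v=F_\ga[w]$ is more self-contained and avoids the black-box citations, at the cost of having to carry out the sup-convolution regularization and the boundary-touching argument you flag (both standard, but some work). The paper's approach is shorter precisely because those technicalities are already packaged inside the cited results. A minor remark: you prove $w_0$ is a full \emph{solution} via the dynamic programming principle, whereas the paper only records that $w_0$ is a \emph{subsolution} (citing \cite[Lemma A.1]{PozSic}); your stronger claim is correct and makes the deduction $w_0\le F_\ga[w]$ from maximality among solutions cleaner.
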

    \begin{proof}
        It is straightforward to show that the minimum in \eqref{calmas1} is actually achieved. To ease notation, we denote by $w_0(s,t)$ the function defined by the right-hand side of \eqref{calmas1}. We have to show:
        \begin{itemize}
            \item[1.] The function $w_0$ is uniformly continuous in $Q \cup \partial^-_p Q$;
            \item[2.] the function $w_0$ is subsolution to \eqref{HJg};
            \item[3.] the function $w_0$ is the maximal subsolution not exceeding $w \circ \ga$ on $\partial^-_p Q$.
        \end{itemize}
        For the item 1, see Proposition \ref{unifcontloc}. The point 2 is a consequence of \cite[Lemma A.1]{PozSic}. The function $w_0$ coincides with $w \circ \ga$ in $[0,1] \times \{0\}$, while for any $t_0 >0$ we have by \eqref{calmas1}
        \begin{equation*}
            w_0(0,t_0) \le w(\ga(0),t_0-\de) + \de \, L_\ga(0,0)
        \end{equation*}
        for $\de >0$ small enough, which shows, sending $\de$ to $0$, that $w_0\le w \circ \ga$ at $(0,t_0)$, and so
        \begin{equation}\label{calmas2}
            F_\ga[w](s,t) \ge w_0(s,t).
        \end{equation}
        Finally, \cite[Proposition 5.9]{Sic} yields that there is a maximal trace $\ov w$ on $\partial^-_p Q$, not exceeding $w \circ \ga$, that can be continuously extended to a uniformly continuous subsolution to \eqref{HJg} on the whole of $Q$. Then, we have that (see \cite[Theorem 6.5]{PozSic}):
        \begin{equation*}
            F_\ga[w](s,t)= \min\left \{ \overline w(s_0,t_0) + \int_{t_0}^t L_\ga(\eta,\dot \eta) \, d\tau \right \},
        \end{equation*}
        where $(s_0,t_0)$ varies in $ \partial^-_p Q$ with $t_0 <t$, and $\eta:[t_0,t] \to [0,1]$ is any curve with $\eta(t_0)=s_0$ and $\eta(t)=s$. This shows \ that $F_\ga[w](s,t) \le w_0(s,t)$, and concludes the proof in view of \eqref{calmas2}.
    \end{proof}

    We proceed now by defining the operator
    \begin{equation*}
        G: C([0,+\infty)) \times \R \longrightarrow C([0,+\infty))
    \end{equation*}
    as
    \begin{equation*}
        G[\psi,a](t):= \min \{\psi(r) + a \, (t-r) \mid r \in [0,t]\}.
    \end{equation*}

    This definition is justified by the following characterization:

    \begin{Lemma}
        \emph{\cite[Lemma 4.4]{Sic}} $G[\psi,a]$ is the maximal continuous function in $[0,+\infty)$ dominated by $\psi$ with
        \begin{equation*}
            \frac d{dt} \varphi(t) \le a \qquad t \in (0,+\infty)
        \end{equation*}
        for any $C^1$ supertangent $\varphi$ to $G[\psi,a]$ at $t$.
    \end{Lemma}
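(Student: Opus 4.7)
I would decompose the statement into four claims about $G[\psi,a]$: continuity on $[0,+\infty)$, domination by $\psi$, the supertangent inequality, and the maximality assertion. The first two are essentially immediate. Since $\psi$ is continuous and $[0,t]$ is compact, the infimum defining $G[\psi,a](t)$ is attained, and the family $\{\psi(\cdot)+a(t-\cdot)\}_t$ depends continuously on $t$ uniformly on compact sets, so a standard minimum-of-a-continuous-family argument yields continuity. Taking $r=t$ in the defining formula immediately gives $G[\psi,a](t) \le \psi(t)$.

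For the supertangent inequality, the key observation is a one-sided Lipschitz bound: if $r^{*} = r^{*}(t) \in [0,t]$ realizes the minimum in the definition of $G[\psi,a](t)$, and $t' > t$, then $r^{*} \le t < t'$ so $r^{*}$ is admissible in the problem defining $G[\psi,a](t')$. This gives
\begin{equation*}
G[\psi,a](t') \;\le\; \psi(r^{*}) + a(t' - r^{*}) \;=\; G[\psi,a](t) + a(t'-t).
\end{equation*}
Consequently, if $\varphi$ is a $C^1$ supertangent to $G[\psi,a]$ at $t$, then $\varphi(t')-\varphi(t) \ge G[\psi,a](t') - G[\psi,a](t)$ has the reverse sign as $t' \to t^{+}$ would require, and dividing by $t'-t > 0$ and letting $t' \downarrow t$ yields $\varphi'(t) \le a$.

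For the maximality claim, let $f\in C([0,+\infty))$ satisfy $f\le\psi$ and $\varphi'(t)\le a$ for every $C^1$ supertangent $\varphi$ to $f$. This is exactly the viscosity subsolution condition for the equation $u'=a$; by a standard result (the same one underlying the fact that viscosity subsolutions of $u' \le 0$ are nonincreasing), the function $t \mapsto f(t) - at$ is nonincreasing on $[0,+\infty)$. Hence for every $0 \le r \le t$ we have
\begin{equation*}
f(t) - a\,t \;\le\; f(r) - a\,r \;\le\; \psi(r) - a\,r,
\end{equation*}
so $f(t)\le \psi(r)+a(t-r)$. Minimizing over $r\in[0,t]$ gives $f(t)\le G[\psi,a](t)$, which is the desired maximality.

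The only nontrivial technical point is passing from the viscosity subsolution inequality at every point to global monotonicity of $t\mapsto f(t)-at$; this is classical (it can be obtained, for instance, by a doubling-of-variables argument or by approximating with strict subsolutions) and should be the main care-demanding step, but it is a standard viscosity-theoretic fact and not specific to the network setting.
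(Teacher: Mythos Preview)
The paper does not supply its own proof of this lemma; it simply cites \cite[Lemma 4.4]{Sic}. So there is no in-paper argument to compare against, and your write-up would serve as a self-contained justification. The overall architecture---continuity, domination, one-sided Lipschitz bound for the supertangent condition, and monotonicity of $t\mapsto f(t)-at$ for maximality---is the natural one and is correct in spirit.

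There is, however, a genuine glitch in your supertangent step. From the one-sided Lipschitz bound $G[\psi,a](t') \le G[\psi,a](t) + a(t'-t)$ for $t'>t$ and the supertangent inequality $\varphi(t')-\varphi(t) \ge G[\psi,a](t')-G[\psi,a](t)$, you cannot deduce an \emph{upper} bound on $\varphi(t')-\varphi(t)$: you are chaining a lower bound with an upper bound in the wrong order. The fix is to use the same one-sided Lipschitz estimate with the roles of the two times swapped. For $t'<t$ one has $G[\psi,a](t) \le G[\psi,a](t') + a(t-t')$, hence
\[
\varphi(t') \;\ge\; G[\psi,a](t') \;\ge\; G[\psi,a](t) - a(t-t') \;=\; \varphi(t) - a(t-t'),
\]
so $\dfrac{\varphi(t)-\varphi(t')}{t-t'} \le a$, and letting $t'\uparrow t$ gives $\varphi'(t)\le a$. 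With this correction the argument goes through; the maximality step via the nonincreasing property of $t\mapsto f(t)-at$ for viscosity subsolutions of $u'=a$ is standard and correctly invoked.
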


    \smallskip

    We consider the problem
    \begin{equation}\label{discr}
        \tag{Discr} \left \{
        \begin{array}{cc}
            w(z,t)= G[F_z[w], \wha c_z] & \quad\text{for $(z,t) \in \VN \times (0,+\infty)$} \\
            w(z,0) = g(z) & \quad\text{for $z \in \NN$.}
        \end{array}
        \right .
    \end{equation}
    We say that
    \begin{equation*}
        w_0\in UC \big ( (\NN \times \{0\}) \cup ( \VN \times (0,+\infty)) \big )
    \end{equation*}
    is a subsolution to\eqref{discr} if
    \begin{eqnarray*}
        w_0(z, t) &\le& G[F_z[w_0], \wha c_z](t) \qquad \text{for any $z \in \VN, t \in (0, + \infty)$,} \\
        w_0(z,0) &\le& g(z) \qquad\text{for $z \in \NN$,}
    \end{eqnarray*}
    the definition of supersolution is given replacing $\le$ by $\ge$ in the above formulae. A solution is a super and a subsolution at the same time.

    The connection of this problem with \eqref{eq:HJ} is given by the following result.

    \begin{Proposition}\label{filone}
        \hfill
        \begin{itemize}
            \item[{\bf (i)}] If $u$ is a uniformly continuous subsolution to \eqref{eq:HJ} then the trace of $u$ on $(\NN \times \{0\}) \cup ( \VN \times (0,+\infty))$ is a subsolution to \eqref{discr}.
            \item[{\bf (ii)}] If $v$ is a uniformly continuous supersolution to \eqref{eq:HJ} then the trace of $v$ on $(\NN \times \{0\}) \cup ( \VN \times (0,+\infty))$ is a supersolution to \eqref{discr}.
        \end{itemize}
    \end{Proposition}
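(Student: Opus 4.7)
The plan is to establish both items via a common two-step template: first, compare $u$ (resp.\ $v$) with $F_\gamma[u]$ (resp.\ $F_\gamma[v]$) arc by arc and take the minimum over $\gamma\in\EN^z$; second, for the subsolution direction only, upgrade the resulting pointwise estimate using the flux-limiter bound together with the variational characterization of the operator $G$.

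For item (i), I would fix $\gamma\in\EN^z$ and observe that $u_\gamma(s,t):=u(\gamma(s),t)$ is a uniformly continuous subsolution to \eqref{HJg} whose parabolic trace on $\partial_p^-Q$ agrees with the data defining $F_\gamma[u]$. Combining the Lax--Oleinik representation of Lemma \ref{calmas} with the standard subsolution estimate $u_\gamma(s,t)\le u_\gamma(s_0,t_0)+\int_{t_0}^t L_\gamma(\eta,\dot\eta)\,d\tau$ along admissible curves $\eta:[t_0,t]\to[0,1]$ from $s_0$ to $s$, and then minimizing over $(s_0,t_0)\in\partial_p^-Q$ and $\eta$, yields $u_\gamma\le F_\gamma[u]$ on $Q$. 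Evaluating at $s=1$ and minimizing over $\gamma\in\EN^z$ gives $u(z,t)\le F_z[u](t)$. To upgrade this, I would invoke condition \textbf{(ii-b)} of Definition \ref{defsol}, which is inherited by subsolutions and asserts that every $C^1$ supertangent $\psi$ to $u(z,\cdot)$ at $t_0>0$ satisfies $\tfrac{d}{dt}\psi(t_0)\le\wha c_z$; then by \cite[Lemma 4.4]{Sic} the function $G[F_z[u],\wha c_z]$ is the maximal continuous function dominated by $F_z[u]$ whose $C^1$ supertangents have slope bounded by $\wha c_z$, so $u(z,t)\le G[F_z[u],\wha c_z](t)$ follows. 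The initial condition $u(z,0)\le g(z)$ on $\NN$ is immediate from the subsolution requirement. For item (ii), by the supersolution counterpart of the same Lax--Oleinik estimate one obtains $F_\gamma[v]\le v_\gamma$ on $Q$ for every $\gamma\in\EN^z$, whence $F_z[v](t)\le v(z,t)$; since $G[F_z[v],\wha c_z]\le F_z[v]$ by the very definition of $G$, the desired inequality $v(z,t)\ge G[F_z[v],\wha c_z](t)$ is automatic, and no flux-limiter condition is needed in this direction.

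The main technical point is the justification of the one-sided comparison on each arc: the boundary data of $F_\gamma[\cdot]$ are prescribed only on the bottom and on the left of $Q$, so the comparison with a sub- or supersolution of \eqref{HJg} cannot be carried out via a direct maximum-principle argument but must pass through the dynamic programming identity of Lemma \ref{calmas}. This proceeds essentially as in the finite-network case treated in \cite{PozSic, Sic}; the only novelty, due to the infiniteness of $\NN$, is verifying that the minima appearing in the Lax--Oleinik formula are actually attained, which is ensured by the uniform continuity of the traces of $u$ and $v$ together with conditions \textbf{(P1)}--\textbf{(P7)}, along the same lines as the proof of Lemma \ref{nosleep}.
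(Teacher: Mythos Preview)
Your treatment of item (i) is correct and in line with what the paper implicitly invokes from \cite{Sic}: the subsolution inequality along every curve gives $u_\gamma\le F_\gamma[u]$, minimizing over $\gamma\in\EN^z$ gives $u(z,\cdot)\le F_z[u]$, and the supertangent bound from condition \textbf{(ii-b)} together with the maximality characterization of $G$ yields $u(z,\cdot)\le G[F_z[u],\wha c_z]$.

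Item (ii), however, has a genuine gap. Your claim that ``$F_\gamma[v]\le v_\gamma$ on $Q$ for every $\gamma\in\EN^z$'' does not follow from the supersolution property alone, and your assertion that ``no flux-limiter condition is needed in this direction'' is incorrect. The issue is at the lateral boundary $\{1\}\times(0,+\infty)$: the Lax--Oleinik formula defining $F_\gamma[v]$ uses data only on $\partial_p^-Q$, so standard comparison between the subsolution $F_\gamma[v]$ and the supersolution $v_\gamma$ would require either boundary data at $s=1$ (which is precisely $v(z,\cdot)$, the quantity you want to estimate) or a state-constraint supersolution property for $v_\gamma$ at $s=1$. The backward super-optimality principle for $v_\gamma$ only produces \emph{some} optimal trajectory, and that trajectory may well exit through $s=1$ before reaching $\partial_p^-Q$; when it does, you are at the vertex $z$ and must invoke condition \textbf{(ii-a)}. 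Note also that \textbf{(ii-a)} singles out \emph{one} arc $\gamma\in\EN^z$ (depending on $t_0$) at which the constrained supersolution inequality holds---so even the corrected statement is ``for some $\gamma$'', not ``for every $\gamma$'', which is still enough since $F_z[v]$ is a minimum over $\gamma$. The argument in \cite{Sic}, Proposition~6.5, handles exactly this boundary interaction via the vertex condition; the paper here simply observes that this argument is local (arc by arc) and therefore carries over verbatim to the infinite network.
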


    The proofs given in \cite{Sic}, see Propositions 6.4 and 6.5, are obtained arguing on a single arc and so still hold in our context.

    \medskip

    \begin{Lemma}\label{subdued}
        Given a uniformly continuous subsolution $u$ to \eqref{eq:HJ}, and $(z,t) \in \VN \times (0,+\infty)$, for any admissible curve $\zeta:[0,t] \to \NN$ with $\zeta(t)= z$, we get
        \begin{equation*}
            g(\zeta(0)) + \int_0^t L(\zeta, \dot\zeta) \, d\tau\ge u(z,t).
        \end{equation*}
    \end{Lemma}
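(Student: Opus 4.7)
The plan is to prove the lemma by induction along the partition $0 = t_0 < t_1 < \cdots < t_k = t$ of admissibility of $\zeta$ provided by Definition \ref{adcurve}. Setting $z_i := \zeta(t_i)$, by admissibility $z_i \in \VN$ for $1 \le i \le k$ while $z_0 \in \NN$ may lie in the interior of an arc. I claim
\[
u(z_i, t_i) \le g(z_0) + \int_0^{t_i} L(\zeta, \dot\zeta) \, d\tau \qquad \text{for every } i = 0, 1, \ldots, k,
\]
which, at $i = k$, gives the assertion. The base case $i = 0$ is the initial-datum condition $u(\cdot, 0) \le g(\cdot)$ in Definition \ref{defsol}. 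Throughout I write $\tilde u$ for the trace of $u$ on $(\NN \times \{0\}) \cup (\VN \times (0, + \infty))$; by Proposition \ref{filone}(i), $\tilde u$ is a subsolution to the discrete problem \eqref{discr}.

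For the inductive step I split according to the two alternatives of Definition \ref{adcurve}. In the \emph{arc case}, $\zeta$ traverses a single arc $\gamma$ on $[t_i, t_{i+1}]$, which I orient so that $\gamma(1) = z_{i+1}$, and I set $\eta := \gamma^{-1} \circ \zeta|_{[t_i, t_{i+1}]}$ together with $s_0 := \eta(t_i)$ (so $s_0 = 0$ when $i \ge 1$, while $s_0 = \gamma^{-1}(z_0) \in [0, 1)$ when $i = 0$). Since $\gamma \in \EN^{z_{i+1}}$, the discrete subsolution inequality combined with $G[\cdot, \cdot] \le F_{z_{i+1}}[\tilde u] \le F_\gamma[\tilde u](1, \cdot)$ yields $\tilde u(z_{i+1}, t_{i+1}) \le F_\gamma[\tilde u](1, t_{i+1})$; applying Lemma \ref{calmas} with boundary point $(s_0, t_i) \in \partial_p^- Q$ (valid because either $t_i = 0$ or $s_0 = 0$) and curve $\eta$ gives
\[
\tilde u(z_{i+1}, t_{i+1}) \le \tilde u(\gamma(s_0), t_i) + \int_{t_i}^{t_{i+1}} L_\gamma(\eta, \dot\eta) \, d\tau,
\]
and Proposition \ref{lemlemnew} identifies the right-hand integral with $\int_{t_i}^{t_{i+1}} L(\zeta, \dot\zeta) \, d\tau$. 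In the \emph{vertex case}, $\zeta \equiv z$ on $[t_i, t_{i+1}]$ for some $z \in \VN$, and by \eqref{link3} the piecewise integral of $L$ equals $\wha c_z(t_{i+1} - t_i)$. Here I invoke condition \textbf{(ii-b)} of Definition \ref{defsol} on $u(z, \cdot)$: every $C^1$ supertangent $\psi$ at any $t_0 > 0$ satisfies $\psi'(t_0) \le \wha c_z$, and by standard viscosity arguments this implies $t \mapsto u(z, t) - \wha c_z t$ is non-increasing on $(0, +\infty)$, and on $[0, +\infty)$ by continuity of $u$; hence $u(z, t_{i+1}) - u(z, t_i) \le \wha c_z(t_{i+1} - t_i)$. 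In either case, combining with the inductive hypothesis closes the step.

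The main subtlety lies in the first interval, where $z_0$ may be interior to an arc so that $u(z_0, 0)$ is not a trace at a vertex and the natural $(0, t_i)$ boundary point in Lemma \ref{calmas} is unavailable. This is absorbed by the flexibility of $\partial_p^- Q$: the point $(s_0, 0)$ with $s_0 = \gamma^{-1}(z_0) \in [0, 1)$ still lies in $\partial_p^- Q$ and matches the initial-datum bound $u(z_0, 0) \le g(z_0)$, so the same inductive framework handles this corner case without modification.
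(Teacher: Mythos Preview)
Your proof is correct and follows essentially the same approach as the paper's: induction along the admissibility partition, with the arc case handled via the chain $u \le G[F_z[u],\wha c_z] \le F_z[u] \le F_\ga[u](1,\cdot)$ (Proposition~\ref{filone} and Lemma~\ref{calmas}) together with the change of variables from Proposition~\ref{lemlemnew}, and the vertex case via condition \textbf{(ii-b)}. Your explicit treatment of the first interval---using the boundary point $(s_0,0)\in\partial_p^-Q$ when $\zeta(0)$ lies in the interior of an arc---makes precise a point the paper leaves implicit, but the argument is otherwise identical.
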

    \begin{proof}
        We denote by $\{t_i\}$, $i=0, \cdots, m$, for a suitable $m$, the finite partition of $[0,t]$ in correspondence with $\zeta$ given in Definition \ref{adcurve}, note that $t_0=0$ and $t_m=t$. It will be proved by finite induction on $i$ that the inequality
        \begin{equation*}
            g(\zeta(0)) + \int_0^{t_i} L(\zeta, \dot\zeta) \, d\tau\ge u(\xi(t_i),t_i)
        \end{equation*}
        holds for any $i \in \{1, \cdots, m\}$. The above relation is trivially true for $i=0$. Let us assume that it holds for some $i \in \{1, \cdots, m-1\}$ and prove it for $i+1$. Exploiting the inductive assumption, we can write
        \begin{eqnarray*}
            g(\zeta(0)) + \int_0^{t_{i+1}} L(\zeta, \dot\zeta) \, d\tau&=& g(\zeta(0)) + \int_0^{t_i} L(\zeta, \dot\zeta) \, d\tau+ \int_{t_i}^{t_{i+1}} L(\zeta,\dot\zeta) \, d\tau\\
            &\ge&u(\zeta(t_i),t_i) + \int_{t_i}^{t_{i+1}} L(\zeta,\dot\zeta) \, d\tau.
        \end{eqnarray*}
        It is therefore enough to prove
        \begin{equation}\label{subdued2}
            u(\zeta(t_i),t_i) + \int_{t_i}^{t_{i+1}} L(\zeta,\dot\zeta) \, d\tau\ge u(\zeta(t_{i+1}),t_{i+1}).
        \end{equation}
        We first assume that $\zeta$ restricted to $[t_i,t_{i+1}]$ is nonconstant and with support contained in an arc $ \ga$ of $ \NN$ with $ \ga(1)= \zeta(t_{i+1})$. We thus have by Proposition \ref{lemlemnew}
        \begin{equation*}
            \int_{t_i}^{t_{i+1}} L(\zeta,\dot\zeta) \, d\tau= \int_{t_i}^{t_{i+1}} L_\ga( \Upsilon^{-1}(\zeta),D_\tau( \Upsilon^{-1}(\zeta)) \, d\tau.
        \end{equation*}
        Hence, by exploiting Lemma \ref{calmas}, the definition of the semidiscrete problem and that $u$, restricted to $\VN \times (0,+\infty)$, is subsolution to \eqref{discr} by Proposition \ref{filone}, we get
        \begin{eqnarray*}
            u(\zeta(t_i),t_i) + \int_{t_i}^{t_{i+1}} L(\zeta,\dot\zeta) \, d\tau&=& u (\ga(\Upsilon^{-1} \zeta(t_i)),t_i) + \int_{t_i}^{t_{i+1}} L_\ga( \Upsilon ^{-1} (\zeta), D_\tau( \Upsilon^{-1}(\xi)) \, d\tau\\
            &\ge& F_{ \ga}[u](1,t_{i+1}) \ge F_{\zeta(t_{i+1})}[u](t_{i+1}) \\ &\ge& G[F_{\zeta(t_{i+1})}[u], \wha c_{\zeta(t_{i+1})}](t_{i+1}) \ge u({\zeta(t_{i+1})},t_{i+1}),
        \end{eqnarray*}
        which completes the induction argument in the case where $\zeta$ is nonconstant in $[t_i,t_{i+1}]$. We proceed assuming instead that
        \begin{equation*}
            \zeta(t) \equiv z \in \VN \qquad\text{for $t \in [t_i,t_{i+1}]$,}
        \end{equation*}
        by exploiting that $u$ is subsolution to \eqref{eq:HJ}, we get
        \begin{eqnarray*}
            u(\zeta(t_i),t_i) + \int_{t_i}^{t_{i+1}} L(\zeta,\dot\zeta) \, d\tau&=& u( z,t_i) + \wha c_{z} \, (t_{i+1}-t_i) \\
            &\ge& u(z,t_{i+1}).
        \end{eqnarray*}
        This shows \eqref{subdued2} and completes the argument.
    \end{proof}

    \smallskip

    \begin{Lemma}\label{superdued}
        Given a uniformly continuous supersolution $v$ to \eqref{eq:HJ}, and $( z_0,t_0)\in\VN \times (0,+\infty)$, there exists an admissible curve $\xi$ defined in $[0,t_0]$ with $\xi(t_0)= z_0$ such that
        \begin{equation*}
            v(z_0,t_0) \ge g(\xi(0)) + \int_0^{t_0} L(\xi, \dot \xi) \, d\tau.
        \end{equation*}
    \end{Lemma}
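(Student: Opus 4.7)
The strategy is to build the admissible curve $\xi$ by a backward-in-time iteration starting at $(z_0,t_0)$, exploiting at each vertex the supersolution property of $v$ in its semidiscrete form. By Proposition \ref{filone}, the trace of $v$ on $\VN\times(0,+\infty)$ satisfies $v(z,t)\ge G[F_z[v],\wha c_z](t)$, i.e.
\begin{equation*}
v(z_0,t_0)\ \ge\ \min_{r\in[0,t_0]}\bigl\{F_{z_0}[v](r)+\wha c_{z_0}(t_0-r)\bigr\}.
\end{equation*}
Let $r_0$ realize this minimum. If $r_0=0$, then $v(z_0,t_0)\ge g(z_0)+\wha c_{z_0}t_0=g(z_0)+L(z_0,0)\,t_0$ and the stationary curve $\xi\equiv z_0$ does the job. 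Otherwise pick $\ga_0\in\EN^{z_0}$ minimizing $F_\ga[v](1,r_0)$, and apply Lemma \ref{calmas} to obtain $(s_0^0,t_0^0)\in\partial_p^-Q$ with $t_0^0<r_0$ and a curve $\eta_0:[t_0^0,r_0]\to[0,1]$ attaining $F_{\ga_0}[v](1,r_0)$. Define the first segments of $\xi$ by setting $\xi\equiv z_0$ on $[r_0,t_0]$ and $\xi:=\ga_0\circ\eta_0$ on $[t_0^0,r_0]$; by Proposition \ref{lemlemnew} (after replacing $\eta_0$ with a monotone curve of no larger action if necessary) the action along this piece equals $\int_{t_0^0}^{r_0}L_{\ga_0}(\eta_0,\dot\eta_0)\,d\tau$.

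If $t_0^0=0$ we stop: $\xi$ is admissible and the inequality $v(z_0,t_0)\ge g(\xi(0))+\int_0^{t_0}L(\xi,\dot\xi)\,d\tau$ follows. If instead $t_0^0>0$, then the condition $s_0^0\in\partial_p^-Q$ forces $s_0^0=0$, so $\ga_0(0)\in\VN$, and we iterate the whole procedure at the vertex-time pair $(\ga_0(0),t_0^0)$. This produces a sequence of triples $(z_i,r_i,t_i^0)$ with strictly decreasing times, stationary contributions $A_i=\wha c_{z_i}(t_i-r_i)$ and arc contributions $B_i=\int_{t_i^0}^{r_i}L_{\ga_i}(\eta_i,\dot\eta_i)\,d\tau$ satisfying
\begin{equation*}
v(z_0,t_0)\ \ge\ v(z_n,t_n^0)+\sum_{i=1}^n (A_i+B_i)\qquad\text{for every }n.
\end{equation*}

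The key obstacle is to show that the iteration terminates after finitely many steps. Suppose by contradiction it does not. Let $a_i:=r_i-t_i^0>0$ and $s_i:=t_i-r_i\ge0$; then $\sum_i(a_i+s_i)\le t_0$. By Theorem \ref{babasic}, $B_i\ge a_i\LL(\ga_i,1/a_i)$; by the uniform superlinearity of the $\LL$'s on the (finitely many) edges of $\G_0$, for every $M>0$ there exists $C_M$ with $\LL(e,q)\ge Mq-C_M$, giving $B_i\ge M-a_iC_M$. Summing over $n$ steps yields $\sum_{i\le n}B_i\ge nM-C_Mt_0$. On the stationary side, $G$-periodicity and \textbf{(P2)} bound $\wha c_z$ both above and below, so $\bigl|\sum_{i\le n}A_i\bigr|\le \bar c\,t_0$ for some $\bar c$. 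Finally, since each arc has Euclidean length at most some $K<\infty$ by \textbf{(P1)}, $d_\NN(z_n,z_0)\le nK$, and the uniform continuity of $v$ (with a concave modulus dominated by $r\mapsto ar+b$) yields $v(z_n,t_n^0)\ge g(z_0)-\om(t_0)-anK-b$. Combining, the lower bound on $v(z_0,t_0)$ grows at least like $n(M-aK)+O(1)$; picking any $M>aK$ gives a contradiction as $n\to\infty$.

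Hence the iteration ends in finitely many steps; concatenating the arc and stationary segments gives an admissible curve $\xi$ on $[0,t_0]$ ending at $z_0$, and telescoping the accumulated inequalities delivers exactly $v(z_0,t_0)\ge g(\xi(0))+\int_0^{t_0}L(\xi,\dot\xi)\,d\tau$. The only genuinely delicate point is the termination argument above; everything else parallels the one-step backward argument used in the finite network setting of \cite{PozSic,Sic}.
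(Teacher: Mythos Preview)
Your proof is correct and follows the same overall architecture as the paper: a backward iteration driven by Proposition~\ref{filone} and Lemma~\ref{calmas}, producing a decreasing sequence of times and a concatenation of arc segments and stationary pieces, followed by a termination argument that contradicts the uniform continuity of $v$.

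The termination argument, however, is carried out differently. The paper works directly with the overall Lagrangian $L$ on $T\NN$: since each nontrivial step traverses a full arc, the curve $\zeta$ built up to step $n_0$ satisfies $|\zeta|\ge\sum_i|\ga_i|$, which diverges by the \emph{positive infimum} of arc lengths in \textbf{(P1)}; superlinearity of $L$ then forces $v(\zeta(t_{n_0}),t_{n_0})\to-\infty$, hence $d_\NN(z_0,\zeta(t_{n_0}))\to\infty$, and the ratio $|v|/d_\NN$ blows up. You instead invoke Theorem~\ref{babasic} to pass to the discrete Lagrangian $\LL$, obtaining $B_i\ge M-a_iC_M$ from superlinearity of $\LL$ on the finitely many edges of $\G_0$, and you bound $d_\NN(z_n,z_0)$ from \emph{above} by $nK$ via the \emph{finite supremum} of arc lengths; the contradiction then comes from a single choice $M>aK$. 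Your route is arguably more direct (no intermediate step showing $d_\NN\to\infty$), at the cost of appealing to the later Theorem~\ref{babasic} and, implicitly, to $G$-periodicity so that the $\LL(e,\cdot)$ involved come from the finite edge set $\EE_0$. Two small points worth tightening: write $\LL(e_i,\cdot)$ rather than $\LL(\ga_i,\cdot)$, and note that the admissibility of the concatenated curve is most cleanly obtained by applying Proposition~\ref{admi} at the end rather than forcing each $\eta_i$ to be monotone.
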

    \begin{proof}
        The curve $\xi$ will be constructed through a backward procedure starting from $(z_0,t_0)$. We select $r \in [0,t_0]$ with
        \begin{equation}\label{superdued1}
            G[F_{ z_0}[v], \wha c_{z_0}](t_0)= F_{z_0}[v](r) + \wha c_{z_0} (t_0-r).
        \end{equation}
        We proceed considering an arc $\ga_1$ ending at $z_0$ with
        \begin{equation*}
            F_{z_0}[v](r)=F_{\ga_1}[v](1,r).
        \end{equation*}
        By Lemma \ref{calmas} there is a time $t_1 <r$ and a curve $\eta_1: [t_1,r] \to [0,1]$ linking $\eta_1(t_1)$ to $1$ such that $( \eta_1(t_1),t_1) \in \partial^- Q$ and
        \begin{equation}\label{superdued2}
            F_{z_0}[v](r) = v( \ga_1(\eta_1(t_1)),t_1) + \int_{t_1}^r L_{ \ga_1}(\eta_1, \dot \eta_1) \, d\tau.
        \end{equation}
        We then define a curve $\zeta_1: [t_1, t] \to \NN$ setting
        \begin{equation*}
            \zeta_1(\tau) = \left \{
            \begin{array}{cc}
                \ga_1 \circ \eta_1(\tau) & \quad \text{if $\tau \in [t_1,r]$} \\
                z_0 & \quad \text{if $\tau \in (r, t]$}.
            \end{array}
            \right .
        \end{equation*}
        Taking into account \eqref{superdued1}, \eqref{superdued2} and that $v$ is a supersolution to \eqref{discr}, we have
        \begin{equation*}
            v( z_0,t_0) \ge G[F_{z_0}[v],\wha c_{z_0}](t) = v(\zeta_1(t_1),t_1) + \int_{t_1}^t L(\zeta_1,\dot\zeta_1) \, d\tau.
        \end{equation*}
        If $t_1 >0$, then $\zeta_1(t_1) = \ga_1(0):=z_1 \in \VN$ because $( \eta_1(t_1),t_1) \in \partial^- Q$, this implies that
        \begin{equation}\label{superdued22}
            |\zeta_1| \ge |\ga_1|.
        \end{equation}
        We iterate the above procedure starting from the vertex $z_1$, constructing in this way a decreasing sequence of times $t_n$ (finite or infinite), and a sequence of concatenated curves $\zeta_n:[t_n,t_{n-1}] \to \NN$ contained in arcs $\ga_n$ with
        \begin{equation}\label{superdued3}
            v(\zeta_n(t_{n-1}),t_{n-1}) \ge v(\zeta_n(t_n),t_n) + \int_{t_n}^{t_{n-1}} L(\zeta_n,\dot\zeta_n) \, d\tau.
        \end{equation}
        Arguing as for \eqref{superdued22} we further get that
        \begin{equation}\label{superdued33}
            |\zeta_n| \ge |\ga_n|.
        \end{equation}
        We claim that the sequence $t_n$ is finite. If this is not the case, we arbitrarily fix an index $n_0 >0$ and a time $t_{n_0}$, and define $\zeta:[t_{n_0},t_0] \to \NN$ via
        \begin{equation}\label{superdued35}
            \zeta(\tau)= \zeta_n(\tau) \qquad\text{for $\tau \in [t_n,t_{n-1}]$, $n \le n_0$.}
        \end{equation}
        We glue together the inequalities in \eqref{superdued3} to get
        \begin{equation}\label{superdued351}
            v( z_0,t_0) \ge v(\zeta (t_{n_0}),t_{n_0}) + \int_{t_{n_0}}^{t_0} L(\zeta,\dot\zeta) \, d\tau.
        \end{equation}
        We further derive from \eqref{superdued33}
        \begin{equation}\label{superdued333}
            |\zeta| \ge \sum_{i=1}^{n_0} |\ga_i|.
        \end{equation}
        We argue as in Lemma \ref{nosleep} exploiting the superlinearity assumption on the $ L_{ \ga}$'s, to obtain for any $k \in \N$ and suitable constants $B_k$
        \begin{equation*}
            \int_{t_{n_0}}^t L(\zeta,\dot\zeta) \, d\tau \ge k \, |\zeta| - B_k (t -t_{n_0}),
        \end{equation*}
        which implies by \eqref{superdued351} that
        \begin{eqnarray}
            v(z_0,t_0) - v(\zeta (t_{n_0}),t_{n_0}) &\ge& k \, |\zeta| -B_k (t_0 - t_{n_0})\label{superdued4} \\
            &\ge& k \, d_\NN(z_0,\zeta(t_{n_0})) -B_k (t_0- t_{n_0}). \nonumber
        \end{eqnarray}
        We derive from \eqref{superdued333} and condition \textbf{(P1)} that
        \begin{equation*}
            \lim_{n_0 \to +\infty} v(\zeta (t_{n_0}),t_{n_0}) = - \infty,
        \end{equation*}
        and consequently
        \begin{equation*}
            \lim_{n_0 \to +\infty} d_\NN(\zeta(t_{n_0}),z_0) = + \infty.
        \end{equation*}
        Inequality \eqref{superdued4} yields
        \begin{equation*}
            \frac{|v(\zeta (t_{n_0}),t_{n_0})|}{d_{\NN}(\zeta(t_{n_0}),z_0)} \ge \frac{-v( z_0,t_0) -B_k (t_0- t_{n_0})}{d_{\NN}(\zeta(t_{n_0}),z)} + k
        \end{equation*}
        and passing to the limit
        \begin{equation*}
            \liminf_{n_0 \to + \infty} \frac{|v(\zeta (t_{n_0}),t_{n_0})|}{d_{\NN}(\zeta(t_{n_0}),z_0)} \ge k,
        \end{equation*}
        and, since $k \in \N$ is arbitrary
        \begin{equation*}
            \liminf_{n_0 \to + \infty} \frac{|v(\zeta (t_{n_0}),t_{n_0})|}{d_{\NN}(\zeta(t_{n_0}),z_0)} = + \infty,
        \end{equation*}
        which is in contradiction with $v$ being uniformly continuous.

        We deduce that the sequence $\{t_n\}$ must be finite, with minimum, or last element, equal to $0$. Therefore the curve $\zeta$ defined as in \eqref{superdued35} with $0$ in place of $t_{n_0}$ is an admissible curve in $[0,t]$ satisfying the inequality in the statement.
    \end{proof}

    \smallskip

    We can now prove the main statement of this section.

    \begin{proof}[{\bf Proof of Theorem \ref{compa}:}]
        For any arc $\ga$, we have by Lemmata \ref{subdued} and \ref{superdued} that
        \begin{equation*}
            u \circ \ga \le v \circ \ga \qquad\text{on $\partial_p Q$}.
        \end{equation*}
        Then, by \cite{Sic} Theorem 5.2
        \begin{equation*}
            u \circ \ga \le v \circ \ga \qquad\text{on $\overline Q$.}
        \end{equation*}
        This concludes the proof.
    \end{proof}

    \medskip

    \bigskip

    \subsection{Proof of Theorem \ref{new}}\label{newnew}

    {In this section we provide the proof of one of the key results for our asymptotic analysis}

    \new*

    \smallskip

    Let us discuss first some preliminary results.

    \begin{Definition}
        A measure $\mu = \sum_{e\in \EE_0} \la_e \, \mu_e$ is said to be {\it nonsingular } if
        \begin{equation*}
            0 \not\in \spt \mu_e \txt{for any $ e \in \spt_{\EE_0} \mu$.}
        \end{equation*}
    \end{Definition}

    \medskip

    \begin{Proposition}\label{lubiana}
        Let $\mu \in \M^h \cap \M_p$, for some conjugate pair $h$, $p$. Then, up to rewriting the singular part (without loss of generality), it can be written in the form
        \begin{equation}\label{lubiana2}
            \mu = (1 -\bar \la) \, \nu + \bar \la \, \de(\bar e ,0)
        \end{equation}
        where $\bar e$ is any edge with $a_{\bar e}=a_0$, $\bar\la \in [0,1]$ and, if $\bar \la <1$, $\nu$ is nonsingular and
        \begin{equation}\label{lubiana3}
            \nu \in \M^{\frac h{1-\bar \la}} \cap \M_p.
        \end{equation}
    \end{Proposition}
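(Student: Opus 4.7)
My plan is to combine the structural information from Proposition \ref{alfinale} (which pins down the exact form of the edge-components $\mu_e$) with the affine/convex structure of $\bM_p$ and $\bM^h$, and to deduce the properties of $\nu$ by a contradiction argument exploiting the minimality of $\mu$.

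\textbf{Step 1 (identifying the singular part).} Since $\mu \in \bM_p$, Proposition \ref{alfinale} gives $\mu_e = \de(e,\mathcal Q_p(e))$ for every $e \in \spt_{\EE_0}\mu$. Split $\spt_{\EE_0}\mu = N \sqcup S$ according to whether $\mathcal Q_p(e) > 0$ or $\mathcal Q_p(e) = 0$. If $S = \emptyset$ take $\bar\la = 0$ and there is nothing to prove; otherwise the second half of Proposition \ref{alfinale} forces $p$ to be a minimizer of $\al$ and $a_e = a_0$ for every $e \in S$, whence by \eqref{eq:minbe} we get $\al(p) = a_0 = -\be(0)$.

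\textbf{Step 2 (swapping the singular part).} Set $\bar\la := \sum_{e \in S} \la_e$ (the total mass of the singular part of $\mu$); assume $\bar\la \in (0,1)$, the case $\bar\la = 1$ being analogous. Define
\begin{equation*}
    \nu := \frac{1}{1-\bar\la} \sum_{e \in N} \la_e\, \de(e,\mathcal Q_p(e)), \qquad \mu' := (1-\bar\la)\,\nu + \bar\la\, \de(\bar e, 0).
\end{equation*}
By \eqref{uffa} we have $A(\de(\bar e, 0)) = -a_{\bar e} = -a_0$, which is exactly the action of the singular part of $\mu$ per unit mass (since $a_e = a_0$ on $S$); both singular pieces also have vanishing rotation vector. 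Hence $A(\mu') = A(\mu) = \be(h)$ and $\rho(\mu') = \rho(\mu) = h$. Moreover $\int \LL^p \, d\de(\bar e, 0) = \LL(\bar e, 0) = -a_0 = -\al(p)$, so $\de(\bar e, 0)$ lies in $\bM_p$; by convexity of $\bM_p$ and $\bM$ the measure $\mu'$ belongs to $\bM^h \cap \bM_p$. This is precisely the ``rewriting the singular part'' invoked in the statement, and we replace $\mu$ by $\mu'$.

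\textbf{Step 3 (properties of $\nu$).} By construction $\nu$ is nonsingular, a probability measure, and has rotation vector $h/(1-\bar\la) \in \R^{b(\G_0)}$, so Proposition \ref{proprhobis} shows that $\nu$ is closed. For $\nu \in \bM_p$, observe that $\int \LL^p\, d\mu'$ is an affine combination of $\int \LL^p\, d\nu$ and $\int \LL^p\, d\de(\bar e, 0) = -\al(p)$; since both $\mu'$ and $\de(\bar e, 0)$ attain the minimum value $-\al(p)$, and $\int \LL^p\, d\nu \ge -\al(p)$, equality must hold, giving $\nu \in \bM_p$. The delicate point is showing $\nu \in \bM^{h/(1-\bar\la)}$: conjugacy of $p$ with $h$ does not directly give conjugacy of $p$ with $h/(1-\bar\la)$, so I argue by contradiction. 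Were there $\wtd\nu \in \bM$ with $\rho(\wtd\nu) = h/(1-\bar\la)$ and $A(\wtd\nu) < A(\nu)$, then $\wtd\mu := (1-\bar\la)\wtd\nu + \bar\la\, \de(\bar e, 0)$ would be a closed probability measure with $\rho(\wtd\mu) = h$ and $A(\wtd\mu) < A(\mu') = \be(h)$, contradicting $\mu \in \bM^h$. The only obstacle worth flagging is this last contradiction argument, which is what replaces the (generally false) attempt to extract conjugacy of $p$ at $h/(1-\bar\la)$ from conjugacy at $h$.
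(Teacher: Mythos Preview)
Your argument is correct and uses the same core ingredients as the paper (Proposition \ref{alfinale} for the structure of $\mu$, Proposition \ref{proprhobis} for closedness of $\nu$, and the affine behaviour of $\rho$ and $A$), but the order in Step 3 is reversed. The paper first shows $\nu \in \bM^{\bar h}$ directly via convexity of $\be$: from $(1-\bar\la)\be(\bar h)+\bar\la\be(0)\ge\be(h)=A(\mu)=(1-\bar\la)A(\nu)+\bar\la\be(0)$ one reads off $\be(\bar h)=A(\nu)$, and from this it \emph{computes} $\langle p,\bar h\rangle=\be(\bar h)+\al(p)$, i.e.\ $p$ is genuinely conjugate to $\bar h$, whence $\nu\in\bM_p$ by Proposition \ref{utilissima}. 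You instead get $\nu\in\bM_p$ first by the affine splitting of $\int\LL^p\,d\mu'$, and then $\nu\in\bM^{\bar h}$ by contradiction. Your remark that conjugacy at $h$ ``does not directly give'' conjugacy at $h/(1-\bar\la)$ is therefore slightly pessimistic: once $\be(\bar h)$ is known, the paper's two-line computation does exactly this. The paper's route additionally yields the explicit formula \eqref{corlubiana3} for $\be(\bar h)$, which is used later; your contradiction argument is cleaner but does not produce it. One small ordering issue: in Step 2 you invoke convexity of $\bM$ to place $\mu'$ there before having checked $\nu\in\bM$ (done only in Step 3); this is harmless since Proposition \ref{proprhobis} applies directly to $\mu'$ as well.
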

    \begin{proof}
        Assume $\bar \la <1$. Then $\nu$ is a closed probability measure by Proposition \ref{proprhobis}. Formula \eqref{lubiana2} is then a direct consequence of Proposition \ref{alfinale}. We proceed proving \eqref{lubiana3}. Since $\rho$ is affine for convex combination and $\rho(\de(\bar e,0))=0$, we have
        \begin{equation*}
            \rho(\nu) = \frac h {(1 -\bar \la)} =:\bar h
        \end{equation*}
        and
        \begin{eqnarray*}
            (1 - \bar \la) \, \be (\bar h) + \bar \la \, \be (0) & \ge& \be(h) = A(\mu) = (1 - \bar \la) \, A( \nu) + \bar \la \, \be(0),
        \end{eqnarray*}
        which implies, see \eqref{eq:minbe}, that
        \begin{equation}\label{corlubiana3}
            \be(\bar h)= A (\nu) = \frac 1{1- \bar \la} \, \big ( \be(h) + \bar \la \,a_0 \big )
        \end{equation}
        so that $\nu \in \M^{\bar h}$. We therefore have by \eqref{corlubiana3} and Propositions \ref{utilissima}, \ref{alfinale}
        \begin{eqnarray*}
            \langle \bar h,p\rangle & =& \frac 1{1- \bar \la} \, \langle h,p \rangle = \frac 1{1- \bar \la} \, ( \be (h) + \al(p)) = \be (\bar h) - \frac{\bar\la}{1-\bar \la} \, \al(p) + \frac 1{1-\bar \la} \, \al(p) \\
            &=& \be (\bar h) + \al(p),
        \end{eqnarray*}
        which gives, by Proposition \ref{utilissima}, that $\nu \in \M_p$, and concludes the proof.
    \end{proof}

    \smallskip

    \begin{Proposition}
        (\cite[Proposition 6.2]{SicSor1})\label{lubianone} Let $\mu = \sum_e \la_e \, \mu_e$ be a Mather measure and $e_0$ an edge with $\pm e_0 \in \spt_{\EE_0} \mu$, then $\mu_{\pm e_0} = \de(\pm e_0,0)$.
    \end{Proposition}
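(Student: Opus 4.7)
The approach is variational. The pair $\{e_0,-e_0\}$ forms a trivial cycle in $\G_0$ (since $\theta(e_0)+\theta(-e_0)=0$), so mass on the $\pm e_0$ fibres can be exchanged with the equilibrium measure at the identified point $(e_0,0)\sim(-e_0,0)$ without affecting the rotation vector. Strict convexity of $\LL$ should then force such an exchange to strictly decrease the action unless the original mass is already concentrated at $0$. The main technical task is to extract this strict-decrease inequality quantitatively.

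The first step is to apply Propositions \ref{utilissima} and \ref{alfinale}: since $\mu\in\bM^h$ for $h:=\rho(\mu)$ and $\be$ is convex, there is some $p$ conjugate to $h$, hence $\mu\in\bM_p$ and $\mu_{\pm e_0}=\de(\pm e_0,\mathcal Q_p(\pm e_0))$ with $\mathcal Q_p(\pm e_0)\ge 0$. The claim thus reduces to $\mathcal Q_p(\pm e_0)=0$; note that the compatibility condition $H_{-e}(s,\rho)=H_e(1-s,-\rho)$ makes $\si'(e_0,\cdot)$ and $\si'(-e_0,\cdot)$ share the same blow-up behaviour at $a_{e_0}$, so by the inverse-function identity $1/\mathcal Q_p(e)=\si'(e,\al(p))$ we automatically have $\mathcal Q_p(e_0)=0 \iff \mathcal Q_p(-e_0)=0$.

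Assuming for contradiction $\mathcal Q_p(\pm e_0)>0$, I will construct $\mu_\eps$ by subtracting mass $\eps/\mathcal Q_p(\pm e_0)$ from each Dirac $\de(\pm e_0,\mathcal Q_p(\pm e_0))$ and placing the removed amount $\eps\bigl(1/\mathcal Q_p(e_0)+1/\mathcal Q_p(-e_0)\bigr)$ at the equilibrium point $\de(e_0,0)$. The rotation-vector changes from the two subtractions are $\eps\theta(e_0)$ and $-\eps\theta(e_0)$, which cancel, so $\mu_\eps\in\bM$ with $\rho(\mu_\eps)=h$. Using $\LL(e,0)=-a_e$ and the Fenchel identity $\LL(e,\mathcal Q_p(e))=\mathcal Q_p(e)\,\si(e,\al(p))-\al(p)$, the action increment reduces to
\[
A(\mu_\eps)-A(\mu)=-\eps\Bigl\{\si(e_0,\al(p))+\si(-e_0,\al(p))-(\al(p)-a_{e_0})\bigl[\tfrac{1}{\mathcal Q_p(e_0)}+\tfrac{1}{\mathcal Q_p(-e_0)}\bigr]\Bigr\}.
\]

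The crux is proving the brace strictly positive. When $\al(p)>a_{e_0}$, the strict concavity of $a\mapsto\si(e,a)$ from Lemma \ref{newborn} combined with $1/\mathcal Q_p(e)=\si'(e,\al(p))$ gives $\si(e,\al(p))-b_e=\int_{a_{e_0}}^{\al(p)}\si'(e,a)\,da>(\al(p)-a_{e_0})/\mathcal Q_p(e)$ for each $e\in\{e_0,-e_0\}$; summing and using $b_{e_0}+b_{-e_0}=\int_0^1(\si^+_{a_{e_0}}-\si^-_{a_{e_0}})(e_0,s)\,ds\ge 0$ closes the argument. The delicate boundary case $\al(p)=a_{e_0}$ is the main obstacle: the brace collapses to $b_{e_0}+b_{-e_0}$; if this is strictly positive the contradiction persists, while if it vanishes then $\min_\rho H_{e_0}(s,\rho)\equiv a_{e_0}$ in $s$, forcing $\si'(e_0,a_{e_0}^+)=+\infty$ and hence $\mathcal Q_p(e_0)=0$, against our assumption. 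Either way $A(\mu_\eps)<A(\mu)=\be(h)$, violating $\mu\in\bM^h$, and so $\mathcal Q_p(\pm e_0)=0$ as desired.
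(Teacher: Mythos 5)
Your perturbation argument is sound and the key computation is correct: I verified the Fenchel identity $\LL(e,\mathcal Q_p(e))=\mathcal Q_p(e)\,\si(e,\al(p))-\al(p)$, the cancellation in the rotation vector, the reduction of the action increment to the brace expression, the concavity estimate $\si(e,\al(p))-b_e>(\al(p)-a_{e_0})/\mathcal Q_p(e)$ when $\al(p)>a_{e_0}$, and the Fatou argument that forces $\si'(e_0,a_{e_0}^+)=+\infty$ when $b_{e_0}+b_{-e_0}=0$. Note that the paper states this result by citing \cite[Proposition 6.2]{SicSor1}, so there is no internal proof to compare against.

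The one step whose justification does not hold up is the parenthetical claim that the compatibility $H_{-e}(s,\rho)=H_e(1-s,-\rho)$ makes $\si'(e_0,\cdot)$ and $\si'(-e_0,\cdot)$ ``share the same blow-up behaviour'' and hence gives $\mathcal Q_p(e_0)=0\iff\mathcal Q_p(-e_0)=0$. Unwinding the compatibility relation one finds $\si'(e_0,a)=\int_0^1 ds/\partial_\rho H_{e_0}(s,\si^+_a(e_0,s))$ while $\si'(-e_0,a)=\int_0^1 ds/|\partial_\rho H_{e_0}(s,\si^-_a(e_0,s))|$, and the right- and left-slopes of a $C^1$ strictly convex fibre near its minimum can vanish at different rates, so the compatibility alone does not force simultaneous divergence as $a\to a_{e_0}^+$. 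The equivalence is nonetheless true, but for a simpler structural reason: Proposition \ref{alfinale} makes both $\mu_{\pm e_0}$ Dirac masses, and the decomposition convention in Remark \ref{postdef} requires $\mu_{e_0}(0)=\mu_{-e_0}(0)$, which for Diracs forces $\mathcal Q_p(e_0)=0$ exactly when $\mathcal Q_p(-e_0)=0$. Substituting this observation for the symmetry remark closes the only soft spot; with that repair, the proof is complete.
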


    \smallskip

    We proceed by establishing a crucial lemma that allows us putting in relation the action of a measure with the average action of a family of circuits taken with given multiplicities, see the forthcoming Remark \ref{remlequile}.

    \begin{Lemma}\label{lequile}
        Given $h \in \Z^{b(\G_0)} \setminus \{0\}$, $T >0$, a measure $\mu$ with $\rho(\mu) = \frac hT$ and action equal to $\be \left (\frac hT \right )$, we have
        \begin{eqnarray}\label{lequile1}
            A(\mu) &=& \frac 1T \, \left ( \sum_i n_i \, A_\LL(\xi_i) + \sum_{e \in \cup_i \spt \xi_i} \frac{m_e}{\mathcal Q_p(e)} \, \LL(e,\mathcal Q_p(e)) \right ) \\
            &-& \bar \la \, a_0 \nonumber \\
            T&=& \frac 1{1-\bar \la} \, \sum_{e \in \cup_i \spt \xi_i} \left [ \left ( \sum_{i \in I(e)} n_i \right ) \, \frac 1{\mathcal Q_p(e)} + \frac{m_e}{\mathcal Q_p(e)} \right ]\label{lequile001}\\
            h &=& \sum_{ e \in ( \cup_i \spt \xi_i) \setminus \EE_\TT} \left [ \left ( \sum_{i \in I(e)} n_i \right ) \theta(e) + m_e \, \theta(e) \right ]\label{lequile01}
        \end{eqnarray}
        where $\bar \la \in [0,1)$, the $\xi_i$'s are (finitely many) nontrivial parametrized circuits, $n_i \in \N$, $p \in H^1(\G_0,\R)$ is conjugate with $\frac hT$,
        \begin{equation*}
            I(e) = \{i \mid e \in \xi_i\},
        \end{equation*}
        the $m_e$'s are nonnegative quantities bounded from above by $C_0$, the number of circuits in $\G_0$, and, in addition
        \begin{equation}\label{lequile10}
            m_e \in \N \qquad\text{for any $e \in ( \cup_i \spt \xi_i) \setminus \EE_\TT$.}
        \end{equation}
    \end{Lemma}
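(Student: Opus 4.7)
The plan is to reduce the statement to the structural description of Mather measures via an integer-part / fractional-part decomposition of the convex coefficients. Since $h\neq 0$, $\mu$ cannot be purely singular, so Proposition \ref{lubiana} applies and decomposes $\mu=(1-\bar\lambda)\nu+\bar\lambda\delta(\bar e,0)$ with $\bar\lambda\in[0,1)$, $\nu$ nonsingular, and $\nu\in\bM^{h/((1-\bar\lambda)T)}\cap\bM_p$ for some $p$ conjugate to $h/T$. Proposition \ref{utilissima} then writes $\nu=\sum_i\alpha_i\mu_{\xi_i}$ as a convex combination of occupation measures of parametrized circuits in $\bM_p$; moreover, Proposition \ref{alfinale} forces the speed on every $e\in\spt\xi_i$ to be $\mathcal Q_p(e)>0$, so the traversal time along $e$ is $1/\mathcal Q_p(e)$ and the total time of $\xi_i$ is $T_{\xi_i}=\sum_{e\in\xi_i}1/\mathcal Q_p(e)$.

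Setting $\beta_i:=\alpha_i(1-\bar\lambda)T/T_{\xi_i}$, the condition $\rho(\mu)=h/T$ becomes $\sum_i\beta_i[\xi_i]=h$ in $H_1(\G_0,\Z)$, the normalization $\sum_i\alpha_i=1$ gives $\sum_i\beta_i T_{\xi_i}=(1-\bar\lambda)T$, and the additivity of the action together with \eqref{uffa} produces the claimed expression for $A(\mu)$ in terms of the $A_\LL(\xi_i)$. I would then split $\beta_i=n_i+r_i$ with $n_i:=\lfloor\beta_i\rfloor\in\N$ and $r_i\in[0,1)$. The residue $\sum_i r_i[\xi_i]=h-\sum_i n_i[\xi_i]$ automatically sits in $H_1(\G_0,\Z)$ as a difference of integer vectors, and since $\{\theta(e)\}_{e\in\EE_0^+\setminus\EE_\TT^+}$ is a basis of $H_1(\G_0,\Z)$ by Proposition \ref{cycy}, that residue has integer coordinates in the non-tree edges; defining $m_e:=\sum_{i\in I(e)}r_i$ distributes these coordinates over $\cup_i\spt\xi_i$, with the uniform bound $m_e<C_0$ following from $r_i<1$ and $|I(e)|\le C_0$.

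The main technical obstacle is an orientation bookkeeping step. When two different circuits $\xi_i$, $\xi_j$ traverse an unoriented edge in opposite directions, both $e$ and $-e$ belong to $\cup_i\spt\xi_i$ and their $\theta$-contributions collapse via $\theta(-e)=-\theta(e)$, so a priori the basis identification only forces the combination $m_e-m_{-e}$ to be an integer. To recover the stronger individual integrality $m_e\in\N$ demanded by the statement I would preprocess the representation of $\nu$ — pairing opposing circuits and cancelling antiparallel copies of each non-tree edge — so that each unoriented non-tree edge is covered in only one orientation across the family $\{\xi_i\}$. Once this normalization is in place, the basis argument directly yields $m_e\in\N$ on non-tree edges, while on tree edges no analogous constraint arises (since $\theta$ vanishes there), which is consistent with the statement allowing non-integer $m_e$ on $\EE_\TT$.
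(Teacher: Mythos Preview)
Your overall strategy is the paper's: decompose via Proposition~\ref{lubiana}, write $\nu=\sum_i\alpha_i\mu_{\xi_i}$ by Proposition~\ref{utilissima}, use Proposition~\ref{alfinale} to fix the speeds at $\mathcal Q_p(e)$, rescale the convex coefficients by $T/T_{\xi_i}$, and split into integer and fractional parts. The formulas \eqref{lequile1}--\eqref{lequile01} then drop out exactly as you describe.

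The gap is in your handling of the orientation issue. You correctly observe that if some non-tree edge were traversed in both orientations across the $\xi_i$'s, the basis argument would only force $m_e-m_{-e}\in\Z$. But your proposed fix --- ``pairing opposing circuits and cancelling antiparallel copies'' --- is not well-defined: removing an edge from a circuit destroys the circuit property, and there is no guarantee the resulting pieces can be reassembled into circuits still lying in $\bM_p$ with the prescribed speeds $\mathcal Q_p(e)$. The paper resolves this much more cleanly by invoking Proposition~\ref{lubianone}: for any Mather measure, if both $\pm e_0$ lie in the support then $\mu_{\pm e_0}=\delta(\pm e_0,0)$. Since $\nu$ is nonsingular, this immediately rules out both orientations of any edge appearing in $\spt_{\EE_0}\nu=\cup_i\spt\xi_i$. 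Hence for each non-tree $e\in\EE_0^+$ at most one of $e,-e$ contributes to the expansion of $h$ in the basis $\{\theta(e)\}$, and the integrality of $h$ directly forces $\sum_{i\in I(e)}\beta_i\in\N$, giving $m_e\in\N$ with no preprocessing needed.
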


    \medskip

    \begin{Remark}\label{remlequile}
        The salient relation is given by formula \eqref{lequile1}, which somehow compares the action $A(\mu)$ to the average action of a family of circuits $\xi_i$ taken with multiplicity $n_i$ plus an error given by the rightmost term containing the coefficients $m_e$. Since they are bounded, the error becomes negligible as $T$ goes to infinity.
    \end{Remark}

    \begin{proof}
        By Propositions \ref{utilissima}, \ref{lubiana}
        \begin{equation}\label{lubianina}
            \mu = (1-\bar \la) \, \nu + \bar \la \, \de(\bar e,0)
        \end{equation}
        where $\bar e$ is such that $a_{\bar e}=a_0$, $\bar \la \in [0,1)$ and $\nu$ is a nonsingular measure belonging to $\M^{\frac h{T (1- \bar \la)}} \cap \M_p$ for $p \in H^1(\G_0,\R)$ conjugate to $\frac h{T (1-\bar \la)}$. We first assume $\bar \la =0$ or in other terms that $\mu$ is nonsingular. We know from Propositions \ref{utilissima} that
        \begin{equation*}
            \mu = \sum_{i=1}^m \si_i \, \mu_i \qquad \si_i >0 ,\, \sum \si_i =1
        \end{equation*}
        for some occupation measures $\mu_i$, based on circuits $\xi_i$, belonging to $\M_p$. By Proposition \ref{alfinale}
        \begin{equation*}
            \xi_i=(e_j^i,\mathcal Q_p(e_j^i), 1/\mathcal Q_p(e_j^i))_{j=1}^{m_i}
        \end{equation*}
        and the total time of parametrization of the $\xi_i$'s is given by
        \begin{equation*}
            S_i = \sum_{j= 1}^{m_i} \frac 1 {\mathcal Q_p(e_j^i)} \qquad\text{for any $i$,}
        \end{equation*}
        which implies
        \begin{equation}\label{zorro2}
            \sum_{i =1}^m \left ( \sum_{e \in \spt \xi_i} \frac 1{\mathcal Q_p(e)} \right ) \, \si_i \, S_i = \sum_{i=1}^m \si_i =1.
        \end{equation}
        Since the action functional on the space of measures is affine for convex combinations, we get
        \begin{equation}\label{lequile2}
            A(\mu)= \sum_i \si_i \, A(\mu_i) = \sum_i \frac{\si_i}{S_i} \, A_\LL(\xi_i) = \sum_i \, \frac{\si_i}{S_i} \, \sum_j \frac 1 {\mathcal Q_p(e_j^i)} \, \LL(e_j^i, \mathcal Q_p(e_j^i)).
        \end{equation}
        We focus on the relation
        \begin{equation*}
            A(\mu)= \frac 1T \, \sum_i \frac{T \si_i}{S_i} \, A_\LL(\xi_i).
        \end{equation*}
        If $\frac{T \si_i}{S_i}$ were a positive integer, for any $i$, then $A(\mu)$ should be equal to the average action of the union of the circuits $\xi_i$, each one taken with multiplicity $\frac{T \si_i}{S_i}$. However, in general, this is not the case, and an error will appear. In what follows we look into this issue.

        By multiplying and dividing by $T$, we get
        \begin{eqnarray}
            \mu &=& \frac 1T \, \sum_{e \in \spt_{\EE_0} \mu } \, \left ( \sum_{i \in I(e)} \frac{\si_i T}{S_i} \right ) \frac 1{\mathcal Q_p(e)} \,\de(e,\mathcal Q_p(e))\label{lequile030} \\
            h &=& \sum_{e \in \spt_{\EE_0} \mu \setminus \EE_\TT} \, \left ( \sum_{i \in I(e)} \frac{\si_i T}{S_i} \right ) \, \theta(e).\label{lequile30}
        \end{eqnarray}
        Note that by Proposition \ref{lubianone} for no edge $e$ it can happen that both $\pm e$ belong to $\spt_{\EE_0} \mu$. Taking into account that $h \in \Z^{b(\G_0)}$, we have
        \begin{equation}\label{lequile31}
            \sum_{i \in I(e)} \frac{\si_i T}{S_i} \in \N \qquad\text{for any $e \in (\cup_i \spt \xi_i) \setminus \EE_\TT$.}
        \end{equation}
        We set
        \begin{eqnarray}
            m_e &:=& \sum_{i \in I(e)} \frac{\si_i T}{S_i} - \left \lfloor \frac{\si_i T}{S_i} \right \rfloor \le C_0 \txt{for any $e \in \cup_i \spt \xi_i$}\label{lequile4} \\
            n_i &:=& \left \lfloor \frac{\si_i T}{S_i} \right \rfloor, \nonumber
        \end{eqnarray}
        where $\lfloor \cdot \rfloor$ in the above formula stands for the integer part. We derive
        \begin{equation}\label{zorro1}
            \sum_{i \in I(e)} \frac{\si_i T}{S_i} = m_e + \sum_{i \in I(e)} n_i \txt{for any $e \in \cup \spt \xi_i$.}
        \end{equation}
        By \eqref{zorro2},\eqref{lequile2}, the fact that $\mu$ is a probability measure, \eqref{lequile030}, \eqref{lequile30} and \eqref{zorro1}, we deduce that
        \begin{eqnarray*}
            T &=& \sum_{e \in \spt_{\EE_0} \mu } \left ( \sum_{i \in I(e)} n_i \right ) \, \frac 1{\mathcal Q_p(e)} + \frac{m_e} {\mathcal Q_p(e)}\\
            h &=& \sum_{ e \in \spt_{\EE_0} \mu \setminus \EE_\TT } \left [ \left ( \sum_{i \in I(e)} n_i \right ) \theta(e) + m_e \, \theta(e) \right ] \\
            A(\mu) &=& \frac 1T \, \sum_i \frac{\si_i T}{S_i} \, A_\LL(\xi_i) = \frac 1T \, \left [ \sum_i n_i \, A_\LL(\xi_i) + \sum_i \left (\frac{\si_i T}{S_i} - n_i \right ) \, A_\LL(\xi_i) \right ].
        \end{eqnarray*}
        The first two equalities above correspond to \eqref{lequile001}, \eqref{lequile01}. The third one yields \eqref{lequile1}, with the $m_e$'s bounded from above by \eqref{lequile4}, once we observe that by \eqref{lequile2}
        \begin{equation*}
            \sum_i \left (\frac{\si_i T}{S_i} - n_i \right ) \, A_\LL(\xi_i) = \sum_{e \in \spt_{\EE_0} \mu} \, \frac{m_e}{\mathcal Q_p(e)} \, \LL(e,\mathcal Q_p(e)).
        \end{equation*}
        Finally, \eqref{lequile10} is a consequence of \eqref{lequile31}. This proves the assertion if $\bar \la =0$ in \eqref{lubianina}. It is left the case where $\bar\la \in (0,1)$. We can apply the above part of the proof to $\frac h {T (1-\bar \la)}$ taking the nonsingular measure $\nu$ in $\rho^{-1} \left (\frac h {T (1-\bar \la)} \right )$. Using \eqref{lubianina} and the relations between the new and old times, the new and old rotation vectors, it is straightforward to show the statement and complete the proof.
    \end{proof}

    \medskip

    \begin{Remark}[Compare with Remark \ref{positive}]\label{positivebis0}
        We point out that if we arbitrarily fix a constant $b$ and consider the family of modified Hamiltonians
        \begin{equation*}
            H^*_\ga(s,\rho) = H_\ga(s,\rho) + b \qquad\text{ for $\ga \in \EN$},
        \end{equation*}
        then the corresponding Hamiltonians and Lagrangians on the underlying graph are $\HH^*(e,\rho) = \HH(e,\rho) + b$ and $\LL^*(e,\la) =\LL(e,\la) -b$, respectively. In fact, $a^*_e = a_e+b$ and
        \begin{equation*}
            \si^*(e,a) = \si(e,a-b) \qquad\text{ for any $a \in [a^*_e, +\infty)$.}
        \end{equation*}
        Consequently the inverse of $\si^*(e,\cdot)$ is $\HH(e,\cdot) + b$ defined in $[\si(e,a), + \infty)$, as was claimed. The assertion on $\LL^*$ is then direct consequence of \eqref{deflag}. Since the homogenization procedure does not change if we replace $H_\ga$ by $H_\ga +b$, (up to change $\bar{H}$ with $\bar{H}+b$) we can choose the additive constant $b$ in such a way that the Lagrangians $\LL^*(e,\cdot)$ are strictly positive for any $e \in \EE_0$. We will make use in what follows of this property.
    \end{Remark}

    \medskip

    Let us prove the main result of this section.

    \begin{proof}[{\bf Proof of Theorem \ref{new}}]
        \noindent{\em Preliminaries:} \quad By Remark \ref{positivebis0}, we can assume without loosing generality that $\LL(e, \cdot)$ is strictly positive for any $e \in \EE_0$, and
        \begin{equation}\label{new2001}
            \de < \frac 12, \quad A> 1.
        \end{equation}
        We fix $A$, $\de$ satisfying \eqref{new2001}. Given $T >0$, we consider a measure $\mu$ with rotation vector $\frac h T$ and action equal to $\be \left ( \frac hT \right )$. This implies that $\mu \in \bM_p$ for $p \in H^1(\G_0,\R)$ conjugate to $\frac hT$. Since $ \left |\frac hT \right | < A$, then $p \in \partial \be(h/T)$ is bounded as well by a constant depending on $A$. We know from Proposition \ref{lubiana} that
        \begin{equation}\label{lubianozzo}
            \mu = (1- \bar \la) \, \nu + \bar \la \, \de(\bar e,0),
        \end{equation}
        where $\bar \la \in [0,1]$, $\bar e$ satisfies $a_{\bar e}= a_0$ and, if $\bar\la \ne 1$, $\nu$ is a nonsingular measure belonging to $\bM^{\frac h{1-\bar\la}} \cap \bM_p$. We deduce by Corollary \ref{coralfinale} that
        \begin{equation}\label{new007}
            \frac 1{\mathcal Q_c(e)} \ge a \txt{for a suitable $a >0$, any $e \in \spt_{\EE_0} \mu \setminus\{\bar e\}$.}
        \end{equation}
        Taking into account that $\LL(e,\cdot)$ is strictly positive and $C^1$, we can in addition select $b > a$ with
        \begin{equation}\label{new201}
            \frac d{dt} \; [ t \, \LL(e, 1/t)] = \LL(e, 1/t) - \frac 1t \, \LL'(e,1/t)> 0 \quad\text{ for any $t > \frac b2$, $e \in \EE_0$.}
        \end{equation}
        We denote by $M > 1$ a quantity such that
        \begin{itemize}
            \item[--] is an upper bound of $\LL(e,1)$ for $e$ varying in $ \EE_0$;
            \item[--] is a Lipschitz constant for $\LL(e,\cdot)$ in $[0, 1/a]$, any $e \in \EE_0$;
            \item[--] is a Lipschitz constant for $\beta$ in $B(0,A)$;
            \item[--] is a Lipschitz constant in $[a/2,b]$ for $t \mapsto t \, \LL(e,1/t)$.
        \end{itemize}
        We require $T$ to satisfy
        \begin{eqnarray}
            \frac{|\EE_0| \be(0)}T & < & \frac \de 2\\
            \frac{|\EE_0| M A}{T} &<& \frac \de 4\label{new00}\\
            \frac{M(3 C_0+4) |\EE_0|^2}T &<& \frac \de 2,\label{new00bis}
        \end{eqnarray}
        where $|\EE_0|$ is the number of edges and $C_0$ is the number of circuits in $\G_0$.

        \medskip

        \noindent{\em First estimate:} \quad We show that
        \begin{equation*}
            \frac{\Phi_\LL(x,y,T;h)}T \ge \be \left ( \frac h T \right ) - \de.
        \end{equation*}
        We consider a parametrized path $\xi=(e_i, q_i, T_i)_{i=1}^m$ in $\G_0$ with $\sum_i T_i=T$, $ \theta([\xi])=h$, linking $ x$ to $y$ which is $\frac \de 4$--optimal for $\Phi_\LL(x,y,T;h)$. We further consider a parametrized path $\zeta = (f_j,1,1)_{j=1}^k$, contained in $\TT$ and linking $y$ to $ x$. The total length of the parametrization of $\zeta$, denoted by $T_\zeta$, is clearly less than or equal to $|\EE_0|$. By concatenating $\xi$ and $\zeta$, we get a parametrized cycle $\xi \cup \zeta$ in $\G_0$, with total length of parametrization $T+T_\zeta$, and
        \begin{equation*}
            \theta([\xi \cup \zeta]) = \sum_i \theta(e_i)= h.
        \end{equation*}
        The closed occupation measure corresponding to $ \xi \cup \zeta$ will be denoted by $\nu$. We have
        \begin{equation*}
            \rho(\nu) = \frac{\theta([\xi \cup \zeta])}{T+T_\zeta} = \frac h{T+ T_\zeta},
        \end{equation*}
        and
        \begin{equation}\label{new123}
            A(\nu) = \frac 1{T+ T_\zeta} \, \left [ \sum_i T_i \LL(e_i, q_i) + \sum_j \LL(f_j, 1) \right ] \ge \be \left ( \frac h {T+ T_\zeta} \right ).
        \end{equation}
        We derive from \eqref{new00}
        \begin{equation}\label{new0000}
            \left | \frac h T - \frac h{T+ T_\zeta} \right | = \frac{T_\zeta}{ T+ T_\zeta} \, \left | \frac h T \right | \le \frac{| \EE_0|}T \, A \le\frac \de 4 \, \frac 1M.
        \end{equation}
        Note that $\left |\frac hT \right | < A$ by assumption, and consequently
        \begin{equation*}
            \left | \frac h{T+ T_\zeta} \right | \le A.
        \end{equation*}
        Exploiting \eqref{new0000} and the Lipschitz character of $\be$, we get
        \begin{eqnarray}
            \be \left ( \frac h {T+T_\zeta} \right ) &=& \be \left ( \frac h {T+ T_\zeta} \right )- \be \left ( \frac h {T} \right ) + \be \left ( \frac h {T} \right )\label{new1}\\
            &\ge& \be \left ( \frac h{T} \right ) - \frac \de 4. \nonumber
        \end{eqnarray}
        Taking into account that $\LL$ is positive and \eqref{new00}, we further derive from the leftmost equality in \eqref{new123} and the $ \frac \de 4$--optimality of $\xi$
        \begin{eqnarray}
            A(\nu) &\le& \frac 1{T+T_\zeta} \, ( \Phi_\LL(x,y,T;h) + \de/4 + | \EE_0| \, M )\label{new2} \\
            &\le& \frac 1T \, \Phi_\LL(x,y,T;h) + \left ( \frac 1{T+ T_\zeta} - \frac 1T \right ) \, \Phi_\LL(x,y,T;h) + \frac \de 2 \nonumber\\
            & \le& \frac 1T \, \Phi_\LL(x,y,T;h) + \frac \de 2. \nonumber
        \end{eqnarray}
        By combining \eqref{new1}, \eqref{new2}, we obtain
        \begin{eqnarray*}
            \be \left ( \frac h T \right ) &\le& \be \left ( \frac h{T+ T_\zeta} \right ) + \frac \de 4 \\
            &\le& A(\nu) + \frac \de 4 \le \frac 1T \, \Phi_\LL(x,y,T;h) + \frac \de 4+ \frac \de 2,
        \end{eqnarray*}
        as it was claimed.

        \medskip

        \noindent{\em Second estimate:} \quad We proceed by showing that
        \begin{equation*}
            \be(h/T) \ge \frac 1T \, \Phi_\LL(x,y,T;h) - \de.
        \end{equation*}

        We first assume that $h=0$ or, in other terms that $\bar\la =1$ in \eqref{lubianozzo}. We recall that $\bar e$ satisfies $a_{\bar e}=a_0$. We consider a parametrized path $\xi= (e_i,1,1)_{i=1}^m$ made up by edges of $\EE_\TT$ linking $x$ to $y$ and passing through $\oo(\bar e)$; we denote by $\xi_1$ the subpath of $\xi$ between $x$ and $\oo(\bar e)$, and by $\xi_2$ the one connecting $\oo(\bar e)$ and $y$. We finally set $\eta = (\bar e,-\bar e,0,0, (T-m)/2,(T-m)/2)$. Note that the rotation vector of $\xi_1 \cup \eta \cup \xi_2$ is vanishing. We have
        \begin{equation*}
            A_\LL(\xi_1 \cup \eta \cup \xi_2)= \sum_{i=1}^m \LL(e_i,1) + (T-m) (-a_0).
        \end{equation*}
        We therefore exploit that $\be$ is positive, $M$ is an upper bound of $\LL(e,1)$, for $e$ varying in $\EE_0$ and use \eqref{eq:minbe}, \eqref{new00} to get
        \begin{eqnarray*}
            \be(0) - \frac 1T \, \Phi_\LL(x,y,T;0) &\ge&\be(0) - \frac 1T \, A_\LL(\xi_1 \cup \eta \cup \xi_2) \\
            &\ge& - \frac{M |\EE_0|} T +\frac mT \, \be(0) \ge - \de,
        \end{eqnarray*}
        as claimed.

        \smallskip

        For $h \ne 0$ we will essentially use Lemma \ref{lequile}. This estimate is the most delicate one. We recall the following relations from Lemma \ref{lequile}:
        \begin{eqnarray}
            \label{new903} A(\mu) &=& \frac 1T \, \left ( \sum_{i=1}^m n_i \, A_\LL(\xi_i) + \sum_{e \in \cup_i \spt \xi_i} \frac{m_e}{\mathcal Q_p(e)} \, \LL(e,\mathcal Q_p(e)) \right ) - \bar \la \, a_0\\
            T&=& \frac 1{1-\bar\la} \,\sum_{e \in \cup_i \spt \xi_i} \left [ \left ( \sum_{i \in I(e)} n_i \right ) \, \frac 1{\mathcal Q_p(e)} + \frac{m_e}{\mathcal Q_p(e)} \right ]\label{new903tris}\\
            h &=& \sum_{ e \in \cup_i \spt \xi_i \setminus \EE_\TT} \left [ \left ( \sum_{i \in I(e)} n_i \right ) \theta(e) + m_e \, \theta(e) \right ],\label{new903bis}
        \end{eqnarray}
        see the statement of Lemma \ref{lequile} for more details on the above notation. We recall that
        \begin{equation}\label{new917-1}
            \xi_i=(e_j^i,\mathcal Q_p(e_j^i), 1/\mathcal Q_p(e_j^i))_{j=1}^{m_i}
        \end{equation}
        is a family of circuits and, according to \eqref{new903tris}, the total time of parametrization is given by
        \begin{equation}\label{new917}
            \sum_{i=1}^m S_i = \sum_{i=1}^m \, \sum_{j= 1}^{m_i} \frac 1 {\mathcal Q_p(e_j^i)} = (1 - \bar \la) \, T.
        \end{equation}
        In order to get a path connecting $ x$ and $ y$, we need to modify the $\xi_i$'s as follows:
        \begin{itemize}
            \item[(i)] add connecting paths between $\tt(\xi_i)$ and $\oo(\xi_{i+1})$ belonging to $\TT$, for $i=1, \cdots m-1$;
            \item[(ii)] replace any $e \in (\cup_i \spt \xi_i) \setminus \EE_\TT$ such that $m_e \ne 0$ with the circuit given by Lemma \ref{crycry}, keeping the same multiplicity $m_e$, which is actually a positive integer by Lemma \ref{lequile};
            \item[(iii)] if $e \in (\cup_i \spt \xi_i) \cap \EE_\TT$ with $m_e \ne 0$ add an equilibrium circuit based on $e$ for a time $\frac{m_e}{\mathcal Q_p(e)}$;
            \item[(iv)] add a path in $\TT$ connecting $x$ to $\oo(\bar e)$, see \eqref{lubianozzo};
            \item [(v)] add a path in $\TT$ connecting $\oo(\bar e)$ to $x$;
            \item[(vi)] add a path in $\TT$ connecting $ x$ to $ \oo(\xi_1)$;
            \item[(vii)] add a path in $\TT$ connecting $\tt(\xi_m)$ to $ y$.
        \end{itemize}
        If $\bar \la =0$ items (iv), (v) should be removed. The concatenation of edges in the new path is as follows: we start at $ x$ and reach $\oo( \bar e)$ following the path in item (iv) then stay in the equilibrium circuit $(\bar e, -\bar e)$ at speed $0$ for a time $\bar \la \, T$ and finally go back to $x$ through the path in (v). We continue the trip going from $x$ to $\oo(\xi_1)$ through the path in item (vi). We proceed travelling the circuit $\xi_1$ for $n_1$ times, when we meet for the first time an edge $e$ as in item (ii), we pause the travel on $\xi_1$ and move along the circuit $\theta(e)$ for $m_e$ times, then we resume. When we meet for the first time an edge $e$ as in item (iii), we take a stopover at $e$ as indicated in (iii). When we reach through this procedure $\tt(\xi_1)$, we take the path between $\tt(\xi_1)$ and $\oo(\xi_2)$ introduced in item (i). We proceed travelling along $\xi_2$, when we meet an edge, as in item(ii), (iii), not previously seen, we apply the same procedure as above, and so on. Once travelled in this way all the $\xi_i$'s, we finally follow the path connecting $\tt(\xi_m)$ to $ y$ introduced in (vii), to conclude the journey from $ x$ to $ y$.

        The number of added edges in (i) can be estimated from above by $C_0 \, | \EE_0|$, those in (ii) by $| \EE_0|$ ( $\ge$ number of the edges in (ii)) times $C_0$ ($\ge$ the multiplicity $m_e$) times $|\EE_0|$ ($\ge$ number of edges in each added circuit, see (ii), which gives $C_0| \EE_0|^2$, no added edges in (iii), and finally those in (iv) plus (v) plus (vi) plus (vii) by $4 \,| \EE_0|$.

        We denote by $k$ the overall number of added edges counted with their multiplicity, and, according to what pointed out above, we obtain the estimate
        \begin{equation}\label{new501}
            k \le C_0 \, | \EE_0| + C_0| \EE_0|^2 + 4 \, | \EE_0| \le (2 C_0+ 4) \, |\EE_0|^2.
        \end{equation}
        We further impose that all the added edges are travelled with speed $1$ and consequently in a time $1$. Since all the extra edges belong to $\TT$ we get, by \eqref{new903bis}, a parametrized path $\zeta$ connecting $x$ to $y$ with
        \begin{equation*}
            \theta([\zeta])= h.
        \end{equation*}
        In addition, since the time spent by $\zeta$ on the original (non added) edges is given by
        \begin{equation}\label{newciaociao}
            \bar \la \, T + \sum_{e \in \cup_i \spt \xi_i} \left ( \sum_{i \in I(e)} n_i \right ) \frac 1{\mathcal Q_p(e)} + \frac{m_e}{\mathcal Q_p(e)} = T,
        \end{equation}
        see \eqref{new903tris}, the total time of parametrization of $\zeta$ is $T+k$. The action of $\zeta$ can be written as
        \begin{eqnarray*}
            A_\LL(\zeta)&=& \sum_i n_i \, A_\LL(\xi_i) + \sum_{e \in ( \cup_i \spt \xi_i) \setminus \EE_T } \frac{m_e}{\mathcal Q_p(e)} \, \LL(e,\mathcal Q_p(e)) \\
            &+& \sum_{e \in ( \cup_i \spt \xi_i) \cap \EE_T } \frac{m_e}{\mathcal Q_p(e)} \, \LL(e,0) - \bar \la \, T \, a_0 + \sum_{f \in F} k_f \, \LL(f,1),
        \end{eqnarray*}
        where $F$ denotes the family of edges in the added paths and $k_f$ their multiplicity, it is clear that $\sum_f k_f= k$. We then derive from \eqref{new903}, taking into account \eqref{new501} and observing that $M$ is a Lipschitz constant of $\LL(e,\cdot)$ in $[0, 1/a]$ for any $e$, that
        \begin{eqnarray*}
            \frac 1T \, A_\LL(\zeta) - \be( h/T) &=& \frac 1T \, \sum_{e \in (\cup_i \spt \xi_i) \cap \EE_\TT } \frac{m_e}{\mathcal Q_p(e)} \, [ \LL(e,0) -\LL(e,\mathcal Q_p(e))] \\ &+ & \frac 1T \, \sum_{f \in F} k_f \, \LL(f,1) \\
            &\le& \frac 1T \, [ C_0 |\EE_0| \, M + M \, k ] \le \frac MT \, (3 C_0+4) |\EE_0|^2
        \end{eqnarray*}
        and, by \eqref{new00bis}, we get
        \begin{equation}\label{new3000}
            \be( h/T) \ge \frac 1T \, A_\LL(\zeta) - \frac \de 2.
        \end{equation}
        The next step is to change the parametrization on $\zeta$ in order to get $T$ as new total time of parametrization. We define
        \begin{eqnarray*}
            \bar t_e &:=& \frac 1{\mathcal Q_p(e)} \, \left ( 1 - \frac{k} T \right ) \qquad\text{ for $e \in \cup_i \spt \xi_i$}\\
            \bar t_{\bar e} &:=& (\bar \la \, T) \, \left ( 1 - \frac{k} T \right );
        \end{eqnarray*}
        by \eqref{new2001}, \eqref{new00bis}, \eqref{new501} we have
        \begin{equation}\label{new500}
            1 - \frac{k}{T} > 1 - \frac{(2 C_0+4 ) \, |\EE_0|^2}T > 1 - \frac \de 2> \frac 12.
        \end{equation}
        We now consider a new path $\bar\zeta$ possessing the same edges of $\zeta$, with the same multiplicities, but with the following change of parametrization on the non added edges:
        \begin{itemize}
            \item[--] from $(e, \mathcal Q_p(e),1/\mathcal Q_p(e))$ to $(e, 1/\bar t_e, \bar t_e)$ for the $e$'s in the first term of the right hand-side of \eqref{new903tris};
            \item[--] again from $(e, \mathcal Q_p(e),1/\mathcal Q_p(e))$ to $(e, 1/\bar t_e, \bar t_e)$ for the $e$'s in the second term of the right hand-side of \eqref{new903tris}, {\it i.e.}, with $m_e >0$, not belonging to $\EE_\TT$;
            \item[--] from $(e, 0, m_e/\mathcal Q_p(e))$ to $(e,0, m_e \bar t_e)$ for the $e$'s in the second term of the right hand-side of \eqref{new903tris}, {\it i.e.}, with $m_e >0$, belonging to $\EE_\TT$;
            \item[--] from $(\bar e,0, \bar \la T)$ to $(\bar e,0, \bar t_{\bar e})$.
        \end{itemize}
        Taking into account \eqref{new917-1}, \eqref{new917}, the time spent on the $\xi_i$'s with the new parametrization is
        \begin{equation*}
            \sum_{i=1}^m \, \sum_{j= 1}^{m_i} \frac 1 {\mathcal Q_p(e_j^i)} \left ( 1 - \frac{k}{ T} \right ) = (1 - \bar \la) \, T \left ( 1 - \frac{k}{T} \right ) = (1 - \bar \la) \, T - (1 - \bar \la) \, k,
        \end{equation*}
        and the time spent on $\bar e$ is
        \begin{equation*}
            \bar \la \, T - \bar \la \,k.
        \end{equation*}
        Since the total time of parametrization of $\zeta$ is $T+k$, that of $\bar \zeta$ is actually $T$. By \eqref{new007} the edges in $\spt_\EE \mu \setminus \{\bar e\}$ can be divided in the following two classes:
        \begin{eqnarray*}
            \EE_1 &:=& \{ e \mid 1/\mathcal Q_p(e) \in [a,b]\} \\
            \EE_2 &:=& \{ e \mid 1/\mathcal Q_p(e) > b\},
        \end{eqnarray*}
        where the constants $a$, $b$ have been introduced at the beginning of the proof. By \eqref{new500}
        \begin{eqnarray}
            \frac 1{\bar t_e} & < & \frac2b \txt{for $e \in \EE_2$} \\
            \frac1b \; < \; \frac 1{\bar t_e} & < & \frac2a \txt{for $e \in \EE_1$.}\label{new20002}
        \end{eqnarray}
        Taking into account that $\frac 1{\mathcal Q_p(e)} > \bar t_e$ and \eqref{new201}, we get
        \begin{equation}\label{new20003}
            \frac 1{\mathcal Q_p(e)} \, \LL(e,\mathcal Q_p(e)) > \bar t_e \, \LL(e,1/\bar t_e) \txt{for any $e \in \EE_2$.}
        \end{equation}
        Since $M$ is a Lipschitz constant in $[a/2,b]$ for $t \mapsto t \, \LL(e,1/t)$, we further have by \eqref{new20002}, \eqref{new20003}, \eqref{newciaociao}
        \begin{eqnarray*}
            A_\LL(\zeta) - A_\LL ( \bar \zeta) &=& \sum_i n_i \left [\sum_{j=1}^{m_i} \left ( \frac 1{\mathcal Q_p(e_{ij})} \, \LL(e_{ij}, \mathcal Q_p{e_{ij}}) - \bar t_{e_{ij}} \, \LL(e_{ij}, 1/\bar t_{e_{ij}}) \right ) \right ] \\ &+& \sum_{e \in ( \cup_i \spt \xi_i) \setminus \EE_T } \left [ \frac{m_e}{\mathcal Q_p(e)} \, \LL(e, \mathcal Q_p(e)) - m_e \, \bar t_e \, \LL(e, 1/\bar t_e) \right ]\\
            &+& \sum_{e \in ( \cup_i \spt \xi_i) \cap \EE_T } \left [ \left ( \frac{m_e}{\mathcal Q_p(e)} - m_e \, \bar t_e \right ) \, \LL(e,0)\right ] + ( - \bar \la \, T + \bar t_{\bar e} ) \, a_0\\
            &\ge& \sum_i n_i \left [\sum_{e_{ij} \in \EE_1} \left ( \frac 1{\mathcal Q_{e_{ij}}} \, \LL(e_{ij}, \mathcal Q_p(e_{ij})) - \bar t_{e_{ij}} \, \LL(e_{ij}, 1/\bar t_{e_{ij}}) \right ) \right ] \\
            &+& \sum_{\substack{e \in ( \cup_i \spt \xi_i) \setminus \EE_T \\ e \in \EE_1 }} \left [ \frac{m_e}{\mathcal Q_p(e)} \, \LL(e, \mathcal Q_p(e)) - m_e \, \bar t_e \, \LL(e, 1/\bar t_e) \right ] \\
            &\ge& - \sum_i n_i \sum_{e_{ij} \in \EE_1} \left (\frac{M}{\mathcal Q_p(e_{ij})} \, \frac kT \right ) - \sum_{\substack{e \in ( \cup_i \spt \xi_i) \setminus \EE_T \\ e \in \EE_1 }} m_e \, \left ( \frac M{\mathcal Q_p(e)} \, \frac kT \right )\\
            &\ge& - M \, k.
        \end{eqnarray*}
        Consequently by \eqref{new00bis}
        \begin{equation*}
            \frac 1T \, (A_\LL(\zeta) - A_\LL ( \bar\zeta)) \ge - \frac{M \, k}T \ge \frac{M (2C_0+4) \, |\EE_0|^2}T \ge - \frac \de 2
        \end{equation*}
        and by \eqref{new3000}
        \begin{equation*}
            \be(h/T) \ge \frac 1T \, A_\LL(\zeta) - \frac \de 2\ge \frac 1T \, A_\LL ( \bar\zeta)- \de \ge \frac 1T \, \Phi_\LL(x,y,T;h) - \de.
        \end{equation*}
        This concludes the proof.
    \end{proof}

    \medskip

    \subsection{Proof of Proposition \ref{lemnew}}\label{lemmanuovo}

    \lemnew*

    \medskip

    We start with a definition:

    \begin{Definition}\label{defiuno}
        Let $\zeta:[0,T]\to\NN$ be an admissible curve, with corresponding decomposition $\{t_i\}_{i=1}^{m +1}$ of its interval of definition, such that
        \begin{equation*}
            \zeta([t_i,t_{i+1}]) \subset \spt \ga_i \qquad\text{for $i=1, \cdots, m$},
        \end{equation*}
        for some arc $\ga_i$ of $ \EN$. We call a {\em projection} of $\zeta$ on $\G_0$, any path made up
        \begin{itemize}
            \item[--] by edges $e_i$, in correspondence to intervals $[t_i,t_{i+1}]$ where $\zeta([t_i,t_{i+1}]) \subset \spt \ga_i $ is nonconstant, with
            \begin{equation*}
                \Psi_\EN(\ga_i) = (e_i,h_i) \in \EE \txt{for some $h_i \in\Z^{b(\G_0)}$,}
            \end{equation*}
            where $\Psi_\EN$ is one of the bijection relating $\NN$ to the underlying graph $\G$, see Notation \ref{pi};
            \item[--] by $\pm e$ for intervals $[t_i,t_{i+1}]$ where $\zeta$ is instead supported by a vertex $z$, where $\oo(e)= \pi_1(z)$, see Notation \ref{pi}.
        \end{itemize}
    \end{Definition}

    \smallskip

    \begin{Lemma}\label{stracuzzi}
        Any projected path on $\G_0$ of an admissible curve $\zeta$ in $\NN$ linking two vertices $z_1$, $z_2$, has rotation vector $\pi_2(z_2)-\pi_2(z_1)$.
    \end{Lemma}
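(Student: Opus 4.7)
The plan is to reduce this to the unique path-lifting property \eqref{eq:pathlift} combined with the terminal-vertex formula \eqref{patchbis}. Given an admissible curve $\zeta:[0,T]\to\NN$ from $z_1$ to $z_2$ with partition $\{t_0,\dots,t_m\}$ as in Definition \ref{adcurve}, I would first associate to $\zeta$ a canonical path in the underlying graph $\G$, and then project it to $\G_0$, verifying that the result coincides with any projection as defined in Definition \ref{defiuno}.

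More precisely, on each nontrivial subinterval $[t_i,t_{i+1}]$ the restriction $\zeta|_{[t_i,t_{i+1}]}$ traces a unique arc $\gamma_i\in\EN$ with $\gamma_i(0)=\zeta(t_i)$ and $\gamma_i(1)=\zeta(t_{i+1})$, hence corresponds via $\Psi_\EN$ to a unique edge $(e_i,h_i)\in\EE$; on each constant subinterval at a vertex $z=\zeta(t_i)=\zeta(t_{i+1})$, the projection prescribes inserting a pair $\pm e$ based at $\pi_1(z)$, which in $\G$ lifts to a pair $(e,h),(-e,h+\theta(e))$ starting at $\pi(z)$ and returning to $\pi(z)$, contributing $e+(-e)=0$ to the incidence vector in $\Cf_1(\G_0,\R)$ and hence $0$ to $\theta$. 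Therefore, collecting these pieces yields a finite sequence of edges in $\G$ whose consecutive terms automatically satisfy the concatenation relation \eqref{prepatch}, because $\Psi_\VN$ translates the requirement $\gamma_{i+1}(0)=\gamma_i(1)$ in $\NN$ into $\oo((e_{i+1},h_{i+1}))=\tt((e_i,h_i))$ in $\G$.

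Next, I would call this sequence $\widehat\xi$ and observe that, by construction, $\oo(\widehat\xi)=\pi(z_1)$ and $\tt(\widehat\xi)=\pi(z_2)$. Its projection to $\G_0$ is exactly a projection $\xi_0=(e_i)$ of $\zeta$ in the sense of Definition \ref{defiuno} (the inserted constant-vertex pairs project to $\pm e$ as required). Since $\widehat\xi$ is a lift of $\xi_0$ starting at $\pi(z_1)=(\pi_1(z_1),\pi_2(z_1))$, the unique path-lifting property \eqref{eq:pathlift} identifies $\widehat\xi$ with the canonical lift, and \eqref{patchbis} gives
\begin{equation*}
\tt(\widehat\xi)=\bigl(\tt(\xi_0),\,\pi_2(z_1)+\theta([\xi_0])\bigr).
\end{equation*}
Comparing with $\tt(\widehat\xi)=\pi(z_2)=(\pi_1(z_2),\pi_2(z_2))$ and reading off the second component yields $\theta([\xi_0])=\pi_2(z_2)-\pi_2(z_1)$, which is exactly the rotation vector of $\xi_0$.

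Finally, one must check independence of the particular projection chosen. Any two projections of $\zeta$ differ only in the $\pm e$ choice on constant subintervals, and both possibilities insert a cycle of incidence vector $e+(-e)=0$, so they have the same image under $\theta$. The only mildly delicate point — and the step I expect to require the most care — is verifying the concatenation relation at vertices where $\zeta$ is constant: one must check that the pair $(e,h),(-e,h+\theta(e))$ correctly interpolates between the lift of the previous arc and that of the next without disturbing the lifting identity, which is precisely where the involution convention \eqref{defbartopcry} and the vanishing of $\theta$ on $\EE_\TT$ (Remark \ref{treecopy}) make the argument go through. Everything else is bookkeeping.
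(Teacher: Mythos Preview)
Your approach is correct and essentially the same as the paper's: lift the projected path to $\G$ starting at $\pi(z_1)$, observe that the lift ends at $\pi(z_2)$, and read off the second component via \eqref{patchbis}. The paper streamlines the argument by first \emph{removing} the constant subintervals (noting that the inserted $\pm e$ pairs have zero incidence vector, so the rotation vector is unaffected), after which the nonconstant pieces $\Psi_\EN(\ga_i)$ directly form a path in $\G$ from $\pi(z_1)$ to $\pi(z_2)$ and the conclusion follows from \eqref{prepatch}, \eqref{patch}. Your version keeps the constant pieces and handles them explicitly, which is fine but slightly more laborious.

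One minor misattribution: in your final paragraph you invoke the vanishing of $\theta$ on $\EE_\TT$ (Remark \ref{treecopy}) as the reason the constant-vertex insertions do not disturb the lift. That is not the relevant fact, since the inserted edge $e$ at a constant vertex need not lie in $\EE_\TT$. What actually makes the lifted pair $(e,h),(-e,h+\theta(e))$ return to $\pi(z)$ is simply that $\theta(-e)=-\theta(e)$, i.e.\ $\theta$ is a homomorphism on $\Cf_1(\G_0,\Z)$. The involution formula \eqref{defbartopcry} already encodes this; no appeal to the spanning tree is needed.
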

    \begin{proof}
        Since $\zeta$ is admissible there is a decomposition of its interval of definition $\{t_i\}_{i=1}^{m +1}$ such that
        \begin{equation*}
            \zeta([t_i,t_{i+1}]) \subset \spt \ga_i \qquad\text{for $i=1, \cdots, m$},
        \end{equation*}
        where $\ga_i$ are arcs of $\NN$. If we remove the intervals of the decomposition where $\zeta$ is constant, we still obtain a curve linking $z_1$, $z_2$ and the rotation vector of the projection is not affected by Definition \ref{defiuno}. We can therefore assume that
        \begin{equation*}
            \zeta([t_i,t_{i+1}]) = \ga_i([0,1]) \txt{for any $i$.}
        \end{equation*}
        Consequently $(\Psi_\EN(\ga_i))_{i=1}^m$ is a path in $\G$ linking $\pi(z_1)$ to $\pi(z_2)$. This gives the assertion thanks to \eqref{prepatch}, \eqref{patch}.
    \end{proof}

    \smallskip

    We are ready to start the proof of Proposition \ref{lemnew}. We will essentially use Theorem \ref{babasic}.

    \begin{proof}[{\bf Proof of Proposition \ref{lemnew}}]
        According to Proposition \ref{curvacce}, we can take an optimal admissible curve $\zeta$ for $ \Phi_{L}(z_1,z_2,T)$, with corresponding partition $\{t_1,\dotsc, t_m\}$ of $[0,T]$. We denote by $\xi=(e_i)_{i=1}^m$ a projection of $\zeta$ on $\G_0$, according to Definition \ref{defiuno}. If $[t_i,t_{i+1}]$ is an interval where $\zeta$ is nonconstant and supported by an arc $\ga_i$ with $ \Psi_\EN (\ga_i)=(e_i,h_i)$, for some $h_i \in \Z^{b(\G_0)}$, we consider the parametrization
        \begin{equation*}
            (e_i, 1/(t_{i+1} -t_i), t_{i+1} -t_i)
        \end{equation*}
        and get by the optimality of $\zeta$, Proposition \ref{lemlemnew} and Theorem \ref{babasic}, that
        \begin{eqnarray}
            \int_{t_i}^{t_{i+1}} L(\zeta,\dot\zeta) \, dt &=& \int_{t_i}^{t_{i+1}} L_{\ga_i}( \Upsilon^{-1} (\zeta), D_t (\Upsilon^{-1}(\zeta)) \, dt\label{lemnew1} \\
            &\ge& (t_{i+1}-t_i) \, \LL(e_i, 1/(t_{i+1}-t_i)). \nonumber
        \end{eqnarray}
        If instead $\zeta$ is constant in $[t_i,t_{i+1}]$, supported by a vertex $\bar z_i$, we make correspond in $\xi$ the pair of parametrized edges $ \left (\pm e_0, 0,\frac{t_{i+1} -t_i}2 \right )$, with $\oo(e_0)= \pi_1(\bar z_i)$ and in addition
        \begin{equation}\label{lemnew11}
            \LL(e_0,0)= \wha c_{\bar z_i},
        \end{equation}
        see Lemma \ref{flusso}. We derive
        \begin{eqnarray}
            \int_{t_i}^{t_{i+1}} L(\zeta,\dot \zeta) &=& \wha c_{\bar z_i} \, (t_{i+1} - t_i)\label{lemnew2}\\
            &=& \LL(e_0,0) \, \frac{t_{i+1} - t_i}2 + \LL(-e_0,0) \, \frac{t_{i+1} - t_i}2. \nonumber
        \end{eqnarray}
        Due to \eqref{lemnew1}, \eqref{lemnew2}, we obtain
        \begin{equation*}
            A_{L}(\zeta) \ge A_\LL(\xi),
        \end{equation*}
        with the projected path $\xi$ endowed with the parametrization described above; taking into account that $\theta([\xi]) = \pi_2(z_2)-\pi_2(z_1)$ by Lemma \ref{stracuzzi}, we conclude that
        \begin{equation}\label{lemnew10}
            \Phi_{L}(z_1,z_2,T)= A_{L}(\zeta) \ge A_\LL(\xi) \ge \Phi_\LL(\pi_1(z_1), \pi_1(z_2),T; \pi_2(z_2)-\pi_2(z_1)).
        \end{equation}

        \smallskip

        Let us now start with an optimal parametrized path $\xi_0 =(e_i,q_i,t_{i+1} - t_i)_{i=1}^m$ for
        \begin{equation*}
            \Phi_\LL(\pi_1(z_1), \pi_1(z_2),T;\pi_2(z_2)- \pi_2(z_1))
        \end{equation*}
        with $t_1=0$, $t_{m+1}=T$. We fix $h_1= \pi_2(z_1)$, and denote by $\xi= ((e_i,h_i))_{i=1}^m$ the corresponding path in $\G$ given by the unique path-lifting property; thanks to \eqref{patchbis} we have
        \begin{equation*}
            \tt(\xi)= (\tt(\xi_0),h_1 + \theta(\xi_0)) = (\tt(\xi_0),\pi_2(z_2)) =\pi(z_2).
        \end{equation*}
        We thereafter set
        \begin{equation*}
            \ga_i= \Psi_\EN^{-1}((e_i,h_i)) \txt{for $i=1, \cdots, m$.}
        \end{equation*}
        By the optimality of $\xi_0$, using Theorem \ref{babasic} we can associate to the edge $e_i$, in the case where $q_i \ne 0$, a curve $\xi_i: [t_i,t_{i+1}] \to [0,1]$ with $\xi_i(t_i)=0$, $\xi_i(t_{i+1})=1$, with
        \begin{equation}\label{lemnew3}
            \int_{t_i}^{t_{i+1}} L_{e_i}(\xi_i, \dot\xi_i), \, dt = (t_{i+1}-t_i) \, \LL(e_i,q_i).
        \end{equation}
        If we set $\zeta_i:= \ga_i \circ \xi_i$, then according to Proposition \ref{lemlemnew}, \eqref{maxxi} and \eqref{lemnew3}, we have
        \begin{equation*}
            \int_{t_i}^{t_{i+1}} L(\zeta_i, \dot \zeta_i) \, dt \le\int_{t_i}^{t_{i+1}}L_{e_i}(\xi_i,\dot\xi_i)\,dt= (t_{i+1}-t_i) \, \LL(e_i,q_i).
        \end{equation*}
        If instead $(e_i,e_{i+1},0,0,t_{i+1}-t_i,t_{i+2}-t_{i+1})$ is an equilibrium circuit contained in $\xi_0$, with $e_{i+1}=-e_i$ and $\ga_i(0)=z_i$, we have by the minimality of $\xi_0$, see Lemma \ref{flusso}, that \eqref{lemnew11} holds true with $e_i$ in place of $e_0$. We then define
        \begin{eqnarray*}
            \zeta_i(t) &\equiv& z_i \quad\text{in $[t_i,t_{i+1}]$} \\
            \zeta_{i+1}(t) &\equiv& z_i \quad\text{in $[t_{i+1},t_{i+2}]$.}
        \end{eqnarray*}
        By concatenating all the $\zeta_i$'s, we get a curve $\zeta$ in $\NN$ linking $z_1$ to $z_2$ such that
        \begin{equation*}
            A_{L}(\zeta)\le A_\LL(\xi_0).
        \end{equation*}
        We derive
        \begin{equation*}
            \Phi_\LL(\pi_1(z_1), \pi_1(z_2),T;\pi_2(z_2)-\pi_2(z_1))= A_\LL(\xi_0)= A_{L}(\zeta) \ge \Phi_{L}(z_1,z_2,T),
        \end{equation*}
        which, together with \eqref{lemnew10}, gives the assertion.
    \end{proof}

    \bigskip

    \appendix

    \section{Construction of periodic networks}\label{appexnet}

    {}In this appendix we show how to explicitly construct a periodic network over any finite graph.

    \begin{Definition}\label{embe}
        Given a graph $\wha\G=(\wha\VV,\wha\EE)$ and a Euclidean space $\R^M$, we call {\em embedding} of $\wha\G$ in $\R^M$ a pair of maps
        \begin{equation*}
            \mathcal I_{\wha\VV}: \wha\VV \to \R^M, \qquad \mathcal I_{\wha\EE}: \wha\EE \to \mathcal K(0,1;\R^M),
        \end{equation*}
        where $\mathcal K(0,1;\R^M)$ stands for the space of regular simple curves of $\R^M$ defined in $[0,1]$, satisfying
        \begin{equation*}
            \mathcal I_{\wha\EE}(e)(0) = \mathcal I_{\wha\VV}(\oo(e)), \quad \mathcal I_{\wha\EE}(e)(1) = \mathcal I_{\wha\VV}(\tt(e)), \quad \mathcal I_{\wha\EE}(-e)(s)= \mathcal I_{\wha\EE}(e)(1-s) \quad\text{for $s \in [0,1]$}
        \end{equation*}
        and in addition
        \begin{equation*}
            \mathcal I_{\wha\EE}(e)([0,1]) \cap \mathcal I_{\wha\EE}(e')((0,1)) = \emptyset \qquad\text{ whenever $e'\ne \pm e$}.
        \end{equation*}
    \end{Definition}

    \medskip

    It is clear that the set
    \begin{equation*}
        \wha\NN :=\bigcup_{e \in \EE} \mathcal I_{\wha\EE}(e)([0,1]) \subset \R^M
    \end{equation*}
    is a network embedded in $\R^M$, the elements $\mathcal I_{\wha\VV}(x)$, for $x$ varying in $\wha\VV$, are the vertices of $\wha\NN$, and the curves $\mathcal I_{\wha\EE}(e)$, for $e$ varying in $\wha\EE$, the arcs. The initial graph $\wha \G$ is in turn the underlying graph of $\wha\NN$.

    \smallskip

    It is well known, (see \cite{Kainen74}) that every finite graph admits a (book) embedding in the three-dimensional Euclidean space. Note that the construction of periodic networks we are carrying out depends on the embedding of the base graph $\G_0$ we choose.

    \medskip

    Let $\G$ be the maximal topological crystal over $\G_0$ defined as in Section \ref{murano}. We choose a distinguished embedding of $\G$ in a Euclidean space to define a canonical representative in the equivalence class of the periodic networks over $\G_0$. We first denote by $(\mathcal I_{\VV_0}, \mathcal I_{\EE_0})$ an embedding of the finite graph $\G_0$ in $\R^K$, for some positive integer K, which does exist, as pointed above.

    \smallskip

    We set
    \begin{equation*}
        N_0 := b(\G_0) + K.
    \end{equation*}

    \smallskip

    \begin{Definition}
        We define an embedding $(\mathcal I_\VV, \mathcal I_\EE)$ of $ \G$ in $\R^{N_0}$ setting
        \begin{eqnarray*}
            \mathcal I_\VV((x,h)) &:=& (\mathcal I_{\VV_0}(x),h) \txt{for any $(x,h) \in \VV$} \\
            \mathcal I_\EE((e,h))(s) &:=& (\mathcal I_{\EE_0}(e)(s), h + s \theta(e)) \txt{for any $(e,h) \in \EE$, $s \in [0,1]$.}
        \end{eqnarray*}
    \end{Definition}

    \smallskip

    One can straightforwardly check that it is indeed an embedding, according to Definition \ref{embe}. We denote by $\NN'$ the corresponding network in $\R^{N_0}$ and by $ \VN'$, $ \EN'$ the sets of its vertices and arcs, respectively.

    \smallskip

    \begin{Proposition}
        The network $\NN'$ is periodic (in the sense of Definition \ref{vetro}).
    \end{Proposition}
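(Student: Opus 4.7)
The plan is to verify the two conditions in Definition \ref{vetro} for the network $\NN'$.

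For condition \textbf{(i)}, I would observe that by construction the pair of maps $(\mathcal I_\VV,\mathcal I_\EE)$ is a bijection from $(\VV,\EE)$ onto $(\VN',\EN')$, and the required compatibility/adjacency relations in the Definition \ref{embe} of embedding guarantee that the inverse maps $\Psi_{\VN'}:=\mathcal I_\VV^{-1}$ and $\Psi_{\EN'}:=\mathcal I_\EE^{-1}$ realize an isomorphism (in the sense of Definition \ref{morph}) between the abstract graph $\G$ and the underlying graph of $\NN'$ (as specified in Definition \ref{sottostante}). Indeed, $\mathcal I_\VV(\oo(e,h))=\mathcal I_\EE((e,h))(0)$ and $\mathcal I_\EE(-(e,h))(s)=\mathcal I_\EE((e,h))(1-s)$ translate exactly into the defining rules of the underlying graph. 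Since $\G$ is the maximal topological crystal over $\G_0$ by the Proposition preceding the discussion in Section \ref{murano}, condition \textbf{(i)} follows.

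For condition \textbf{(ii)}, I would compute explicitly the Euclidean length of a generic arc $\mathcal I_\EE((e,h))$. The derivative with respect to $s\in[0,1]$ equals $(\dot{\mathcal I}_{\EE_0}(e)(s),\theta(e))\in\R^K\times\R^{b(\G_0)}$, so that the length is
\begin{equation*}
    \ell(e,h)=\int_0^1\sqrt{|\dot{\mathcal I}_{\EE_0}(e)(s)|^2+|\theta(e)|^2}\,ds,
\end{equation*}
which in particular depends only on $e\in\EE_0$ and not on $h\in\Z^{b(\G_0)}$. Since $\EE_0$ is finite, and each curve $\mathcal I_{\EE_0}(e)$ is simple and $C^1$ regular (so its speed $|\dot{\mathcal I}_{\EE_0}(e)(s)|$ is continuous and strictly positive on the compact set $[0,1]$), the collection $\{\ell(e,h)\}_{(e,h)\in\EE}$ is made of finitely many positive numbers, and hence admits both a finite upper bound and a strictly positive infimum.

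I do not anticipate any real obstacle in this proof: the embedding has been tailored so that the $\Z^{b(\G_0)}$--component varies linearly with $s$ via $\theta(e)$, which simultaneously takes care of the involution compatibility $\mathcal I_\EE(-(e,h))(s)=\mathcal I_\EE((e,h))(1-s)$ (because $\theta(-e)=-\theta(e)$ together with the relation $-(e,h)=(-e,h+\theta(e))$) and makes the length formula above independent of $h$. The only point that deserves a brief check is the disjointness condition $\mathcal I_\EE((e,h))([0,1])\cap\mathcal I_\EE((e',h'))((0,1))=\emptyset$ for $(e',h')\ne\pm(e,h)$, which was already built into the definition of the embedding $(\mathcal I_\VV,\mathcal I_\EE)$ and hence can be taken as granted once one has verified that $(\mathcal I_\VV,\mathcal I_\EE)$ is indeed an embedding (a fact the authors explicitly flag as a routine check).
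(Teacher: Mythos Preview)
Your proposal is correct and follows essentially the same approach as the paper: both verify condition \textbf{(i)} from the construction and establish condition \textbf{(ii)} by observing that the Euclidean length of $\mathcal I_\EE((e,h))$ depends only on $e\in\EE_0$ (the paper phrases this as arcs in the same $\Z^{b(\G_0)}$--orbit being translates, while you compute the length integral explicitly), so that finiteness of $\EE_0$ gives the required bounds. Your write-up is in fact more detailed than the paper's, which dispatches \textbf{(i)} as ``immediate'' and leaves the positivity of the infimum implicit.
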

    \begin{proof}
        Property \textbf{(i)} in Definition \ref{vetro} is immediate. Let $(e_i,h_i)$, $i=1, \, 2$, be arcs in the same orbit with respect to $\Z^{b(\G_0)}$, then $e_1=e_2=:e$ and the corresponding arcs on $\NN'$ are $(\mathcal I_{\EE_0}(e), h_1 + s \theta(e))$, $(\mathcal I_{\EE_0}(e), h_2 + s \theta(e))$, which possess the same Euclidean length in $\R^{N_0}$. This shows condition \textbf{(ii)} as well, and concludes the proof.
    \end{proof}

    It is easy to check:

    \begin{Lemma}
        An element $(x,h) \in \R^{N_0}$ belongs to $\NN'$ if and only if
        \begin{itemize}
            \item[--] $h$ has any component in $\Z$ except at most one which is of the form
            \begin{equation*}
                (1-s) \rho_1+ s \rho_2 \txt{for some $\rho_i$, $i=1,2$, in $\Z$ and $s \in [0,1]$;}
            \end{equation*}
            \item[--] there exist $e \in \EE_0$ and $s\in[0,1]$ with $\mathcal I_{\EE_0}(e)(s)= x$.
        \end{itemize}
    \end{Lemma}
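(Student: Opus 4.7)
The plan is to unfold the definition $\mathcal I_\EE((e,h_0))(s) = (\mathcal I_{\EE_0}(e)(s),\, h_0 + s\,\theta(e))$ and exploit the special form of $\theta$ in the chosen basis of $H_1(\G_0,\R)$ (Proposition \ref{cycy}): $\theta$ vanishes on $\EE_\TT$, while on $\EE_0\setminus\EE_\TT$ it sends each oriented edge to a signed standard basis vector $\pm e_i$ after the identification $H_1(\G_0,\R)\simeq\R^{b(\G_0)}$. This is the structural fact responsible for at most one coordinate of $h$ being non-integer.

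The forward direction is essentially immediate. If $(x,h)=\mathcal I_\EE((e,h_0))(s)\in\NN'$, then $x=\mathcal I_{\EE_0}(e)(s)$, giving the second condition. For the first, $h=h_0+s\,\theta(e)$ with $h_0\in\Z^{b(\G_0)}$ and $\theta(e)$ supported on at most one coordinate (with value in $\{0,\pm1\}$). Hence at most one coordinate of $h$ fails to be an integer; when it does, it equals $h_0^{(i)}\pm s = (1-s)\,h_0^{(i)} + s\,(h_0^{(i)}\pm 1)$, which is of the required form.

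For the backward direction, the idea is to use condition (2) to produce a candidate pair $(e,s)$ and then recover $h_0\in\Z^{b(\G_0)}$ from condition (1). The disjointness requirement in Definition \ref{embe} guarantees that, up to reversal, the pair $(e,s)$ is uniquely determined whenever $x$ is not the image of a vertex of $\G_0$. If $x=\mathcal I_{\VV_0}(y)$ is a vertex, then forcing $s\in\{0,1\}$ collapses $(1-s)\rho_1+s\rho_2$ to an integer, so $h\in\Z^{b(\G_0)}$ and $(x,h)=\mathcal I_\VV((y,h))$ is a vertex of $\NN'$. Otherwise $s\in(0,1)$ is uniquely determined; the index $i$ of the exceptional coordinate of $h$ must then match the coordinate on which $\theta(e)$ acts, and setting $h_0$ equal to $h$ on the integer coordinates and equal to $\rho_1$ (or $\rho_2$, after possibly reversing the edge) on the $i$-th coordinate yields $h_0+s\,\theta(e)=h$, as desired.

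The main obstacle lies in the consistency check between conditions (1) and (2) when $s\in(0,1)$: one must verify that the $\rho_1,\rho_2$ appearing in (1) differ by at most one and that the exceptional coordinate index $i$ coincides with the one dictated by $\theta(e)$. Both facts follow from the form of $\theta$ on the fixed basis together with the disjointness of the embedding, but need to be articulated carefully to reconstruct $(e,h_0,s)\in\EE\times[0,1]$ unambiguously from the data $(x,h)$.
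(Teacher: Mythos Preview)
The paper offers no proof here (it merely says ``It is easy to check''), so there is nothing to compare against. Your forward implication is fine. The backward implication, however, cannot be salvaged: the ``if'' direction of the lemma is false as written, and the step you label ``the main obstacle'' is exactly where it breaks. Take $b(\G_0)\ge 2$, pick $e_1\in\EE_0^+\setminus\EE_\TT^+$ so that $\theta(e_1)$ is the first standard basis vector, and set $x=\mathcal I_{\EE_0}(e_1)\!\left(\tfrac12\right)$ and $h=(0,\tfrac12,0,\dots,0)$. Condition~(2) holds with $e=e_1$, $s=\tfrac12$; condition~(1) holds since only the second coordinate of $h$ is non-integer and $\tfrac12=(1-\tfrac12)\cdot0+\tfrac12\cdot1$. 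But $(x,h)\notin\NN'$: by the disjointness clause in Definition~\ref{embe} the only arcs of $\NN'$ through $x$ are $\mathcal I_\EE((\pm e_1,h_0))$ at parameter $\tfrac12$, and their $\R^{b(\G_0)}$-component has second coordinate $h_0^{(2)}\in\Z$, never $\tfrac12$.

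Your assertion that ``the index $i$ of the exceptional coordinate of $h$ must then match the coordinate on which $\theta(e)$ acts'' is unsupported and, as the example shows, false: disjointness of the embedding pins down $(e,s)$ from $x$ alone and imposes no constraint whatsoever on $h$, so neither the exceptional index nor $|\rho_2-\rho_1|$ is forced by the hypotheses. (The vertex case has the same defect once the two occurrences of $s$ are read as independently quantified, which the wording allows.) The lemma is best read as a correct necessary condition paired with an informal converse; a genuine biconditional would have to tie the two bullets by a common $s$ \emph{and} require that the exceptional coordinate be the one supporting $\theta(e)$, with $|\rho_2-\rho_1|\le 1$.
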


    \smallskip

    \begin{Remark}\label{covnetbuild}
        According to Remark \ref{periodicgraph}, the network $\NN'$ is composed by infinite copies of $\TT$ (more precisely of embeddings of the maximal tree). The $(e,h)\in\EE$ with $\theta(e)\ne 0$, link them together. An example of this can be seen in figure \ref{fig:squarecov}.
    \end{Remark}

    \begin{figure}[ht]
        \begin{subcaptiongroup}
            \phantomcaption\label{subfig:square}
            \includegraphics{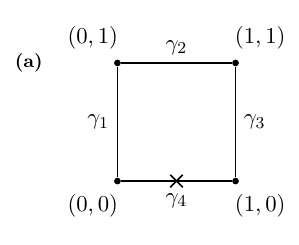}
            \qquad
            \phantomcaption\label{subfig:squaretree}
            \includegraphics{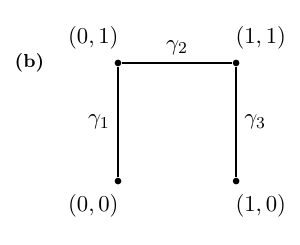}
            \\
            \phantomcaption\label{subfig:squarecov}
            \includegraphics{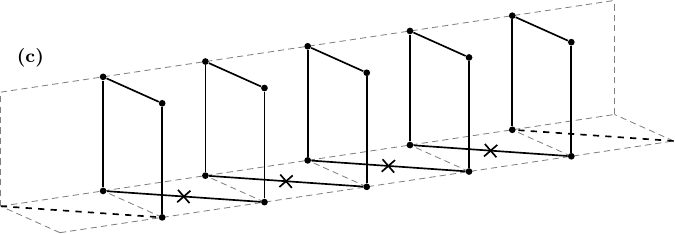}
        \end{subcaptiongroup}
        \caption{In \subref{subfig:square} it is represented an embedding in $\R^2$ of a finite graph $\G_0$ and in \subref{subfig:squaretree} the corresponding embedding of a spanning tree of $\G_0$, which we call $\NN_\TT'$. Using the process described above, we get in \subref{subfig:squarecov} the relative embedding in $\R^3$ of $\G$. As noted in Remark \ref{covnetbuild}, it is made of infinite copies of $\NN_\TT'$.}\label{fig:squarecov}
    \end{figure}

    \smallskip

    \medskip

    \bigskip

\end{document}